\documentclass[10.5pt,reqno]{amsart}
\usepackage{longtable}
\usepackage{hyperref}
\usepackage[T1]{fontenc}
\usepackage[utf8]{inputenc}
\usepackage[english]{babel}
\usepackage{textcomp}
\usepackage{dsfont}
\usepackage{latexsym}
\usepackage{amssymb}
\usepackage{amsthm}
\usepackage{amsmath}
\DeclareMathAlphabet{\mathpzc}{OT1}{pzc}{m}{en}
\usepackage{yfonts}
\usepackage{xfrac}
\usepackage{newlfont}
\usepackage{graphicx}
\usepackage{mathtools}
\usepackage{comment}
\usepackage{indentfirst}
\usepackage{braket}
\usepackage{mathrsfs}
\usepackage{fixmath}

\textwidth16.5cm
\textheight21cm
\evensidemargin.2cm
\oddsidemargin.2cm

\addtolength{\headheight}{3.2pt}    

\usepackage{scalerel}[2014/03/10]
\usepackage[usestackEOL]{stackengine}
\newcommand{\dashint}{\,\ThisStyle{\ensurestackMath{%
			\stackinset{c}{.2\LMpt}{c}{.5\LMpt}{\SavedStyle-}{\SavedStyle\phantom{\int}}}%
		\setbox0=\hbox{$\SavedStyle\int\,$}\kern-\wd0}\int}
	
\usepackage{xcolor}

\DeclareMathOperator{\tr}{Tr}

\newcommand{\Aff}{\mathrm{Aff}}
\newcommand{\Hol}{\mathrm{Hol}}
\newcommand{\Min}{\mathrm{min}}
\newcommand{\Max}{\mathrm{max}}

\newcommand{\ee}{\mathrm{e}}

\newcommand{\vect}[1]{\mathbf{{#1}}}
\newcommand{\dd}{\mathrm{d}}

\DeclarePairedDelimiter{\abs}{\lvert}{\rvert}

\DeclarePairedDelimiter{\norm}{\lVert}{\rVert}

\let\originalleft\left
\let\originalright\right
\renewcommand{\left}{\mathopen{}\mathclose\bgroup\originalleft}
\renewcommand{\right}{\aftergroup\egroup\originalright}

\newcommand{\N}{\mathds{N}}
\newcommand{\Z}{\mathds{Z}}

\newcommand{\C}{\mathds{C}}
\newcommand{\Hd}{\mathds{H}}
\newcommand{\Od}{\mathds{O}}
\newcommand{\R}{\mathds{R}}

\newcommand{\T}{\mathds{T}}

\newcommand{\Ff}{\mathfrak{F}}

\newcommand{\Hs}{\mathscr{H}}
\newcommand{\Us}{\mathscr{U}}

\newcommand{\Ac}{\mathcal{A}}
\newcommand{\Bc}{\mathcal{B}}
\newcommand{\Cc}{\mathcal{C}}
\newcommand{\Dc}{\mathcal{D}}

\newcommand{\Fc}{\mathcal{F}}

\newcommand{\Hc}{\mathcal{H}}
\newcommand{\Ic}{\mathcal{I}}

\newcommand{\Kc}{\mathcal{K}}
\newcommand{\Lc}{\mathcal{L}}
\newcommand{\cM}{\mathcal{M}}
\newcommand{\Nc}{\mathcal{N}}

\newcommand{\Pc}{\mathcal{P}}

\newcommand{\Rc}{\mathcal{R}}

\newcommand{\Uc}{\mathcal{U}}

\newcommand{\Wc}{\mathcal{W}}

\renewcommand{\Im}{\mathrm{Im}\,}
\renewcommand{\Re}{\mathrm{Re}\,}

\newcommand{\meg}{\leqslant}
\newcommand{\Meg}{\geqslant}

\renewcommand{\phi}{\varphi}
\newcommand{\mi}{\mu}

\newcommand{\leftexp}[2]{{\vphantom{#2}}^{#1}{#2}} 
\newcommand{\trasp}{\leftexp{t}}
\newcommand{\Lin}{\mathscr{L}}

\title[Invariant Spaces of Holomorphic Functions]{A Survey on Invariant Spaces of Holomorphic Functions on Symmetric Domains}

\date{}

\begin{document}

\theoremstyle{definition}
\newtheorem{definition}{Definition}[section]

\newtheorem{remark}[definition]{Remark}

\theoremstyle{plain}
\newtheorem{theorem}[definition]{Theorem}

\newtheorem{lemma}[definition]{Lemma}

\newtheorem{proposition}[definition]{Proposition}

\newtheorem{corollary}[definition]{Corollary}

\author[M. Calzi]{Mattia Calzi}

\address{Dipartimento di Matematica, Universit\`a degli Studi di
	Milano, Via C. Saldini 50, 20133 Milano, Italy}
\email{{\tt mattia.calzi@unimi.it}}

\keywords{Dirichlet space, symmetric Siegel domains, Wallach set, invariant spaces.}
\thanks{{\em Math Subject Classification 2020} 46E15, 47B33, 32M15  }
\thanks{The author is a member of the 	Gruppo Nazionale per l'Analisi Matematica, la Probabilit\`a e le	loro Applicazioni (GNAMPA) of the Istituto Nazionale di Alta	Matematica (INdAM). The author was partially funded by the INdAM-GNAMPA Project CUP\_E55F22000270001.} 

\begin{abstract}
	We present some old and new results on a class of invariant spaces of holomorphic functions on symmetric domains, both in their circular bounded realizations and in their unbounded realizations as  Siegel domains of type II. These spaces include: weighted Bergman spaces; the Hardy space $H^2$; the Dirichlet space; holomorphic Besov spaces; the Bloch space.

	Our main focus will be on invariant Hilbert and semi-Hilbert spaces, but we shall also discuss minimal and maximal spaces in suitable classes of invariant Banach and semi-Banach spaces.
\end{abstract}
\maketitle

\section{Introduction}

Let $D$ denote the unit disc in $\C$, and consider the group $G(D)$ of its biholomorphisms, that is,
\[
G(D)=\Set{z\mapsto \alpha\frac{z-b}{1-\overline b z}\colon \alpha\in \T, \abs{b}<1}.
\]
Then, $G(D)$ is a connected simple Lie group; denote by $\widetilde G(D)$ its universal covering.
Consider the representations $\widetilde U_\lambda$ of $\widetilde G(D)$ in the space $\Hol(D)$ of holomorphic functions on $D$ defined by
\[
\widetilde U_\lambda(\phi) f= (f\circ \phi^{-1}) (J\phi^{-1})^{\lambda/2}
\]
for every $\lambda\in \C$, for every $\phi\in \widetilde G$, and for every $f\in \Hol(D)$, where $\phi^{-1}$ acts on $D$ by means of the canonical mapping $\widetilde G(D)\to G(D)$, while $(J\phi^{-1}(z))^{\lambda/2}$ is defined as a continuous function of $(\lambda, \phi, z)\in \C\times \widetilde G(D)\times D$ (hence analytic in $(\lambda,\phi,z)$ and holomorphic in $(\lambda,z)$). Then, it is known (cf.~\cite{VergneRossi}) that the holomorphic discrete series representations of $\widetilde G(D)$, that is, the unitary representations of $\widetilde G(D)$ which are unitarily equivalent to a subrepresentation of the left regular representation in $L^2(\widetilde G(D))$, may be identified with the irreducible unitary representations induced by the $\widetilde U_\lambda$, for $\lambda>1$, in the weighted Bergman spaces
\[
A^2_\lambda(D)=\Set{f\in \Hol(D)\colon \int_{D} \abs{f(z)}^2 (1-\abs{z}^2)^{\lambda-2}\,\dd z}.
\]
The unweighted Bergman space corresponds to $\lambda=2$. Observe that $A^2_\lambda(D)$ embeds continuously into $\Hol(D)$, so that it is a repreducing kernel Hilbert space. More precisely, if we define the function
\[
\Kc^{(\lambda)}\colon D\times D\ni (z,w)\mapsto c_\lambda (1-z \overline w)^{-\lambda}\in \C
\]
for a suitable $c_\lambda>0$, then 
\[
\langle f \vert \Kc^{(\lambda)}_z\rangle_{A^2_\lambda(D)}=f(z)
\]
for every $f\in A^2_\lambda(D)$ and for every $z\in D$, where $\Kc^{(\lambda)}_z=\Kc^{(\lambda)}(\,\cdot\,,z)$. The sesquiholomorphic function $\Kc^{(\lambda)}$ is then called the reproducing kernel of $A^2_\lambda(D)$. 
Notice that saying that $\widetilde U_\lambda(\phi)$ induces a unitary automorphism of $A^2_\lambda(D)$ is equivalent to saying that
\begin{equation}\label{eq:5}
	\Kc^{(\lambda)}=(\widetilde U_\lambda (\phi)\otimes \overline{\widetilde U_\lambda(\phi)})\Kc^{(\lambda)}.
\end{equation}

Notice, in addition, that the Hardy space 
\[
H^2(D)=\Set{f\in \Hol(D)\colon \sup_{r\in (0,1)} \int_{\T} \abs{f(r \alpha)}^2\,\dd \alpha<\infty}
\]
is also a reproducing kernel Hilbert space, with reproducing kernel
\[
\Kc^{(1)}\colon D\times D\ni (z,w)\mapsto c_1 (1-z \overline w)^{-1}\in \C
\]
for a suitable $c_1>0$. Now, suitably raising both sides of~\eqref{eq:5} to the power $1/\lambda$  yields  $\Kc^{(1)}=(\widetilde U_1 (\phi)\otimes \overline{\widetilde U_1(\phi)})\Kc^{(1)}$ for every $\phi \in \widetilde G(D)$, so that $H^2(D)$ is $\widetilde U_1$-invariant with its norm. 

One may then wonder for which values of $\lambda\in \C$ the sesquiholomorphic function
\[
\Kc^\lambda\colon   D\times D\ni (z,w)\mapsto   (1-z \overline w)^{-\lambda}\in \C
\]
is the reproducing kernel of some reproducing kernel Hilbert space of holomorphic functions. It turns out that this is equivalent to a certain positivity condition on $\Kc^\lambda$, which is satisfied if and only if $\lambda\Meg 0$. Given $\lambda\Meg 0$, the corresponding reproducing kernel Hilbert space $H_\lambda(D)$ may then be defined as the  completion of the space of finite linear combinations of the $\Kc^\lambda_z$, endowed with the unique scalar product such that
\[
\langle \Kc^\lambda_z \vert \Kc^\lambda_w\rangle= \Kc^\lambda(z,w)=(1-z\overline w)^{-\lambda}
\]
for every $z,w\in D$. It turns out that $H_\lambda(D)$ actually embeds continuously into $\Hol(D)$, so that it is a reproducing kernel Hilbert space of holomorphic functions; in addition, $\widetilde U_\lambda$ induces an irreducible unitary representation of $\widetilde G(D)$ in $H_\lambda(D)$ (cf.~\cite{VergneRossi}). 

There is also an interesting converse result to the previous construction. Namely, if $H$ is a reproducing kernel Hilbert space  of holomorphic functions in which $\widetilde U_\lambda$ induces a (automatically continuous) bounded representation, then $\lambda \Meg 0$ and $H=H_\lambda(D)$ with equivalent norms. If, in addition, $U_\lambda$ induces a unitary representation in $H$, then $H$ and $H_\lambda(D)$ have proportional norms (cf.~\cite[Theorem 5.1]{Arazy}).

A general abstract study of the Banach spaces continuously embedded in $\Hol(D)$ and in which $\widetilde U_\lambda$ induces a bounded representation for some $\lambda>0$ was more recently developed in~\cite{AlemanMas}. In this paper, besides density and duality (and other) problems,   minimal and maximal  spaces (subject to certain further conditions) were determined. Namely, two $\widetilde U_\lambda$-invariant Banach spaces $X_\Min^\lambda$ and $X_\Max^\lambda$ were determined such that $X_\Min^\lambda \subseteq X\subseteq X_\Max^\lambda$ continuously for every $\widetilde U_\lambda$-invariant Banach space $X$ satisfying certain conditions. Explicitly, $X_\Min^\lambda$  may be realized as the weighted Bergman space of type $L^1$
\[
A^1_\lambda(D)=\Set{f\in \Hol(D) \colon \int_{D} \abs{f(z)}(1-\abs{z}^2)^{\lambda/2-2}\,\dd z<\infty}
\]
(when $\lambda>2$), whereas  $X_\Max^\lambda$  may be realized as the weighted Bergman  space of type $L^\infty$
\[
A^\infty_\lambda(D)=\Set{f\in \Hol(D)\colon \sup_{z\in D} (1-\abs{z}^2)^{\lambda/2}\abs{f(z)}<\infty}.
\]
This analysis extends the analogous analysis performed before in~\cite{RubelTimoney,ArazyFisherPeetre}, where the $\widetilde U_0$-invariant maximal space  $X_\Max^0$ was identified with the Bloch space $\Bc(D)$, defined as
\[
\Set{f\in \Hol(D)\colon \sup_{z\in D}(1-\abs{z}^2)\abs{f'(z)}<\infty}=\Set{f\in \Hol(D)\colon f'\in A^\infty_2(D)},
\]
whereas the minimal space $X_\Min^0$ was identified with the holomorphic Besov space
\[
B^1(D)=\Set{f\in \Hol(D)\colon  \int_{D} \abs{f''(z)}\,\dd z<\infty}=\Set{f\in \Hol(D)\colon f''\in A^1_4(D)}.
\]
It is worthwhile underlying that, whereas the Bloch space is endowed with the corresponding  seminorm, and is therefore \emph{not} Hausdorff (otherwise, $\widetilde U_0$ would not induce a \emph{bounded} representation in $\Bc(D)$), the space $B^1(D)$ is actually endowed with a norm (e.g., $f\mapsto \abs{f(0)}+\abs{f'(0)}+\norm{f''}_{A^1_4(D)}$), and one may find an equivalent norm which is also $\widetilde U_0$-invariant. 
We would like to emphasize that the problem of dealing with non-Hausdorff spaces raises the issue of finding some natural conditions on a given space $X$ which replace the no longer available continuity of the canonical inclusion $X\subseteq \Hol(D)$. It turns out that, as long as one is concerned with maximal spaces, a reasonable condition to impose is the existence of a \emph{decent} continuous linear functional on $X$, that is, a non-zero continuous linear functional on $X$ which is continuous with respect to the topology of compact convergence; equivalently, a non-zero continuous linear functional on $X$ is decent if it extends to a continuous linear functional on $\Hol(D)$. We shall also say that $X$ is decent if it admits a decent continuous linear functional. Then, $\Bc(D)$ may be characterized as the maximal \emph{decent} semi-Banach space (i.e., a complete space whose topology is defined by a seminorm) of holomorphic functions in which $\widetilde U_0$ induces a bounded representation. 

Since the minimal space is always a Banach space (for $\lambda\Meg 0$), one may avoid these issues and consider only Banach spaces continuously embedded in $\Hol(D)$ when dealing with the minimal space for $\lambda\Meg 0$. Notice, though, that $B^1(D)$ is minimal only in the class of the spaces which contain non-constant functions; the space of constant functions is the actual (but rather uninteresting) `minimal' space. If one still wishes to consider semi-Banach spaces, then the decency assumption does not seem to be sufficient, as one might, for instance, find $\widetilde U_0$-invariant algebraic complements of the space of constant functions in $\Bc(D)$, which would then be invariant Banach spaces \emph{which do not embed continuously into $\Hol(D)$} but are still decent.\footnote{We do not know whether such spaces exist or not.} One then has to consider only decent spaces which contain the space of constant functions, and then $B^1(D)$ is minimal among those ones in which $\widetilde U_0$ induces a continuous bounded representation.

Let us now mention explicitly that, if $\lambda\in-\N$, then  the $(1-\lambda)$-th derivative intertwines $\widetilde U_\lambda$ and $\widetilde U_{2-\lambda}$, that is,
\[
(\widetilde U_\lambda(\phi) f)^{(1-\lambda)}= \widetilde U_{2-\lambda}(\phi)f^{(1-\lambda)}
\]
for every $\phi\in \widetilde G(D)$ and for every $f\in \Hol(D)$. Using this fact, one may then extend the preceding analysis and show that the Dirichlet space (for $\lambda=0$) and, more generally, the spaces
\[
\widetilde H_\lambda=\Set{f\in \Hol(D)\colon f^{(1-\lambda)}\in H_{2-\lambda}},
\]
for $\lambda\in -\N$, are decent semi-Hilbert spaces in which $\widetilde U_\lambda$ induces a continuous unitary representation, and that suitable maximal and minimal spaces are obtained considering
\[
\Set{f\in \Hol(D)\colon \sup_{z\in D} (1-\abs{z}^2)^{1-\lambda/2} \abs{f^{(1-\lambda)}(z)}<\infty }
\]
and
\[
\Set{f\in \Hol(D)\colon f^{(1-\lambda)}\in X_\Min^{2-\lambda}}.
\]

The preceding problems were also considered in more general contexts. First, replacing $D$ with the unit ball in $\C^n$ (cf., e.g.,~\cite{ArazyFisherPeetre,Peloso}). Then, more generally, replacing $D$  with a general (irreducible) symmetric domain, of which the unit disc in $\C$ is the simplest example. Notice that the unit balls in the various $\C^n$ are precisely the only (irreducible) symmetric domains which are \emph{strongly} pseudoconvex: this fact marks an inportant difference between these `rank $1$' domains and the other ones, which typically exhibit a more complicated (and interesting) behaviour. See Section~\ref{sec:1} for more information on symmetric domains. Here we simply mention that the group of biholomorphisms $G(D)$ of a bounded symmetric domain acts transitively on $D$, that is, $D$ is a homogeneous domain (cf.~\cite[No.~17]{Cartan}). 
Cf.~\cite{Timoney1,Timoney2,ArazyFisher4,ArazyFisherPeetre,ArazyUpmeier3,AlemanMas} for the study of invariant (semi-)Banach spaces of holomorphic functions (in particular, minimal and maximal spaces), and~\cite{Peetre,Peetre2,ArazyFisher,Peetre3,Fisher,ArazyFisherPeetre2,ArazyFisher3,ArazyFisher2,Zhu,Arazy2,Arazy3,Arazy,ArazyUpmeier,ArazyUpmeier4,ArazyUpmeier2} for the study of invariant (semi-)Hilbert spaces of holomorphic functions on bounded symmetric domains. See also~\cite{BB,Ishi6} for the study of invariant Hilbert spaces of holomorphic functions on bounded homogeneous domains.

These problems were also studied in the unbounded realizations of bounded symmetric domains as Siegel domains, of which the upper half-plane and the Siegel upper half-spaces
\[
\Set{(\zeta,z)\in \C^n\times \C\colon \Im z-\abs{\zeta}^2>0}
\]
are the simplest examples, and correspond to the unit disc in $\C$ and to the unit ball in $\C^{n+1}$, respectively. When $D$ is replaced by its unbounded realization $\Dc$ as a Siegel domain, the group $\Aff(\Dc)$ of affine automorphisms of  $\Dc$ acts transitively on $\Dc$ and is somewhat easier to work with, so that several results in this constext are obtained actually studying the $\Aff(\Dc)$-invariant spaces of holomorphic functions. Cf.~\cite{Ishi3,Ishi4,Ishi5,Garrigos,Arcozzietal,Rango1,Tubi} for the study of invariant (semi-)Banach spaces of holomorphic functions on symmetric Siegel domains.

In this paper we shall survey some old and recent advances in the study of the above and related problems on symmetric domains, both in their bounded and unbounded realizations. 

In Section~\ref{sec:1}, we shall review some basic facts about bounded symmetric domains and symmetric cones. We shall follow the formalism of Jordan triple systems, as it seems the most suitable one for a concise exposition of the subject. In particular, we shall provide a brief description of irreducible symmetric domains, both in their circular and bounded realizations, and in their unbounded realizations as Siegel domains of type II.

In Section~\ref{sec:2}, we shall classify the $\widetilde U_\lambda$-invariant \emph{closed} vector subspaces of $\Hol(D)$. This preliminary study is actually of fundamental importance in what follows.
In Section~\ref{sec:3}, we shall discuss the problem of dealing with invariant semi-Banach and semi-Hilbert spaces, and discuss the various approaches that have been pursued.

In Section~\ref{sec:4} we shall then describe a classification of $\widetilde U_\lambda$-invariant semi-Hilbert spaces of holomorphic functions. In Section~\ref{sec:5} we shall indicate some of the various alternative descriptions of the aforementioned spaces that have been developed in the literature.

In Section~\ref{sec:6}, we shall transfer the preceding results to the unbounded realization of $D$ as a Siegel domain $\Dc$ of type II, and also describe the classification of $\Aff(\Dc)$-invariant semi-Hilbert spaces whenever possible.
Finally, in Section~\ref{sec:7} we shall consider the problem of finding minimal and maximal spaces in suitable families of $\widetilde U_\lambda$-invariant semi-Banach spaces. We shall mainly pursue this analysis on $\Dc$ instead of $D$ for some technical advantages.

\section{Bounded Symmetric Domains and Jordan Triple Systems}\label{sec:1}

A bounded symmetric domain is a bounded connected open subset $D$ of $\C^n$ such that for every $z\in D$ there is an involutive biholomorphism of $D$ having $z$ as an isolated fixed point (or, equivalently, as its unique fixed point). It is then known that $D$ is a homogeneous domain, that is, that $D$ admits a transitive group of biholomorphism (cf.~\cite[17]{Cartan}). In addition, the component of the identity $G_0(D)$ of the group of biholomorphisms  $G(D)$ of $D$ is a semisimple Lie group, and is simple if and only if $D$ is irreducible, that is, is not biholomorphic to the product of two (non-trivial) symmetric domains.

Every bounded symmetric domain is biholomorphic to a circular (hence convex, cf.~\cite{Cartan} or also~\cite[Theorem 1.6]{Loos}) bounded symmetric domain, which is unique up to linear isomorphisms. This realization of $D$ may be effectively studied either by means of the study of the group $G_0(D)$ and its Lie algebra (cf., e.g.,~\cite{Helgason,FarautKoranyi}) or by means of the theory of Jordan triple systems (or by means of the theory of Jordan pairs, cf., e.g.,~\cite{Loos,Satake}). In this section, we shall briefly describe the second approach, which seems to better highlight the algebraic objects we shall need.

\begin{definition}
	A   hermitian Jordan triple system  $Z$ is a finite dimensional complex vector space endowed with a triple product $\{\,\cdot\,,\,\cdot\,,\,\cdot\,\}\colon Z\times Z\times Z\to Z$ which is $\R$-trilinear, $\C$-bilinear and symmetric in the first and third argument,   $\C$-antilinear in the second argument, and satisfies the identity
	\[
	\{u,v,\{x,y,z\}\}-\{x,y,\{u,v,z\}\}=\{\{u,v,x\},y,z\}-\{x,\{v,u,y\},z\}
	\]
	for every $u,v,x,y,z\in Z$.
	
	We say that $Z$ is positive if every $\lambda\in \C$ such that   $\{x,x,x\}=\lambda x$ for some non-zero $ x\in Z$ is necessarily $>0$.
	In this case, we endow $Z$ with the scalar product defined by
	\[
	\langle x\vert y \rangle\coloneqq \tr \{x,y,\,\cdot\,\}
	\]
	for every $x,y\in Z$ (cf.~\cite[Corollary 3.16]{Loos}).
\end{definition}

Notice that, if we define $D(u,v)\coloneqq \{u,v,\,\cdot\,\}$, then  $D(u,v)^*=D(v,u)$ with respect to the chosen scalar product, thanks to~\cite[Proposition 3.4 and Corollary 3.16]{Loos}), and the condition defining a hermitian Jordan triple system becomes
\[
D(u,v)\{x,y,z\}=\{D(u,v)x,y,z\}-\{x,D(u,v)^*y,z\}+\{x,y,D(u,v)z\}
\]
for every $u,v,x,y,z\in Z$, which expresses the fact that $D(u,v)$ is a  derivation of $Z$. 

\begin{definition}
	We say that $x\in Z$ is  is a tripotent if $x=\{x,x,x\}$. 
	
	Two tripotents $x,y\in Z$ are said to be orthogonal if one of the following equivalent conditions hold: $\{x,x,y\} =0$; $\{y,y,x\}=0$; $D(x,y)=0$; $D(y,x)=0$ (cf.~\cite[Lemma 3.9]{Loos}). If $x,y\in Z$ are orthogonal tripotents, then also $x+y$ is a tripotent.
	
	A tripotent $x\in Z$ is   primitive if it cannot be written as the sum of two non-trivial orthogonal tripotents. 
	A tripotent $x\in Z$ is maximal if there are no non-trivial tripotents which are orthogonal to  $x$.
\end{definition}

We are now able to describe the correspondence between bounded symmetric domains and Jordan triple systems (cf.~\cite[Theorems 2.10, 3.17, and 4.1]{Loos}). Notice that this correspondence is actually bijective (cf.~\cite[Theorem 4.1]{Loos}).

\begin{proposition}\label{prop:2}
	Let $Z$ be a positive hermitian Jordan triple system, and set $\abs{x}_Z\coloneqq \sqrt{\norm{D(x,x)}}$ for every $x\in Z$. Then, $\Set{x\in Z\colon \abs{x}_Z<1}$ is a circular bounded symmetric domain.
	
	Let $D$ be a circular bounded symmetric domain in a finite-dimensional complex vector space $Z$.  Denote by $\Kc$ the unweighted Bergman kernel on $D$ and endow $Z$ with the scalar product given by the Bergman metric at $0$.\footnote{In other words,  $\Kc$ is the reproducing kernel of the Bergman space $\Hol(D)\cap L^2(D)$, so that $\Kc\colon D\times D\to \C$ is a sesquiholomorphic map and $f(z)=\langle f\vert \Kc(\,\cdot\,,z)\rangle_{L^2(D)}$ for every $f\in \Hol(D)\cap L^2(D)$ and for every $z\in D$. Then, we endow $Z$ with the scalar product  $(v,w)\mapsto \partial_{1,v}\overline{\partial_{2,w}} \log \Kc(0,0)$, where, $\partial_{1,v}$ denotes the holomorphic dervative in the direction of $v$ in the first argument, whereas $\overline{\partial_{2,w}}$ denotes the antiholomorphic derivative in the direction of $w$ in the second argument.} Then, the equality
	\[
	\langle\{ x,y,z \}\vert w\rangle\coloneqq\frac 12  \partial_{1,x}\overline{\partial_{2,y}} \partial_{1,z} \overline{\partial_{2,w}} \log \Kc(0,0),
	\]
	for every $x,y,z,w\in Z$, defines the structure of a positive hermitian Jordan triple system on $Z$, for which $D=\Set{x\in Z\colon \abs{x}_Z<1}$, the stabilizer $K$ of $0$ in $G(D)$ is the group of automorphisms of $Z$, and
	\[
	\langle x\vert y \rangle=\tr D(x,y)
	\]
	for every $x,y\in Z$.
\end{proposition}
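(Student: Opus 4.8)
The statement collects the two halves of the Koecher--Loos correspondence, so I would prove the two directions separately and then reconcile the normalisations; throughout, the engine is the spectral (Peirce) decomposition of $Z$ and the classification of the holomorphic vector fields on a circular domain as polynomials of degree at most $2$.

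For the first assertion I would start from the spectral theorem for positive hermitian Jordan triple systems: every $x\in Z$ can be written as $x=\sum_j \lambda_j e_j$ with $\lambda_j>0$ and pairwise orthogonal primitive tripotents $e_j$. Since orthogonality of tripotents means $D(e_j,e_k)=0$ for $j\neq k$, one gets $D(x,x)=\sum_j \lambda_j^2 D(e_j,e_j)$, a commuting family of self-adjoint operators; passing to the joint Peirce decomposition one finds $\norm{D(x,x)}=\max_j\lambda_j^2$, so that $\abs{x}_Z=\max_j\lambda_j$. This identifies $\abs{\,\cdot\,}_Z$ with the spectral norm; in particular it is a genuine norm, so its unit ball is bounded and convex, and it is invariant under $x\mapsto \alpha x$ for $\alpha\in\T$ (because $\alpha e_j$ is again a primitive tripotent), so the ball is circular. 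The map $z\mapsto -z$ is an involutive biholomorphism of the ball with $0$ as isolated fixed point. To upgrade this to a genuine symmetric domain I would construct, for each $a$ in the ball, a biholomorphism sending $0$ to $a$: these are the Jordan-theoretic transvections built from the quadratic representation $Q(x)=\{x,\,\cdot\,,x\}$, the Bergman operator $B(x,y)=\mathrm{id}-2D(x,y)+Q(x)Q(y)$, and the associated quasi-inverse. Transitivity of the group they generate, together with conjugation of $z\mapsto -z$, yields an involutive symmetry at every point.

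For the second assertion I would first invoke H.\ Cartan's linearity theorem: since $D$ is bounded and circular, every biholomorphism fixing $0$ is linear, whence $K\subseteq \mathrm{GL}(Z)$. The Bergman kernel $\Kc$ is $G(D)$-covariant, hence the $(1,1)$-form $\partial\overline\partial\log\Kc$ is an invariant metric; differentiating once more produces the displayed quartic expression, and its behaviour under the holomorphic and antiholomorphic derivatives immediately gives the required multilinearity, the symmetry in the first and third arguments, and the conjugate-linearity in the second. Positivity is read off from $\langle\{x,x,x\}\vert x\rangle=\tfrac12\,\partial_{1,x}\overline{\partial_{2,x}}\partial_{1,x}\overline{\partial_{2,x}}\log\Kc(0,0)>0$, which forces the eigenvalue $\lambda$ in $\{x,x,x\}=\lambda x$ to be positive. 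The trace formula $\langle x\vert y\rangle=\tr D(x,y)$ then follows by comparing the two scalar products at $0$, using that $\Kc(z,w)$ is a negative power of the generic norm of $Z$.

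The genuinely hard point is the Jordan identity, in both directions. Proving that the transvections of the first part are well defined biholomorphisms, and that the group they generate is a Lie group acting transitively, already requires the identity; conversely, to verify it for the triple product extracted from $\Kc$ I would identify the Lie algebra $\gf=\aut(D)$ with its grading $\gf=\gf_{-1}\oplus\gf_0\oplus\gf_1$, in which $\gf_{-1}$ consists of the constant (translation) fields, $\gf_0$ of the linear fields $\mathrm{Lie}(K)^{\C}$, and $\gf_1$ of the quadratic fields $z\mapsto Q_v(z)$ indexed by $v\in Z$. The Jordan identity is then exactly the closure $[\gf_{-1},\gf_1]\subseteq\gf_0$ together with the Jacobi identity, i.e.\ the integrability of this family of vector fields. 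Finally, to obtain $D=\Set{x\in Z\colon \abs{x}_Z<1}$ and $K=\aut(Z)$, and to see that the two constructions are mutually inverse, I would appeal to the uniqueness up to linear isomorphism of the circular convex realisation, matching the infinitesimal data at $0$; this is where the bijectivity claimed after the statement is secured.
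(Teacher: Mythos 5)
Your proposal takes a genuinely different route from the paper: the paper's proof of this proposition is essentially a citation (the first assertion to Loos, Theorems 3.17 and 4.1; the second to Loos, Theorem 2.10 and Corollary 3.16, plus the observation that $K$ coincides with the group of \emph{linear} biholomorphisms of $D$), whereas you attempt to reconstruct the underlying mathematics. Your outline is in fact faithful to how the cited results are actually proved: the spectral decomposition identifying $\abs{\,\cdot\,}_Z$ with the spectral norm, H.~Cartan's linearity theorem for the stabilizer of $0$, and the Koecher-type identification of $\aut(D)$ with the graded Lie algebra $\gf_{-1}\oplus\gf_0\oplus\gf_1$ (constant, linear, quadratic fields), in which the Jordan triple identity is precisely the Jacobi identity together with $[\gf_{-1},\gf_1]\subseteq\gf_0$. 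What the paper's approach buys is brevity and reliability; what yours buys is an explanation of \emph{why} the correspondence holds, which the survey deliberately suppresses.

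That said, several load-bearing steps in your sketch are asserted rather than proved, and you should be aware of where the real work lies. (i) That $\abs{\,\cdot\,}_Z$ satisfies the triangle inequality (hence that its unit ball is convex) does not follow formally from $\abs{x}_Z=\max_j\lambda_j$; it needs the joint Peirce analysis or a separate argument. (ii) Transitivity via transvections and the fact that they are biholomorphisms of the ball is exactly the content of Loos's Theorem 4.1, not a routine verification, and you correctly note it consumes the Jordan identity. (iii) The positivity $\langle\{x,x,x\}\vert x\rangle>0$ is a statement about fourth derivatives of $\log\Kc$ for an arbitrary bounded circular symmetric domain; it is true but is essentially Loos's Theorem 3.17 and requires an argument (boundedness enters in an essential way). (iv) Most importantly, deriving the trace formula $\langle x\vert y\rangle=\tr D(x,y)$ from ``$\Kc$ is a negative power of the generic norm of $Z$'' risks circularity: that kernel formula is normally \emph{deduced} from the correspondence you are in the middle of proving. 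A non-circular route is to use the change-of-variables rule for $\Kc$ together with homogeneity (available from the first part) to get $\det\left(h_{i\bar\jmath}(z)\right)=c\,\Kc(z,z)$, i.e.\ the K\"ahler--Einstein property of the Bergman metric, which is exactly the identity between the second derivatives and the traced fourth derivatives of $\log\Kc$ at $0$ that the trace formula expresses.
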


\begin{proof}
	The first assertion follows from~\cite[Theorems 3.17 and 4.1]{Loos}. The second assertion follows from~\cite[Theorem 2.10 and Corollary 3.16]{Loos} and from the fact that  $K$ is the group of linear biholomorphisms of $D$ (cf., e.g.,~\cite[1.5]{Loos}), so that every element of $K$ is an automorphism of $Z$ by Proposition~\ref{prop:2}, whereas every automorphism of $Z$ preserves $D$ by~\cite[Theorem 3.17]{Loos}. 
\end{proof}

We shall now pass to describing the unbounded realizations of a bounded symmetric domain as Siegel domains of type II. It is convenient  to first  introduce the notion of a Jordan algebra.

\begin{definition}
	A (real or complex) Jordan algebra $A$ is a commutative, not necessarily associative  (real or complex) algebra such that $x^2(x y)=x(x^2y)$ for every $x,y\in A$.  A real Jordan algebra $A$ is formally real if $x=y=0$ for every $x,y\in A$ such that $x^2+y^2=0$.
	
	Given a real Jordan algebra $A$, its complexification $A_\C$ (as a vector space), endowed with the triple product defined by
	\[
	\{x,y,z\}\coloneqq x (\overline y z)-(x z) \overline y+z( \overline y x),
	\]
	is a hermitian Jordan triple system which is called the hermitification of $A$. 
	
	Two idempotents $x,y$ of a Jordan algebra $A$ are said to be orthogonal if $xy=0$. In this case, $x+y$ is an idempotent as well.
\end{definition}

Notice that $A$ is a formally real Jordan algebra if and only if $A_\C$ is a positive hermitian Jordan triple system (cf.~\cite[3.7 and 3.13]{Loos}). In addition, if $e,e'$ are orthogonal idempotents of $A$, then $e,e'$ are orthogonal tripotents of $A_\C$ (and conversely).

See~\cite[Theorem 3.13]{Loos} for a proof of the following result (`Peirce decomposition').

\begin{proposition}
	Let $Z$ be a positive hermitian Jordan triple system, and let $e$ be a tripotent in $Z$. Define $Z_\alpha(e)\coloneqq \Set{x\in Z\colon D(e,e)x=\alpha x}$ for $\alpha\in \R$. Then, the $Z_\alpha(e)$ are pairwise orthogonal,
	\[
	Z=Z_0(e)\oplus Z_{1/2}(e)\oplus Z_1(e) \qquad \text{and} \qquad
	\{Z_\alpha(e),Z_{\beta}(e),Z_\gamma(e)\}\subseteq Z_{\alpha-\beta+\gamma}(e)
	\]
	for every $\alpha,\beta,\gamma\in \R$. The space $Z_0(e)$ is trivial if and only if $e$ is a maximal tripotent.
	
	In addition, $Z_1(e)$ is a complex Jordan algebra with product $(x,y)\mapsto \{x,e,y\}$ and unit $e$, and coincides (as a Jordan triple system) with the hermitification of the formally real Jordan algebra $A(e)\coloneqq \Set{z\in Z_2(e)\colon z=z^*}$, where 
	\[
	z^*\coloneqq \{e,z,e\}
	\]
	for every $z\in Z_1(e)$.
\end{proposition}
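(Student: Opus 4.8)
The plan is to reduce the whole statement to a spectral analysis of the single operator $L\coloneqq D(e,e)$, which is self-adjoint because $D(u,v)^*=D(v,u)$ for the scalar product $\langle x\vert y\rangle=\tr D(x,y)$. Hence $L$ is diagonalizable with real eigenvalues and pairwise orthogonal eigenspaces, which immediately yields that the $Z_\alpha(e)$ are pairwise orthogonal and that $Z$ is their orthogonal direct sum. For the product rule I would use the defining identity in its derivation form $D(u,v)\{x,y,z\}=\{D(u,v)x,y,z\}-\{x,D(u,v)^*y,z\}+\{x,y,D(u,v)z\}$ with $u=v=e$: since $L^*=L$, inserting eigenvectors $x\in Z_\alpha(e)$, $y\in Z_\beta(e)$, $z\in Z_\gamma(e)$ gives $L\{x,y,z\}=(\alpha-\beta+\gamma)\{x,y,z\}$, that is $\{Z_\alpha(e),Z_\beta(e),Z_\gamma(e)\}\subseteq Z_{\alpha-\beta+\gamma}(e)$. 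In particular $e=\{e,e,e\}$ gives $Le=e$, so $e\in Z_1(e)$.

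The main obstacle is to show that the only eigenvalues of $L$ are $0,\tfrac12,1$. Here I would bring in the conjugate-linear operator $Q(e)\colon x\mapsto\{e,x,e\}$ and establish two facts. First, specializing the derivation identity to $x=z=e$ and using $Le=e$ gives $L\,Q(e)+Q(e)\,L=2\,Q(e)$, so that $Q(e)$ maps $Z_\alpha(e)$ into $Z_{2-\alpha}(e)$. Second, and this is the crux, a direct computation with the defining identity yields the quadratic operator identity $Q(e)^2=2L^2-L$. Granting these, the relation $Q(e)^3=Q(e)$ — a consequence of the fundamental formula $Q(Q(e)x)=Q(e)Q(x)Q(e)$ evaluated at $x=e$, together with $Q(e)e=\{e,e,e\}=e$ — shows that $Q(e)^2$ is idempotent, so $2L^2-L$ acts on each $L$-eigenspace $Z_\alpha(e)$ as the scalar $2\alpha^2-\alpha$, which must then lie in $\{0,1\}$. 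Thus $\alpha\in\{-\tfrac12,0,\tfrac12,1\}$, and positivity, which guarantees that $D(a,a)$ is a positive operator for every $a$ and in particular $L=D(e,e)\Meg 0$, excludes $\alpha=-\tfrac12$; this leaves $Z=Z_0(e)\oplus Z_{1/2}(e)\oplus Z_1(e)$. I expect the clean derivation of $Q(e)^2=2L^2-L$ purely from the axioms to be the genuinely laborious point; everything around it is bookkeeping on eigenspaces.

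For the maximality assertion I would exploit that, by the product rule, $Z_0(e)$ is a subtriple, hence a positive hermitian Jordan triple system in its own right. If $w\neq 0$ is a tripotent orthogonal to $e$, then $\{e,e,w\}=0$, i.e. $Lw=0$ and $w\in Z_0(e)$, so $Z_0(e)\neq 0$; conversely, if $Z_0(e)\neq 0$, then, being a non-zero positive Jordan triple system, it contains a non-zero tripotent $w$ (via the spectral decomposition of its elements into mutually orthogonal tripotents), and $Lw=0$ is exactly the orthogonality condition $\{e,e,w\}=0$. Hence $Z_0(e)=0$ if and only if $e$ admits no non-trivial orthogonal tripotent, i.e. if and only if $e$ is maximal.

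For the last assertion I would set $x\cdot y\coloneqq\{x,e,y\}$ on $Z_1(e)$. Bilinearity and commutativity are immediate from the $\C$-bilinearity and the symmetry in the outer arguments of the triple product; $Z_1(e)$ is stable under $\cdot$ by the product rule; and $e$ is a unit since $x\cdot e=\{x,e,e\}=Lx=x$ for $x\in Z_1(e)$. The Jordan axiom $x^2(xy)=x(x^2y)$ I would again extract from the defining identity restricted to $Z_1(e)$. The conjugate-linear map $z^*\coloneqq\{e,z,e\}=Q(e)z$ is an involution of $Z_1(e)$ (indeed $Q(e)^2=\mathrm{id}$ there, by the identity of the second paragraph), so $Z_1(e)$ splits as $A(e)\oplus iA(e)$ into the $\pm1$ real eigenspaces of $*$ and is therefore the complexification of $A(e)=\Set{z\in Z_1(e)\colon z=z^*}$. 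It then remains to check that the ambient triple product reproduces the hermitification formula $\{x,y,z\}=x(\overline y z)-(xz)\overline y+z(\overline y x)$, with the bar denoting the conjugation fixing $A(e)$, which is a direct computation with the defining identity; positivity of $Z$ then corresponds precisely to formal reality of $A(e)$, as recorded in the excerpt.
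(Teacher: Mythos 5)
The paper does not actually prove this proposition: it is quoted as the ``Peirce decomposition'' with the proof delegated to \cite[Theorem 3.13]{Loos}, so there is no internal argument to compare against; what you have done is reconstruct, from the defining identity, essentially the standard Jordan-theoretic proof contained in that reference. Your computational skeleton is correct. Writing $L=D(e,e)$ and $Q=Q(e)$: self-adjointness of $L$ for the trace form (which the paper records, citing Loos, in its definition of positivity) gives the orthogonal eigenspace decomposition; the derivation identity with $u=v=e$ gives both the multiplication rule $\{Z_\alpha,Z_\beta,Z_\gamma\}\subseteq Z_{\alpha-\beta+\gamma}$ and $LQ+QL=2Q$; and the identity you flag as the laborious crux, $Q^2=2L^2-L$, is in fact a one-line substitution: taking $(u,v,x,y,z)=(w,e,e,e,e)$ in the defining identity and using the outer symmetry $\{w,e,e\}=\{e,e,w\}$ yields $Lw-L^2w=L^2w-Q^2w$. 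Moreover, your two appeals to heavier unproven theory can be eliminated, which tightens the proof: you need neither the fundamental formula nor positivity of $D(e,e)$ to pin down the spectrum. Indeed, from $LQ=Q(2I-L)$ and the fact that $p(t)=2t^2-t$ has real coefficients (so conjugate-linearity of $Q$ is harmless), associativity of composition gives $Q\,p(L)=Q^3=p(L)\,Q=Q\,p(2I-L)$; since $p(t)-p(2-t)=6(t-1)$, this forces $Q(L-I)=0$, and multiplying $Q^2=2L^2-L$ on the right by $L$ then gives $2L^3-3L^2+L=0$, so the minimal polynomial of $L$ divides $t(2t-1)(t-1)$ and the spectrum lies in $\{0,\tfrac12,1\}$ (this even gives diagonalizability without invoking self-adjointness). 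The inputs that genuinely remain external in your argument are exactly those the paper itself treats as citable background from Loos: positive-definiteness of the trace form, the spectral decomposition of elements of a positive Jordan triple system (needed to produce a nonzero tripotent in $Z_0(e)$ for the converse half of the maximality claim), and the routine but deferred verifications that $(x,y)\mapsto\{x,e,y\}$ satisfies the Jordan identity on $Z_1(e)$ and that the restricted triple product agrees with the hermitification formula; with those references supplied, your proof is complete. Finally, note that ``$Z_2(e)$'' in the statement of $A(e)$ is a typo for $Z_1(e)$ (the eigenvalues are normalized here to $0,\tfrac12,1$), and you read it correctly.
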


We now fix a maximal tripotent $e$ in $Z$ and a frame $(e_1,\dots, e_r)$ in the formally real Jordan algebra $A(e)\subseteq Z_1(e)$, that is, a  family of pairwise orthogonal primitive idempotents such that $e=e_1+\cdots+e_r$. Note that $r$ is independent of the chosen frame and is called the rank of $A$ (and $Z$), cf.~\cite[Theorem III.1.2]{FarautKoranyi}.
Then, we may consider the complex Jordan algebra  $Z_1(e_1+\cdots+e_j)$, for $j=1,\dots, r$, and denote by $\Delta_j$ its (polynomial) determinant function, so that
\[
\Delta_j(z)={\det}_\C( \C[z]\ni y\mapsto y z\in \C[z]),
\]
where $\C[z]$ denotes the (commutative and) associative subalgebra $\C[z]$ of $Z_1(e_1+\cdots+e_j)$ generated by $e_1+\cdots+e_j$ and $z$ (cf.~\cite[p.~29]{FarautKoranyi}). If $z\in Z_1(e)$ and $\Delta_r(z)\neq 0$, then $z$ is said to be invertible in $Z_1(e)$, and its inverse $z^{-1}$ is the (unique) inverse of $z$ in the subalgebra $\C[z]$ of $Z_1(e)$  generated by $z$ and $e$.  We shall provide later on a description of these functions in series of examples.  By means of the orthogonal projection of $Z$ onto $Z_1(e_1+\cdots+e_j)$, we may then extend $\Delta_j$ to a polynomial function on $Z$. Then, we define
\[
\Delta^{\vect s}\coloneqq \Delta_1^{s_1-s_2}\cdots \Delta_{r-1}^{s_{r-1}-s_r}\Delta_r^{s_r}
\]
for every $\vect s\in \C^r$. These functions are rational (exactly) when $\vect s\in \Z^r$ and are polynomial when (and only when, if $D$ or, equivalently, $\Omega$, is irreducible) $s_1\Meg \cdots \Meg s_r$ (cf.~\cite[Proposition 2.1]{Ishi2} and~\cite[Proposition XI.2.1]{FarautKoranyi}). We denote by $\N_\Omega$ the set of $\vect s\in \R^r$ such that $\Delta^{\vect s}$ is polynomial.

Notice that the $\Delta^{\vect s}$ are  in  priciple only defined on the convex domain $Z_{1/2}(e)+(\Omega+ i  A(e))$, where 
\[
\Omega\coloneqq \Set{x^2\colon x\in A(e), \det x\neq 0}.
\]
Notice that $\Omega$ is a symmetric cone, that is, it is open, convex, self-adjoint with respect to the chosen scalar product (i.e., $\Omega=\Set{x\in A(e)\colon \forall y\in \overline\Omega\setminus\Set{0}\:\: \langle x,y\rangle>0}$) and homogeneous (i.e., admits a transitive group of linear isomorphisms), cf.~\cite[Theorem III.2.1]{FarautKoranyi}.
We shall also write $\Delta$ instead of $\Delta_r$. 

We may then consider the unbounded realization $\Dc$ of $D$ as a Siegel domain associated with the maximal tripotent $e$ (cf.~\cite[Section 10]{Loos}). Namely, define 
\[
\Phi\colon Z_{1/2}(e)\times Z_{1/2}(e)\ni (\zeta,\zeta')\mapsto 2\{\zeta,\zeta',e\}\in Z_1(e),
\]
so that $\Phi$ is non-degenerate, $^*$-hermitian, and $\overline{\Omega}$-positive,\footnote{In other words, $\Phi(\zeta,\zeta')^*=\{e,\Phi(\zeta,\zeta'),e\}=\Phi(\zeta',\zeta)$ for every $\zeta,\zeta'\in Z_{1/2}(e)$, and $\Phi(\zeta,\zeta)\in \overline \Omega\setminus \Set{0}$ for every non-zero $\zeta\in Z_{1/2}(e)$. } and set (with $\Phi(\zeta)=\Phi(\zeta,\zeta)$ for simplicity)
\[
\Dc\coloneqq \Set{(\zeta,z)\in Z_{1/2}(e)\times Z_1(e)\colon \frac{z-z^*}{2i}-\Phi(\zeta)\in \Omega}.
\]
When $Z_{1/2}(e)=\Set{0}$, so that $\Dc=A(e)+i  \Omega$, $\Dc$ is called a tube domain (or a Siegel domain of type I), and $D$ is said to be of tube type.
Notice that $D$ is canonically biholomorphic to $\Dc$ by means of the Cayley transform
\[
\Cc\colon D\ni z\mapsto i  (e-z_1)^{-1}(e+z_1) +2 (e-z_1)^{-1}z_{1/2}\in \Dc;
\]
here, we endow $Z$ with the  (complex) Jordan algebra structure with product $(z,z')\mapsto \{z,e,z'\}$.

Let us observe explicitly that,  if we define the genus $g$ of $D$ as $(\dim_\C Z_{1/2}(e)+2\dim_\C Z_1(e))/r $ (which does \emph{not} depend on the choice of $e$), then the unweighted Bergman kernel on $\Dc$, that is, the reproducing kernel of $\Hol(\Dc)\cap L^2(\Dc)$, is 
\[
((\zeta,z),(\zeta',z'))\mapsto c' \Delta^{-g}\left( \frac{z-\overline{z'}}{2 i }- \Phi(\zeta,\zeta') \right)
\]
for some $c'>0$. We also define $a\in \N$ so that $\frac 1 r \dim Z_1(e)-1= a \frac{r-1}{2}$. Notice that $a$ does \emph{not} depend on the choice of $e$.

We now describe the list of simple positive hermitian Jordan triple systems (up to isomorphisms), that is, those positive hermitian Jordan triple systems which are not isomorphic to the direct sum of two non-trivial hermitian Jordan triple systems (cf.~\cite[4.14 and 4.17]{Loos}). This description will provide a description of the corresponding irreducible bounded symmtric domains by Proposition~\ref{prop:2}. The original classification of irreducible bounded symmetric domains may be found in~\cite{Cartan}. 

\subsection{Domains of Type $(I_{p,q})$}

In this case, $Z$ is the space $M_{p,q}(\C)$ of complex $p\times q$ matrices ($p\meg q$), with triple product given by  $\{x,y,z\}=\frac 1 2 (x y^* z+z y^* x)$. The associated bounded symmetric domain is $\Set{z\in Z\colon I_p-zz^* \text{ is positive and non-degenerate}}=\Set{z\in Z\colon \norm{z}<1}$, where $I_p$ is the $p\times p$ identity matrix and $\norm{z}$ denotes the   norm of $z$ considered as a linear mapping $\C^p\to \C^q$. 
One may then choose $e\coloneqq (I_p, 0)$ as a maximal tripotent. Then, $Z_1(e)=\Set{(z,0)\colon z\in M_{p,p}(\C)}$, $Z_{1/2}(e)=\Set{(0,\zeta)\colon \zeta\in M_{p,q-p}(\C)}$ and $A(e)=\Set{(z,0)\in Z_1(e)\colon z=z^*  }$.

The cone $\Omega$ is then the cone of non-degenerate positive hermitian complex $p\times p$ matrices, while
\[
\Phi((0,\zeta),(0, \zeta'))= (\zeta\zeta'^*,0)
\]
for every $\zeta,\zeta'\in M_{p,q-p}(\C)$, and
\[
\Cc(z,\zeta)=(i (I_p-z)^{-1}(I_p+z), (I_p-z)^{-1} \zeta  )
\]
for every $z\in M_{p,p}(\C)$ and for every $\zeta\in M_{p, q-p}(\C)$. 
In this case, choosing the frame $(e_1,\dots, e_p)$ of $A(e)$ given by $e_j=(\delta_{j,u}\delta_{j,v})_{u,v}$ for every $j=1,\dots, p$, the polynomial function
\[
\Delta_j(z,\zeta)
\]
is simply the (complex) minor corresponding to the first $j$ rows and columns of $z$. In particular, $\Delta(z,\zeta)$ is the (complex) determinant of $z$.

In particular, the rank of $D$ is $p$, $a=0$ if $p=1$, whereas $a=2$ for $p\Meg 2$, the genus of $D$ is $q+p$, and $D$ is of tube type if and only if $p=q$.

Notable examples arise when $q=p+1$, in which case $D$ becomes the unit ball in $\C^q$ and  $\Dc$ becomes the Siegel upper half-space $\Set{(\zeta,z)\in \C^p\times \C\colon \Im z-\abs{\zeta}^2>0 }$.

\subsection{Domains of Type $(I\!I_{n})$, $n\Meg 2$}

In this case, $Z$ is the space $A_{n}(\C)$ of skew-symmetric complex $n\times n$ matrices, with triple product given by  $\{x,y,z\}=\frac 1 2 (x y^* z+z y^* x)$. The associated bounded symmtric domain is then $\Set{z\in Z\colon \norm{z}<1}$. 

Denote by $H_p(\Hd)$ the formally real Jordan algebra of hermitian $p\times p$ matrices over the division ring of the quaternions, with the symmetrized product $(x,y)\mapsto \frac 1 2 (x y+y x)$. Observe that $\Hd$ may be identified (as a division ring) with a subset of $M_{2,2}(\C)$ by means of the mapping
\[
a+b\vect i +c\vect j+d\vect k\mapsto \left(\begin{matrix} a+i  d & b+i  c\\ -b+i  c & a-i  d  \end{matrix}\right) ,
\] 
so that $\vect i H_{p}(\Hd)$ may be identified with a subspace of $A_{2 p}(\C) $, and $A_{2p}(\C)$ is the hermitification of $\vect i H_p(\Hd)$.\footnote{Thus, $\vect i H_p(\Hd)$ inherits with the product $(\vect i x)(\vect i y)=\frac 1 2\vect i(x y+ yx)$.}
Thus, a natural choice for a maximal tripotent of $A_{2p}(\C)$ is $\vect i I_p$. The corresponding $\Omega$ is $\vect i\widetilde\Omega  $, where $\widetilde \Omega$ is the symmetric cone associated with $H_p(\Hd)$, that is, the cone of non-degenerate positive hermitian  $p\times p$ matrices over $\Hd$. 
We may then choose the frame $(\vect i e_1,\dots, \vect i e_p)$, where the $e_j$ are defined as for the domains of type $(I_{p,p})$, in which case  $\Delta_j(z)$ is the (complex) Pfaffian corresponding to the first $2j$ rows and columns of $z$, for every $z\in Z$.

We have thus dealt with the case in which $n$ is even. If, otherwise, $n=2p+1$ for some $p\Meg 1$, then we may choose a maximal tripotent $e=(\begin{smallmatrix} \vect i I_p & 0\\ 0 & 0 \end{smallmatrix})$, with the above identifications, so that $Z_1(e)$ may be canonically identified with $A_{2p}(\C)$, while $Z_{1/2}(e)$ may be identified with $M_{2p,1}(\C)$. It will then suffice to observe that
\[
\Phi(u,v)=\vect i \overline v \trasp u + u \trasp {\overline v} \vect i\in A_{2p}(\C)
\]
for every $u,v\in M_{2p, 1}(\C)$.

In particular, the rank of $D$ is $[\frac n 2]$, $a=4$, the genus of $D$ is $ 2(n-1)$ and $D$ is  of tube type if and only if $n$ is even.

\subsection{Domains of Type $(I\!I\!I_n)$}

In this case, $Z$ is the space  $S_n(\C)$ of symmetric complex $n\times n$ matrices, with triple product given by  $\{x,y,z\}=\frac 1 2 (x y^* z+z y^* x)$ and associated domain $\Set{z\in Z\colon \norm{z}<1}$, where $\norm{z}$ denotes the usual operator norm of $z\in Z$. 

One may then choose $e\coloneqq I_p$ as a maximal tripotent. Then, $Z=Z_1(e)$ and $A(e)=\Set{z\in Z\colon z=z^*  }$, that is, $A(e)$ is the Jordan algebra of real symmetric $n\times n$ matrices. Then, $\Omega$ is the cone of non-degenerate positive symmtric real  $n\times n$ matrices, and  
\[
\Cc(z)= i  (I_n-z)^{-1}(I_n+z).
\]
One may then choose a frame $(e_1,\dots,e_n)$ of $A(e)$ as in the case of the domains of type $(I_{n,n})$, so that $\Delta_j(z)$ becomes the complex minor corresponding to the first $j$ rows and columns of $z$, for every $z\in Z$ and for every $j=1,\dots, n$.

In particular, the rank of $D$ is $n$, $a=1$, the genus of $D$ is $n+1$, and $D$ is always of tube type.

\subsection{Domains of Type $(IV_n)$, $n\Meg 3$}

In this case, $Z$ is the  hermitification of the (formally real) Jordan algebra $A$ with underlying space  $\R\times \R\times \R^{n-2}$ and   product given by $(x,y,z)(x',y',z')= (x x'+\langle z, z'\rangle, y y'+\langle z,z'\rangle, [(x+y) z'+(x'+y')z]/2)$, where $\langle\,\cdot\,,\,\cdot\,\rangle$ denotes the standard scalar product on $\R^{n-2}$.
Observe that $A$ may be also interpreted as the space of symmetric matrices $\left(\begin{smallmatrix} x &z\\z & y \end{smallmatrix}\right)$ endowed with the symmetrization of the product 
\begin{equation}\label{eq:4}
	\Big(\begin{smallmatrix} x &z\\z & y \end{smallmatrix}\Big)\left(\begin{smallmatrix} x' &z'\\z' & y' \end{smallmatrix}\right)=\left(\begin{smallmatrix} x x'+\langle z,z'\rangle & x z'+y' z\\ x' z+y z' & y y'+\langle z,z'\rangle \end{smallmatrix}\right)
\end{equation}
It is then natural to choose the identity $(1,1,0)$ of $A$ as a maximal tripotent for $Z$. In this case, $\Omega$ is the Lorentz cone $\Set{(x,y,z)\in Z\colon  x,y>0, xy-\abs{z}^2>0}$. 

One may then choose a frame $(e_1,e_2)$ of $A=A(e)$ with $e_1=(1,0,0)$ and $e_2=(0,1,0)$, so that $\Delta_1(x,y,z)=x$ and $\Delta_2(x,y,z)=x y-\sum_{j=1}^{n-2} z_j^2$.

In particular, the rank of $D$ is $2$, $a=n-2$, the genus of $D$ is $n$, and $D$ is always of tube type.

\subsection{The Exceptional Domains $(V)$ and $(V\!I)$}

For the exceptional domain $(V)$, $Z$ is the space of $1\times 2$ matrices with values in the complexification $\Od_\C$ of the division algebra $\Od$ of Cayley octonions, with triple product given by $\{x,y,z\}=\frac{1}{2}(x(\overline y^* z)+z(\overline y^* x))$, where $\overline{\,\cdot\,}$ is the conjugation of $\Od_\C$ (considered as the complexification of $\Od$), while $y^*=\trasp{\tilde y}$, where $\tilde{\,\cdot\,} $ is the complexification of the conjugation of $\Od$. In this case, the rank of $D$ is $2$, $a=6$, the genus of $D$ is $12 $, and $D$ is not of tube type.

For the exceptional domain $(V\!I)$, $Z$ is the hermitification of the formally real Jordan algebra $H_3(\Od)$ of hermitian $3\times3$ matrices with values   $\Od$, with the product $(x,y)\mapsto\frac 12 (xy+yx)$. In this case, the rank of $D$ is $3$, $a=8$, the genus of $D$ is $ 18$, and $D$ is of tube type.

\section{Invariant Closed Subspaces  of $\Hol(D)$}\label{sec:2}

In this section, $D$ denotes an irreducible symmetric \emph{circular} (hence convex) bounded domain. We shall then assume that $D$ is the unit ball  in a simple positive hermitian Jordan triple system $Z$ (with respect to the `spectral norm' $z\mapsto \norm{D(z,z)}$, where $Z$ is endowed with the scalar product $(z,w)\mapsto \tr D(z,w)$).

Let $\widetilde G(D)$ be the universal covering group of the component of the identity $G_0(D)$ of the group $G(D)$ of biholomorphisms of $D$, and define a representation $\widetilde U_\lambda$ of $\widetilde G(D)$ in $\Hol(D)$, for every $\lambda\in \R$, so that
\[
\widetilde U_\lambda(\phi) f = (f\circ \phi^{-1}) (J \phi^{-1})^{\lambda/g}
\]
for every $\phi\in \widetilde G(D)$ and for every $f\in \Hol(D)$, where $g$ is the genus of $D$, $\phi$ acts on $D$ by means of the canonical mapping $\widetilde G(D)\to G_0(D)$, and $(J \phi^{-1})^{\lambda/g}= \ee^{(\lambda/g) \log J(\phi^{-1},\,\cdot\,)}$, where $\log J$ is the unique continuous function on the simply-connected space $\widetilde G(D)\times D$ such that $\log J(I,0)=0$ and $\ee^{\log J(\phi,z)}=\det_\C \phi'(z)$ for every $(\phi,z)\in \widetilde G(D)\times D$. 

We shall also consider the \emph{ray representations} (cf., e.g.,~\cite{Bargmann}) $U_\lambda\colon G(D)\to \Lin(\Hol(D))/\T$ defined by
\[
U_\lambda(\phi) f=(f\circ \phi^{-1}) (J\phi^{-1})^{\lambda/g}
\]
for every $\phi\in G(D)$ and for every $f\in \Hol(D)$. Notice that, unless $\lambda/g\in\Z$, $U_\lambda$ may \emph{not} be unambiguously defined as an automorphism of $\Hol(D)$, so that $U_\lambda$ does not induce an ordinary representation of $G(D)$ in $\Hol(D)$: this is the reason why we need to define $U_\lambda$ as a ray representation. The advantage of considering $U_\lambda$ as a ray representation and not as a projective one lies in the fact that  ray representations in a semi-Banach space may be bounded or isometric, contrary to projective representations.
Recall that $U_\lambda$ is (strongly) continuous as a ray representation if the mapping $G(D)\ni \phi\mapsto U_\lambda (\phi)f\in \Hol(D)/\T$ is continuous for every $f\in \Hol(D)$.
When $\lambda/g\in \Z$, we shall also identify  $U_\lambda$ with the corresponding ordinary representation of $G(D)$.

In this section we shall investigate the closed $\widetilde U_\lambda$-invariant subspaces of $\Hol(D)$. To begin with, we shall determine the closed $K$-invariant and $K_0$-invariant subspaces of $\Hol(D)$, where $K$ denotes the stabilizer of $0$ in $G(D)$ (so that $K$ is the group of automorphisms of $Z$) and $K_0$ its component of the identity (so that $K_0$ is the stabilizer of $0$ in $G_0(D)$. Notice that by $K$-invariance we means $K$-$U_0$-invariance (and analogously for $K_0$-invariance). Thus, $K$-invariance is equivalent (for a vector subspace) to $K$-$U_\lambda$-invariance for every $\lambda\in \R$, since $K$ acts by linear automorphisms on $D$ by Proposition~\ref{prop:2}.

Recall that, since $D$ (hence, $\Omega$) is irreuducible, the set $\N_\Omega$ of $\vect s$ for which $\Delta^{\vect s}$ is a polynomial on $Z$ is $\Set{\vect s\in \N^r\colon  s_1\Meg s_2\Meg\cdots\Meg s_r}$ (cf.~Section~\ref{sec:1}).

\begin{proposition}\label{prop:3}
	For every $\vect s\in \N_\Omega$, denote by $\Pc_{\vect s}$ the $K_0$-invariant subspace of $\Hol(D)$ generated by $\Delta^{\vect s}$. Then, $U_0$ induces pairwise inequivalent irreducible representations of $K$ in the $\Pc_{\vect s}$, and the space $\Pc$ of holomorphic polynomials on $Z$ is the  direct sum of the $\Pc_{\vect s}$.
\end{proposition}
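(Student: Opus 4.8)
The statement is the Hua--Schmid--Kostant decomposition (cf.~\cite[Theorem XI.2.4]{FarautKoranyi}), and I would sketch a proof by highest-weight theory adapted to the present formalism. The plan is to realise the $\Delta^{\vect s}$ as the highest weight vectors for the action of $K_0$ on $\Pc$, and to deduce both the irreducibility of the $\Pc_{\vect s}$ and the multiplicity-freeness of the decomposition from the fact that the semi-invariants of a maximal unipotent subgroup are exactly the monomials in the principal minors $\Delta_1,\dots,\Delta_r$.

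First I would record that $K$ is \emph{compact}: by Proposition~\ref{prop:2} every element of $K$ is an automorphism of $Z$, hence preserves the scalar product $\langle x\vert y\rangle=\tr D(x,y)$, so that $K$ is a closed subgroup of the unitary group of $Z$. Since $K$ acts linearly, it preserves the grading $\Pc=\bigoplus_{m\in\N}\Pc_m$ by homogeneous polynomials of degree $m$, and each $\Pc_m$ is finite-dimensional, hence a completely reducible $K$-module. Endowing $\Pc$ with the Fischer inner product, for which the whole unitary group of $Z$ (and in particular $K$) acts unitarily and the $\Pc_m$ are mutually orthogonal, one reduces the whole question to the description of the irreducible $K_0$-submodules of $\Pc$ and of their highest weight vectors.

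Next I would exhibit the $\Delta^{\vect s}$ as highest weight vectors. Fix a maximal torus $T$ of $K_0$, a positive system, and the corresponding maximal unipotent subgroup $N^+$ of the complexification $K_0^{\C}$; a highest weight vector is then a $T$-eigenvector annihilated by $N^+$, i.e.\ an $N^+$-semi-invariant polynomial. The principal minors $\Delta_1,\dots,\Delta_r$ associated with the frame $(e_1,\dots,e_r)$ are relative invariants of the subgroup stabilising the flag $Z_1(e_1)\subseteq Z_1(e_1+e_2)\subseteq\cdots\subseteq Z_1(e)$, each being multiplied by a character; consequently every monomial $\Delta^{\vect s}=\Delta_1^{s_1-s_2}\cdots\Delta_r^{s_r}$ with $\vect s\in\N_\Omega$ is an $N^+$-semi-invariant, hence a highest weight vector, and its $T$-weight depends injectively on $\vect s$, so that distinct signatures give distinct weights.

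The crux, and the step I expect to be the main obstacle, is the converse: that the algebra of $N^+$-invariant polynomials is \emph{precisely} $\C[\Delta_1,\dots,\Delta_r]$. This is where the genuine Jordan-theoretic input is needed: using the Peirce decomposition and the transitivity of the triangular group on the symmetric cone $\Omega$, one shows that the $\Delta_j$ are algebraically independent and that the only relative invariants of the flag stabiliser are products of their powers. Granting this, the space of highest weight vectors in each degree has the $\Delta^{\vect s}$ as a basis; hence each irreducible $K_0$-submodule of $\Pc$ has a one-dimensional space of highest weight vectors spanned by a single $\Delta^{\vect s}$, the $K_0$-module $\Pc_{\vect s}$ generated by $\Delta^{\vect s}$ is irreducible, every $K_0$-type occurs with multiplicity one, and $\Pc=\bigoplus_{\vect s\in\N_\Omega}\Pc_{\vect s}$. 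Pairwise inequivalence is immediate from the distinctness of the highest weights. Finally, since the $K_0$-isotypic decomposition is multiplicity-free and $K$ normalises $K_0$ while preserving degrees, $K$ must permute the $\Pc_{\vect s}$ within each degree; one then checks that it fixes each of them (equivalently, that the relevant $K_0$-types are stable under $K/K_0$), so that $\Pc_{\vect s}$ is $K$-invariant, and being $K_0$-irreducible it is \emph{a fortiori} $K$-irreducible, as asserted.
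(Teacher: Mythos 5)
Your proposal takes a genuinely different route from the paper: the paper's proof is purely by citation --- all the $K_0$-statements are quoted from~\cite[Theorem 2.1]{FarautKoranyi2}, and the $K$-invariance of the $\Pc_{\vect s}$ from~\cite[Proposition 4.6]{Tubi} --- whereas you reconstruct the highest-weight proof of the cited decomposition itself. For the $K_0$-part your outline is the standard and correct one (compactness of $K$, preservation of the grading and of the Fischer product, the $\Delta^{\vect s}$ as $N^+$-semi-invariant $T$-eigenvectors with pairwise distinct weights, and Schmid's lemma identifying the $N^+$-invariant polynomials with $\C[\Delta_1,\dots,\Delta_r]$). Two caveats: you ``grant'' Schmid's lemma rather than prove it, which is legitimate in a sketch since you name it and indicate the Jordan-theoretic input, but it is the entire content of the cited theorem; and the reference you give, \cite[Theorem XI.2.4]{FarautKoranyi}, covers only the tube-type case (polynomials on the complexified Jordan algebra), while the proposition concerns a general positive hermitian Jordan triple system $Z$, where the $\Delta_j$ must first be extended by the Peirce projections --- the correct reference is precisely~\cite[Theorem 2.1]{FarautKoranyi2}.

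The genuine gap is the last step, the passage from $K_0$ to $K$. You correctly reduce it to showing that the permutation of the $K_0$-types induced by $K/K_0$ is trivial, but ``one then checks that it fixes each of them'' is exactly the point that needs an argument, and it is not automatic: for a disconnected compact group the component group can act on the $K_0$-types by a nontrivial permutation (through outer automorphisms of $K_0$), and here $K/K_0$ is genuinely nontrivial in general --- e.g.\ for type $(I_{p,p})$ the transposition $z\mapsto \trasp z$ is an automorphism of the Jordan triple system not lying in $K_0$. The missing argument runs roughly as follows: any $k\in K$ maps the frame $(e_1,\dots,e_r)$ to another frame; since $K_0$ acts transitively on frames of a simple $Z$, one may compose with an element of $K_0$ and assume $k$ fixes each $e_j$; such a $k$ preserves each Peirce space $Z_1(e_1+\cdots+e_j)$ together with its Jordan algebra structure and unit, hence fixes the intrinsically defined determinant polynomials $\Delta_j$, hence fixes $\Delta^{\vect s}$; therefore $k\,\Pc_{\vect s}=k\,K_0\,\Delta^{\vect s}=K_0\,k\,\Delta^{\vect s}=\Pc_{\vect s}$. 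This is in substance what the paper delegates to~\cite[Proposition 4.6]{Tubi}; without it, the statement about $K$ (as opposed to $K_0$) does not follow from your multiplicity-freeness argument alone.
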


\begin{proof}
	All the assertions, with $K$ replaced by $K_0$, follow from~\cite[Theorem 2.1]{FarautKoranyi2}. The fact that the $\Pc_{\vect s}$ are $K$-$U_0$-invariant follows from~\cite[Proposition 4.6]{Tubi}.
\end{proof}

We may then consider the character $\chi_{\vect s}$  of the irreducible representation of $K_0$ in $\Pc_{\vect s}$ induced  by $U_0$, i.e.,
\[
\chi_s\colon K_0\ni \phi\mapsto \tr U_0(\phi)\in \C,
\]
and the associated integral operator
\[
\pi_{\vect s}= U(\overline {\chi_{\vect s}})=\int_{K_0} \overline{\chi_{\vect s}(k)} U_0(k)\,\dd k.
\]
Then,~\cite[Theorems 27.44]{HewittRoss} shows that $\pi_{\vect s}\Pc_{\vect s'}=0$ if $\vect s\neq \vect s'$ and that $\pi_{\vect s}$ is the identity on $\Pc_{\vect s}$, for every $\vect s,\vect s'\in \N_\Omega$. Since the space $\Pc$ of holomorphic polynomials on $Z$ is dense in $\Hol(D)$ (as $D$ is circular and convex), this and Proposition~\ref{prop:3} imply that $\pi_{\vect s}$ is a projector of $\Hol(D)$ onto $\Pc_{\vect s}$, that $\pi_{\vect s} \pi_{\vect s'}=0$ when $\vect s\neq \vect s'$, and that $I=\sum_{\vect s\in \N_\Omega} \pi_{\vect s}$ pointwise on $\Hol(D)$ (cf., also,~\cite[Subsection 4.2]{Tubi}).

\begin{corollary}\label{cor:2}
	The mappings $V\mapsto V\cap \Pc$ and $W\mapsto \overline W$ (where $\overline W$ denotes the closure of $W$ in $\Hol(D)$) induce two  bijections which are inverse of one another between the set of closed $K_0$-invariant vector subspaces of $\Hol(D)$ and the set of $K_0$-invariant vector subspaces of $\Pc$. 
	
	In particular, the closed $K_0$-invariant subspace of $\Hol(D)$ are also $K$-invariant and are precisely the spaces of the form $\overline{\bigoplus_{\vect s\in N} \Pc_{\vect s}}$, as  $N$ runs through the set of subsets of $\N_\Omega$.
\end{corollary}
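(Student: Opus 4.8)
The plan is to push everything through the multiplicity-free decomposition $\Pc=\bigoplus_{\vect s\in\N_\Omega}\Pc_{\vect s}$ of Proposition~\ref{prop:3} and the associated projectors $\pi_{\vect s}$, whose two key features I would rely on repeatedly: $\pi_{\vect s}$ is a \emph{continuous} endomorphism of $\Hol(D)$ (being the integral over the compact group $K_0$ of the strongly continuous family $k\mapsto\overline{\chi_{\vect s}(k)}\,U_0(k)$ acting on the Fréchet space $\Hol(D)$), and $I=\sum_{\vect s}\pi_{\vect s}$ pointwise.

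First I would describe the $K_0$-invariant subspaces of $\Pc$. Since $K_0$ is compact, every such subspace $W$ is completely reducible, and because the $\Pc_{\vect s}$ exhaust $\Pc$ and carry pairwise inequivalent irreducibles, one gets $W=\bigoplus_{\vect s}(W\cap\Pc_{\vect s})$ with each $W\cap\Pc_{\vect s}$ equal to $0$ or, by irreducibility, to all of $\Pc_{\vect s}$. Hence the $K_0$-invariant subspaces of $\Pc$ are exactly the $\bigoplus_{\vect s\in N}\Pc_{\vect s}$, $N\subseteq\N_\Omega$. That $W\mapsto\overline W$ lands among closed $K_0$-invariant subspaces (the closure of an invariant space is invariant, as each $U_0(k)$ is continuous) and that $V\mapsto V\cap\Pc$ lands among $K_0$-invariant subspaces of $\Pc$ are immediate; the content is that the two are mutually inverse.

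For $\overline W\cap\Pc=W$ I would take $p\in\overline W\cap\Pc$, write $W=\bigoplus_{\vect s\in N}\Pc_{\vect s}$, and approximate $p$ by a sequence $w_n\in W$. Applying the continuous projector $\pi_{\vect s}$ gives $\pi_{\vect s}p=\lim_n\pi_{\vect s}w_n$; for $\vect s\notin N$ we have $\pi_{\vect s}W=0$, so $\pi_{\vect s}p=0$, whence $p=\sum_{\vect s\in N}\pi_{\vect s}p\in W$. For $\overline{V\cap\Pc}=V$ --- which I expect to be the crux --- I would fix $v\in V$ and note that each $\pi_{\vect s}v$ lies in $V\cap\Pc$: it lies in $\Pc_{\vect s}\subseteq\Pc$, and it lies in $V$ because $\pi_{\vect s}v$ is a limit of Riemann sums $\sum_j c_j\,U_0(k_j)v$ with $U_0(k_j)v\in V$ and $V$ closed and linear. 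Since $v=\sum_{\vect s}\pi_{\vect s}v$ converges in $\Hol(D)$, the finite partial sums exhibit $v$ as a limit of elements of $V\cap\Pc$, giving $V\subseteq\overline{V\cap\Pc}$; the reverse inclusion is clear. This establishes the bijection.

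The ``in particular'' statement then follows formally: by the bijection the closed $K_0$-invariant subspaces are precisely $\overline{\bigoplus_{\vect s\in N}\Pc_{\vect s}}$, $N\subseteq\N_\Omega$, and each of these is $K$-invariant because every $\Pc_{\vect s}$ is $K$-invariant by Proposition~\ref{prop:3} and the $U_0(k)$, $k\in K$, are continuous, so that $K$-invariance passes to the closure. The main obstacle is the density $\overline{V\cap\Pc}=V$, i.e.\ that the polynomials contained in a closed invariant subspace are already dense in it; everything there rests on the stability $\pi_{\vect s}V\subseteq V$ together with the pointwise resolution $I=\sum_{\vect s}\pi_{\vect s}$, the continuity of the $\pi_{\vect s}$ being the technical point that makes both halves of the inverse computation go through.
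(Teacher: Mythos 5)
Your proof is correct and follows essentially the same route as the paper's: both arguments rest on the projectors $\pi_{\vect s}$, their continuity on $\Hol(D)$, the pointwise resolution $I=\sum_{\vect s}\pi_{\vect s}$, and the irreducibility and pairwise inequivalence of the $\Pc_{\vect s}$, with closedness of $V$ guaranteeing $\pi_{\vect s}(V)\subseteq V$. You merely spell out some steps the paper leaves implicit (the Riemann-sum argument for stability under $\pi_{\vect s}$ and the classification of $K_0$-invariant subspaces of $\Pc$), so there is nothing to correct.
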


\begin{proof}
	Let $V$ be a closed $K_0$-invariant subspace of $\Hol(D)$, and observe that either $\pi_{\vect s}(V)=\Set{0}$ or $\pi_{\vect s}(V)=\Pc_{\vect s}$, by $K_0$-invariance, for every $\vect s\in \N_\Omega$. Set $N\coloneqq \Set{\vect s\in \N_\Omega\colon \pi_{\vect s}(V)=\Pc_{\vect s}}$, and observe that $\sum_{\vect s\in \N_\Omega} \pi_{\vect s}$ is necessarily a projector of $\Hol(D)$ onto $V$, so that $V=\overline{\bigoplus_{\vect s\in N} \Pc_{\vect s}}$. In particular, $V\cap \Pc=\bigoplus_{\vect s\in N} \Pc_{\vect s}$ and $V=\overline{V\cap \Pc}$.  The assertion follows.
\end{proof}

Now, we shall consider the $\widetilde U_\lambda$-invariant closed vector subspaces of $\Hol(D)$, which may be equivalently characterized as the closed $G_0(D)$-$U_\lambda$-invariant subspaces of $\Hol(D)$. In order to do that, we need the following definition.

\begin{definition}\label{def:2}
	For every $\vect s\in \N_\Omega$ and for every $\lambda\in\C$, define $q(\vect s, \lambda)$ as the order of $0$ of the function 
	\begin{equation}\label{eq:1}
		\lambda' \mapsto (\lambda')_{\vect s}\coloneqq  \prod_{j=1}^r \left( \lambda' - \frac 1 2 a(j-1)  \right)\cdots \left( \lambda'-\frac 1 2 a(j-1)+ s_j-1\right)
	\end{equation}
	at  $\lambda$ (cf.~Section~\ref{sec:1}), 	so that $q(\vect s,\lambda)$ is the number of $j\in \Set{1,\dots,r}$ such that $\frac 1 2 a(j-1)-\lambda$ is a positive integer $<s_j$. Set $q(\lambda)\coloneqq\max_{\vect s\in \N_\Omega} q(\vect s,\lambda)$.
\end{definition}

Cf.~\cite[Proposition 4.7]{Tubi} for a proof of the following result. The `if' part is a consequence of~\cite[Theorem 5.3]{FarautKoranyi2}. A seemingly incomplete proof of the `only if' part also appears in~\cite[Theorem 4.8 (ii)]{Arazy}.

\begin{proposition}\label{prop:8}
	Take $\lambda\in \R$. Then, a closed vector subspace $V$ of $\Hol(D)$ is $G_0(D)$-$U_\lambda$-invariant if and only if $V=\overline{\bigoplus_{q(\vect s,\lambda)\meg j} \Pc_{\vect s}}$ for some $j=-1,\dots,q(\lambda)$. In this case, $V$ is also $G(D)$-$U_\lambda$-invariant.
\end{proposition}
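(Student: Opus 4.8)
The plan is to reduce the problem twice — first from $G_0(D)$-invariance to $K_0$-invariance, then from the group action to the infinitesimal action on polynomials — after which the statement becomes a combinatorial property of the function $q(\,\cdot\,,\lambda)$ on $\N_\Omega$. Since $K_0\subseteq G_0(D)$, a closed $G_0(D)$-$U_\lambda$-invariant subspace $V$ is in particular closed and $K_0$-invariant, so Corollary~\ref{cor:2} gives $V=\overline{\bigoplus_{\vect s\in N}\Pc_{\vect s}}$ for a unique $N\subseteq\N_\Omega$, with $V\cap\Pc=\bigoplus_{\vect s\in N}\Pc_{\vect s}$, $V=\overline{V\cap\Pc}$, and $V$ automatically $K$-invariant. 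It remains to determine which $N$ occur. To that end I would pass to the derived representation $\dd\widetilde U_\lambda$ of the Lie algebra $\gf$ of $G_0(D)$. The key observation is that each $\dd\widetilde U_\lambda(X)$ is a first-order holomorphic differential operator with polynomial coefficients (the vector-field part coming from the flow on $D$, the zeroth-order part from differentiating the automorphy factor $(J\phi^{-1})^{\lambda/g}$), and hence preserves $\Pc$. As $V$ is closed and invariant, the difference quotients $\tfrac1t\big(\widetilde U_\lambda(\exp tX)p-p\big)$ converge in $V$ for $p\in V\cap\Pc$, so $W\coloneqq V\cap\Pc=\bigoplus_{\vect s\in N}\Pc_{\vect s}$ is invariant under $\dd\widetilde U_\lambda(\gf)$, equivalently under $\gf_\C=\mathfrak k_\C\oplus\mathfrak p^+\oplus\mathfrak p^-$.

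Since $\mathfrak k_\C$ preserves each $\Pc_{\vect s}$ (Proposition~\ref{prop:3}), invariance of $W$ is controlled by the raising operators $\mathfrak p^+$ (which increase $\abs{\vect s}$) and the lowering operators $\mathfrak p^-$. By $K$-equivariance these shift the label by a single box: $\dd\widetilde U_\lambda(\mathfrak p^-)\Pc_{\vect s}\subseteq\bigoplus_k\Pc_{\vect s-\vect{\delta}_k}$ and $\dd\widetilde U_\lambda(\mathfrak p^+)\Pc_{\vect s}\subseteq\bigoplus_k\Pc_{\vect s+\vect{\delta}_k}$ (sums over those $k$ with the shifted label in $\N_\Omega$; here $\vect{\delta}_k$ is the $k$-th standard basis vector). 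As each $\Pc_{\vect s}$ is $K$-irreducible, Schur's lemma forces every such component to be either zero or surjective onto $\Pc_{\vect s\pm\vect{\delta}_k}$, so $W$ is invariant iff $N$ is closed under every \emph{non-vanishing} transition. The decisive input — the explicit computation underlying \cite[Theorem 5.3]{FarautKoranyi2} (cf.\ \cite[Proposition 4.7]{Tubi}) — is which components vanish. The lowering components carry no dependence on $\lambda$ (the automorphy-factor term raises the degree, hence contributes only to $\mathfrak p^+$); comparing with large $\lambda$, where $H_\lambda$ is an irreducible weighted Bergman space and unitarity makes $\mathfrak p^+$ and $\mathfrak p^-$ mutually adjoint, shows every legal lowering component is nonzero for all $\lambda$. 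The raising component into $\Pc_{\vect s+\vect{\delta}_k}$ is affine in $\lambda$, with unique zero at $\lambda=\tfrac12 a(k-1)-s_k$, i.e.\ exactly when one further factor of~\eqref{eq:1} vanishes, so that $q(\vect s+\vect{\delta}_k,\lambda)=q(\vect s,\lambda)+1$; otherwise it is nonzero.

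It follows that $N$ must be closed under box removal and under every box addition that does not raise $q(\,\cdot\,,\lambda)$, and I would conclude by a combinatorial argument on $\N_\Omega=\{\vect s\in\N^r\colon s_1\Meg\cdots\Meg s_r\}$. Set $j_0\coloneqq\max_{\vect s\in N}q(\vect s,\lambda)$ (with $j_0=-1$ if $N=\emptyset$). From Definition~\ref{def:2} one checks that $q(\,\cdot\,,\lambda)$ is non-decreasing under box addition and essentially counts how many of the increasing thresholds $\tfrac12 a(j-1)-\lambda+1$ are met by the decreasing entries $s_j$; consequently $\{q(\,\cdot\,,\lambda)\Meg m\}$ has a \emph{minimum} $\vect\sigma^{(m)}$ with $q(\vect\sigma^{(m)},\lambda)=m$, and every interval $[\vect\sigma^{(m)},\vect u]$ with $q(\vect u,\lambda)=m$ lies entirely in the level set $\{q(\,\cdot\,,\lambda)=m\}$. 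Picking $\vect s^\ast\in N$ with $q(\vect s^\ast,\lambda)=j_0$, downward closure gives $\vect\sigma^{(m)}\le\vect s^\ast$, hence $\vect\sigma^{(m)}\in N$ for every $m\meg j_0$; and for any $\vect u$ with $q(\vect u,\lambda)=m\meg j_0$ a box-adding path from $\vect\sigma^{(m)}$ to $\vect u$ stays at level $m$, so consists of non-vanishing raisings, forcing $\vect u\in N$. With the trivial inclusion $N\subseteq\{q(\,\cdot\,,\lambda)\meg j_0\}$ this yields $N=\{q(\,\cdot\,,\lambda)\meg j_0\}$, i.e.\ $V=\overline{\bigoplus_{q(\vect s,\lambda)\meg j_0}\Pc_{\vect s}}$ with $-1\meg j_0\meg q(\lambda)$. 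The converse is the cited consequence of \cite[Theorem 5.3]{FarautKoranyi2} (the same transition analysis shows these spaces are $\mathfrak p^\pm$-invariant, and one exponentiates). Finally $G(D)=G_0(D)K$, since $G_0(D)$ is transitive on $D$, so the $G_0(D)$- and $K$-invariance of $V$ give $G(D)$-invariance.

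I expect the main obstacle to be the representation-theoretic computation of the previous paragraph: that lowering components never vanish on legal transitions while the raising component into $\Pc_{\vect s+\vect{\delta}_k}$ vanishes at exactly the one value of $\lambda$ recorded by $q$. This is where the generalized Pochhammer symbol~\eqref{eq:1} genuinely enters, and it is precisely the point at which the argument in \cite[Theorem 4.8(ii)]{Arazy} is incomplete. A secondary technical point is the passage from $G_0(D)$-invariance of the closed space $V$ to $\gf$-invariance of its dense polynomial part $V\cap\Pc$, which uses that polynomials are analytic vectors and that $\dd\widetilde U_\lambda(\gf)$ preserves $\Pc$.
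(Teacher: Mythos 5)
Your proposal is correct and follows the same route as the paper, which offers no inline proof but defers to \cite[Proposition 4.7]{Tubi} (with the `if' part attributed to \cite[Theorem 5.3]{FarautKoranyi2}): reduction to $K_0$-types via Corollary~\ref{cor:2}, passage to the derived action of $\gf_\C=\mathfrak{k}_\C\oplus\mathfrak{p}^+\oplus\mathfrak{p}^-$ on $V\cap\Pc$, and the Faraut--Kor\'anyi single-box transition coefficients as the decisive (cited, not reproved) input. The parts you work out yourself are sound: the non-vanishing of the $\lambda$-independent lowering transitions via their adjointness to the raising ones in $H_\lambda$ for large $\lambda$, and the combinatorial step showing that a subset of $\N_\Omega$ closed under box removal and under $q$-preserving box addition is necessarily a sublevel set $\Set{q(\,\cdot\,,\lambda)\meg j}$ (using that $q$ is non-decreasing under box addition and that each $\Set{q(\,\cdot\,,\lambda)\Meg m}$ has a componentwise minimum).
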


\section{Invariant Semi-Banach Spaces in $\Hol(D)$}\label{sec:3}

We keep the notation of Section~\ref{sec:2}.
In this section we shall consider a semi-Banach subspace $X$ of $\Hol(D)$ in which $\widetilde U_\lambda$ induces a (not necessarily continuous) bounded representation of $\widetilde G$. Without further conditions on $X$, these spaces may exhibit very pathological behaviours, as their topology may be completely unrelated to that of $\Hol(D)$. Consequently, we shall impose some further conditions that we are about to discuss.
We shall content ourselves to discuss some basic facts and ideas which will be of use when dealing with semi-Hilbert spaces or minimal and maximal spaces. 

When $X$ is Hausdorff, the natural condition to be imposed is that $X$ embeds continuously into $\Hol(D)$. Nonetheless, in~\cite{ArazyFisher2,Arazy} a seemingly different condition was imposed, namely: 
\begin{enumerate}
	\item[$(W\!I)$] $ U_0$ induces a continuous representation of $K_0$ in $X$ and for every Radon measure $\mi$ on $K_0$  and for every $f,g\in X$
	\[
	\langle U_0(\mi) f\vert g\rangle=\int_{K_0} \langle U_0(k) f\vert g \rangle\,\dd \mi(k),
	\]
	where $U_0(\mi) f(z)=\int_{K_0} U_0(\phi) f(z)\,\dd \mi(\phi)$ for every $z\in D$.
\end{enumerate}
Notice that the condition $(W\!I)$ is stated in a somewhat different form in~\cite{ArazyFisher2,Arazy}. Indeed, in~\cite{ArazyFisher2,Arazy} it is required that for every Radon measure $\mi$ on $K_0$ the operator $U_0(\mi)$ (defined as an operator of $\Hol(D)$) maps $X$ into itself continuously and that `the integral converges in the norm of $X$.' At a first glance, this latter condition may seem to mean that for every $f\in X$ the function $K_0\ni k\mapsto U_0(k)f\in X$ is measurable and that its integral coincides with $U_0(\mi)f$. Nonetheless, a more careful inspection of the proofs shows that the function $K_0\ni k\mapsto U_0(k)f\in X$ is actually assumed to be continuous (and this is \emph{not} a consequence of measurability unless $X$ is assumed to be separable).
Then, if we define $\mi$ as the Haar measure on $\T\subseteq K_0$, the operator $U_0(\mi)\colon f\mapsto  f(0) \chi_D$ maps $X$ continuously into itself. In particular, the mapping $f\mapsto f(0)$ is continuous on $X$ since $X$ is Hausdorff. Since $\widetilde U_\lambda$ is assumed to induce a bounded representation of $\widetilde G(D)$ in $X$, and since $\widetilde G(D)$ acts transitively on $D$, it is then clear that the mapping $f\mapsto f(z)$ is continuous on $X$ for every $z\in D$. By the closed graph theorem, it then follows that $X$ embeds continuously into $\Hol(D)$. In particular, if   $X$ is reflexive (for example, a Hilbert space), this implies that $X$ is separable (cf.~\cite[Proposition 2.14]{Rango1}). 

For non-Hausdorff spaces, various conditions have been considered. In~\cite{ArazyFisher,ArazyFisherPeetre}, in which case  $\lambda=0$ and $D$ is the unit disc in $\C$, the authors assume that $X$  embeds continuously into the Bloch space
\[
\Bc\coloneqq\Set{f\in \Hol(D)\colon \sup_{z\in D} (1-\abs{z}^2)\abs{f'(z)}<\infty},
\]
endowed with the corresponding seminorm, and that $ U_0$ induces a continuous (bounded) representation of $ G(D)$ in $X$. Nonetheless, it is unclear whether this condition prevents some   pathological spaces, such as a $U_0$-invariant Hausdorff subspace of the Dirichlet space
\[
\Dc\coloneqq\Set{f\in \Hol(D)\colon \int_{D}  \abs{f'(z)}^2\,\dd z<\infty},
\]
endowed with the corresponding seminorm (\emph{if existing}).
In~\cite{ArazyFisher2,Arazy}, condition $(W\!I)$ was again considered (and this stronger condition certainly prevents the preceding pathological spaces). Nonetheless, it is unclear whether this condition implies that the closure of $\Set{0}$ in $X$ is closed in $\Hol(D)$, as the conclusions of~\cite[Theorems 5.2]{Arazy} require (without proof). 
We shall therefore follow the approach of~\cite{Rango1,Tubi}, where $X$ is assumed to be `strongly decent' and `saturated'.

The notion of a `decent' linear functional on $X$  dates back to~\cite{RubelTimoney}, where the Bloch space $\Bc$ on the unit disc $D$ in $\C$ was characterized as the largest $U_0$-invariant Banach space of holomorphic functions which admits a decent continuous linear functional, that is, a non-zero  $L\in X'$  which extends to a continuous linear functional on $\Hol(D)$. Consequently, we say that $X$ is decent if it admits a decent continuous linear functional. Even though this notion is perfectly adapted to the study of maximal invariant spaces of holomorphic functions (even in greater generality, cf.~Theorem~\ref{theorem:3}), a somewhat stronger notion appears to be necessary in order to study invariant semi-Banach spaces of holomorphic functions.

\begin{definition}
	We say that a semi-Banach space $X$ of holomorphic functions on $D$ is strongly decent if the set $L$ of continuous linear functionals on $X$ which extend to continuous linear functionals on $\Hol(D)$ is dense in $X'$ (in the weak dual topology). Equivalently (cf.~\cite[Proposition 2.13]{Rango1}), if there is a ($\widetilde U_\lambda$-invariant) closed vector subspace $V$ of $\Hol(D)$ such that $X\cap V$ is the closure of $\Set{0}$ in $X$ and the mapping $X\to \Hol(D)/V$ is continuous. If, in addition, $V\subseteq X$, then $X$ is said to be saturated.
	
	We shall say that $X$ is ultradecent if the closure of $L\cap \overline B_{X'}(0,1)$ in the weak topology $\sigma(X',X)$ generates $X'$ (i.e., if $L$ has a non-zero characteristic as a vector subspace of $X'$).
\end{definition}

Refining the preceding argument (and taking into account Proposition~\ref{prop:7} and its proof), it is not hard to show that, given a semi-Banach space $X$ in $\Hol(D)$ in which $\widetilde U_\lambda$ induces a bounded representation of $\widetilde G$, if contition $(W\!I)$ holds, then $X$ is ultradecent (and saturated up to replacing the closure of $\Set{0}$ in $X$ with its closure in $\Hol(D)$); conversely, if $X$ is ultradecent and saturated, then it satisfies condition $(W\!I)$.

We observe explicitly that the ultradecency condition has the only purpose of allowing to reconstruct the continuous linear functionals on $X'$ as limits of sequences of decent (or trivial) linear functionals, and this is essential when discussing measurability matters. We do not know any examples of $\lambda$-invariant spaces (see Definition~\ref{def:1} below) which are not ultradecent.

\begin{definition}\label{def:1}
	Take $\lambda\in \R$. We shall say that a semi-Banach space $X\subseteq \Hol(D)$ is (strictly) $\lambda$-invariant if $X$ is strongly decent and saturated, and if $U_\lambda$ induces a (not necessarily continuous) bounded (resp.\ isometric) ray representation of $G(D)$ in $X$.
\end{definition}

Notice that, when $D$ is the unit disc in $\C$, then the Bloch space $\Bc(D)$ is strictly $0$-invariant, but $U_0$ does \emph{not} induce a continuous representation of $G(D)$ in $\Bc(D)$.

We shall now present some general results on the basic properties of $\lambda$-invariant spaces. Notice that, by our definitions, a $\lambda$-invariant Banach space embeds continuously into $\Hol(D)$.

\begin{proposition}\label{prop:7}
	Take $\lambda\in \R$ and a $\lambda$-invariant semi-Banach space $X\subseteq \Hol(D)$. Then, the following conditions are equivalent:
	\begin{enumerate}
		\item[\textnormal{(1)}] $\widetilde U_\lambda$ induces a continuous representation of $\widetilde G(D)$ in $X$;
		
		\item[\textnormal{(2)}] $  U_\lambda$ induces a continuous ray representation of $ G(D)$ in $X$;
		
		\item[\textnormal{(3)}] $X$ is ultradecent and $\Pc \cap \overline X$ is contained and dense in $X$, where $\overline X$ denotes the closure of $X$ in $\Hol(D)$;
		
		\item[\textnormal{(4)}] $X$ is ultradecent and separable.
	\end{enumerate}
	If the preceding conditions are satisfied, then the mappings $\delta_R\colon f \mapsto f(R\,\cdot\,)$, $R\in (0,1]$, induce a strongly continuous semigroup of endomorphisms of $X$.
\end{proposition}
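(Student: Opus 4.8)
The plan is to prove the four conditions equivalent through the cycle $(1)\Rightarrow(2)\Rightarrow(3)\Rightarrow(4)\Rightarrow(1)$, and then to deduce the semigroup statement once continuity is in hand. Two of the implications are soft. For $(1)\Rightarrow(2)$ I would descend the continuous representation $\widetilde U_\lambda$ of $\widetilde G(D)$ along the covering $\widetilde G(D)\to G_0(D)$, exploiting that the phase indeterminacy of $U_\lambda$ disappears exactly in the quotient $\Hol(D)/\T$; continuity of $\phi\mapsto U_\lambda(\phi)f\in X/\T$ on $G_0(D)$ then follows, and extends to all of $G(D)$ since the stabilizer $K$, acting by the continuous linear action, meets every connected component. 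For $(3)\Rightarrow(4)$ I observe that $\Pc$ has countable algebraic dimension, so a dense set of polynomials renders $X$ separable, while ultradecency is common to both conditions.

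For $(2)\Rightarrow(3)$ the engine is the rotation subgroup $\rho_\theta\coloneqq U_0(\ee^{i\theta}\mathrm{id}_Z)$, which lies in $K_0$ because $z\mapsto \ee^{i\theta}z$ is an automorphism of $Z$. From $(2)$ this is a strongly continuous, uniformly bounded $\T$-representation on $X$, so the isotypic projectors $\pi_{\vect s}$ converge as $X$-valued Bochner integrals, are continuous on $X$, and satisfy $\pi_{\vect s}(X)\subseteq\Pc_{\vect s}\cap X$. Replacing $f$ by its Fej\'er means $\int_\T F_N(\theta)\rho_\theta f\,\dd\theta$ (with $F_N\Meg0$ and $\int_\T F_N=1$) produces polynomials in $X$ converging to $f$, yielding density of $\Pc\cap\overline X$; since each $\Pc_{\vect s}\cap X$ is finite dimensional, hence closed in $\Hol(D)$, a limiting argument shows $\Pc\cap\overline X\subseteq X$. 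Finally, continuity forces condition $(W\!I)$, and hence ultradecency, by the discussion preceding Definition~\ref{def:1}.

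The crux is $(4)\Rightarrow(1)$. I again lift to $\widetilde G(D)$ to kill the phase, so that $\tilde\phi\mapsto\widetilde U_\lambda(\tilde\phi)f\in\Hol(D)$ is continuous and, for every decent functional $L$, so is $\tilde\phi\mapsto L\big(\widetilde U_\lambda(\tilde\phi)f\big)$. Ultradecency makes the decent functionals total in $X'$ and realizes every $\ell\in X'$ as a weak-$*$ limit of elements of $L\cap\overline B_{X'}(0,1)$, whence $\tilde\phi\mapsto\widetilde U_\lambda(\tilde\phi)f$ is weakly measurable; separability and the Pettis measurability theorem then upgrade this to strong measurability, and a bounded strongly measurable representation of a second countable Lie group is automatically strongly continuous. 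The hard part will be precisely this measurability step: controlling the phase, extracting weak measurability from the decent functionals alone, and passing to the Hausdorff quotient modulo $V\coloneqq\overline{\Set{0}}$ (the closure of $\Set{0}$ in $X$) so that Pettis genuinely applies.

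For the semigroup statement I would represent $\delta_R$ as a vector-valued Poisson integral against the rotation group. Because holomorphic functions carry only non-negative frequencies, one has $\delta_R f=\int_\T p_R(\theta)\rho_\theta f\,\dd\theta$, where $p_R$ is the Poisson kernel normalized to mass one; since $p_R\Meg0$ is a probability density and $M\coloneqq\sup_\theta\norm{\rho_\theta}<\infty$ by the strong continuity just obtained, this gives $\delta_R\colon X\to X$ with $\norm{\delta_R}\meg M$ \emph{uniformly} in $R\in(0,1]$. The semigroup law $\delta_R\delta_{R'}=\delta_{RR'}$ is inherited from $\Hol(D)$, and strong continuity follows from $\delta_R p\to p$ for polynomials $p$ (a finite sum on which $\delta_R$ acts by $R^{\abs{\vect s}}$ on each $\Pc_{\vect s}$-component) together with the uniform bound and the density from $(3)$.
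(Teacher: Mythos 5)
Your architecture is the same as the paper's (the cycle $(1)\Rightarrow(2)\Rightarrow(3)\Rightarrow(4)\Rightarrow(1)$, Fej\'er means over the rotation subgroup for $(2)\Rightarrow(3)$, measurability-plus-boundedness for $(4)\Rightarrow(1)$, the Poisson integral for the semigroup statement), but the two steps you defer or cite your way around are precisely the crux of the proof, and one of your citations is circular. In $(2)\Rightarrow(3)$ you derive ultradecency from ``continuity forces $(W\!I)$, and hence ultradecency, by the discussion preceding Definition~\ref{def:1}.'' That discussion is an unproved remark which the paper explicitly says is obtained ``taking into account Proposition~\ref{prop:7} \emph{and its proof}'' --- that is, it depends on the very proposition you are proving, so it cannot be invoked here; nor is the implication $(2)\Rightarrow(W\!I)$ itself immediate, since the integral-exchange half of $(W\!I)$ is exactly the kind of statement that needs the decency machinery. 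The correct argument is a direct one, and your Fej\'er operators already contain it: $L_m$ maps $X$ into the finite-dimensional space $\bigoplus_{k\meg m}(\Pc_k\cap X)$ with $\norm{L_m}_{\Lin(X)}\meg C$ uniformly in $m$; hence for $x'\in \overline B_{X'}(0,1/C)$ the functional $x'\circ L_m$ factors through a finite-dimensional subspace of $\Hol(D)$, so it extends to a continuous functional on $\Hol(D)$, lies in $\overline B_{X'}(0,1)$, and converges to $x'$ in $\sigma(X',X)$. This construction, not the remark, is what yields ultradecency.

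In $(4)\Rightarrow(1)$ the inference ``every $\ell\in X'$ is a weak-$*$ limit of elements of $L\cap\overline B_{X'}(0,1)$, whence the orbit map is weakly measurable'' has a hole: limits of \emph{nets} of continuous functions need not be measurable, so you must produce \emph{sequences}. For that you need a bounded part of $X'$ to be compact and metrizable for $\sigma(X',X)$, which is where separability of $X$ enters (independently of Pettis), and it is also why ultradecency rather than strong decency is indispensable: ultradecency puts an entire ball $\overline B_{X'}(0,r)$ inside the weak-$*$ closure of the \emph{bounded} set $L\cap\overline B_{X'}(0,1)$, whereas weak-$*$ density of the subspace $L$ alone would not give bounded approximating sequences. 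You flag this as ``the hard part'' but do not carry it out, and it is the heart of the implication. Relatedly, in $(2)\Rightarrow(3)$ and in the semigroup step you treat $\int_\T F_N(\theta)\rho_\theta f\,\dd\theta$ and $\int_\T p_R(\theta)\rho_\theta f\,\dd\theta$ as $X$-valued Bochner integrals; since $X$ may be non-Hausdorff, these are only defined as elements of $X/V$, $V$ the closure of $\Set{0}$ in $X$, and one must still identify them with the corresponding $\Hol(D)$-valued integrals in order to conclude that the Fej\'er means are genuine polynomials belonging to $X$ and that $\delta_R f\in X$. That identification is done by pairing against functionals on $\Hol(D)$ vanishing on $V$ and using that $X$ is saturated and $V$ is closed in $\Hol(D)$ --- an argument you gesture at only in $(4)\Rightarrow(1)$, but which is needed in all three places. (A smaller point of the same nature: in $(2)\Rightarrow(3)$, continuity of $U_0$ on $\T$ as a \emph{ray} representation must be upgraded to continuity as an ordinary representation, which the paper gets from Bargmann's theorem; and your projectors onto the spaces $\Pc_{\vect s}$ cannot be produced by integrating over $\T$ alone --- rotations only give the projectors onto the homogeneous components $\Pc_k$, which is all the argument needs.)
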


We observe explicitly that, as the proof shows, the equivalence between (2)  (with $G(D)$ replaced by $\T$ and $\lambda=0$), (3),   and (4) holds even if we only assume that $X$ is simply $\T$-invariant. The proof is partially inspired by~\cite{ArazyFisher2}.

The relevance of condition (3) lies in the fact that the space $\Pc\cap \overline X$ must be one of the (finitely many) spaces classified in Proposition~\ref{prop:8}.

This result should be compared to~\cite[Theorems 1 and 2]{AlemanMas}. We observe explicitly that the spaces considered in~\cite{AlemanMas} are subject to the assumption that $\Hol(R D)\subseteq X$ continuously for every $R>1$, which allows to consider the semigroup of dilations by $f \mapsto f(R\,\cdot\,)$ for $R\in (0,1]$ without further assumptions. Then, when $D$ is the unit disc in $\C$, the continuity of the rotations is   equivalent to the left continuity at $1$ of the dilations, and both these properties are equivalent to the density of $\Pc$ in $X$. One implication follows from Proposition~\ref{prop:7}, whereas the other one follows from the fact that $\widetilde U_\lambda$ is clearly continuous on $\Pc$, endowed with the topology of $\Hol(R D)$ for some $R>1$.

\begin{proof}
	(1) $\implies$ (2). By (1), it is clear that $U_\lambda$ is continuous on $G_0(D)$, in particular at the identity. Then, $U_\lambda$ is continuous on $G(D)$.
	
	(2) $\implies$ (1). This is a consequence of~\cite[Theorem 3.2]{Bargmann}.
	
	(2) $\implies$ (3). Observe that, when restricted to $\T$, $U_\lambda$ coincides with $U_0$ \emph{as a ray representation}. Since $U_0$ may be interpreted as an ordinary representation,~\cite[Theorem 3.2]{Bargmann} shows that it is continuous on $\T$. Thus, we may consider the continuous linear operator
	\[
	\pi_k\colon \Hol(D)\ni f \mapsto \frac{1}{k!} f^{(k)}(0)(\,\cdot\,)^k= \int_\T \alpha^k U_0(\alpha) f \,\dd \alpha\in \Hol(D)
	\]
	for every $k\in \N$. We may also define a continuous linear operators
	\[
	\widetilde \pi_k\colon X\ni f \mapsto  \int_\T \alpha^k U_0(\alpha) f \,\dd \alpha\in X/V,
	\] 
	where $V$ denotes the closure of $\Set{0}$ in $X$.
	Observe that, if $V^\circ$ denotes the polar of $V$ in $\Hol(D)$, that is, the space of continuous linear functionals on $\Hol(D)$ which induce continuous linear functionals on $X$, then
	\[
	\langle \mi,\pi_k(f)\rangle=\int_\T \alpha^k \langle \mi, U_0(\alpha) f\rangle\,\dd \alpha=\langle \mi, \widetilde \pi_k(f)\rangle
	\]
	for every $\mi \in V^\circ $ and for every $f\in X$. Since $V$ is closed in $\Hol(D)$, this is sufficient to prove that $\pi_k(f)\in X$ and that $\widetilde \pi_k(f)=\pi_k(f)+V$. Consequently, $\pi_k$ induces a continuous linear projector of $X$ onto $X\cap \Pc_k$, where $\Pc_k$ denotes the space of homogeneous holomorphic polynomials on $Z$ of degree $k$. Now,  for every $f\in X$, the sequence
	\[
	L_k(f)\coloneqq \sum_{k=0}^m \left( 1- \frac{k}{m+1}\right) \pi_k(f)=\int_\T U_0(\alpha) f \sum_{k=-m}^m \left(1-\frac{\abs{k}}{m+1}\right)\alpha^k\,\dd \alpha
	\]
	converges to $f$ in $X$ thanks to the properties of the Fejér kernel. Thus, $\Pc \cap X$ is dense in $X$. 
	Let us now prove that $\Pc\cap X=\Pc \cap \overline X$.  Take $f\in \Pc\cap \overline X$. Then, there is a sequence $(f_j)$ of elements of $X$ which converges to $f$ in $\Hol(D)$, so that $\pi_k(f_j)\to \pi_k(f)$  in $\Pc_k$ for every $k\in\N$. Now, $\pi_k(f_j)\in \Pc_k\cap X$ for every $j,k\in \N$ and $\Pc_k\cap X$ is closed (being finite-dimensional) in $\Pc_k$, so that $\pi_k(f) \in \Pc_k\cap X$ for every $k\in\N$. Since $f=\sum_{k\meg N} \pi_k(f)$ for a suitable $N\in\N$, it then follows that $f\in \Pc \cap X$.

	Finally, let us  prove that $X$ is ultradecent. Set $C\coloneqq \sup_{\alpha\in \T} \norm{U_0(\alpha)}_{\Lin(X)}<\infty$, so that $\norm{L_k}_{\Lin(X)}\meg C$ for every $k\in\N$ by the properties of the Fejér kernel. 
	Then, take $x'\in \overline B_{X'}(0,1/C)$ and observe that, for every $k\in \N$, there is a  linear functional $x'_k\colon \bigoplus_{h\meg k}\Pc_h\to \C$ which extends the restriction of $x'$ to $\bigoplus_{h\meg k}\Pc_h\cap X$. Then, the linear functional $x'_k\circ L_k$ is continuous on $\Hol(D)$, coincides on $X$ with the continuous linear functional $x'\circ L_k$, and $\norm{x'\circ L_k}_{X'}\meg \norm{x'}_{X'}\norm{L_k}_{\Lin(X)}\meg 1$. In addition, $ x'_k\circ L_k=x'\circ L_k$ converges to $x'$ in the weak topology $\sigma(X',X)$. We have thus proved that, if $L$ denotes the space of continuous linear functionals on $X'$ which extend to continuous linear functionals on $\Hol(D)$, then $\overline B_{X'}(0,1/C)$ is contained in the closure of $L\cap \overline{B}_{X'}(0,1) $ in $\sigma(X',X)$, so that $X$ is ultradecent.
	
	(3)  $\implies$ (4). Obvious.

	(4) $\implies$ (1). By~\cite[Corollary 2 to Proposition 18 of Chapter VIII, \S\ 4, No.\ 6]{BourbakiInt2}, we may reduce to proving that the mapping 
	\[
	\widetilde G(D)\ni \phi\mapsto \langle x', \widetilde U_\lambda(\phi) f\rangle\in \C
	\]
	is measurable for every $f\in X$ and for every $x'\in \overline B_{X'}(0,1) $. Since $X$ is ultradecent, there is $r\in (0,1]$ such that $\overline B_{X'}(0,r)$ is contained in the closure of $ L\cap \overline B_{X'}(0,1)$ in the weak topology $\sigma(X',X)$, where $L$ denotes the set of continuous linear functionals on $X$ which extend to continuous linear functionals on $\Hol(D)$ (cf.~\cite[Exercise 21 of Chapter IV, \S\ 2]{BourbakiTVS}). Since $\overline B_{X'}(0,1/r)$ is compact and metrizable in the weak topology $\sigma(X',X)$, for every $x'\in \overline B_{X'}(0,1)$ we may find a sequence $(\mi_{x',j})$ of elements of $L\cap \overline B_{X'}(0,1/r) $ which converges to $x'$ in the weak topology $\sigma(X',X)$. Since clearly the mapping
	\[
	\widetilde G(D)\ni \phi\mapsto \langle \mi_{x',j}, \widetilde U_\lambda(\phi) f\rangle\in \C
	\]
	is continuous for every $j\in \N$, the assertion follows.
	
	Now, assume that conditions (1) to (4) hold, and observe that
	\[
	\delta_R f(z)=f(Rz)= \int_{\T} U_0(\alpha) f(z) \frac{1-R^2}{1-2R \Re \alpha+R^2}\,\dd \alpha
	\]
	for every $f\in \Hol(D)$, for every $R\in (0,1)$, and for every $z\in D$, so that the assertion follows from the properties of the Poisson kernel, arguing as before.
\end{proof}

\begin{corollary}\label{cor:1}
	Keep the hypotheses and the notation of Proposition~\ref{prop:7}. If the Hausdorff space associated with $X$ is reflexive, then  conditions \textnormal{(1)} to \textnormal{(4)} hold.
\end{corollary}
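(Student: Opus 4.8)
The plan is to exploit the equivalence of conditions (1)--(4) in Proposition~\ref{prop:7} and simply establish the one most directly accessible from reflexivity, namely condition (4): that $X$ is ultradecent and separable. Throughout I would write $V$ for the closure of $\{0\}$ in $X$; since $X$ is saturated, $V$ is a closed $\widetilde U_\lambda$-invariant subspace of $\Hol(D)$ contained in $X$, and the Hausdorff space associated with $X$ is the reflexive Banach space $Y\coloneqq X/V$. The first observation I would record is that every continuous linear functional on $X$ annihilates $V$, so that $X'$ is canonically identified with $Y'$ and the topology $\sigma(X',X)$ with the weak topology $\sigma(Y',Y)=\sigma(Y',Y'')$ of the \emph{reflexive} dual $Y'$.

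For ultradecency I would argue as follows. Strong decency of $X$ says exactly that the subspace $L\subseteq X'$ of functionals extending continuously to $\Hol(D)$ is dense in $X'$ for $\sigma(X',X)$. By the identification above this is the weak topology of $Y'$; since $L$ is a linear subspace, and the weak and norm closures of a convex set coincide, $L$ is in fact \emph{norm} dense in $X'$. A short scaling argument (approximating $(1-\delta)x'$ and letting $\delta\to 0$) then upgrades this to the statement that $L\cap\overline B_{X'}(0,1)$ is norm dense, hence $\sigma(X',X)$-dense, in $\overline B_{X'}(0,1)$; its $\sigma(X',X)$-closure is then the whole unit ball, which certainly generates $X'$, so $X$ is ultradecent.

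For separability I would first use strong decency and saturation to factor the continuous map $X\to\Hol(D)/V$, whose kernel is precisely $V$, through a continuous \emph{injective} linear map $Y\hookrightarrow\Hol(D)/V$. Since $\Hol(D)$ is a separable Fréchet--Montel space, so is its quotient $\Hol(D)/V$, and I would then deduce separability of the reflexive $Y$ from the existence of this injection --- either by quoting~\cite[Proposition 2.14]{Rango1} applied to $Y\hookrightarrow\Hol(D)/V$, or by the following direct argument: the image of $\overline B_Y(0,1)$ is bounded, hence relatively compact in $\Hol(D)/V$ by the Montel property, and on a compact subset of a Fréchet--Montel space the weak topology coincides with the (metrizable, separable) original one; as the injection is weak-to-weak continuous and $\overline B_Y(0,1)$ is weakly compact by reflexivity, it restricts to a homeomorphism of $(\overline B_Y(0,1),\sigma(Y,Y'))$ onto a separable metrizable space, whence $Y'$ and, by reflexivity, $Y$ are separable. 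Having obtained condition (4), all of (1)--(4) follow from Proposition~\ref{prop:7}.

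I expect the separability step to be the main obstacle: ultradecency is essentially immediate once reflexivity turns weak-$*$ density into norm density, but transferring separability back to $Y$ across an injection whose inverse need not be continuous is delicate, and it is precisely the Montel property of $\Hol(D)/V$ --- forcing the weak and original topologies to agree on bounded sets --- that makes the transfer go through.
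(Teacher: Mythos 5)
Your proposal is correct and follows essentially the same route as the paper's proof: both reduce to condition (4) of Proposition~\ref{prop:7}, obtaining ultradecency by using reflexivity to upgrade the $\sigma(X',X)$-density of the decent functionals (a convex set) to norm density on the dual of the associated Hausdorff space, and obtaining separability by invoking \cite[Proposition 2.14]{Rango1}, which is exactly the citation the paper uses. One caveat concerns your alternative, self-contained separability argument: the step ``$\Hol(D)$ is a separable Fr\'echet--Montel space, so is its quotient $\Hol(D)/V$'' is not valid as a general principle, since the Montel property is \emph{not} inherited by quotients (K\"othe's classical example produces a Fr\'echet--Montel space with a quotient isomorphic to $\ell^1$). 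The conclusion is nonetheless true here, because $\Hol(D)$ is a nuclear (in particular Schwartz) Fr\'echet space, and nuclearity and the Schwartz property, unlike the Montel property, do pass to Hausdorff quotients and force the quotient to be Fr\'echet--Montel; with that repair, your direct argument --- weak-to-weak continuity of the injection, weak compactness of the ball by reflexivity, and the coincidence of the weak and original topologies on compact subsets --- is a sound reconstruction of the cited result.
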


\begin{proof}
	Observe first that $X$ is separable thanks to~\cite[Proposition 2.14]{Rango1}. It will then suffice to prove that $X$ is ultradecent.  However, if $L$ is a vector subspace of $X'$ which is dense for the weak topology $\sigma(X',X)$, then $L$ is dense in $X'$ for the strong dual topology, so that $L\cap B_{X'}(0,1)$ is dense in $\overline B_{X'}(0,1)$ for the strong dual topology, in particular for the weak topology $\sigma(X',X)$. Since $X$ is strongly decent, it is clear that it is also ultradecent.
\end{proof}

Let us now briefly comment on~\cite[Theorem 7]{AlemanMas}. If $D$ is the unit disc in $\C$, $X$ is a $\lambda$-invariant \emph{Banach} space  for $\lambda>0$, and $\Hol(RD)\subseteq X$ for every $R>1$, then~\cite[Theorem 7]{AlemanMas} characterizes the $X$ for which the spaces
\[
D(X)=\Set{f'\colon f\in X} \qquad \text{and} \qquad A(X)=\Set{f\in \Hol(D)\colon f'\in X}
\]
are still $(\lambda+1)$- and  (for $\lambda>1$) $(\lambda-1)$-invariant, respectively, in terms of the properties of the Cesàro operator
\[
\Cc\colon f \mapsto \int_0^{\,\cdot\,} \frac{f(z)}{1-z}\,\dd z
\]
on $X$. More precisely,  $D(X)$ is $(\lambda+1)$-invariant if and only if $\Cc$ induces a continuous endomorphism of $X$, whereas $A(X)$ is $(\lambda-1)$-invariant (for $\lambda>1$) if and only if $I_X-\Cc$ is invertible.

Since it is unclear whether a precise analogue of the Cesàro operator $\Cc$ exists in more general settings,\footnote{Actually, some analogues of the Cesàro operator were considered in~\cite{NanaSehba,CPCarleson}. Nonetheless, these analogues were not defined as continuous endomorphisms of $\Hol(D)$, but rather as endomorphisms of suitable quotients of $\Hol(D)$ for which a canonical `representative' is not known.} this interesting result seem to be specific to the one-dimensional case. 

\section{Invariant semi-Hilbert Spaces in $\Hol(D)$}\label{sec:4}

We keep the notation of Sections~\ref{sec:2} and~\ref{sec:3}.
In this section we shall characterize the $\lambda$-invariant semi-Hilbert subspaces $H$ of $\Hol(D)$.

Denote by $\Kc$   the unweighted Bergman kernel, that is, the reproducing kernel of the Bergman space $A^2(D)=L^2(D)\cap \Hol(D)$, and observe that $A^2(D)$ is $U_g$-invariant with its norm, so that
\begin{equation}\label{eq:2}
	\Kc(z,z')=J\phi(z)\Kc(\phi(z),\phi(z')) \overline{J\phi(z')}
\end{equation}
for every $\phi\in G(D)$ and for every $z,z'\in D$ (cf., e.g.,~\cite[Proposition 1.4.12]{Krantz}). In addition, the sesquiholomorphic function $\Kc^{\lambda/g}$, which may be unambiguously defined as a holomorphic function of $\lambda$ (taking the value $1$ at $\lambda=0$) since $D\times D$ is convex, hence simply connected,  is the reproducing kernel of a reproducing kernel Hilbert space if and only if $\lambda$ belongs to the so-called Wallach set, that is,
\[
\Wc(\Omega)=\Set{j a/2\colon j=0,\dots, r-1 }\cup (a(r-1)/2,+\infty)
\]
(cf., e.g.,~\cite{VergneRossi}). We denote by $H_\lambda(D)$, or simply $H_\lambda$, the corresponding Hilbert space, so that $H_\lambda$ is the completion of the space of finite linear combinations of the $\Kc^{\lambda/g}(\,\cdot\,,z)$, as $z$ runs through $D$, endowed with the unique scalar product such that
\[
\langle \Kc^{\lambda/g}(\,\cdot\,,z)\vert \Kc^{\lambda/g}(\,\cdot\,,z')\rangle=\Kc^{\lambda/g}(z,z')
\]
for every $z,z'\in D$. 
In particular, $H_\lambda$ is strictly $\lambda$-invariant by the invariance properties~\eqref{eq:2} of $\Kc$.

Then, $H_g$ is the unweighted Bergman space $A^2(D)$ by definition. More generally, if we define the weighted Bergman space
\[
A^2_\lambda(D)\coloneqq \Set{f\in \Hol(D)\colon \int_D \abs{f(z)}^2 \Kc(z,z)^{-\lambda/g}\,\dd \nu_D(z)<\infty},
\]
where $\dd\nu_D(z)= \Kc(z,z)\,\dd z$ is the $U_g$-invariant measure on $D$, then $A^2_\lambda(D)=H_\lambda$ for $\lambda>g-1$ (with proportional norms) and $A^2_\lambda(D)=\Set{0}$ otherwise.

Now, observe that  Proposition~\ref{prop:3} shows that every strongly decent and saturated $K_0$-invariant semi-Hilbert subspace $H$ of $\Hol(D)$ (so that $U_0$ induces a \emph{continuous} representation of $K_0$ in $H$, cf.~Corollary~\ref{cor:1}) is the orthogonal direct sum of the $\Pc_{\vect s}$, $\vect s\in \N_\Omega$, that it contains (and is therefore also $K$-invariant, cf.~Proposition~\ref{prop:3}). In addition, every two $K$-invariant scalar products on  $\Pc_{\vect s}$ are proportional for every $\vect s\in \N_\Omega$, since the $\Pc_{\vect s}$ are $K$-$U_0$-irreducible. 
It is then customary to endow the space $\Pc$ of holomorphic polynomials on $Z$ with the Fischer scalar product
\[
\langle p\vert q \rangle_\Fc\coloneqq p(\overline{\nabla}) \overline q(0),
\]
where $\overline \nabla$ denotes the antiholomorphic gradient. In other words, if an orthonormal basis and the associated coordinates on $Z$ are fixed,  $p(z)=\sum_{\alpha} a_\alpha z^\alpha$, and $q(z)=\sum_{\beta} b_\beta z^\beta$ for every $z\in Z$, then $\langle p\vert q\rangle_\Fc=\sum_{\alpha} a_\alpha \overline{b_\alpha}$. In addition, the completion of $\Pc$ with respect to this scalar product is the Fock space (cf.~\cite[Propositon XI.1.1]{FarautKoranyi})
\[
\Fc\coloneqq \Set{f\in \Hol(D)\colon \frac{1}{\pi^{\dim Z}} \int_Z \abs{f(z)}^2 \ee^{-\abs{z}^2}\,\dd z<\infty }.
\]
In particular, since $K$ embeds as a subgroup of the unitary group of $Z$ by Proposition~\ref{prop:2}, it is clear that $\Fc$ is $K$-$U_0$-invariant with its scalar product, so that the same holds for $\Pc$. 
Then, the following result holds   (cf.~\cite[Corollary 3.7 and Theorem 3.8]{FarautKoranyi2}).
\begin{theorem}\label{prop:9}
	Take $\lambda\in \Wc(\Omega)$. Then, 
	\[
	\langle f\vert g \rangle_{H_\lambda}=\sum_{q(\vect s,\lambda)=0 } \frac{1}{(\lambda)_{\vect s}} \langle \pi_{\vect s}(f)\vert \pi_{\vect s}(g)\rangle_{\Fc}
	\]
	for every $f,g\in H_\lambda$, where  $(\lambda)_{\vect s}$ is defined in~\eqref{eq:1}.
\end{theorem}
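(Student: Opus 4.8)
The plan is to combine the $K$-isotypic decomposition of $\Hol(D)$ furnished by Proposition~\ref{prop:3} with an explicit expansion of the reproducing kernel $\Kc^{\lambda/g}$ along the components $\Pc_{\vect s}$: the representation theory forces $\langle\,\cdot\,\vert\,\cdot\,\rangle_{H_\lambda}$ to be a sum of multiples of the Fischer product, and the expansion of the kernel pins the multiples down.

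First I would note that $H_\lambda$ is a strongly decent, saturated, $K$-invariant semi-Hilbert space. Indeed $H_\lambda$ is strictly $\lambda$-invariant by the invariance property~\eqref{eq:2} of $\Kc$, and when restricted to the stabilizer $K$ of $0$ the ray representation $U_\lambda$ agrees with $U_0$ up to unimodular scalars (every $k\in K$ being linear on $Z$, with unimodular complex Jacobian), so that $U_0$ acts unitarily on $H_\lambda$. By the discussion opening this section (which rests on Proposition~\ref{prop:3} and Corollary~\ref{cor:2}), $H_\lambda$ is then the orthogonal direct sum of the $\Pc_{\vect s}$, $\vect s\in \N_\Omega$, that it contains; and since $U_0$ acts irreducibly on each $\Pc_{\vect s}$, the restriction of $\langle\,\cdot\,\vert\,\cdot\,\rangle_{H_\lambda}$ to $\Pc_{\vect s}$ is a $K$-invariant scalar product, hence a positive multiple $c_{\vect s}\langle\,\cdot\,\vert\,\cdot\,\rangle_\Fc$ of the Fischer product. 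Writing $c_{\vect s}=+\infty$ when $\Pc_{\vect s}\not\subseteq H_\lambda$, we thus obtain
\[
\langle f\vert g\rangle_{H_\lambda}=\sum_{\vect s\in \N_\Omega} c_{\vect s}\,\langle \pi_{\vect s}(f)\vert \pi_{\vect s}(g)\rangle_\Fc,
\]
the terms with $c_{\vect s}=+\infty$ being absent, and it remains to show that $c_{\vect s}=1/(\lambda)_{\vect s}$.

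To identify the $c_{\vect s}$ I would pass to reproducing kernels. Let $K_{\vect s}$ denote the reproducing kernel of $\Pc_{\vect s}$ for the Fischer product (not to be confused with the group $K$); then the reproducing kernel of $(\Pc_{\vect s},c_{\vect s}\langle\,\cdot\,\vert\,\cdot\,\rangle_\Fc)$ is $c_{\vect s}^{-1}K_{\vect s}$, so the orthogonal-sum structure gives the reproducing kernel of $H_\lambda$ as $\sum_{\vect s} c_{\vect s}^{-1}K_{\vect s}$. On the other hand this kernel is by construction $\Kc^{\lambda/g}$, which admits the Faraut--Korányi expansion (cf.~\cite[Theorem~3.8]{FarautKoranyi2})
\[
\Kc^{\lambda/g}(z,w)=\sum_{\vect s\in \N_\Omega}(\lambda)_{\vect s}\,K_{\vect s}(z,w);
\]
this is consistent with the normalization $\Kc^{\lambda/g}\vert_{\lambda=0}=1$, since $(\lambda)_{\vect s}$ vanishes at $\lambda=0$ unless $\vect s=0$, where it equals $1=K_0$. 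Because $\Pc=\bigoplus_{\vect s}\Pc_{\vect s}$ is a direct sum (Proposition~\ref{prop:3}), fixing $w$ and applying the projector $\pi_{\vect s}$ in the variable $z$ shows $c_{\vect s}^{-1}K_{\vect s}=(\lambda)_{\vect s}K_{\vect s}$ for every $\vect s$, whence $c_{\vect s}^{-1}=(\lambda)_{\vect s}$; in particular $\Pc_{\vect s}\subseteq H_\lambda$ precisely when $(\lambda)_{\vect s}\neq 0$, and then $c_{\vect s}=1/(\lambda)_{\vect s}$.

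It remains to match the index set and to secure positivity, the latter being the step I expect to be the most delicate. By Definition~\ref{def:2}, $q(\vect s,\lambda)$ is the order of vanishing at $\lambda$ of $\lambda'\mapsto(\lambda')_{\vect s}$, so that $(\lambda)_{\vect s}\neq 0$ is equivalent to $q(\vect s,\lambda)=0$; this is exactly the index set of the statement. Finally one must check that each surviving $(\lambda)_{\vect s}$ is \emph{strictly positive}, as a scalar product cannot be negative; this is where $\lambda\in\Wc(\Omega)$ enters. For $\lambda>a(r-1)/2$ every factor $\lambda-\frac{1}{2}a(j-1)$ in~\eqref{eq:1} is positive, so $(\lambda)_{\vect s}>0$ for all $\vect s\in\N_\Omega$. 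For the discrete values $\lambda=\frac{1}{2}ak$ with $0\meg k\meg r-1$, the factors coming from indices $j>k+1$ may be negative; however, any index $j>k+1$ that actually contributes a factor has $s_j\Meg 1$, and then the monotonicity $s_1\Meg\cdots\Meg s_r$ forces $s_{k+1}\Meg 1$, so that the block with index $k+1$ vanishes and $(\lambda)_{\vect s}=0$. Hence $(\lambda)_{\vect s}$ is either $0$ or a product of strictly positive factors, giving $(\lambda)_{\vect s}\Meg 0$, with $(\lambda)_{\vect s}>0$ exactly when $q(\vect s,\lambda)=0$. This non-negativity is precisely the defining feature of the Wallach set, and its verification at the discrete points---where sign cancellations must be controlled---is the crux of the argument, the continuous range and the representation-theoretic reduction being comparatively routine.
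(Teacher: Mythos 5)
Your proof is correct and is essentially the paper's own route: the paper gives no independent argument but relies on exactly the two ingredients you use, namely the orthogonal decomposition of a $K$-invariant semi-Hilbert space into the $\Pc_{\vect s}$ with Fischer-proportional inner products (the discussion immediately preceding the theorem) and the Faraut--Kor\'anyi kernel expansion $\Kc^{\lambda/g}=\sum_{\vect s}(\lambda)_{\vect s}K_{\vect s}$ cited from \cite{FarautKoranyi2}, which pins down the constants via the reproducing-kernel matching you carry out. One small remark: the final positivity verification you single out as the crux is actually automatic in your own setup, since each $c_{\vect s}$ compares two positive-definite inner products and is therefore positive, so that step is a consistency check (it would only become essential if one wanted to prove that $\Kc^{\lambda/g}$ is positive-definite exactly for $\lambda\in\Wc(\Omega)$, which the theorem does not ask).
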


In particular, this provides another description of $H_\lambda$, for every $\lambda\in \Wc(\Omega)$. 

\begin{remark}\label{oss:1}
	Take $\lambda,\lambda'\in \Wc(\Omega)$ with $\lambda\meg\lambda'$. Then, $H_\lambda \subseteq H_{\lambda'}$ continuously.
\end{remark}

\begin{proof}
	It suffices to observe that $(\lambda)_{\vect s}\meg (\lambda')_{\vect s}$ for every $\vect s$ with $q(\vect s,\lambda)=0$.
\end{proof}

One may then consider, on $\Pc$,  the analytic continuation of the the  hermitian form
\[
\langle p\vert q\rangle_{\lambda}=\sum_{\vect s\in \N_\Omega }\frac{1}{(\lambda)_{\vect s}} \langle \pi_{\vect s}(p)\vert \pi_{\vect s}(q)\rangle_{\Fc}
\]
which is defined for $\lambda\in\C$ such that $q(\lambda)=0$. This form is always $K$-$U_0$-invariant and $\dd \widetilde U_\lambda$-invariant by analytic continuation, but is not positive unless $\lambda\in \Wc(\Omega)$. Nonetheless, when $q(\lambda)>0$, it may induce suitable residual hermitian forms on $V_{\lambda,j}$, with radical $V_{\lambda, j-1}$, where $V_{\lambda,j}=\bigoplus_{q(\vect s,\lambda)\meg j}\Pc_{\vect s}$, $j=0,\dots,q(\lambda)$. It turns out that the resulting hermitian forms   are positive if and only if $\lambda\in \Wc(\Omega)$ and $j=0$, or $a(r-1)/2-\lambda\in\N$ and $j=q(\lambda)$ (cf.~\cite[Theorem 5.4]{FarautKoranyi}). We may then state the following definition.
\begin{definition}
	Take $\lambda\in a(r-1)/2-\N$. Then, we define
	\[
	\widetilde H_\lambda(D)\coloneqq \Set{f\in \Hol(D)\colon  \sum_{q(\vect s,\lambda)=q(\lambda)} \frac{1}{(\lambda)'_{\vect s}} 
		\norm{\pi_{\vect s}(f)}_\Fc^2<\infty  },
	\]
	or simply $\widetilde H_\lambda$,
	endowed with the corresponding  Hilbert seminorm, where 
	\[
	(\lambda)'_{\vect s}= \lim_{\lambda'\to \lambda} \abs*{\frac{(\lambda')_{\vect s}}{(\lambda'-\lambda)^{q(\lambda)}}}
	\]
	for every $\vect s\in \N_\Omega$ with $q(\vect s,\lambda)=q(\lambda)$.\footnote{We observe explicitly that, if we had not put the absolute value, the $(\lambda)'_{\vect s} $ would still have had the same sign, but could have been all negative.}
\end{definition}

Since the closure of $\Set{0}$ in $\widetilde H_\lambda$ is clearly $V_{q(\lambda)-1}$, which is closed in $\Hol(D)$, and since one may show that the canonical mapping $\widetilde H_\lambda \to \Hol(D)/V_{q(\lambda)-1}$ is continuous, it is clear that $\widetilde H_\lambda$ is strongly decent and saturated.
Conversely, we have the following result (cf.~\cite[\S\ 5]{Arazy} and~\cite[Theorem 4.8]{Tubi}). See also~\cite{ArazyFisher} and~\cite[Theorem 5]{AlemanMas} for alternative proofs when $D$ is the unit disc and $\lambda=0$ and $\lambda>0$, respectively.

\begin{theorem}
	Take $\lambda\in \R$. If $\lambda\in \Wc(\Omega)$, then $H_\lambda$ is a strictly $\lambda$-invariant Hilbert space contained in $\Hol(D)$. If $\lambda\in a(r-1)/2-\N$, then $\widetilde H_\lambda$  is a strictly $\lambda$-invariant semi-Hilbert space contained in $\Hol(D)$.
	
	Conversely, if $H$ is a non-trivial (resp.\ strictly) $\lambda$-invariant semi-Hilbert  space contained in $\Hol(D)$, then either $\lambda\in \Wc(\Omega)$ and $H=H_\lambda$ with equivalent (resp.\ proportional) norms, or $\lambda\in a(r-1)/2-  \N$ and $H=\widetilde H_\lambda$ with equivalent (resp.\ proportional) seminorms.
\end{theorem}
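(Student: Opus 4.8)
The plan is to prove the two implications separately, using Theorem~\ref{prop:9} and the classification of Proposition~\ref{prop:8} as the main engines. For the forward direction the work is largely assembled from what precedes. The transformation law~\eqref{eq:2} for $\Kc$ shows, as already noted after the definition of $H_\lambda$, that $U_\lambda$ acts isometrically on $H_\lambda$; since $H_\lambda$ is a reproducing kernel Hilbert space whose kernel $\Kc^{\lambda/g}$ is sesquiholomorphic, its elements lie in $\Hol(D)$ and the inclusion $H_\lambda\hookrightarrow\Hol(D)$ is continuous, so that point evaluations are decent functionals spanning a weak-$*$ dense subspace of $H_\lambda'$; hence $H_\lambda$ is strongly decent and, being Hausdorff, saturated, i.e.\ strictly $\lambda$-invariant. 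For $\widetilde H_\lambda$ the excerpt already records that the radical is $V_{\lambda,q(\lambda)-1}$, closed in $\Hol(D)$, and that $\widetilde H_\lambda\to\Hol(D)/V_{\lambda,q(\lambda)-1}$ is continuous, giving strong decency and saturation; the isometry of $U_\lambda$ then follows by exponentiating the $\dd\widetilde U_\lambda$-invariance of the residual form $\langle\,\cdot\,\vert\,\cdot\,\rangle_\lambda$.

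For the converse I would first pass to the associated Hausdorff space, which is a Hilbert space and hence reflexive, so that Corollary~\ref{cor:1} yields conditions \textnormal{(1)}--\textnormal{(4)} of Proposition~\ref{prop:7}: the representation $\widetilde U_\lambda$ is continuous and $\Pc\cap\overline H$ is dense in $H$. Since $\overline H$ is a closed $G_0(D)$-$U_\lambda$-invariant subspace, Proposition~\ref{prop:8} forces $\Pc\cap\overline H=\bigoplus_{q(\vect s,\lambda)\meg j}\Pc_{\vect s}$ for some $j\in\Set{-1,\dots,q(\lambda)}$, the value $j=-1$ being excluded as it gives the trivial space. Averaging the inner product over the compact group $K_0$ (using that $U_0=U_\lambda\vert_{K_0}$ is a bounded continuous representation) produces an equivalent $K$-invariant inner product; by Proposition~\ref{prop:3} the pieces $\Pc_{\vect s}$ are pairwise inequivalent $K$-irreducibles, hence mutually orthogonal, and on each the inner product is $c_{\vect s}\langle\,\cdot\,\vert\,\cdot\,\rangle_\Fc$ for some $c_{\vect s}>0$.

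The heart of the argument is to pin down the ratios $c_{\vect s}$ using the \emph{noncompact} part of the group, and I expect this to be the main obstacle. On the dense polynomial subspace the infinitesimal action $\dd\widetilde U_\lambda$ is a fixed system of differential operators, the same for every invariant completion; the raising and lowering operators arising from the noncompact directions in the complexified Lie algebra $\gf_0^{\C}$ of $G_0(D)$ connect adjacent signatures, namely pairs $\Pc_{\vect s},\Pc_{\vect s'}$ where $\vect s'$ differs from $\vect s$ by a unit increase in one coordinate. In the strictly invariant case these operators are skew-adjoint for $\langle\,\cdot\,\vert\,\cdot\,\rangle_H$, and matching $\langle \dd\widetilde U_\lambda(X)p\vert q\rangle_H=-\langle p\vert \dd\widetilde U_\lambda(X)q\rangle_H$ against the identical relation for $\langle\,\cdot\,\vert\,\cdot\,\rangle_\lambda$ forces each ratio $c_{\vect s}/c_{\vect s'}$ to coincide with the corresponding ratio of the $(\lambda)_{\vect s}$; propagating along the connected graph of signatures in the support shows $\langle\,\cdot\,\vert\,\cdot\,\rangle_H$ is proportional to the restriction of $\langle\,\cdot\,\vert\,\cdot\,\rangle_\lambda$. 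By the positivity dichotomy of~\cite[Theorem 5.4]{FarautKoranyi} recalled before the definition of $\widetilde H_\lambda$, this restricted form is positive only when $j=0$ and $\lambda\in\Wc(\Omega)$, giving $H=H_\lambda$, or when $j=q(\lambda)$ and $a(r-1)/2-\lambda\in\N$, giving $H=\widetilde H_\lambda$; taking completions yields proportional (semi)norms.

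Finally, in the merely bounded case I would argue for equivalence rather than proportionality. The $K_0$-averaging above already reduced to a $K$-invariant inner product, but the noncompact ratios are no longer rigidly fixed. The underlying module $\Pc\cap\overline H$ is one of the irreducible modules singled out by the admissible supports, and it carries both the invariant inner product of the model space ($H_\lambda$ or $\widetilde H_\lambda$) and the boundedly invariant inner product $\langle\,\cdot\,\vert\,\cdot\,\rangle_H$. The natural intertwiner $\iota$, the identity on $K$-finite vectors, is then a module map between the bounded representation on $H$ and the unitary one on the model; boundedness of $U_\lambda$ on $H$ together with unitarity on the model and irreducibility of the module (Schur's lemma applied to the $G_0(D)$-invariant positive operator built from $\iota$ and its adjoint) should show that $\iota$ extends to an isomorphism with bounded inverse, so that $H$ equals $H_\lambda$ (resp.\ $\widetilde H_\lambda$) with equivalent (semi)norms. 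The delicate point is precisely this last step, namely justifying that the formal intertwiner is bounded with bounded inverse in the absence of exact invariance; this is where the boundedness of the representation, as opposed to mere algebraic invariance, is genuinely used, and where one must also rule out the non-admissible supports that could in principle survive once isometry is dropped.
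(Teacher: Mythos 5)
The paper does not reproduce a proof of this theorem; it defers to \cite[\S\ 5]{Arazy} and \cite[Theorem 4.8]{Tubi}, so the comparison below is with that (standard) route. Your forward direction and your treatment of the \emph{strictly} invariant half of the converse follow it faithfully: reduce via Corollary~\ref{cor:1} and Proposition~\ref{prop:7} to continuity of $\widetilde U_\lambda$ and density of $\Pc\cap\overline H$, identify the support by Proposition~\ref{prop:8}, use $K_0$-averaging and the mutual inequivalence of the $\Pc_{\vect s}$ (Proposition~\ref{prop:3}) to diagonalize the inner product with weights $c_{\vect s}$, then use skew-adjointness of the noncompact part of $\dd\widetilde U_\lambda$ to fix the ratios $c_{\vect s}/c_{\vect s'}$ and conclude by the positivity dichotomy for the residual forms. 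Modulo standard glosses (polynomials are analytic vectors, so the Lie algebra genuinely acts on them in the $H$-topology; the relevant signature graphs are connected; the degenerate case of an identically vanishing seminorm must be excluded as ``trivial''), this part is correct.

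The genuine gap is the merely bounded case, i.e.\ precisely the ``equivalent norms'' half of the converse. Write $M$ for the model space ($H_\lambda$ or $\widetilde H_\lambda$), $V=\widetilde U_\lambda$ acting unitarily on $M$, $U=\widetilde U_\lambda$ acting boundedly on $H$, and $\iota\colon M\supseteq\Pc\cap\overline H\to H$ for your formal intertwiner. Even if $\iota$ were known to be bounded, the operator $\iota^*\iota$ you propose to feed to Schur's lemma is \emph{not} an intertwiner: from $\iota V(\phi)=U(\phi)\iota$ one gets
\[
V(\phi)\,\iota^*\iota-\iota^*\iota\, V(\phi)=\iota^*\left((U(\phi)^*)^{-1}-U(\phi)\right)\iota,
\]
and $(U(\phi)^*)^{-1}=U(\phi)$ exactly when $U(\phi)$ is unitary, which is the hypothesis you have dropped. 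Moreover, Schur's lemma (even in its closed-operator form) presupposes boundedness or at least closability of $\iota$ with control of domains, and boundedness of $\iota$ and $\iota^{-1}$ \emph{is} the assertion ``$H=H_\lambda$ with equivalent norms'': the argument is circular. Nor can one unitarize first and then quote the isometric case: $G_0(D)$ is a noncompact simple Lie group, hence non-amenable, and bounded Hilbert-space representations of such groups need not be similar to unitary ones.

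What the cited proofs actually do at this point is quantitative rather than qualitative. After the $K_0$-averaging one has the weights $c_{\vect s}$, and one tests the uniform bound $\sup_\phi\norm{U_\lambda(\phi)}_{\Lin(H)}<\infty$ on explicit orbits --- for instance $U_\lambda(\phi)1$, which up to a unimodular factor is a normalized kernel $\Kc^{\lambda/g}(\,\cdot\,,\phi(0))/\Kc^{\lambda/g}(\phi(0),\phi(0))^{1/2}$, whose expansion along the $\Pc_{\vect s}$ is explicit by Theorem~\ref{prop:9} --- and extracts from the resulting two-sided generating-function estimates that $c_{\vect s}(\lambda)_{\vect s}$ is bounded above and below uniformly in $\vect s$. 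It is this estimate, not an abstract irreducibility argument, that simultaneously yields equivalence of norms and rules out the non-admissible supports (the intermediate $j$ with $0<j<q(\lambda)$, and $j=0$ or $j=q(\lambda)$ when the corresponding form fails to be positive), which your outline correctly flags but leaves open. Without this step the bounded half of the theorem is unproved.
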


We have thus completely characterized the $\lambda$-invariant semi-Hilbert spaces contained in $\Hol(D)$. Nonetheless, their description is quite abstract and therefore not completely satisfactory. A number of equivalent descriptions have been developed both in this context and in the context of Siegel domains. We shall present (some of) these descriptions in the following sections.

Before we do that, we indicate how the scalar product of the $H_\lambda$, $\lambda>m/r-1$, may be used to characterize the dual of ultradecent and separable $\lambda$-invariant Banach spaces, following~\cite[Theorem 3]{AlemanMas}. Notice that, for $\lambda>m/r-1$, one has $q(\lambda)=0$, so that a non-trivial $\lambda$-invariant semi-Banach space must be Hausdorff.

\begin{proposition}\label{prop:12}
	Take $\lambda>m/r-1$, and a separable and ultradecent $\lambda$-invariant Banach space $X$ in $\Hol(D)$. Consider the mapping  $\Ff\colon X'\to \Hol(D)$ defined so that
	\[
	\Ff(L)(z)\coloneqq \overline{\langle L, \Kc^{\lambda/g}(\,\cdot\,,z)\rangle}
	\]
	for every $L\in X'$ and for every $z\in D$. Then, $\Ff $ is well defined and continuous, $\Ff (X')$ is $\lambda$-invariant with the induced topology, and the sesquilinear form
	\[
	X\times \Ff (X')\ni (f,h)\mapsto \lim_{R\to 1^-} \langle f(R\,\cdot\,)\vert h(R\,\cdot\,)\rangle_{H_\lambda}\in \C
	\]
	is well defined and induces an antilinear isomorphism of $\Ff (X')$ onto $X'$.
\end{proposition}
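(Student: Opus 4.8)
The plan is to construct the antilinear isomorphism in three moves: first make sense of $\Ff$ and show it lands continuously in $\Hol(D)$, then identify $\Ff(X')$ as an honest $\lambda$-invariant space, and finally show that the regularized $H_\lambda$-pairing inverts $\Ff$. Since $X$ is separable and ultradecent, Proposition~\ref{prop:7} gives conditions (1)--(4); in particular $\widetilde U_\lambda$ induces a \emph{continuous} representation of $\widetilde G(D)$ in $X$, the dilations $\delta_R$ form a strongly continuous semigroup, the projectors $\pi_{\vect s}$ act continuously on $X$, and $\Pc\cap\overline X$ is dense in $X$. Because $q(\lambda)=0$, Proposition~\ref{prop:8} forces $\overline X$ (the closure in $\Hol(D)$) to be $\Set{0}$ or all of $\Hol(D)$, so in the non-trivial case $\Pc\subseteq X$ densely. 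The first key point is that each section $\Kc^{\lambda/g}(\,\cdot\,,z)$ lies in $X$, which I would obtain from homogeneity: choosing $\phi_z\in\widetilde G(D)$ with $\phi_z(0)=z$, the transformation law~\eqref{eq:2} (raised to the power $\lambda/g$) together with the fact that $\Kc^{\lambda/g}(\,\cdot\,,0)$ is constant yields $\Kc^{\lambda/g}(\,\cdot\,,z)=c_z\,\widetilde U_\lambda(\phi_z)\mathbf 1$ for a scalar $c_z$ depending antiholomorphically on $z$, where $\mathbf 1\in\Pc\subseteq X$ is the constant function $1$. As $\widetilde U_\lambda(\phi_z)$ is a bounded operator of $X$, this puts $\Kc^{\lambda/g}(\,\cdot\,,z)$ in $X$ with locally bounded norm, and strong continuity makes $z\mapsto \Kc^{\lambda/g}(\,\cdot\,,z)\in X$ antiholomorphic. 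Hence $\Ff(L)(z)=\overline{\langle L,\Kc^{\lambda/g}(\,\cdot\,,z)\rangle}$ is holomorphic in $z$, and $\abs{\Ff(L)(z)}\meg\norm{L}_{X'}\norm{\Kc^{\lambda/g}(\,\cdot\,,z)}_X$ shows $\Ff\colon X'\to\Hol(D)$ is well defined and continuous.

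Next I would check that $\Ff$ is injective and that $\Ff(X')$, with the transported norm, is $\lambda$-invariant. Injectivity reduces to showing that the closed linear span $Y$ of the sections $\Kc^{\lambda/g}(\,\cdot\,,z)$ equals $X$: the span is a closed $\widetilde U_\lambda$-invariant subspace containing the constants, and applying the continuous projectors $\pi_{\vect s}$ and antiholomorphic $z$-derivatives at $0$ to the $X$-valued map $z\mapsto\Kc^{\lambda/g}(\,\cdot\,,z)$ produces, via the Faraut--Kor\'anyi expansion $\Kc^{\lambda/g}(\,\cdot\,,z)=\sum_{\vect s}(\lambda)_{\vect s}K_{\vect s}(\,\cdot\,,z)$ (with $K_{\vect s}$ the Fischer reproducing kernel of $\Pc_{\vect s}$), elements spanning every $\Pc_{\vect s}$; thus $\Pc\subseteq Y$ and $Y=X$, so $\Ff(L)=0$ gives $L=0$. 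The operator $\Ff$ intertwines the contragredient representation $\phi\mapsto\widetilde U_\lambda(\phi^{-1})^*$ on $X'$ with the ray representation $U_\lambda$ on $\Ff(X')$, up to the scalar cocycle from~\eqref{eq:2}; since the contragredient is bounded, $U_\lambda$ is bounded on $\Ff(X')$. Finally, $\Ff(X')$ is a Banach space embedding continuously into $\Hol(D)$ (hence Hausdorff and saturated), and it is \emph{strongly} decent because, by the pairing computed below, each $f\in X$ defines a decent functional on $\Ff(X')$ and the image of $X$ in $X''=(\Ff(X'))'$ is $\sigma(X'',X')$-dense by Goldstine's theorem. Therefore $\Ff(X')$ is $\lambda$-invariant.

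For the pairing I would use Proposition~\ref{prop:9} (note $q(\vect s,\lambda)=0$ for all $\vect s$). With $(e_{\vect s,i})_i$ any $\Fc$-orthonormal basis of $\Pc_{\vect s}$, the expansion above gives $\pi_{\vect s}\Ff(L)=(\lambda)_{\vect s}\sum_i\overline{\langle L,e_{\vect s,i}\rangle}\,e_{\vect s,i}\in\Pc_{\vect s}$, whence the Fischer reproducing property yields $\tfrac1{(\lambda)_{\vect s}}\langle\pi_{\vect s}f\vert\pi_{\vect s}\Ff(L)\rangle_\Fc=\langle L,\pi_{\vect s}f\rangle$ for every $\vect s$ and every $f\in X$. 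Since $\delta_R$ scales $\Pc_{\vect s}$ by $R^{\abs{\vect s}}$, summing the components with weights $R^{2\abs{\vect s}}$ produces the clean identity
\[
\langle f(R\,\cdot\,)\vert \Ff(L)(R\,\cdot\,)\rangle_{H_\lambda}=\langle L,\delta_{R^2}f\rangle
\]
for all $f\in X$ and $R\in(0,1)$. Letting $R\to1^-$ and invoking the strong continuity of $\delta_{R^2}$ at $1$ (Proposition~\ref{prop:7}), the limit exists and equals $\langle L,f\rangle$. Thus the sesquilinear form in the statement is well defined, and $h=\Ff(L)\mapsto(f\mapsto\langle L,f\rangle)$ is precisely $\Ff^{-1}$; being antilinear (as $\Ff$ is) and bijective, it is the desired antilinear isomorphism of $\Ff(X')$ onto $X'$.

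I expect the main obstacle to be justifying the interchange of limits behind the displayed identity: applying $L$ term-by-term to the (only $\Hol(D)$-convergent) Faraut--Kor\'anyi series and summing the Peirce components in $X$ requires the continuity of the $\pi_{\vect s}$ on $X$ and the Abel--Fej\'er summation packaged in the proof of Proposition~\ref{prop:7}, while upgrading $z\mapsto\Kc^{\lambda/g}(\,\cdot\,,z)$ from a $\Hol(D)$-valued to a genuinely $X$-valued antiholomorphic map through the homogeneity identity is the second delicate point.
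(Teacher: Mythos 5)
Your proposal is correct, and its first half coincides with the paper's argument: the sections $\Kc^{\lambda/g}(\,\cdot\,,z)=c_z\,\widetilde U_\lambda(\phi_z)1$ lie in $X$ by homogeneity and $1\in\Pc\subseteq X$, antiholomorphy of $z\mapsto\Kc^{\lambda/g}(\,\cdot\,,z)\in X$ follows from strong continuity plus a Morera-type argument (exactly as in the paper), and the relation $U_\lambda(\phi)\Ff(L)=c_\phi\Ff(L\circ U_\lambda(\phi^{-1}))$ gives the invariance of $\Ff(X')$. Where you genuinely diverge is the central identity $\langle f(R\,\cdot\,)\vert\Ff(L)(R\,\cdot\,)\rangle_{H_\lambda}=\langle L,f(R^2\,\cdot\,)\rangle$: the paper proves it first only for $f=\Kc^{\lambda/g}(\,\cdot\,,z')$, using the reproducing formula $\langle g\vert\Kc^{\lambda/g}(R\,\cdot\,,z)\rangle_{H_\lambda}=g(Rz)$ obtained from an orthonormal basis of homogeneous polynomials, and then extends it to all $f\in X$ by the totality of the kernel sections in $X$ and the continuity of $f\mapsto f(R\,\cdot\,)\in H_\lambda$; you instead prove it for all $f$ at once by computing the Schur components $\pi_{\vect s}\Ff(L)=(\lambda)_{\vect s}\sum_i\overline{\langle L,e_{\vect s,i}\rangle}\,e_{\vect s,i}$ from the Faraut--Kor\'anyi expansion and Abel-summing in $X$. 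Your route yields an explicit formula for the components of $\Ff(L)$ (hence injectivity directly), at the price of the interchange-of-limits issues you flag; these are genuine but fixable with the paper's own tools, and more cleanly than by termwise application of $L$ to the $\Hol(D)$-convergent series: compute $\pi_{\vect s}\Ff(L)(z)=\overline{\langle L,\pi_{\vect s}\Kc^{\lambda/g}(\,\cdot\,,z)\rangle}$ through the $X$-valued integral $\pi_{\vect s}=\int_{K_0}\overline{\chi_{\vect s}(k)}U_0(k)\,\dd k$ (legitimate since $U_0$ restricted to $K_0$ acts continuously on $X$ by Proposition~\ref{prop:7}, and the $X$-valued and $\Hol(D)$-valued integrals agree by applying point evaluations), and obtain $\delta_{R^2}f=\sum_k R^{2k}\pi_k f$ in $X$ from the uniform bound $\sup_k\norm{\pi_k}_{\Lin(X)}<\infty$ together with the Fej\'er convergence established in the proof of Proposition~\ref{prop:7}. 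Finally, two of your steps are redundant: a Banach space embedding continuously into $\Hol(D)$ is automatically strongly decent and saturated (take $V=\Set{0}$ in the definition), so the Goldstine argument is unnecessary; and injectivity of $\Ff$ is free once the pairing recovers $L$ from $\Ff(L)$ --- the totality of the kernels, which you prove in order to get injectivity, is invoked in the paper for the extension step instead.
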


\begin{proof}
	Observe that $1=\Kc^{\lambda/g}(\,\cdot\,,0)\in X$ by Proposition~\ref{prop:7}, so that also 
	\[
	\Kc^{\lambda/g}(\,\cdot\,,\phi(0))=\overline{(J \phi)^{-\lambda/g} (0)}\widetilde U_\lambda(\phi)1
	\]
	belongs to $X$ for every $\phi\in \widetilde G(D)$. In addition, the mapping $\widetilde G(D)\ni \phi\mapsto \Kc^{\lambda/g}(\,\cdot\,,\phi(0))\in X$ is continuous by Proposition~\ref{prop:7}, so that using Morera's theorem and the preceding remarks we see that the mapping $D\ni z\mapsto \Kc^{\lambda/g}(\,\cdot\,,z)\in X$ is antiholomorphic. Thus, $\Ff $ is well defined and continuous. In particular, $\Ff (X')$ is a Banach space and embeds continuously into $\Hol(D)$. The $\lambda$-invariance of $\Ff (X')$ then follows from the preceding formulae, since $U_\lambda(\phi) \Ff (L)=c_\phi \Ff (L\circ U_\lambda(\phi^{-1}))$ for some $c_\phi\in \T$, for every $\phi\in G(D)$.
	
	Now, observe that, if $f\in H_\lambda$ and $(e_j)$ is an orthonormal basis of $H_\lambda$ consisting of homogeneous polynomials of degree $d_j$ (which is possible by Theorem~\ref{prop:9}), then $\Kc^{\lambda/g}(R\,\cdot\,,z)=\sum_j  e_j(R\,\cdot\,) \overline{e_j(z)}$, so that
	\[
	\langle f\vert \Kc^{\lambda/g}(R\,\cdot\,,z)\rangle_{H_\lambda}=\sum_{j} \langle f\vert e_j\rangle_{H_\lambda}R^{d_j} e_j(z) 	= \sum_j \langle f \vert e_j \rangle_{H_\lambda} e_j(R z)= f(Rz)
	\]
	for every $z\in D$ and for every $R\in (0,1)$. 
	In addition, 	observe that $f(R\,\cdot\,)\in H_\lambda$ for every $R\in (0,1)$ and for every $f\in \Hol(D)$ by Remark~\ref{oss:2} below. In particular, the mapping $X\ni f\mapsto f(R\,\cdot\,)\in H_\lambda$ is continuous for every $R\in (0,1)$.
	Therefore,   choosing $f=\Kc^{\lambda/g}(\,\cdot\,,z')$ for some $z'\in D$,  and $L\in X'$, 
	\[
	\langle f(R\,\cdot\,)\vert \Ff (L)(R\,\cdot\,)\rangle_{H_\lambda}=\overline{\Ff (L)(R^2\,\cdot\,)}=\langle L, f(R^2\,\cdot\,)\rangle.
	\]
	Since the set of $\Kc^{\lambda/g}(\,\cdot\,,z)$, $z\in D$, is total in $X$ by Proposition~\ref{prop:7}, the equality $\langle f(R\,\cdot\,)\vert \Ff (L)(R\,\cdot\,)\rangle_{H_\lambda}=\langle L, f(R^2\,\cdot\,)\rangle$ holds for every $f\in X$, so that
	\[
	\lim_{R\to 1^-} \langle f(R\,\cdot\,), \Ff (L)(R\,\cdot\,)\rangle_{H_\lambda}= \lim_{R\to 1^-} \langle L, f(R^2\,\cdot\,)\rangle=\langle L,f\rangle
	\]
	thanks to Proposition~\ref{prop:7}, whence the result.
\end{proof}

\section{Other Descriptions of the Spaces $H_\lambda$ and $\widetilde H_\lambda$}\label{sec:5}

We keep the notation of Sections~\ref{sec:2} to~\ref{sec:4}.
In addition, with reference to a fixed maximal tripotent $e$ of $Z$, we set $m\coloneqq \dim_\C Z_1(e)$ and $n\coloneqq \dim_\C Z_{1/2}(e)$, so that $m/r-1=a(r-1)/2$ and $g=(n+2 m)/r$.  Notice that $m,n,a,g$ do \emph{not} depend on the choice of $e$.
In addition, we define $\square\coloneqq \Delta(\nabla)$.

\subsection{The Case of Domains of Tube Type}

For  domains of tube type, we have the following intertwining formula (cf.~\cite[Theorem 6.4]{Arazy}).

\begin{theorem}\label{prop:10}
	Take $\lambda\in m/r-1-\N$ and assume that $D$ is of tube type (that is, $n=0$). Then, $\square^{m/r-\lambda}$ intertwines the representations $\widetilde U_\lambda$ and $\widetilde U_{g-\lambda}$ of $\widetilde G$ in $\Hol(D)$. In particular, 
	\[
	\widetilde H_\lambda=\Set{f\in \Hol(D)\colon \square^{m/r-\lambda}f\in H_{g-\lambda}}
	\]
	with a proportional seminorm.
\end{theorem}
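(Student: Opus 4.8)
The plan is to treat the two assertions separately: first the intertwining property of $\square^{m/r-\lambda}$, and then to deduce the seminorm identity from it together with the classification of $\lambda$-invariant semi-Hilbert spaces obtained in Section~\ref{sec:4}. Since $\lambda\in m/r-1-\N$, the exponent $k\coloneqq m/r-\lambda$ is a strictly positive integer, so $\square^k=\Delta(\nabla)^k$ is a genuine constant-coefficient holomorphic differential operator of order $rk$ on $Z$, hence a continuous endomorphism of $\Hol(D)$; it maps $\Pc_{\vect s}$ onto $\Pc_{\vect s-k\vect 1}$ (and kills $\Pc_{\vect s}$ when $\vect s-k\vect 1\notin\N_\Omega$). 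Because $\widetilde G(D)$ is connected and the maps $\phi\mapsto\widetilde U_\mu(\phi)$ are continuous representations in the Fréchet space $\Hol(D)$ while $\square^k$ is continuous and linear, the operator identity $\square^k\circ\widetilde U_\lambda(\phi)=\widetilde U_{g-\lambda}(\phi)\circ\square^k$ for all $\phi$ is equivalent to its infinitesimal form $\square^k\circ\dd\widetilde U_\lambda(X)=\dd\widetilde U_{g-\lambda}(X)\circ\square^k$ for all $X$ in $\gf=\mathrm{Lie}(\widetilde G(D))$. I would then exploit the graded decomposition $\gf=\gf_{-1}\oplus\gf_0\oplus\gf_1$ coming from the Jordan structure, under which $\dd\widetilde U_\mu$ is realized by constant vector fields on $\gf_{-1}$, by linear vector fields plus a weight term proportional to $\mu$ on $\gf_0$, and by quadratic vector fields plus a first-order multiplication term proportional to $\mu$ on $\gf_1$.

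On $\gf_{-1}$ the action $\dd\widetilde U_\mu$ is $\mu$-independent and given by directional derivatives $\partial_v$; since $\square^k$ has constant coefficients it commutes with $\partial_v$, and the intertwining is immediate. On $\gf_0$ I would use that $\Delta$ is relatively invariant under the structure group of $Z$ (it transforms by the determinant character), whence $\square=\Delta(\nabla)$ is covariant under the linear part; the difference between the weight terms for $\mu=\lambda$ and $\mu=g-\lambda$ is then exactly compensated by the degree drop $rk=m-r\lambda$ produced by $\square^k$, so the two sides agree. The case of $\gf_1$, the quadratic vector fields, is the crux and is \emph{where I expect the real work to lie}: here the required identity is the higher-rank analogue of Bol's identity, namely the covariance of the determinant operator $\square$ under the nonlinear (``special'') transformations. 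I would establish it either by the explicit covariance formula for $\Delta(\nabla)$ of Faraut--Korányi, or by transporting the computation through the Cayley transform to the tube realization $\Dc=A(e)+i\Omega$, where $\gf_1$ becomes the group of real translations $x\mapsto x+a$ and $\Delta$ is an ordinary polynomial on the flat space $A(e)_\C$; in either formulation the verification reduces to a commutation identity between $\square$ and the relevant first- and second-order operators.

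Granting the intertwining, I would deduce the seminorm statement \emph{without} an explicit Fischer-norm computation, by invoking the classification of Section~\ref{sec:4}. For $\lambda\in m/r-1-\N$ and $D$ of tube type one has $g=2m/r$, so $g-\lambda\Meg m/r+1>a(r-1)/2$; thus $g-\lambda$ lies in the continuous part of the Wallach set and $H_{g-\lambda}$ is a genuine, strictly $(g-\lambda)$-invariant Hilbert space. Consider
\[
X\coloneqq\Set{f\in\Hol(D)\colon \square^k f\in H_{g-\lambda}}
\]
with the pulled-back seminorm $\norm{f}_X\coloneqq\norm{\square^k f}_{H_{g-\lambda}}$. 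The intertwining descends to the ray representations up to a unimodular factor, so the identity $\norm{U_\lambda(\phi)f}_X=\norm{U_{g-\lambda}(\phi)\square^k f}_{H_{g-\lambda}}=\norm{\square^k f}_{H_{g-\lambda}}=\norm{f}_X$ shows that $U_\lambda$ acts isometrically on $X$. Moreover $\ker\square^k$ is a closed $\widetilde U_\lambda$-invariant subspace of $\Hol(D)$ (closed since $\square^k$ is continuous, invariant by the intertwining), it is contained in $X$ and equals the radical of $\norm{\cdot}_X$ (as $H_{g-\lambda}$ is Hausdorff); together with the continuity of $X\to\Hol(D)/\ker\square^k$, which follows from the continuous inclusion $H_{g-\lambda}\hookrightarrow\Hol(D)$, and the routine completeness of $X$, this exhibits $X$ as a strongly decent and saturated, strictly $\lambda$-invariant semi-Hilbert space; it is non-trivial, since it contains all polynomials.

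By the classification theorem of Section~\ref{sec:4}, such an $X$ must coincide, with proportional (semi)norm, with either $H_\lambda$ or $\widetilde H_\lambda$. Since $\lambda\in m/r-1-\N=a(r-1)/2-\N$ and the radical $\ker\square^k$ is exactly the closure of $\Set{0}$ dictated by this value of $\lambda$, it is the latter: $X=\widetilde H_\lambda$ with a proportional seminorm (the two descriptions agreeing when $q(\lambda)=0$), which is precisely the asserted equality. As an independent check of the proportionality constant one can instead argue through the Fischer description of Theorem~\ref{prop:9}: in the Fischer product $\square$ is adjoint to multiplication by $\Delta$, so $\norm{\square^k p}_\Fc^2=\langle p\mid\Delta^k\square^k p\rangle_\Fc$ is computed by the scalar through which $\Delta^k\square^k$ acts on each $\Pc_{\vect s}$, and comparing this ratio of the products $(\,\cdot\,)_{\vect s}$ against $(\lambda)'_{\vect s}$ and $(g-\lambda)_{\vect s-k\vect 1}$ recovers the constant term by term.
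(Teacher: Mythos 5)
First, a point of reference: the survey itself does not prove Theorem~\ref{prop:10} at all; it is quoted from the literature (Arazy, Theorem 6.4), so your proposal must be judged on its own merits rather than against an in-paper argument. Judged that way, the second half of your proposal is correct and is a genuinely clean route: the space $X=\Set{f\in \Hol(D)\colon \square^{m/r-\lambda}f\in H_{g-\lambda}}$ with the pulled-back seminorm is strictly $\lambda$-invariant (isometric up to unimodular factors, granting the intertwining), its radical is $\ker\square^{m/r-\lambda}$, the continuity of $X\to \Hol(D)/\ker\square^{m/r-\lambda}$ and completeness follow from the closed range/surjectivity of a constant-coefficient operator on $\Hol(D)$ (the same Hörmander-type fact the survey invokes in the proof of Theorem~\ref{theorem:3}), and $X$ is non-trivial and non-Hausdorff; the classification of Section~\ref{sec:4} then forces $X=\widetilde H_\lambda$ with proportional seminorm, the alternative $H_\lambda$ being excluded precisely because it is Hausdorff while $\ker\square^{m/r-\lambda}$ contains the constants. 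This mirrors how Proposition~\ref{prop:20} sits alongside Theorem~\ref{theorem:1} in the unbounded setting, and it is a legitimate alternative to verifying the Fischer-norm proportionality term by term. Your infinitesimal bookkeeping on $\gf_{-1}$ and $\gf_0$ is also sound: for $T$ in the structure algebra one has $[\square^k,\partial_{Tz}]=\tfrac{kr}{m}(\tr T)\,\square^k$, and $\tfrac{g-2\lambda}{g}=\tfrac{kr}{m}$ exactly when $g=2m/r$ and $\lambda=m/r-k$, so the weights match.

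The genuine gap is in the first half, and it is not a small one: the intertwining on $\gf_1$ \emph{is} the theorem, and your proposal never establishes it. Of your two fallbacks, citing ``the explicit covariance formula for $\Delta(\nabla)$ of Faraut--Kor\'anyi'' is citing the very identity to be proven (the higher-rank Bol identity), which is acceptable as a reference but means the crux is assumed rather than argued. The alternative via the Cayley transform rests on a false premise: the bounded-picture $\gf_1$ does \emph{not} become the real translations of the tube $F+i\Omega$. Already for the unit disc, the translation field $\partial_w$ pulls back under $w=i(1+z)/(1-z)$ to $\tfrac{1}{2i}(1-z)^2\partial_z$, a mixture of all three graded pieces; conversely, passing to the unbounded realization leaves you with the affine group (where the commutations are indeed easy, as in your $\gf_{-1},\gf_0$ computations) \emph{plus} the inversion $\sigma\colon z\mapsto -z^{-1}$, and the required covariance
\[
\square^{k}\left[\Delta^{-\lambda}\,(f\circ\sigma)\right]=c\,\Delta^{-(g-\lambda)}\,\left(\square^{k}f\right)\circ\sigma,
\qquad k=m/r-\lambda,
\]
is exactly the nontrivial Bol-type identity you set out to avoid. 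So in either realization the hard generator is untouched, and the proof as written is circular at its core. To close it you would need to actually prove this identity (e.g.\ by the Bernstein-type relation $\square\,\Delta^{s}=b(s)\Delta^{s-1}$ and induction, or by an honest computation with the quadratic fields $z\mapsto\{z,a,z\}$), or cite it in a precise form.
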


When $g-\lambda>g-1$, that is, $\lambda<1$, this provides an integro-differential description of  $\widetilde H_\lambda$, since $H_{g-\lambda}=A^2_{g-\lambda}(D)$. As we shall see later, in the context of Siegel domains this provides an integro-differential description of all the spaces $\widetilde H_\lambda$, without any restrictions on $\lambda$. 

Another description of $\widetilde H_\lambda$ for all $\lambda\in m/r-1-\N$ is provided in~\cite[Theorem 12]{Arazy2}. Here, $H^2(D)$ denotes the Hardy space on $D$, which is the space $H_{(n+m)/r}$ with a proportional norm (cf.~Subsection~\ref{sec:5:4}).

\begin{theorem}
	Take $\lambda\in m/r-1-\N$ and assume that $D$ is of tube type (that is, $n=0$). Then,
	\[
	\langle f\vert g \rangle_{\widetilde H_\lambda}= \langle \Delta^{m/r-\lambda} \square^{m/r-\lambda} f \vert g \rangle_{H^2(D)}
	\]
	for every $f,g\in \widetilde H_\lambda$.
\end{theorem}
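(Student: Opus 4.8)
The plan is to test the identity on the orthogonal pieces of the Peirce decomposition $\Pc=\bigoplus_{\vect s\in\N_\Omega}\Pc_{\vect s}$ (Proposition~\ref{prop:3}) and to observe that, on each of them, the operator appearing on the right-hand side acts as a scalar. Write $k\coloneqq m/r-\lambda$, a \emph{positive integer} since $\lambda\in m/r-1-\N$; because $n=0$ one has $g-\lambda=2m/r-\lambda=m/r+k$ and $H^2(D)=H_{m/r}$, both honest Hilbert spaces as $m/r+k>m/r>a(r-1)/2$. The determinant $\Delta$ is relatively $K$-invariant, so the constant-coefficient operator $\square=\Delta(\nabla)$ is $K$-equivariant up to the corresponding character; hence $\square^{k}$ sends $\Pc_{\vect s}$ into $\Pc_{\vect s-k\vect 1}$ and, by Schur's lemma, is an isomorphism onto it exactly when $\vect s-k\vect 1\in\N_\Omega$, i.e.\ when $s_r\Meg k$, and vanishes otherwise. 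Multiplication by $\Delta^{k}=\Delta^{k\vect 1}$ is the inverse isomorphism $\Pc_{\vect s-k\vect 1}\to\Pc_{\vect s}$, so $\Delta^{k}\square^{k}$ preserves $\Pc_{\vect s}$ and acts there as a scalar $\gamma_{\vect s}\Meg 0$, with $\gamma_{\vect s}=0$ iff $s_r<k$. Finally, $\langle p\vert q\rangle_\Fc=p(\overline\nabla)\overline q(0)$ makes $\square^{k}$ the Fischer-adjoint of multiplication by $\Delta^{k}$, whence $\langle\square^{k}p\vert\square^{k}q\rangle_\Fc=\gamma_{\vect s}\langle p\vert q\rangle_\Fc$ for $p,q\in\Pc_{\vect s}$.

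Applying Theorem~\ref{prop:9} to $H^2=H_{m/r}$ and to $H_{g-\lambda}=H_{m/r+k}$ and expanding in the components $f_{\vect s}=\pi_{\vect s}f$, the preceding remarks give
\begin{align*}
\langle \Delta^{k}\square^{k}f\vert g\rangle_{H^2}&=\sum_{s_r\Meg k}\frac{\gamma_{\vect s}}{(m/r)_{\vect s}}\,\langle f_{\vect s}\vert g_{\vect s}\rangle_\Fc,\\
\langle \square^{k}f\vert \square^{k}g\rangle_{H_{g-\lambda}}&=\sum_{s_r\Meg k}\frac{\gamma_{\vect s}}{(m/r+k)_{\vect s-k\vect 1}}\,\langle f_{\vect s}\vert g_{\vect s}\rangle_\Fc.
\end{align*}
The decisive point is that, rewriting the symbols~\eqref{eq:1} as ratios of $\Gamma$-factors, the quotient
\[
\frac{(m/r+k)_{\vect s-k\vect 1}}{(m/r)_{\vect s}}=\prod_{j=1}^{r}\;\prod_{l=0}^{k-1}\Bigl(1+\tfrac a2(r-j)+l\Bigr)^{-1}\eqqcolon C
\]
is \emph{independent of $\vect s$} (the factors are positive, hence nonzero). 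Thus $\langle \Delta^{k}\square^{k}f\vert g\rangle_{H^2}=C\,\langle \square^{k}f\vert \square^{k}g\rangle_{H_{g-\lambda}}$.

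This last identity already yields the assertion up to normalization. Indeed the right-hand side is, up to the positive constant $C$, the pull-back to $\widetilde H_\lambda$ of the scalar product of $H_{g-\lambda}$ under $\square^{k}$; it is therefore well defined on $\widetilde H_\lambda$ (where $\square^{k}f\in H_{g-\lambda}$), Hermitian, and $\widetilde U_\lambda$-invariant, because $\square^{k}$ intertwines $\widetilde U_\lambda$ and $\widetilde U_{g-\lambda}$ (Theorem~\ref{prop:10}) and $H_{g-\lambda}$ is $\widetilde U_{g-\lambda}$-invariant. Since $\widetilde U_\lambda$ acts irreducibly on $\widetilde H_\lambda$ modulo the closure of $\{0\}$ (the invariant closed subspaces form the chain of Proposition~\ref{prop:8}), the invariant Hermitian form is unique up to a positive factor, so the right-hand side is proportional to $\langle\,\cdot\,\vert\,\cdot\,\rangle_{\widetilde H_\lambda}$; the two sums moreover range over the same indices, since $\{\vect s:s_r\Meg k\}=\{\vect s:q(\vect s,\lambda)=q(\lambda)\}$. (Reading $q(\vect s,\lambda)$ as the order of vanishing of~\eqref{eq:1} at $\lambda$, the factor indexed by $j=r$ contributes exactly when $s_r\Meg k$, and this is the binding condition because $s_1\Meg\cdots\Meg s_r$.) The stated equality records the normalization of $\langle\,\cdot\,\vert\,\cdot\,\rangle_{\widetilde H_\lambda}$ for which this proportionality constant equals $1$.

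The main obstacle is precisely the cancellation that renders $(m/r+k)_{\vect s-k\vect 1}/(m/r)_{\vect s}$ constant in $\vect s$: this is what promotes the elementary $K$-equivariant identity $\Delta^{k}\square^{k}|_{\Pc_{\vect s}}=\gamma_{\vect s}$ into a fully $\widetilde G(D)$-invariant statement, and it is here that the tube-type hypothesis $n=0$ enters, through the coincidences $g-\lambda=m/r+k$ and $H^2=H_{m/r}$. The remaining difficulties are bookkeeping: identifying the ranges of summation and tracking the explicit constant relating the two customary normalizations of the seminorm on $\widetilde H_\lambda$.
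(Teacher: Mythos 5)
The paper itself offers no proof of this statement: it is quoted from \cite[Theorem 12]{Arazy2} (see the sentence introducing it), so your argument has to stand on its own. Its skeleton is sound, and the computations you do carry out check out: in the tube case $\Delta$ is relatively $K$-invariant, so $\square^{k}$ maps $\Pc_{\vect s}$ into $\Pc_{\vect s-k\vect 1_r}$ and kills it when $s_r<k$; $\Delta^{k}\square^{k}$ acts on $\Pc_{\vect s}$ as a scalar $\gamma_{\vect s}\Meg 0$; Fischer adjointness gives $\langle\square^{k}p\vert\square^{k}q\rangle_\Fc=\gamma_{\vect s}\langle p\vert q\rangle_\Fc$; the ratio $(m/r+k)_{\vect s-k\vect 1_r}/(m/r)_{\vect s}$ does collapse to the $\vect s$-independent constant you display; and $\Set{\vect s\colon s_r\Meg k}=\Set{\vect s\colon q(\vect s,\lambda)=q(\lambda)}$ for exactly the reason you give. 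Two points are loose but reparable. First, for general $f,g\in\widetilde H_\lambda$ neither $\Delta^{k}\square^{k}f$ nor $g$ need belong to $H^2(D)$ (the components $\pi_{\vect s}(g)$ with $s_r<k$ are completely unconstrained), so the right-hand side must be given a meaning: prove the identity on polynomials and interpret the pairing as the resulting absolutely convergent series, or as a limit over dilates as in Remark~\ref{oss:2}. Second, the irreducibility appeal is not licensed by Proposition~\ref{prop:8}, which classifies closed invariant subspaces of $\Hol(D)$, not of the Hilbert quotient of $\widetilde H_\lambda$; but you do not need it, since the ``proportional seminorm'' clause of Theorem~\ref{prop:10}, which you already cite, gives the same proportionality after polarization.

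The genuine gap is that you prove proportionality, not the stated equality. The theorem asserts equality against a seminorm the paper has already fixed: by definition $\langle f\vert g\rangle_{\widetilde H_\lambda}=\sum_{q(\vect s,\lambda)=q(\lambda)}\frac{1}{(\lambda)'_{\vect s}}\langle \pi_{\vect s}(f)\vert \pi_{\vect s}(g)\rangle_\Fc$, so there is no normalization left to choose and your closing sentence cannot do any work. The missing step is precisely the coefficient identity $\gamma_{\vect s}\,(\lambda)'_{\vect s}=(m/r)_{\vect s}$ for $s_r\Meg k$, and this is where the real content of the theorem sits. One can compute $\gamma_{\vect s}$ with tools already in the paper: taking $s=\lambda$ in~\eqref{eq:3} gives $\Delta^{k}\square^{k}=D^{k}_\lambda$, whence $\gamma_{\vect s}=(\lambda\vect 1_r+\vect s)_{k\vect 1_r}=(m/r)_{\vect s}/(m/r)_{\vect s-k\vect 1_r}$, so the required identity reads $(\lambda)'_{\vect s}=(m/r)_{\vect s-k\vect 1_r}$. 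Carrying out the comparison shows the quotient of the two sides is independent of $\vect s$ but in general \emph{not} equal to $1$: already for the unit disc with $\lambda=-2$ (so $k=3$) one has $(\lambda)'_s=2\,(s-3)!$ while $(m/r)_{s-k}=(s-3)!$, so the right-hand side of the theorem is \emph{twice} $\langle f\vert g\rangle_{\widetilde H_\lambda}$ as the paper defines it; in rank one the discrepancy is $(k-1)!$. In other words, with the survey's own normalization the stated equality is exact only for $k\meg 2$, and exactness in general is a statement about the normalization used in \cite{Arazy2}. So your instinct that a normalization issue lurks here was correct, but a complete proof must do the coefficient comparison you avoided: once your two expansions are in hand, comparing them directly with the definition of $\widetilde H_\lambda$ settles everything (making the detour through $H_{g-\lambda}$, the constant $C$, and invariance superfluous) and yields the theorem in the honest form ``equality up to an explicit positive constant depending only on $\lambda$ and $D$.''
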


See also~\cite{Arazy2} for other integral descriptions of the $\widetilde H_\lambda$ in this context.

\subsection{The Case of the Unit Ball}

When $D$ is the unit ball in $\C^{n+1}$, one may provide some interpretations of $H_\lambda$, for $\lambda>0$, and of $\widetilde H_\lambda$, for $\lambda\in-\N$, which do not seem to have been extended to more general domains (which are not of tube type). These results should nonetheless be compared with those of Subsection~\ref{sec:5:3}. In order to facilitate this comparison, we shall adopt similar notation. We observe explicitly that, by Remark~\ref{oss:1} below, it is sufficient to provide such a description only for the $f$ which belong to $\Hol(R D)$ for some $R>1$.

Denote by $D^1_0$ the radial derivative on $\C^{n+1}$, so that $D^1_0 f(z)= z f'(z)$ for every $f\in \Hol(\C^{n+1})$ and for every $z\in \C^{n+1}$. Then, define 
\[
D^k_s\coloneqq (D_0^1+s)( D_{0}^1+s+1)\cdots (D_{1}^1+s+k-1).
\]
for every $s\in \R$ and for every $k\in \N$. 

Then, the following description of $H_\lambda$, $\lambda>0$, holds (cf.~\cite{Peetre3,Arazy3} and also~\cite[Theorem 7.1]{Arazy}). 

\begin{theorem}
	Take $\lambda>0$ and $f\in \Hol(R D)$ for some $R>1$. Then, 
	\[
	\norm{f}_{H_\lambda}^2=\abs{f(0)}^2+\frac{1}{n!} \int_{D} D_0^{n+1} f (z) \overline{f(z)} \frac{(1-\abs{z}^2)^{\lambda-1}}{ \abs{z}^{2(n+1)}}\,\dd z.
	\]
	If, in addition, $\lambda>1$, then
	\[
	\norm{f}_{H_\lambda}^2=\frac{\lambda-1}{n!} \int_{D} D_1^{n} f (z) \overline{f(z)} \frac{(1-\abs{z}^2)^{\lambda-2}}{ \abs{z}^{2n}}\,\dd z.
	\]
\end{theorem}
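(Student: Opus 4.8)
The plan is to diagonalize both sides with respect to the grading of $\Hol(D)$ by homogeneous degree and to match the resulting scalar coefficients against those furnished by Theorem~\ref{prop:9}. The unit ball $D$ in $\C^{n+1}$ is the rank-one domain of type $(I_{1,n+1})$, so that $r=1$, $a=0$, $m=1$ and $g=n+2$; the Wallach set is $[0,+\infty)$ and, since $a=0$, one has $q(\vect s,\lambda)=0$ for every $\vect s$ as soon as $\lambda>0$. Writing $\vect s=(k)$, Theorem~\ref{prop:9} thus reads
\[
\norm{f}_{H_\lambda}^2=\sum_{k=0}^\infty \frac{1}{(\lambda)_k}\norm{\pi_k(f)}_\Fc^2,\qquad (\lambda)_k\coloneqq \lambda(\lambda+1)\cdots(\lambda+k-1),
\]
where $\pi_k$ is the projection onto the space $\Pc_k$ of homogeneous polynomials of degree $k$ (so that $\Pc_{(k)}=\Pc_k$). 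First I would fix $f\in\Hol(RD)$ with $R>1$ and decompose $f=\sum_k f_k$, $f_k=\pi_k(f)$; the convergence of this series in $\Hol(RD)$ legitimizes all the term-by-term integrations and the exchange of sum and integral below, and reduces the problem to proving that each stated integral equals the above sum.

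Next I would diagonalize the two right-hand sides. The radial operator acts on the grading by $D_0^1 f_k=kf_k$, hence $D_0^{n+1}f_k=k(k+1)\cdots(k+n)\,f_k$ and $D_1^{n}f_k=(k+1)\cdots(k+n)\,f_k$; in particular $D_0^{n+1}$ kills the constant term, which is precisely why the first formula carries the separate summand $\abs{f(0)}^2$. The weights $(1-\abs{z}^2)^{\lambda-1}\abs{z}^{-2(n+1)}$ and $(1-\abs{z}^2)^{\lambda-2}\abs{z}^{-2n}$ depend only on $\abs{z}$, hence are invariant under $z\mapsto \ee^{i\theta}z$, an operation that commutes with $D_0^{n+1}$ and $D_1^n$. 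Since $f_k(\ee^{i\theta}z)=\ee^{ik\theta}f_k(z)$, averaging in $\theta$ forces every cross term with $j\neq k$ to vanish, so both integrals collapse to $\sum_k$ of a purely diagonal contribution.

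It then remains to evaluate each diagonal term. Passing to polar coordinates $z=\rho\omega$ and using that $\Pc_k$ is $K$-irreducible (Proposition~\ref{prop:3}, with $K=U(n+1)$), the angular integral $\int_{\Sd}\abs{f_k(\omega)}^2\,\dd\sigma(\omega)$ is a $K$-invariant Hermitian form on the irreducible $\Pc_k$, hence equals $c_k\norm{f_k}_\Fc^2$ for a positive constant $c_k$ that I would compute from the radial Gaussian integral defining $\Fc$. The radial part reduces, via $t=\rho^2$, to a Beta integral: $\int_0^1\rho^{2k-1}(1-\rho^2)^{\lambda-1}\,\dd\rho=\tfrac12 B(k,\lambda)$ for the first formula (the vanishing $k=0$ term removing any issue at $\rho=0$, convergence at $\rho=1$ needing only $\lambda>0$) and $\int_0^1\rho^{2k+1}(1-\rho^2)^{\lambda-2}\,\dd\rho=\tfrac12 B(k+1,\lambda-1)$ for the second (convergent exactly when $\lambda>1$, which explains that hypothesis). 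Combining the combinatorial factor with the Beta factor, with $c_k\propto\Gamma(k+n+1)$, and with the prefactor $1/n!$ (resp.\ $(\lambda-1)/n!$), the Gamma functions telescope and both give a degree-$k$ coefficient proportional to $\Gamma(\lambda)/\Gamma(\lambda+k)=1/(\lambda)_k$; for the second formula the surviving $k=0$ term reproduces exactly the separate $\abs{f(0)}^2$ of the first.

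The main obstacle is the precise bookkeeping of the normalizing constants. All the $k$-dependence telescopes cleanly to $1/(\lambda)_k$, but what then remains is a single degree-\emph{independent} factor, and I would have to reconcile the normalizations of the Fischer product and of the Fock space $\Fc$ with Lebesgue measure on $D$ and with the total mass of the surface measure on $\Sd$ so that this overall constant equals $1$ and the coefficient of $\norm{f_k}_\Fc^2$ is exactly $1/(\lambda)_k$ for every $k$, including the delicate degree-zero term. Once the radial Beta/Gamma identities and this single constant are verified, summing over $k$ and invoking Theorem~\ref{prop:9} yields both displayed formulae.
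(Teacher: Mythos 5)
The paper itself does not prove this theorem: it is quoted from the literature (cf.~\cite{Peetre3,Arazy3} and \cite[Theorem 7.1]{Arazy}), so there is no proof here to compare yours against, and your argument has to stand on its own. Structurally it does: for the ball one indeed has $r=1$, $a=0$, $g=n+2$, $\Pc_{(k)}=\Pc_k$, so Theorem~\ref{prop:9} reads $\norm{f}_{H_\lambda}^2=\sum_k\norm{\pi_k(f)}_\Fc^2/(\lambda)_k$; the radial operators act on $\Pc_k$ by the scalars you give (with $D_0^{n+1}$ annihilating constants, which accounts for the separate $\abs{f(0)}^2$); rotation invariance of the weights kills the cross terms; and each diagonal term reduces in polar coordinates to a Beta integral times the spherical integral $\int_{\Sd}\abs{f_k}^2\,\dd\sigma$, which by irreducibility of $\Pc_k$ under $U(n+1)$ is a constant $c_k$ times $\norm{f_k}_\Fc^2$. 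One small slip: testing on $z_1^k$ against the Gaussian gives $c_k=2\pi^{n+1}/\Gamma(k+n+1)$, so $c_k\propto 1/\Gamma(k+n+1)$ rather than $\propto\Gamma(k+n+1)$; with your reading the telescoping you assert would fail, with the correct one it works exactly as you say.

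The genuine gap is precisely the step you defer as ``the main obstacle'': the degree-independent constant is \emph{not} equal to $1$ under the conventions this paper actually fixes. Carrying your computation to the end with Lebesgue measure $\dd z$ on $\C^{n+1}$, with the paper's Fock norm $\norm{f}_\Fc^2=\pi^{-(n+1)}\int_Z\abs{f}^2\ee^{-\abs{z}^2}\,\dd z$ (so $\norm{z_1^k}_\Fc^2=k!$), and with $H_\lambda$ normalized as in Theorem~\ref{prop:9}, \emph{both} displayed integrals come out to $\frac{\pi^{n+1}}{n!}\sum_k\frac{1}{(\lambda)_k}\norm{\pi_k(f)}_\Fc^2$; for instance, for $n=0$ and $f=z^k$ the first right-hand side equals $\pi\,k!/(\lambda)_k$, whereas $\norm{z^k}_{H_\lambda}^2=k!/(\lambda)_k$. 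So the two formulas are mutually consistent (your observation that the $k=0$ term of the second reproduces the $\abs{f(0)}^2$ of the first is correct), but they recover $\norm{f}_{H_\lambda}^2$ only up to the factor $\pi^{n+1}/n!$. Closing the proof therefore requires identifying the normalization under which the statement is intended --- in Arazy's original formulation the integration is against a suitably normalized measure, which absorbs exactly this factor --- or else correcting the constants $1/n!$ and $(\lambda-1)/n!$; it is not a routine bookkeeping check that resolves in favour of the statement as literally written. Since this is the only step where the specific constants in the statement need justification, and it is the one you leave open, the proof is incomplete exactly there; everything else is right.
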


Notice that, when $\lambda>1$, $H_\lambda$ is a weighted Bergman space, so that a simpler description applies.

There is also the following description of $\widetilde H_\lambda$, which was first found in~\cite{Peloso} for $\lambda=0$, and then extended to all $\lambda\in-\N$ (cf.~\cite{Peetre3,Arazy3} and also~\cite[Theorem 7.2]{Arazy}).

\begin{theorem}
	Take $\lambda\in-\N$ and $f\in \Hol(R D)$ for some $R>1$. Then,
	\[
	\norm{f}_{\widetilde H_\lambda}^2= \frac{1}{(-\lambda)!}\left( \norm{f_{1-\lambda}}_\Fc^2+\frac{1}{n!} \int_D D_{\lambda-1}^{n+2-\lambda} f(z) \overline{f(z)} \frac{1}{\abs{z}^{2(n+2-\lambda)}} \,\dd z\right)
	\]
	and
	\[
	\norm{f}_{\widetilde H_\lambda}^2 =\frac{1}{(-\lambda)!n!} \langle D_{\lambda}^{n+1-\lambda} f \vert f \rangle_{H^2(D)}.
	\]
\end{theorem}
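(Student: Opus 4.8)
The plan is to reduce both identities to the explicit series expansion of the seminorm of $\widetilde H_\lambda$ coming from its definition, and then to recognize each right-hand side as that same series. For the ball in $\C^{n+1}$ we have $r=1$ and $a=0$, so the isotypic components $\Pc_{(s)}$ are precisely the spaces of homogeneous polynomials of degree $s$, and $\pi_{(s)}f=f_s$ is the homogeneous part of $f$ of degree $s$. Since $q(\lambda)=1$ for every $\lambda\in-\N$ (the factor with index $i=-\lambda$ in $(\lambda')_{(s)}=\prod_{i=0}^{s-1}(\lambda'+i)$ produces a simple zero at $\lambda$ as soon as $s\Meg 1-\lambda$), the definition of $\widetilde H_\lambda$ gives
\[
\norm{f}_{\widetilde H_\lambda}^2=\sum_{s\Meg 1-\lambda}\frac{1}{(\lambda)'_{(s)}}\norm{f_s}_\Fc^2 .
\]
First I would compute the limit defining $(\lambda)'_{(s)}$: factoring out $(\lambda'-\lambda)$ and evaluating at $\lambda'=-(-\lambda)$ yields $(\lambda)'_{(s)}=(-\lambda)!\,(s-1+\lambda)!$. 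Thus it suffices to show that each right-hand side equals $\frac{1}{(-\lambda)!}\sum_{s\Meg 1-\lambda}\frac{1}{(s-1+\lambda)!}\norm{f_s}_\Fc^2$. The hypothesis $f\in\Hol(RD)$ for some $R>1$ (which by Remark~\ref{oss:1} below is no real restriction) guarantees a geometric bound on $\norm{f_s}_\Fc$, so every series and every interchange of summation and integration below converges absolutely.

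The key structural fact is that all the operators involved act diagonally on homogeneous components: since $D^1_0 f_d=d\,f_d$, one has $D^k_s f_d=\bigl(\prod_{i=0}^{k-1}(d+s+i)\bigr)f_d$. In particular $D^{n+1-\lambda}_\lambda$ multiplies $f_d$ by $\nu_d\coloneqq(d+\lambda)(d+\lambda+1)\cdots(d+n)=\frac{(d+n)!}{(d+\lambda-1)!}$, and $D^{n+2-\lambda}_{\lambda-1}$ multiplies $f_d$ by $\mu_d\coloneqq(d+\lambda-1)(d+\lambda)\cdots(d+n)=\frac{(d+n)!}{(d+\lambda-2)!}$. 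Crucially, $\nu_d=0$ for $0\meg d\meg-\lambda$ and $\mu_d=0$ for $0\meg d\meg 1-\lambda$, i.e.\ both vanish on (most of) the radical $V_0=\bigoplus_{s\meg-\lambda}\Pc_{(s)}$; this is precisely what makes the identities hold modulo the radical and what tames the singular low-degree contributions.

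I would then treat the $H^2$-identity first, as it is the clean one. Identifying $H^2(D)=H_{n+1}$ and expanding its scalar product via Theorem~\ref{prop:9} with $(n+1)_{(d)}=(n+d)!/n!$ gives $\langle f\vert g\rangle_{H^2(D)}=\sum_d\frac{n!}{(n+d)!}\langle f_d\vert g_d\rangle_\Fc$. Since distinct-degree components are $H^2$-orthogonal,
\[
\langle D^{n+1-\lambda}_\lambda f\vert f\rangle_{H^2(D)}=\sum_d\nu_d\frac{n!}{(n+d)!}\norm{f_d}_\Fc^2=n!\sum_{d\Meg 1-\lambda}\frac{1}{(d+\lambda-1)!}\norm{f_d}_\Fc^2 ,
\]
where the sum truncates at $d\Meg 1-\lambda$ because $\nu_d=0$ for $d\meg-\lambda$. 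Dividing by $(-\lambda)!\,n!$ reproduces the series above exactly, establishing the second formula.

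For the ball-integral identity I would pass to polar coordinates $z=\rho\omega$. Orthogonality over the spheres kills all cross terms, leaving $\int_D D^{n+2-\lambda}_{\lambda-1}f\cdot\overline f\cdot\abs{z}^{-2(n+2-\lambda)}\,\dd z=\sum_d\mu_d\bigl(\int_0^1\rho^{2d+2\lambda-3}\,\dd\rho\bigr)\bigl(\int_S\abs{f_d}^2\bigr)$; the radial integral equals $\frac{1}{2(d+\lambda-1)}$ and converges exactly when $d\Meg 2-\lambda$. Combining $\mu_d/(2(d+\lambda-1))=\tfrac12(d+n)!/(d+\lambda-1)!$ with the Fischer–sphere identity $\int_S\abs{f_d}^2\,\dd\sigma=\frac{n!}{(n+d)!}\norm{f_d}_\Fc^2$ collapses the $d$-th term to a multiple of $\frac{1}{(d+\lambda-1)!}\norm{f_d}_\Fc^2$, summed over $d\Meg 2-\lambda$; the single missing top term $d=1-\lambda$ is exactly the separate summand $\norm{f_{1-\lambda}}_\Fc^2$, after which dividing by $(-\lambda)!\,n!$ recovers the series. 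The hard part will be the bookkeeping of constants — above all, fixing the normalization of $\dd z$ consistently with the one used to define $\Kc$ and $H^2(D)$ so that the product of the beta-integral constant and the Fischer–sphere constant leaves coefficient exactly $1$ — together with the rigorous treatment of the boundary degrees $d\meg 1-\lambda$: there one must apply $D^{n+2-\lambda}_{\lambda-1}$ (which annihilates every component of degree $\meg 1-\lambda$) \emph{before} integrating, so that the vanishing of $\mu_d$ pre-empts the divergent radial factor and no indeterminate $0\cdot\infty$ is ever written down.
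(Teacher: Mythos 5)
A preliminary remark: the paper contains no proof of this theorem — it is a survey statement quoted from Peloso, Peetre and Arazy (cf.\ the sentence preceding it) — so there is no internal argument to compare yours against; I evaluate it on its own merits. Your overall strategy is the right (and standard) one: reduce both sides to the series $\norm{f}_{\widetilde H_\lambda}^2=\frac{1}{(-\lambda)!}\sum_{d\Meg 1-\lambda}\frac{1}{(d+\lambda-1)!}\norm{f_d}_\Fc^2$, using that all operators involved act diagonally on homogeneous components. Your proof of the second ($H^2$) identity is in fact complete and correct: $(\lambda)'_s=(-\lambda)!\,(s-1+\lambda)!$, the eigenvalue $\nu_d$ of $D^{n+1-\lambda}_\lambda$ vanishes exactly for $0\meg d\meg-\lambda$, and the factor $(n+1)_d=(n+d)!/n!$ from Theorem~\ref{prop:9} cancels exactly, with no dependence on measure normalizations once $\langle\cdot\vert\cdot\rangle_{H^2(D)}$ is read as $\langle\cdot\vert\cdot\rangle_{H_{n+1}}$.

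The genuine gap is in the integral identity, at precisely the step you assert rather than prove: the claim that ``every interchange of summation and integration below converges absolutely'' is false. Take $n=0$, $\lambda=0$, $f=1+z^2$. Then $D^2_{-1}f=2z^2$ and the integrand is $2z^2(1+\overline z^2)\abs{z}^{-4}=2z^2\abs{z}^{-4}+2$, whose modulus behaves like $2\abs{z}^{-2}$ near the origin and is therefore \emph{not} Lebesgue integrable on $D$. The obstruction is not the diagonal degrees $d\meg 1-\lambda$ that you address at the end (there $\mu_d=0$ does kill the divergent radial factor), but the cross terms $\mu_d f_d\overline{f_{d'}}\abs{z}^{-2(n+2-\lambda)}$ with $d\Meg 2-\lambda$ and $d'$ small: each has zero integral over every sphere but infinite absolute integral near $0$, so Fubini cannot be invoked to ``kill all cross terms''. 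The statement only makes sense, and your computation only becomes a proof, if the integral is read as an iterated integral in polar coordinates: for fixed $\rho<1$ the expansion of the integrand converges uniformly on the sphere of radius $\rho$ (this is where $f\in\Hol(RD)$ enters), so termwise spherical integration is legitimate and eliminates the cross terms; what survives is a series of nonnegative radial terms to which monotone convergence applies. Two further points. First, $f\in\Hol(RD)$ does \emph{not} give a geometric bound on $\norm{f_s}_\Fc$ (these norms grow like $\sqrt{(n+s)!}\,r^{-s}$); what it gives, and what suffices, is $\norm{f_s}_\Fc^2\meg C\,\frac{(n+s)!}{n!}r^{-2s}$, so that $\sum_d\frac{1}{(d+\lambda-1)!}\norm{f_d}_\Fc^2$ converges (polynomial times geometric). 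Second, the constant bookkeeping you defer genuinely fails to close with $\dd z$ equal to Lebesgue measure: spherical integration produces an extra factor $\pi^{n+1}/n!$, as the example above shows ($\norm{f}_{\widetilde H_0}^2=2$ while the iterated integral equals $2\pi$), so the first identity holds only after normalizing $\dd z$ accordingly; as written, your argument establishes it only up to that multiplicative normalization.
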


Recall that $\norm{\,\cdot\,}_\Fc$ denotes the Fischer norm (cf.~Section~\ref{sec:4}).

\subsection{Description by Means of Invariant Differential Operators}\label{sec:5:3}

Here we summarize some of the descriptions of the spaces $H_\lambda$ and $\widetilde H_\lambda$ provided in~\cite{Yan}.

To begin with, we need to introduce a family of $K$-invariant differential operators. For every $s\in \C$ and for every $k\in\N$, define $D^k_s$ as the unique ($K$-invariant) holomorphic differential operator (with polynomial coefficients) such that
\[
D_s^k p =(s \vect 1_r+\vect s)_{k \vect 1_r} p
\]
for every $p\in \Pc_{\vect s}$ and for every $\vect s\in \N_\Omega$ (cf.~\cite[Theorem 2.5]{Yan} and~\eqref{eq:1}).
In particular,
\[
D_s^k\coloneqq D_s^1 D_{s+1}^1\cdots D_{s+k-1}^1.
\]

Observe that, if $D$ is of tube type, then (cf.~\cite[Theorem 1.9]{Yan})
\begin{equation}\label{eq:3}
	D^k_s f=\Delta^{m/r-s} \square^k( \Delta^{s+k-m/r} f)
\end{equation}
for every $f\in \Hol(U)$ and for every simply-connected open subset $U$ of $\Delta^{-1}(\C\setminus \Set{0})$.\footnote{Here, we define $ \Delta^{m/r-\lambda} =\Delta^k/ \Delta^{\lambda+k-m/r} $. In other words, we define both $\Delta^{m/r-\lambda}$ and $\Delta^{\lambda+k-m/r}$ by means of the same holomorphic determination of $\log \Delta$ on $U$.}

We shall now describe the norms of the spaces $H_\lambda$ and $\widetilde H_\lambda$ on the space of holomorphic functions which are holomorphic on $R D$ for some $R>1$. As the following remark shows, this is sufficient to describe the spaces $H_\lambda$ and $\widetilde H_\lambda$ completely. Actually, it would suffice to determine the seminorm on the various $\Pc_{\vect s}$ (cf.~Proposition~\ref{prop:3}), but this Hardy-type description may provide additional insight.

\begin{remark}\label{oss:2}
	Take $\lambda\in \Wc(\Omega)$. For every $f$ in the closure $ \overline{H_\lambda}$ of $H_\lambda$ in $\Hol(D)$ and for every $R\in (0,1)$, $f(R\,\cdot\,)\in H_\lambda$. In addition, $f\in H_\lambda$ if and only if $ \sup_{R\in (0,1)} \norm{f(R\,\cdot\,)}_{H_\lambda}<\infty$, in which case $\norm{f}_{H_\lambda}=\lim\limits_{R\to 1^-} \norm{f(R\,\cdot\,)}_{H_\lambda}=\sup_{R\in (0,1)} \norm{f(R\,\cdot\,)}_{H_\lambda}$.
	
	An analogous assertion holds if $\lambda\in m/r-1-\N$ and $H_\lambda$ is replaced by $\widetilde H_\lambda$.
\end{remark}

Notice that   $H_\lambda$ and $\widetilde H_\lambda$ are dense in $\Hol(D)$ if (and only if) $\lambda>m/r-1$ and $\lambda\in m/r-1-\N$, respectively.

\begin{proof}
	Take $f\in \overline{H_\lambda}=\overline{\bigoplus_{q(\vect s,\lambda)=0}\Pc_{\vect s}}$, so that $f=\sum_{q(\vect s,\lambda)=0} f_{\vect s}$ in $\Hol(D)$, with $f_{\vect s}\in \Pc_{\vect s}$ for every $\vect s$ with $q(\vect s,\lambda)=0$. 
	Then, take $R\in (0,1)$ and let us prove that $f(R\,\cdot\,)\in H_\lambda$. Observe first that, for every $R'>0$, clearly $f(R'\,\cdot\,)$ belongs to the unweighted Bergman space $A^2(D)=H_g$, so that Theorem~\ref{prop:9} implies that 
	\[
	\sum_{q(\vect s,\lambda)=0}\frac{R'^{2\abs{\vect s}}}{(g)_{\vect s}} \norm{f_{\vect s}}_{\Fc}^2\meg \sum_{\vect s} \frac{1}{(g)_{\vect s}} \norm{f_{\vect s}(R'\,\cdot\,)}_{\Fc}^2=\norm{f(R'\,\cdot\,)}_{H_g}^2<\infty.
	\]
	Then, fix $R'\in (R,1)$, and observe that   there is $N\in\N$ such that
	\[
	\lambda+N-\frac 1 2 a(j-1)\Meg \frac{R}{R'}\Big(g+N-\frac 1 2 a(j-1)\Big)
	\]
	for every $j=1,\dots,r$. It then follows that
	\[
	\sum_{q(\vect s,\lambda)=0} \frac{R^{2\abs{\vect s}} \norm{f_{\vect s}}_{\Fc}^2}{(\lambda)_{\vect s}}\meg C\sum_{\vect s}\frac{R'^{2\abs{\vect s}} \norm{f_{\vect s}}_{\Fc}^2}{(g)_{\vect s}}<\infty,
	\]
	where $C\coloneqq \max_{q(\vect s,\lambda)=0, \vect s\meg N\vect 1_r} \frac{(g)_{\vect s}}{(\lambda)_{\vect s}}$. It then follows that $f(R\,\cdot\,)\in H_\lambda$ and that
	\[
	\norm{f(R\,\cdot\,)}_{H_\lambda}^2=\sum_{q(\vect s,\lambda)=0} \frac{1}{(\lambda)_{\vect s}} \norm{f_{\vect s}(R\,\cdot\,)}_{\Fc}^2=\sum_{q(\vect s,\lambda)=0} \frac{R^{2\abs{\vect s}}}{(\lambda)_{\vect s}} \norm{f_{\vect s}}_{\Fc}^2
	\]
	is an increasing function of $R\in (0,1)$, thanks to Theorem~\ref{prop:9} again. Since
	\[
	\sup_{R\in (0,1)}\norm{f(R\,\cdot\,)}_{H_\lambda}^2=\lim_{R\to 1^-}\norm{f(R\,\cdot\,)}_{H_\lambda}^2=\sum_{q(\vect s,\lambda)=0} \frac{1}{(\lambda)_{\vect s}} \norm{f_{\vect s}}_{\Fc}^2
	\]
	by monotone convergence, the first assertion follows. The second assertion is proved analogously. 
\end{proof}

The following result is contained in~\cite[Theorems 3.13 and 3.15]{Yan}.

\begin{theorem}
	Take $\lambda>m/r-1$ and $k\in \N$ such that $\lambda+k>g-1$. Then, there is a constant $c_{\lambda,k}\neq 0$ such that
	\[
	\norm{f}_{H_\lambda}^2= c_{\lambda,k} \int_D D^k_\lambda f(z) \overline{f (z)}\Kc(z,z)^{-(\lambda+k)/g}\,\dd \nu_D(z)=c_{\lambda,k}\langle D^k_\lambda f \vert f \rangle_{H_{\lambda+k}}
	\]
	for every $f\in  \bigcup_{R>1}\Hol(R D)$. 
	
	The same holds with $\lambda\in m/r-1-\N$ and $H_\lambda$ replaced by $\widetilde H_\lambda$ (and with a different constant $c_{\lambda,k}$).
\end{theorem}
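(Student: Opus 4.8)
The plan is to reduce everything to the Peirce decomposition $\Pc=\bigoplus_{\vect s\in\N_\Omega}\Pc_{\vect s}$ together with the Faraut--Korányi expansion of Theorem~\ref{prop:9}. First I would record that, since $D^k_\lambda$ is $K$-invariant and acts on each $\Pc_{\vect s}$ by the scalar $\mu_{\vect s}(\lambda):=(\lambda\vect 1_r+\vect s)_{k\vect 1_r}$, it commutes with the projectors $\pi_{\vect s}$, so that $\pi_{\vect s}(D^k_\lambda f)=\mu_{\vect s}(\lambda)\,\pi_{\vect s}(f)$ for every $f$. For $f\in\Hol(R D)$, $R>1$, the expansion $f=\sum_{\vect s}\pi_{\vect s}(f)$ converges with rapidly decaying Fischer norms (exactly as in the proof of Remark~\ref{oss:2}), and $\mu_{\vect s}(\lambda)$ grows only polynomially in $\abs{\vect s}$, so $D^k_\lambda f\in\Hol(R D)\subseteq H_{\lambda+k}$ and all the series below converge absolutely and may be rearranged term by term. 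Since $\lambda+k>g-1>a(r-1)/2$ forces $q(\vect s,\lambda+k)=0$ for every $\vect s$, Theorem~\ref{prop:9} applied to $H_{\lambda+k}$ gives
\[
\langle D^k_\lambda f\vert f\rangle_{H_{\lambda+k}}=\sum_{\vect s\in\N_\Omega}\frac{\mu_{\vect s}(\lambda)}{(\lambda+k)_{\vect s}}\,\norm{\pi_{\vect s}(f)}_\Fc^2.
\]

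The heart of the matter is the elementary identity, valid as polynomials in the variable $\lambda$ for each fixed $\vect s$,
\[
\mu_{\vect s}(\lambda)\,(\lambda)_{\vect s}=P_k(\lambda)\,(\lambda+k)_{\vect s},\qquad P_k(\lambda):=\prod_{j=1}^r\Big(\lambda-\tfrac a2(j-1)\Big)_k,
\]
where $(x)_k=x(x+1)\cdots(x+k-1)$. Writing $b_j=\lambda-\tfrac a2(j-1)$ and using~\eqref{eq:1}, one checks factor by factor that both sides have $j$-th factor $\Gamma(b_j+s_j+k)/\Gamma(b_j)$. When $\lambda>m/r-1$ one has $q(\vect s,\lambda)=0$, hence $(\lambda)_{\vect s}\neq0$, for every $\vect s$; dividing the identity by $(\lambda)_{\vect s}(\lambda+k)_{\vect s}$ yields $\mu_{\vect s}(\lambda)/(\lambda+k)_{\vect s}=P_k(\lambda)/(\lambda)_{\vect s}$, so that the displayed sum equals $P_k(\lambda)\,\norm f_{H_\lambda}^2$ by Theorem~\ref{prop:9}. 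As $b_j>0$ for all $j$ (because $\lambda>a(r-1)/2$), the constant $P_k(\lambda)$ is strictly positive, and the first assertion holds with $c_{\lambda,k}=P_k(\lambda)^{-1}$.

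For the semi-Hilbert case $\lambda\in a(r-1)/2-\N$ I would use the same identity, but now read off orders of vanishing at the given $\lambda$. Comparing orders in $\mu_{\vect s}(\lambda')(\lambda')_{\vect s}=P_k(\lambda')(\lambda'+k)_{\vect s}$, and using that $(\lambda+k)_{\vect s}\neq0$ while $\mathrm{ord}_\lambda(\lambda')_{\vect s}=q(\vect s,\lambda)$ (Definition~\ref{def:2}), gives $\mathrm{ord}_\lambda\mu_{\vect s}=\mathrm{ord}_\lambda P_k-q(\vect s,\lambda)$; in particular the coefficient $\mu_{\vect s}(\lambda)/(\lambda+k)_{\vect s}$ is nonzero exactly when $q(\vect s,\lambda)=\mathrm{ord}_\lambda P_k$. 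The key point --- and, I expect, the main obstacle --- is to prove $\mathrm{ord}_\lambda P_k=q(\lambda)$, so that precisely the indices selected by the seminorm of $\widetilde H_\lambda$ survive. I would do this by the counting $\mathrm{ord}_\lambda P_k=\#\{j:\tfrac a2(j-1)-\lambda\in\{0,\dots,k-1\}\cap\N\}$, together with the remark that, since $\lambda+k>g-1$, every value $\tfrac a2(j-1)-\lambda$ that is a non-negative integer is automatically $<k$; comparing with $q(\lambda)=\max_{\vect s}\#\{j:\tfrac a2(j-1)-\lambda\in\{0,\dots,s_j-1\}\}$, which is attained at $\vect s=(N+1)\vect 1_r$ with $N:=a(r-1)/2-\lambda\in\N$, then yields equality. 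Once $\mathrm{ord}_\lambda P_k=q(\lambda)$ is established, matching the leading coefficients at $\lambda'=\lambda$ turns the polynomial identity into $\mu_{\vect s}(\lambda)/(\lambda+k)_{\vect s}=\gamma/(\lambda)'_{\vect s}$ for the surviving $\vect s$, where $\gamma$ is (up to sign) the leading Taylor coefficient of $P_k$ at $\lambda$; since the footnote after the definition of $(\lambda)'_{\vect s}$ guarantees that these signs are uniform, this is exactly the seminorm of $\widetilde H_\lambda$ up to the uniform nonzero constant, giving the claim with a new $c_{\lambda,k}$.

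Finally, the integral form follows by realizing the inner product of $H_{\lambda+k}$ as a weighted Bergman product: since $\lambda+k>g-1$, one has $H_{\lambda+k}=A^2_{\lambda+k}(D)$, so that $\langle D^k_\lambda f\vert f\rangle_{H_{\lambda+k}}$ is computed by $\int_D D^k_\lambda f(z)\overline{f(z)}\,\Kc(z,z)^{-(\lambda+k)/g}\,\dd\nu_D(z)$, the proportionality between the two normalizations being absorbed into $c_{\lambda,k}$. I expect the only genuinely delicate step to be the order-counting identity $\mathrm{ord}_\lambda P_k=q(\lambda)$ in the $\widetilde H_\lambda$ case; everything else is bookkeeping with the Pochhammer symbols of~\eqref{eq:1} and the convergence of the Peirce expansion for functions holomorphic past the boundary.
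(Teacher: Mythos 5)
Your proposal is correct, but there is nothing in the paper to compare it with line by line: the survey offers no proof of this theorem and simply quotes it from \cite[Theorems 3.13 and 3.15]{Yan}. What you have written is therefore a genuinely independent, self-contained derivation, and it holds together. The three pillars all check out: (i) by its defining property, $D^k_\lambda$ acts on each $\Pc_{\vect s}$ by the scalar $\mu_{\vect s}(\lambda)=\prod_{j=1}^r\bigl(\lambda+s_j-\tfrac a2(j-1)\bigr)_k$, and, being a differential operator with polynomial coefficients, it preserves $\Hol(RD)$ and commutes with the projectors $\pi_{\vect s}$; (ii) Remark~\ref{oss:2} gives $\Hol(RD)\subseteq H_{\lambda+k}$, and $\lambda+k>g-1>m/r-1$ forces $q(\vect s,\lambda+k)=0$ for all $\vect s$, so Theorem~\ref{prop:9} legitimately applies to the pair $(D^k_\lambda f,f)$ in $H_{\lambda+k}$; (iii) the identity $\mu_{\vect s}(\lambda)(\lambda)_{\vect s}=P_k(\lambda)(\lambda+k)_{\vect s}$, with $P_k(\lambda)=(\lambda)_{k\vect 1_r}$, is an elementary Gamma-quotient computation and, both sides being polynomials in $\lambda$, holds everywhere. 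Your treatment of the discrete Wallach points is also complete: the hypothesis $\lambda+k>g-1$ enters exactly where it must, guaranteeing both $(\lambda+k)_{\vect s}\neq 0$ and that every non-negative integer of the form $\tfrac a2(j-1)-\lambda$ is automatically $<k$, which is what makes the count $\mathrm{ord}_\lambda P_k=q(\lambda)$ come out right; then the components with $q(\vect s,\lambda)<q(\lambda)$ are annihilated, and the surviving coefficients are a single nonzero multiple of $1/(\lambda)'_{\vect s}$ thanks to the sign uniformity recorded in the paper's footnote after the definition of $(\lambda)'_{\vect s}$. As to what each route buys: your argument is elementary, stays entirely inside the Peirce--Fischer bookkeeping of Sections~\ref{sec:2} and~\ref{sec:4}, and makes the constant explicit ($c_{\lambda,k}^{-1}=(\lambda)_{k\vect 1_r}$ in the continuous range, and, up to sign, the leading Taylor coefficient of $(\lambda')_{k\vect 1_r}$ at $\lambda'=\lambda$ in the discrete range), exhibiting the theorem as a formal consequence of the Faraut--Kor\'anyi expansion; Yan's machinery of invariant differential operators is what constructs the $D^k_s$ and their eigenvalues in the first place and reaches statements this diagonal argument cannot, such as Theorem~\ref{prop:6} with the singular weight $N(z)$. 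One last cosmetic point: the statement's single constant $c_{\lambda,k}$ tacitly identifies $\norm{\,\cdot\,}_{H_{\lambda+k}}$ with the $A^2_{\lambda+k}(D)$ integral norm, which the paper asserts only up to proportionality; your choice to absorb that proportionality into $c_{\lambda,k}$ is the correct reading.
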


Now, define $N\colon Z\to [0,\infty)$ as the unique $K$-invariant function on $Z$ which induces $\abs{\Delta}$ on $Z_1(e)$ (cf.~\cite[p.~21]{Yan}). Then,~\cite[Theorem 3.9]{Yan} implies the following result.

\begin{theorem}\label{prop:6}
	Take $\lambda\in m/r-1-\N$ and $k\in\N$ such that $k>(n+m)/r-1$ and $k\Meg m/r-\lambda$. Then, there is a constant $c_{\lambda,k}\neq 0$ such that
	\[
	\norm{f}_{\widetilde H_\lambda}^2= c_{\lambda,k} \int_D D^{m/r-\lambda}_{\lambda+n/r} f(z) \overline{D^k_\lambda f(z)} \frac{\Kc(z,z)^{-( m/r+ k)/g}}{N(z)^{2(m/r-\lambda)}}\,\dd\nu_D( z)
	\]
	for every $f\in \bigcup_{R>1}\Hol(R D)$.
\end{theorem}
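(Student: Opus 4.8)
The plan is to reduce the asserted identity to a family of scalar identities, one on each $K$-type $\Pc_{\vect s}$, and then to evaluate the relevant weighted integrals. By Remark~\ref{oss:2} it suffices to argue for $f\in\bigcup_{R>1}\Hol(RD)$, for which $f=\sum_{\vect s\in\N_\Omega}\pi_{\vect s}(f)$ converges in $\Hol(RD)$ and every quantity in sight is finite. The right-hand side is a $K$-invariant sesquilinear form: the operators $D^{m/r-\lambda}_{\lambda+n/r}$ and $D^k_\lambda$ are $K$-invariant by construction, and the weight $N(z)^{-2(m/r-\lambda)}\Kc(z,z)^{-(m/r+k)/g}$, together with $\nu_D$, is $K$-invariant since $N$, $\Kc(z,z)$ and $\nu_D$ all are. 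Since, by Proposition~\ref{prop:3}, the $\Pc_{\vect s}$ are pairwise inequivalent and $K$-irreducible, distinct $K$-types are orthogonal both for this form and for $\norm{\,\cdot\,}_{\widetilde H_\lambda}^2$, and each form restricts on $\Pc_{\vect s}$ to a scalar multiple of the Fischer form $\norm{\,\cdot\,}_\Fc^2$. Hence both sides split as $\vect s$-sums and it suffices to compare the two scalars on each $\Pc_{\vect s}$.

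On $\Pc_{\vect s}$ the left-hand scalar is $1/(\lambda)'_{\vect s}$ when $q(\vect s,\lambda)=q(\lambda)$, and $0$ otherwise. For the right-hand side I would use the defining eigenvalue relation of the operators $D^k_s$: by the product formula~\eqref{eq:1} they act on $\Pc_{\vect s}$ as the real scalars $D^{m/r-\lambda}_{\lambda+n/r}=(\lambda+n/r)_{\vect s+(m/r-\lambda)\vect 1_r}/(\lambda+n/r)_{\vect s}$ and $D^k_\lambda=(\lambda)_{\vect s+k\vect 1_r}/(\lambda)_{\vect s}$. Thus the right-hand scalar equals $c_{\lambda,k}$ times the product of these two factors times the normalized integral $I(\vect s)\coloneqq\norm{p_{\vect s}}_\Fc^{-2}\int_D\abs{p_{\vect s}(z)}^2 N(z)^{-2(m/r-\lambda)}\Kc(z,z)^{-(m/r+k)/g}\,\dd\nu_D(z)$, for any $0\neq p_{\vect s}\in\Pc_{\vect s}$.

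The core of the argument is the evaluation of $I(\vect s)$ in closed form. In the tube case ($n=0$, so $D^{m/r-\lambda}_{\lambda+n/r}=D^{m/r-\lambda}_\lambda$) I would make the cancellation explicit: by~\eqref{eq:3} one has $D^{m/r-\lambda}_\lambda f=\Delta^{m/r-\lambda}\square^{m/r-\lambda}f$ and $D^k_\lambda f=\Delta^{m/r-\lambda}\square^k(\Delta^{k-(m/r-\lambda)}f)$, and since $k\Meg m/r-\lambda$ the factor $\Delta^{k-(m/r-\lambda)}$ is a genuine polynomial. As $N=\abs{\Delta}$ on $Z_1(e)$, the integrand $D^{m/r-\lambda}_\lambda f\,\overline{D^k_\lambda f}\,N^{-2(m/r-\lambda)}$ collapses to $\square^{m/r-\lambda}f\,\overline{\square^k(\Delta^{k-(m/r-\lambda)}f)}$, a pairing of two honestly holomorphic functions against the weighted Bergman measure $\Kc(z,z)^{-(m/r+k)/g}\,\dd\nu_D$. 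This measure is finite exactly when $m/r+k>g-1$, i.e.\ $k>(n+m)/r-1$, the standing hypothesis, and Theorem~\ref{prop:9} (equivalently, $H_{m/r+k}=A^2_{m/r+k}$) then evaluates the pairing as a ratio of generalized Pochhammer symbols. In the general case $n\neq0$ no such collapse is available; instead I would invoke the Gindikin gamma function $\Gamma_\Omega$ and the Faraut--Koranyi integral calculus to compute $\int_D\abs{p_{\vect s}}^2 N^{2\mu}\Kc^{-\nu/g}\,\dd\nu_D$ for $p_{\vect s}\in\Pc_{\vect s}$ directly as a ratio of $\Gamma_\Omega$-factors, specializing to $\mu=-(m/r-\lambda)$ and $\nu=m/r+k$.

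It then remains to multiply $I(\vect s)$ by the two eigenvalue factors and to verify that the product telescopes, uniformly in $\vect s$, to $1/(\lambda)'_{\vect s}$ up to a factor $c_{\lambda,k}$ independent of $\vect s$, with $c_{\lambda,k}\neq0$; the hypotheses $k>(n+m)/r-1$ and $k\Meg m/r-\lambda$ ensure both the convergence of $I(\vect s)$ and that the auxiliary powers of $\Delta$ remain polynomial, so that no spurious poles are introduced. I expect the genuine obstacle to be twofold: the closed-form $\Gamma_\Omega$-evaluation of $I(\vect s)$ in the non-tube case, where the $Z_{1/2}(e)$-part obstructs any clean reduction to a Bergman integral; and the verification that the combined $\vect s$-dependence reproduces the regularized weights $(\lambda)'_{\vect s}$ and, in particular, vanishes precisely on the lower Peirce layers $q(\vect s,\lambda)<q(\lambda)$. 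I would handle this last matching by first establishing the identity for those $\lambda$ in a range where both $\norm{\,\cdot\,}_{\widetilde H_\lambda}$ and the right-hand form are positive-definite, and then extending to all $\lambda\in m/r-1-\N$ by analytic continuation in $\lambda$ of the meromorphic, $\vect s$-wise identities.
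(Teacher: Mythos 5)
Two remarks before the substance. First, the paper does not actually prove this statement: being a survey, it quotes it (in amended form) from [Yan, Theorem 3.9], its only original contribution being the correction of a false subsidiary identity in Yan's paper. So the comparison here is with Yan's computation, whose skeleton — reduction to $K$-types, eigenvalue computation for the operators $D^k_s$, closed-form evaluation of radial integrals — is exactly the strategy you outline. Second, your tube-type reduction is correct, and you implicitly avoid Yan's erroneous identity flagged in the survey: you keep the factor $\Delta^{k-(m/r-\lambda)}$ \emph{inside} $\square^k$, writing $D^k_\lambda f=\Delta^{m/r-\lambda}\square^k\bigl(\Delta^{k-(m/r-\lambda)}f\bigr)$, rather than pulling it outside.

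Nevertheless, as a proof the proposal has genuine gaps, and one of its steps fails as stated. The factorization of the right-hand scalar as $c_{\lambda,k}\,\alpha_{\vect s}\beta_{\vect s}I(\vect s)$ is ill-posed on the lower layers: the integral $I(\vect s)=\norm{p_{\vect s}}_\Fc^{-2}\int_D\abs{p_{\vect s}}^2N^{-2(m/r-\lambda)}\Kc^{-(m/r+k)/g}\,\dd\nu_D$ need not converge, since $N^{-2(m/r-\lambda)}$ is not locally integrable (already in the tube case, where $p_{\vect s}$ is divisible by $\Delta^{s_r}$ and by no higher power, $I(\vect s)=+\infty$ whenever $s_r<m/r-\lambda$), while on those same types the eigenvalue of $D^k_\lambda$ vanishes, so your right-hand scalar is a $0\cdot\infty$ expression. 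The finiteness of the theorem's right-hand side depends on keeping the differential operators inside the integral; the $K$-type splitting must therefore be organized around the products $(D^{m/r-\lambda}_{\lambda+n/r}p)\overline{(D^k_\lambda q)}$, never around $\abs{p_{\vect s}}^2$ alone. Relatedly, your analytic-continuation fallback for the final matching cannot work: $m/r-1-\N$ is discrete, the order $m/r-\lambda$ of the operator must remain a fixed positive integer, the weight $(\lambda)'_{\vect s}$ is a regularized limit involving an absolute value (not the value of a function analytic in $\lambda$), and $\widetilde H_\lambda$ is positive definite for no $\lambda$ with $q(\lambda)\Meg 1$. The step you defer to continuation is in fact immediate and needs no continuation: on every $\Pc_{\vect s}$ with $q(\vect s,\lambda)<q(\lambda)$, the operator $D^k_\lambda$ acts by the scalar $(\lambda\vect 1_r+\vect s)_{k\vect 1_r}$, which vanishes \emph{precisely because} $k\Meg m/r-\lambda$ (this is the role of that hypothesis), so the integrand is identically zero there, matching the vanishing of the $\widetilde H_\lambda$-seminorm on those types.

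What then remains is the actual content of the theorem, and it is left as an invocation: (i) for top-layer types one must show that $D^{m/r-\lambda}_{\lambda+n/r}p$ and $D^k_\lambda p$ vanish on $\Set{N=0}$ to order $m/r-\lambda$ (or that convergence otherwise holds) — in the tube case this is~\eqref{eq:3}, but for $n\neq 0$ no such factorization exists, and this is exactly where the shift by $n/r$ in the subscript earns its keep; and (ii) one must evaluate the resulting convergent integrals in closed form and verify that their ratio to $\norm{p_{\vect s}}_\Fc^2$, multiplied by the two Pochhammer eigenvalues, equals $c_{\lambda,k}/(\lambda)'_{\vect s}$ with $c_{\lambda,k}$ independent of $\vect s$. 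Naming the Faraut--Kor\'anyi/Gindikin calculus as the tool is correct, but invoking it is not performing the computation: as written, the proposal reduces the theorem to its computational core — which is precisely [Yan, Theorem 3.9] — without carrying that core out.
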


When $D$ is of tube type and $k=m/r-\lambda$, the preceding expression becomes 
\begin{eqnarray*}
	\norm{f}_{\widetilde H_\lambda}^2&=& c_{\lambda,m/r-\lambda} \int_D \abs{D^{m/r-\lambda}_{\lambda} f(z)}^2 \frac{\Kc(z,z)^{-(g-\lambda)/g}}{\abs{\Delta^{m/r-\lambda}(z)}}\,\dd \nu_D(z)\\
	&=& c_{\lambda,m/r-\lambda} \norm{\square^{m/r-\lambda} f}_{H_{g-\lambda}}^2
\end{eqnarray*}
thanks to~\eqref{eq:3}; this latter formula may be considered as a particular case of Theorem~\ref{prop:10}.

We point out that~\cite[Theorem 3.9]{Yan} claims that, when $D$ is of tube type and $k>m/r-\lambda$, one has the equality
\[
(D^{m/r-\lambda} p) \overline{D^k_\lambda q} \frac{1}{\abs{\Delta}^{2(m/r-\lambda)}}=(\square^{m/r-\lambda} p) \overline{\Delta^{k+\lambda-m/r} \square^k q}
\]
for every two polynomials $p,q$ on $Z$ (and this would provide a simplified version of Theorem~\ref{prop:6} in this case). Nonetheless, this would imply the equality $D^k_\lambda =\Delta^k \square^k= D^k_{m/r-k} $, which is false.

\subsection{Generalized Hardy Spaces}\label{sec:5:4}

In~\cite{ArazyUpmeier2}, the classical identification of $H_{(n+m)/r}$ with the Hardy space $H^2(D)$ is extended to the spaces $H_{(n+m)/r+j a/2}$, $j=0,\dots, r-1$, which are identified with suitable generalized Hardy spaces associated with suitable measures on the various $G(D)$-orbits of the boundary of $D$ (the Hardy space corresponds to the smallest of these orbits, namely the \v Silov boundary of $D$). This identification was previously found in~\cite{VergneRossi}, where, however, $D$ was replaced by its unbounded realization as a Siegel domain of type II. 
In addition,~\cite{ArazyUpmeier2} also provides an analogous interpretation for the spaces $H_{j a/2}$, $j=0,\dots, r-1$, that is,  the spaces corresponding to the dicrete part of the Wallach set $\Wc(\Omega)$.

For some reasons, in~\cite{ArazyUpmeier2} the authors repeatedly identify the considered functions  with their `boundary values', even when these boundary values (in the usual sense of restricted admissible boundary values) do not exist, and also when these boundary values exist as restricted admissible boundary values, but not in the sense of unrestricted boundary values as it is often required in the statements. We shall discuss these issues more extensively when dealing with the unbounded realization of $D$ as a Siegel domain of type II, where these problems are more easily settled and highlighted. 
Here, we shall content ourselves with an amended version of~\cite[Theorem 6.8]{ArazyUpmeier2}, which describes the spaces $H_{(n+m)/r+j a/2}$, $j=0,\dots, r-1$, as generalized Hardy spaces.

To do so, we first need to observe that $\partial D$ contains exactly $r$ distinct $G(D)$-orbits $\partial_j D$, $j=0,\dots, r-1$, ordered in such a way that $\partial_j D\subseteq \overline{\partial_{j+1} D}$ for $j=0,\dots, r-2$. In particular, $\partial_0 D$ is the \v Silov boundary of $D$ (cf.~\cite[p.~419]{ArazyUpmeier2} and~\cite[Theorem 6.3]{Loos}). 

\begin{theorem}
	For every $j=0,\dots, r-1$ there is a $K$-invariant (positive) measure $ \mi_j$ on $\partial_j D$ such that
	\[
	\norm{f}_{H_{(n+m)/r+j a/2}}^2= \norm{f}_{L^2(\mi_j)}
	\]
	for every $f\in \bigcup_{R>1}\Hol(R D)$.
\end{theorem}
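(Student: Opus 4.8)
**

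The goal is to identify, for each boundary orbit $\partial_j D$, a $K$-invariant positive measure $\mi_j$ so that the $H_{(n+m)/r+ja/2}$-norm is reproduced as an $L^2(\mi_j)$-norm on the dense subspace $\bigcup_{R>1}\Hol(RD)$. The plan is to reduce everything to the Fischer-orthogonal decomposition of Theorem~\ref{prop:9}, exploiting the fact that the Peirce spaces $\Pc_{\vect s}$ are $K$-$U_0$-irreducible and mutually inequivalent (Proposition~\ref{prop:3}).

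\textbf{Reduction to a scalar identity on each $\Pc_{\vect s}$.} First I would fix $\lambda_j\coloneqq(n+m)/r+ja/2$ and recall from Theorem~\ref{prop:9} that
\[
\norm{f}_{H_{\lambda_j}}^2=\sum_{q(\vect s,\lambda_j)=0}\frac{1}{(\lambda_j)_{\vect s}}\norm{\pi_{\vect s}(f)}_\Fc^2 .
\]
Since $\lambda_j\Meg (n+m)/r> a(r-1)/2$, we have $\lambda_j\in\Wc(\Omega)$ and $q(\lambda_j)=0$, so the sum runs over all of $\N_\Omega$ and the norm is a genuine (Hausdorff) Hilbert norm. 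The candidate measure $\mi_j$ must be $K$-invariant, and any $K$-invariant positive measure on the homogeneous space $\partial_j D\cong K/K_j$ is determined, up to the radial profile in $\abs z$, by a single density; the key point is that for such a measure the subspaces $\Pc_{\vect s}$ are automatically $L^2(\mi_j)$-orthogonal (by Schur orthogonality, since they are inequivalent $K$-types and $\mi_j$ is $K$-invariant). Thus the problem collapses to verifying a family of scalar equalities: one must exhibit $\mi_j$ so that
\[
\norm{p}_{L^2(\mi_j)}^2=\frac{1}{(\lambda_j)_{\vect s}}\norm{p}_\Fc^2\qquad\text{for all }p\in\Pc_{\vect s},\ \vect s\in\N_\Omega .
\]

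\textbf{Constructing $\mi_j$ and evaluating the moments.} I would obtain $\mi_j$ from the Fatou-type/Poisson-kernel machinery on the orbit $\partial_j D$: concretely, as the $K$-invariant limit (or the appropriate push-forward under the singular-value parametrization of $\partial_j D$) of the measures $(1-\abs z^2)^{\lambda_j-g}\,\dd\nu_D$ that define the weighted Bergman spaces $A^2_{\lambda_j}(D)=H_{\lambda_j}$ for $\lambda_j>g-1$; for $j=0$ this specializes to the \v Silov-boundary (Hardy) measure already treated classically, matching $H^2(D)=H_{(n+m)/r}$. The heart of the computation is the evaluation of the $\Pc_{\vect s}$-moment $\int_{\partial_j D}\abs{p(\zeta)}^2\,\dd\mi_j(\zeta)$, which by $K$-invariance equals a universal constant $c_{\vect s,j}$ times $\norm p_\Fc^2$. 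These moments are exactly the ones computed in the Faraut--Korányi/Gindikin theory via the Gindikin Gamma function of the cone $\Omega$: the ratio of the $\mi_j$-moment to the Fischer moment is a product of Pochhammer-type factors which one identifies with $1/(\lambda_j)_{\vect s}$ by recognizing $\lambda_j=(n+m)/r+ja/2$ as precisely the parameter at which the discrete-orbit Gamma factor degenerates onto $\partial_j D$. This is where the bound on $j$ (namely $j\meg r-1$) enters, matching the structure of the boundary orbits recorded above ($\partial_j D\subseteq\overline{\partial_{j+1}D}$).

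\textbf{Passage from polynomials to $\bigcup_{R>1}\Hol(RD)$ and expected obstacle.} Once the scalar identity holds on each $\Pc_{\vect s}$, it holds on $\Pc$ by orthogonality, and then one extends to $f\in\Hol(RD)$ with $R>1$ by the monotone/dominated convergence argument of Remark~\ref{oss:2}: writing $f=\sum_{\vect s}\pi_{\vect s}(f)$ with geometric decay coming from holomorphy on $RD$, both $\norm{f}_{H_{\lambda_j}}^2$ and $\norm f_{L^2(\mi_j)}^2$ are the same absolutely convergent series $\sum_{\vect s}\frac{1}{(\lambda_j)_{\vect s}}\norm{\pi_{\vect s}(f)}_\Fc^2$. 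The main obstacle is the careful construction and convergence of $\mi_j$ on the non-\v Silov orbits $\partial_j D$ for $j\Meg 1$: these orbits are lower-dimensional and not the topological boundary, so $\mi_j$ is genuinely singular and one must justify that the relevant boundary values of $f\in\Hol(RD)$ exist as honest restrictions to $\partial_j D$ (which they do, since such $f$ extend holomorphically past $\overline D$) and that the moment integrals converge and give exactly the Gindikin factors. This is precisely the point where the original argument in~\cite{ArazyUpmeier2} conflated restricted admissible boundary values with genuine restrictions; restricting to $\bigcup_{R>1}\Hol(RD)$ is what makes the statement clean, since for such $f$ no delicate boundary-value theory is needed and the identity reduces to the unambiguous moment computation above.
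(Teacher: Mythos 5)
First, a point of reference: the paper offers no proof of this statement at all. It is presented as an amended quotation of \cite[Theorem 6.8]{ArazyUpmeier2}, so the only benchmark is the Arazy--Upmeier argument. Your outline does reproduce the correct skeleton of that argument: the Schur-orthogonality reduction (inequivalent $K$-types $\Pc_{\vect s}$ are orthogonal in $L^2$ of any $K$-invariant measure, so the identity collapses to moment identities on each $\Pc_{\vect s}$ against Theorem~\ref{prop:9}), and the extension from $\Pc$ to $\bigcup_{R>1}\Hol(RD)$ via uniform convergence of $\sum_{\vect s}\pi_{\vect s}(f)$ on $\overline D$. Those two steps are sound, and your closing remark about why restricting to $\bigcup_{R>1}\Hol(RD)$ removes all boundary-value subtleties is exactly the point of the paper's amendment.

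The genuine gap is the middle step, which is the entire analytic content of the theorem: the construction of $\mi_j$ for $j\Meg 1$ and the verification that its moments on each $\Pc_{\vect s}$ equal $1/(\lambda_j)_{\vect s}$, $\lambda_j=(n+m)/r+ja/2$, are asserted by appeal to ``Faraut--Kor\'anyi/Gindikin theory'' rather than carried out. Two concrete problems. First, your claim that $\partial_j D\cong K/K_j$ is a $K$-homogeneous space is false for $j\Meg 1$: only the \v Silov boundary $\partial_0 D$ is a single $K$-orbit (which is why, as the paper notes, $\mi_0$ is unique up to scalar and the $j=0$ case is essentially a classical moment computation). For $j\Meg 1$ the $G(D)$-orbit $\partial_j D$ fibers over a compact $K$-orbit of tripotents with ball-shaped fibers, so $K$-invariance leaves an \emph{arbitrary} radial measure on the transversal undetermined; producing the correct radial density is precisely what must be done, and invariance buys you nothing there. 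Second, your proposed construction of $\mi_j$ as a ``$K$-invariant limit'' of the weighted Bergman measures $\Kc(z,z)^{-\lambda_j/g}\,\dd\nu_D$ does not parse: since $\lambda_j\meg g-1$ for all $j=0,\dots,r-1$, these measures are not even locally finite near $\partial D$ (this is why $A^2_{\lambda_j}(D)=\Set{0}$ in that range). What is actually required is the analytic continuation in $\lambda$ of the normalized functionals $c_\lambda\Kc(\,\cdot\,,\,\cdot\,)^{-\lambda/g}\,\dd\nu_D$, together with the nontrivial positivity statement --- the analogue of Gindikin's theorem on Riesz distributions at discrete Wallach points, which in the unbounded picture is the positivity of $I^{ja/2}$ and the identification of its support with a boundary orbit of $\overline\Omega$ --- showing that at $\lambda=\lambda_j$ the continued functional is a positive measure carried by $\overline{\partial_j D}$ whose spherical moments are exactly $1/(\lambda_j)_{\vect s}$. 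As written, your argument only establishes the conditional statement: \emph{if} a $K$-invariant measure on $\partial_j D$ with those moments exists, then the norm identity holds on $\bigcup_{R>1}\Hol(RD)$; the existence is the theorem.
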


Notice that, when $j=0$, the action of $K$ on $\partial_0 D$ is transitive, so that $\mi_0$ is unique up to a constant. We refer the reader to~\cite{ArazyUpmeier2} for a description of $\mi_j$ for $j=1,\dots, r-1$. 

We refer the reader to~\cite{ArazyUpmeier2} for a description of $H_{j a/2}$, $j=0,\dots, r-1$, as it seems too technical for our purposes.

\section{Affinely Invariant Spaces on Siegel Domains}\label{sec:6}

We keep the notation of Sections~\ref{sec:2} to~\ref{sec:5}.
In this section, we shall work with the realization of $D$ as a Siegel domain, namely $\Dc$, associated with a fixed maximal tripotent $e$ in $Z$. We shall then set $E\coloneqq Z_{1/2}(e)$ and $F\coloneqq A(e)$, so that $Z_1(e)=F_\C$, $n=\dim_\C E$ and $m=\dim_\R F$. Consequently, we shall write $\overline z$ instead of $z^*=\{e,z,e\}$ (and $\Re z$, $\Im z$ instead of $(z+z^*)/2$, $(z-z^*)/(2i)$, respectively) for every $z\in F_\C$. Thus,
\[
\Dc=\Set{(\zeta,z)\in E\times F_\C\colon \Im z- \Phi(\zeta)\in \Omega},
\]
with the simplified notation $\Phi(\zeta)=\Phi(\zeta,\zeta)$.
We shall still denote by $\widetilde U_\lambda$ and $U_\lambda$ the (ordinary and ray) representations of $\widetilde G(\Dc)$ and $G(\Dc)$, respectively, in $\Hol(\Dc)$, defined as the corresponding representations of $\widetilde G(D)$ and $G(D)$. If we define  (cf.~Section~\ref{sec:1})
\[
\Cc_\lambda\colon \Hol(D)\ni f \mapsto (f \circ \Cc^{-1}) (J\Cc^{-1})^{\lambda/g}\in \Hol(\Dc),
\]
then $\Cc_\lambda$ is an isomorphism and intertwines the $\widetilde U_\lambda, U_\lambda$ defined on $\widetilde G(D), G(D)$ and the $\widetilde U_\lambda, U_\lambda$ defined on $\widetilde G(\Dc), G(\Dc)$. We may then define $H_\lambda(\Dc)\coloneqq \Cc_\lambda H_\lambda(D)$ for $\lambda\in \Wc(\Omega)$ and $\widetilde H_\lambda(\Dc)\coloneqq \Cc_\lambda \widetilde H_\lambda(D)$ for $\lambda\in m/r-1-\N$. Then, the $H_\lambda(\Dc)$ and the $\widetilde H_\lambda(\Dc)$ are the $\lambda$-invariant semi-Hilbert spaces in $\Hol(\Dc)$.
We observe explicitly that the unweighted Bergman kernel on $\Dc$ is
\[
((\zeta,z),(\zeta',z'))\mapsto c \Delta\left(\frac{z-\overline{z'}}{2i }-\Phi(\zeta,\zeta')  \right)
\]
for some $c>0$ (cf., e.g.,~\cite[Proposition 3.11]{CalziPeloso}).

In this context, the analogues of $K$ and $K_0$ no longer play a relevant role (as they do not have a reasonable description), and are therefore essentially replaced by the nilpotent group $\Nc=E\times F$, with product
\[
(\zeta,x)(\zeta',x')=(\zeta+\zeta',x+x'+2\Im \Phi(\zeta,\zeta')).
\]
The group $\Nc$ acts freely and faithfullly on $\Dc$ (and $E\times F_\C$) by affine biholomorphisms as follows:
\[
(\zeta,x)\cdot (\zeta',z')=(\zeta+\zeta',z'+x+i \Phi(\zeta)+2 i \Phi(\zeta',\zeta)).
\]
In particular, $\Nc$ may be identified with the \v Silov boundary $\Nc\cdot (0,0)$ of $\Dc$, namely $\Set{(\zeta,x+i \Phi(\zeta))\colon (\zeta,x)\in \Nc}$.

Notice that $\Nc$ is not a compact group, but rather a $2$-step nilpotent group (an abelian group, if $E=\Set{0}$). Even though this allows the use of Fourier techniques, it is no longer possible to decompose $\Hol(D)$ into the (closure of the) direct sum of countably many irreducible subrepresentations of some $U_\lambda$.

In this situation the group $\Aff(\Dc)$ (or simply $\Aff$) of affine automorphisms of $\Dc$ acts transitively on $\Dc$, and is generally easier to understand than the whole group $G(\Dc)$. We shall therefore try to describe the $\Aff$-invariant subspaces of $\Hol(\Dc)$. Notice that, when restricted to $\Aff$, $U_\lambda$ may be `corrected' in order to become an ordinary representation $\Uc_\lambda$ of $\Aff$, simply replacing $J\phi^{-1}$ with $\abs{J \phi^{-1}}$. Namely,
\[
\Uc_\lambda(\phi)f=(f\circ \phi^{-1}) \abs{J\phi^{-1}}^{\lambda/g}
\]
for every $\phi\in \Aff$ and for every $f\in \Hol(\Dc)$.

We shall then consider $\Uc_\lambda$-invariant spaces when $n=0$ (that is, when $\Dc$ is a tube domain) and when $m=1$ (that is, when $D$ is the unit ball in $\C^{n+1}$). Unfortunately, the current techniques do not seem to allow for a more general treatment.
We have nonetheless the following general result concerning the closed $\Aff$-invariant subspaces of $\Hol(\Dc)$ (cf.~\cite[Proposition 7.1]{Rango1}).
Here we denote by $GL(\Dc)$ the space of linear biholomorphisms of $\Dc$, so that 
\[
GL(\Dc)=\Set{A\times B_\C\colon A\in GL(E), B\in G(\Omega), B\Phi=\Phi(A\times A)},
\]
$\Aff$ is the semi-direct product of $\Nc$ and $GL(\Dc)$  (cf.~\cite[Proposition 2.1]{Murakami}), and $GL(\Dc)$ acts faithfully  by group automorphisms on $\Nc$.

\begin{proposition}\label{prop:4}
	Let $V$ be a closed $\Aff$-invariant closed subspace of $\Hol(\Dc)$. Then, $V$ is the closure of $V\cap \Pc$, where $\Pc$ denotes the space of holomorphic polynomials on $E\times F_\C$, and there is a $GL(\Dc)$-invariant space $\Ic$ of distributions supported at $(0,0)$ in $\Nc$ such that
	\[
	V=\Set{f\in \Hol(D)\colon \forall I\in \Ic\:\: f*I=0}.
	\]
\end{proposition}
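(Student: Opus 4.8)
The plan is the following. Throughout I work with the action by composition, which is legitimate because the $\Aff$-invariance of a subspace does not depend on $\lambda$: the subgroup $\Nc$ acts by affine maps of unit Jacobian (so $\Uc_\lambda(g)=\Uc_0(g)$ for $g\in\Nc$), while every element of $GL(\Dc)$ has constant Jacobian, so $\Uc_\lambda(g)$ and $f\mapsto f\circ g^{-1}$ differ only by a positive scalar. I also record the concrete form of the convolution: the left-invariant vector fields of $\Nc=E\times F$ act on $\Hol(\Dc)$ by the holomorphic first-order operators $X_v=\partial_{\zeta,v}+2i\Phi(\,\cdot\,,v)\partial_z$ (for $v\in E$) and $Y_w=\partial_{z,w}$ (for $w\in F$); hence convolution $f\mapsto f*I$ with an $I$ supported at $(0,0)$ is the application of the corresponding element of the enveloping algebra $\Uc(\mathfrak{n})$, a holomorphic differential operator, so each $f\mapsto f*I$ is a continuous endomorphism of $\Hol(\Dc)$ commuting with $\Nc$-translations and transforming under $GL(\Dc)$ through its action on $\Nc$.

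First I would prove the density statement $V=\overline{V\cap\Pc}$. The starting point is that $\Dc$ is convex: from $\Phi(t\zeta_1+(1-t)\zeta_2)=t\Phi(\zeta_1)+(1-t)\Phi(\zeta_2)-t(1-t)\Phi(\zeta_1-\zeta_2)$ and the $\overline\Omega$-positivity of $\Phi$ one gets $\Im z-\Phi(\zeta)\in\Omega$ along segments, so $\Dc$ is convex, hence polynomially convex and Runge; consequently $\Pc$ is dense in $\Hol(\Dc)$. To transport this inside $V$ I would use the one-parameter dilation group $\delta_t(\zeta,z)=(t^{1/2}\zeta,tz)\in GL(\Dc)$ (which preserves $\Dc$ since $t\Omega=\Omega$) together with the bigrading $\Pc=\bigoplus_{a,b}\Pc_{a,b}$ by the bidegree in $(\zeta,z)$. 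Each $\Pc_{a,b}$ is finite-dimensional and $GL(\Dc)$-invariant, and $\mathfrak{n}$ acts by lowering the weighted degree; thus $M\coloneqq V\cap\Pc$ is a bigraded $GL(\Dc)$- and $\mathfrak{n}$-submodule of $\Pc$. The content of this step is that the closure of $M$ recaptures all of $V$: I would obtain it by combining the Runge density of $\Pc$ in $\Hol(\Dc)$ with the invariance of $V$, approximating a given $f\in V$ by polynomials and correcting them back into $V$ via the finite-dimensional $GL(\Dc)$-isotypic projections furnished by the decomposition of the $\Pc_{a,b}$ into irreducibles (the affine analogue of Proposition~\ref{prop:3} and Corollary~\ref{cor:2}).

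For the convolution characterization I would set $\Ic\coloneqq\Set{I \text{ supported at }(0,0)\colon f*I=0 \text{ for all } f\in V}$. This $\Ic$ is automatically $GL(\Dc)$-invariant, because $V$ is $GL(\Dc)$-invariant and $GL(\Dc)$ acts compatibly on $\Hol(\Dc)$ and on origin-supported distributions; and trivially $V\subseteq V'\coloneqq\Set{f\in\Hol(\Dc)\colon f*I=0 \text{ for all } I\in\Ic}$. The space $V'$ is closed (an intersection of kernels of the continuous maps $f\mapsto f*I$), $\Nc$-invariant (these maps commute with $\Nc$-translations) and $GL(\Dc)$-invariant (since $\Ic$ is), hence $\Aff$-invariant; by the density step applied to both $V$ and $V'$ it therefore suffices to prove $V'\cap\Pc=V\cap\Pc$. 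Since, by that density, $\Ic$ coincides with $\mathrm{Ann}_{\Uc(\mathfrak{n})}(M)$, this is the purely algebraic double-annihilator identity $M=\Set{p\in\Pc\colon I p=0 \text{ for all } I\in\mathrm{Ann}_{\Uc(\mathfrak{n})}(M)}$ for the submodule $M=V\cap\Pc$, and since $\Uc(\mathfrak{n})$ is graded by the weighted degree it can be verified one bidegree at a time, where it reduces to finite-dimensional linear algebra.

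The main obstacle is exactly this last double-annihilation together with the density step that feeds it. The subtlety is twofold: first, because $\Nc$ is non-compact, a dilation-invariant closed subspace is \emph{not} the closed sum of its homogeneous polynomial components (non-polynomial homogeneous holomorphic functions exist on $\Dc$, such as the powers $z^d$, $d\notin\N$, on the upper half-plane), so the recovery of $V$ from $M$ genuinely needs the Runge approximation and not merely a grading argument; second, the operators $X_v$ are not pure $\zeta$-derivatives, so $\Uc(\mathfrak{n})$ does not contain all constant-coefficient holomorphic operators, and one must check that in each bidegree $\mathrm{Ann}_{\Uc(\mathfrak{n})}(M)$ is still rich enough to cut out $M$ exactly. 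I would resolve this by using the $GL(\Dc)$-irreducibility of the homogeneous pieces: an $\mathfrak{n}$- and $GL(\Dc)$-invariant $M$ contains each irreducible constituent entirely or not at all, so the double annihilator can be computed constituent by constituent, where it holds by the same mechanism as in the one-dimensional model $\Uc(\mathfrak{n})=\C[\partial_z]$ acting on $\C[z]$.
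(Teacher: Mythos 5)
The paper itself gives no proof of Proposition~\ref{prop:4} (it quotes \cite[Proposition 7.1]{Rango1}), so your proposal has to stand on its own, and several of its steps do: $\Aff$-invariance is indeed independent of $\lambda$, a closed $\Nc$-invariant subspace is stable under convolution with distributions supported at the identity, $V'$ is closed and $\Aff$-invariant, and, \emph{granted} the density $V=\overline{V\cap \Pc}$, one does get $\Ic=\mathrm{Ann}(V\cap\Pc)$ and the reduction of $V=V'$ to the polynomial level. The proposal breaks down, however, exactly where all the difficulty sits: the density step. Your mechanism --- approximate $f\in V$ by polynomials (Runge) and then ``correct'' the approximants into $V$ using finite-dimensional $GL(\Dc)$-isotypic projections, presented as ``the affine analogue of Proposition~\ref{prop:3} and Corollary~\ref{cor:2}'' --- cannot work. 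Those two results rest entirely on averaging over the \emph{compact} group $K_0$: the projections $\pi_{\vect s}$ are continuous on all of $\Hol(D)$, and that continuity is what lets a closed invariant subspace be recovered from its polynomial components. In the unbounded realization every relevant group ($\Nc$, the dilations, $GL(\Dc)$) is non-compact, and the graded component maps are not even continuous on $\Pc$ for the topology of $\Hol(\Dc)$: on $\C_+$ the polynomials $p_n(z)=(1+iz/n)^{n^2}$ satisfy $p_n\to 0$ uniformly on compact subsets of $\C_+$ while $p_n(0)=1$, so already the ``constant component'' functional is discontinuous. Consequently the graded or isotypic components of your Runge approximants need not converge to anything, and ``correcting them back into $V$'' has no meaning; any argument of this shape is circular, because a continuous, $V$-compatible component extraction on $\Hol(\Dc)$ is essentially equivalent to the density statement you are trying to prove. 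This step is the analytic core of \cite[Proposition 7.1]{Rango1}, and it is precisely the point where compact-group averaging has to be replaced by different tools (invariance under convolution with compactly supported distributions on $\Nc$ obtained by vector-valued integration, together with Laplace--Fourier analysis of holomorphic functions on Siegel domains).

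The second gap is the ``purely algebraic double-annihilator identity''. Even granting density for both $V$ and $V'$, proving $M=\{p\in\Pc\colon p*I=0 \text{ for all } I\in\mathrm{Ann}(M)\}$ is not ``finite-dimensional linear algebra one bidegree at a time'': an element of $\mathrm{Ann}(M)$ must kill $M$ in \emph{every} bidegree simultaneously, so for each irreducible constituent $W\not\subseteq M$ you must exhibit a single point-supported distribution annihilating all of $M$ (infinitely many conditions) and yet not $W$. The one-dimensional model already shows that no element of the enveloping algebra of $\mathfrak{n}$ separates one constituent from all the others (no polynomial in $\partial_z$ kills every $z^\alpha$, $\alpha\neq\beta$, without killing $z^\beta$), so the existence of such elements depends on $M$ being a ``lower set'' for the $\mathfrak{n}$-action and on explicit knowledge of how the enveloping algebra maps constituents onto one another; in \cite{Tubi,Rango1} this is exactly what the operators $\Dc_{\vect s}$, i.e.\ convolution with the Riesz distributions $I^{-\vect s}$, and the attendant non-vanishing computations on the spaces $\Pc_{\vect s}$ provide. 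Your argument also assumes implicitly that $\Pc$ is multiplicity-free as a $GL(\Dc)$-module (``each irreducible constituent entirely or not at all''), which is true but is itself part of the structure theory being invoked. In short, the architecture (annihilator $\Ic$, reduction to polynomials) is the right one, but both pillars supporting it --- density and the double-annihilator identity --- are asserted rather than proved, and the specific mechanisms you propose for them fail in this non-compact setting.
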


Before we proceed, we need to introduce a few more objects related to the irreducible symmetric cone $\Omega$.

\subsection{Irreducible Symmetric Cones}\label{sec:6:2}

Recall that $F=A(e)$ is endowed with the structure of a (real) Jordan algebra,  and with a frame $e_1,\dots,e_r$ with respect to which the $\Delta_j$ and $\Delta^{\vect s}$ are defined. Then, define $\Delta^*_j$ and $(\Delta^*)^{\vect s}$ as the $\Delta_j$ and $\Delta^{\vect s}$, but this time using the frame $e_r,\dots, e_1$. In other words, $\Delta^*_j$ is the determinant polynomial associated with the Jordan algebra $A(e_{r-j+1}+\cdots+e_r)$, composed with the canonical projection $A(e)\to A(e_{r-j+1}+\cdots+e_r)$, and then $(\Delta^*)^{\vect s}=(\Delta_1^*)^{s_1-s_2}\cdots (\Delta_r^*)^{s_r}$. Then, it is known that there is a simply transitive subgroup $T_-$ of $G(\Omega)$ such that $\Delta^{\vect s}$ and $(\Delta^*)^{\sigma(\vect s)}$, when transferred to $T_-$ by means of the mappings $t\mapsto t e$ and $t\mapsto t^* e$, respectively, give rise to the same  \emph{character} of $T_-$, where
\[
\sigma(s_1,\dots, s_r)=\sigma(s_r,\dots, s_1)
\]
for every $(s_1,\dots, s_r)\in \C^r$. In addition, every character of $T_-$ may be obtained in this way.  
Instead of providing a general construction, we shall describe $T_-$ case by case as a group of lower triangular matrices (cf., e.g.,~\cite[Section 2.1]{CalziPeloso} for more details).  

\begin{itemize}
	\item $r\Meg 3$: in this case, $\Omega$ is the cone of positive  non-degenerate hermitian $r\times r$ matrices over $\R$ (domains of type $(I\!I\!I_r)$), $\C$ (domains of type $(I_{r,q})$), $\Hd$ (domains of type $(I\!I_{2r})$ and $(I\!I_{2r+1})$), or $\Od$ (only for $r=3$, corresponding to the exceptional domain of type $(V\!I)$), endowed with the scalar product $(x,y)\mapsto \Re\tr(x y)$. Then, choosing the frame $e_1,\dots, e_r$ with $e_j=(\delta_{p,j}\delta_{q,j})_{p,q=1,\dots,r}$, $\Delta_j(x)$ and $\Delta_j^*(x)$ become the principal minors of $x$ (over $\R,\C,\Hd$, respectively\footnote{Cf.~\cite[Appendix 2]{BourbakiA3} for a definition of the determinant of an invertible matrix with values in a non-commutative division ring. Notice that, in particular, the determinant of a matrix with values in $\Hd$ takes values in the abelianization of the multiplicative group $\Hd^*$ of $\Hd$, which may be canonically identified with $\R_+^*$. We set the determinant to be $0$ by definition if the matrix is not invertible.})  corresponding to the first and last $j$ rows and columns of $x\in \Omega$, respectively, at least in the case of matrices over $\R,\C,\Hd$.  Then, one may let $T_-$ be the group of lower triangular $r\times r$ matrices with strictly positive diagonal entries, endowed with the unique \emph{left} action by linear automorphisms such that $t\cdot e=t t^*$, where $t^*$ denotes the transpose conjugate of $t$. Except for the case of matrices over $\Od$, the action of $T_-$ is then given by
	\[
	t\cdot x=t x t^*.
	\]
	It is then easily verified, in this case, that the adjoint action (which we shall denote as a \emph{right} action) acts so that $e\cdot t= t^* t$, and this formula determines the adjoint action of $T_-$ in general. It is then readily verified that both $\Delta^{\vect s}$ and $(\Delta^*)^{\sigma(\vect s)}$ determine the same character $t\mapsto \prod_j t_{j,j}^{2 s_j}$ of $T_-$;
	
	\item $r=2$: in this case, $\Omega$ is the Lorentz cone $\Set{\left( \begin{smallmatrix} x &z \\ z & y\end{smallmatrix} \right)\colon x,y>0, z\in \R^{m-2}, \abs{z}^2<xy}$ (domains of type $(I_{2,q})$, $(IV_{m})$, and $(V)$), endowed with the scalar product $\left( \big( \begin{smallmatrix} x &z \\ z & y\end{smallmatrix} \big),\big( \begin{smallmatrix} x' &z' \\ z' & y'\end{smallmatrix} \big)\right)\mapsto xx'+yy'+2\langle z,z'\rangle$. With the choice of $e_1=\left( \begin{smallmatrix} 1&0 \\ 0 & 0\end{smallmatrix} \right)$ and $e_2=\left( \begin{smallmatrix}0&0 \\ 0 & 1\end{smallmatrix} \right)$, we then get $\Delta_1\left( \begin{smallmatrix} x &z \\ z & y\end{smallmatrix} \right)=x$, $\Delta_1^*\left( \begin{smallmatrix} x &z \\ z & y\end{smallmatrix} \right)=y$, and $\Delta_2\left( \begin{smallmatrix} x &z \\ z & y\end{smallmatrix} \right)=\Delta_2^*\left( \begin{smallmatrix} x &z \\ z & y\end{smallmatrix} \right)=yx-\abs{z}^2$. Then, one may let $T_-$ be the group of lower triangular matrices with strictly positive diagonal entries, which acts on $\Omega$ on the left so that
	\[
	t\cdot x= (t x) t^*=t (x t^*),
	\] 
	where the product defined as in~\eqref{eq:4}. The ajdoint (right) action is then given by $x\cdot t=(t^* x) t = t^* (x t )$.  It is then readily verified that both $\Delta^{\vect s}$ and $(\Delta^*)^{\sigma(\vect s)}$ determine the same character $t\mapsto \prod_j t_{j,j}^{2 s_j}$ of $T_-$;
	
	\item $r=1$: then, $\Omega=\R_+^*$ (domains of type $(I_{1,q})$), endowed with the usual scalar product. In this case, one considers the frame $e=1$, so that $\Delta_1=\Delta_1^*$ is the identity on $\Omega$, and one may choose $T_-=G(\Omega)$.
\end{itemize}

Further, let us observe that $T_-$ acts (on the left) also on $E$ in such a way that the mapping $(\zeta,z)\mapsto (t\cdot \zeta, t\cdot z)$ induces a biholomorphism of $\Dc $ (that is, $\Phi(t\cdot \zeta)=t\cdot \Phi(\zeta))$). For domains of type $(I_{p,q})$ with the above description, it suffices to set $A\cdot t=t A$. For domains of type $(I\!I_{2n+1})$ and for the exceptional domain of type $(V)$ the situation is more intricate and the existence of the mentioned action follows from the general theory (cf., e.g.,~\cite[p.~14--15]{Kaneyuki}).

In addition, there is a holomorphic family $(I^{\vect s})_{\vect s\in \C^r}$ of tempered distributions on $F$, supported in $\overline{\Omega}$, such that $\Lc I^{\vect s}=(\Delta^*)^{-\sigma(\vect s)}$ for every $\vect s\in \C^r$ (cf., e.g.,~\cite[Lemma 2.26 and Proposition 2.28]{CalziPeloso}). In addition, $I^{-\vect s}$ is supported at $\Set{0}$ if and only if $(\Delta^*)^{\sigma(\vect s)}$ is a polynomial, that is, if and only if $\vect s\in \N^r$ and $s_1\meg \cdots\meg s_r$. We shall also denote by $\N_{\Omega}^*$ the set of these $\vect s\in \N^r$, so that $\N_\Omega^*=\sigma(\N_\Omega)$. 

We shall also simply write $I^{s}$ instead of $I^{s\vect 1_r}$ for $s\in \C$, so that $\square^k f= f* I^{-k}$ for every $k\in\N$ and for every $f\in \Hol(\Dc)$.

\subsection{The Spaces $H_\lambda(\Dc)$}

Let us now describe  the spaces  $H_\lambda(\Dc)$  for $\lambda\in \Wc(\Omega)$. Define
\[
A^2_\lambda(\Dc)=\Set{f\in \Hol(\Dc)\colon \int_{\Dc} \abs{f(\zeta,z)}^2 \Delta^{\lambda}(\Im z-\Phi(\zeta))\,\dd \nu_\Dc(\zeta,z)<\infty }
\]
endowed with the corresponding Hilbert norm, where $\dd\nu_\Dc(\zeta,z)=\Delta^{-g}(\Im z-\Phi(\zeta))\,\dd (\zeta,z)$ is the $G(\Dc)$-invariant measure. Then, $H_\lambda(\Dc)=A^2_\lambda(\Dc)$ (with proportional norms) when $\lambda>g-1$, whereas $A^2_\lambda(\Dc)=\Set{0}$ otherwise.

In the general case, we may provide a Fourier-type description of the $H_\lambda(\Dc)$. In order to do that, take $\tau\in \overline{\Omega}$, and denote by $\Rc_\tau$ the radical of the bilinear form $\langle \tau, \Im \Phi\rangle$. Then, $\Nc/\ker \tau$ is the direct sum of  the abelian group $\Rc_\tau$ and the Heisenberg group ($\R$, if $E=\Set{0}$) $(E\ominus \Rc_\tau)\oplus (F/\ker \tau)$, where $E\ominus \Rc_\tau$ denotes the orthogonal complement of $\Rc_\tau$ in $E$. Then, the Stone--Von Neumann theorem (cf.~\cite[Theorem 1.50]{Folland}) shows that there is (up to unitary equivalence) a unique  irreducible continuous unitary representation $\pi_\tau$ of $\Nc$ into a Hilbert space $\Hs_\tau$ such that $\pi_\tau(\zeta,x)=\ee^{-i \langle \tau,x\rangle}$ for every $(\zeta,x)\in \Nc$. One may then choose $\Hs_\tau=\Hol(E\ominus \Rc_\tau)\cap L^2(\ee^{-2 \langle \tau, \Phi\rangle}\cdot \Hc^{2(n-d_\tau)})$, where $d_\tau=\dim_\C \Rc_\tau$ and $\Hc^k$ denotes the $k$-dimensional Hausdorff measure, and set
\[
\pi_\tau(\zeta+\zeta',x) \psi(\omega)=\ee^{\langle \tau, 2 \Phi(\omega,\zeta)-\Phi(\zeta)-i  x\rangle} \psi(\omega-\zeta)
\]
for every $\zeta,\omega\in E\ominus \Rc_\tau$, for every $\zeta'\in \Rc_\tau$, for every $x\in F$, and for every $\psi\in \Hs_\tau$ (cf., e.g.,~\cite[\S\ 2]{PWS}). Let us now define the class of (say, Borel) measurable vector fields $(v_\tau)\in \prod_{\tau\in \overline\Omega} \Hs_\tau$: we say that $(v_\tau)$ is (Borel) measurable if the mapping  $GL(\Dc)\ni A\times B\mapsto \Us_{A\times B}^{-1} v_{\trasp B \tau} \in \Hs_\tau$ is (Borel) measurable for every $\tau\in \overline \Omega$,\footnote{Notice that the action $GL(\Dc)\ni A\times B\mapsto \trasp B\in G(\Omega)$ of $GL(\Dc)$ on $\Omega$ has only $r$ distinct orbits (cf., e.g.,~\cite[\S\ 5.1]{VergneRossi}), so that we are only imposing $r$ (and not uncountably many) conditions.} where $\Us_{A\times B}\colon \Hs_\tau\to \Hs_{\trasp B \tau}$ is the operator defined by
\[
\Us_{A\times B} \psi\coloneqq \abs{{\det}_\C A'} (\psi\circ A'),
\]
where $A'\colon E\ominus \Rc_\tau\to E\ominus \Rc_{\trasp B \tau}$ is the map induced by $A$, and is unitary and intertwines $\pi_\tau\circ (A\times B)$ and $\pi_{\trasp B \tau}$ (cf.~\cite[Subsection 2.7]{Tubi}). For every $\tau\in \overline \Omega$, define $P_{\tau,0}$ as the orthogonal projector of $\Hs_\tau$ onto the space of constant functions on $E\times \Rc_\tau$.

If $\lambda\in \Wc(\Omega)$, then $I^{\lambda}$ is a positive measure on $\overline \Omega$, so that we may consider the direct integral $2^\lambda \int_{\overline \Omega} \Lin^2(\Hs_\tau) P_{\tau,0} \, \dd I^{\lambda}(\tau)$, defined as
\[
\Set{(v_\tau)\in \prod_{\tau\in \overline \Omega} \Lin(\Hs_\tau)P_{\tau,0}\colon (v_\tau) \text{ measurable}, 2^\lambda \int_{\overline \Omega} \norm{v_\tau}^2_{\Lin^2(\Hs_\tau)}  \, \dd I^{\lambda }(\tau)<\infty}
\]
(modulo negligible vector fields), endowed with the corresponding (Hilbert) norm. Here, $\Lin^2(\Hs_\tau)$ denotes the space of Hilbert-Schmidt endomorphisms of $\Hs_\tau$, for every $\tau\in \overline \Omega$, and $(v_\tau)\in\prod_{\tau\in \overline \Omega} \Lin(\Hs_\tau) $ is (Borel) measurable if and only if $(v_\tau w_\tau)$ is (Borel) measurable for every (Borel) measurable $(w_\tau)\in \prod_{\tau\in \overline \Omega} \Hs_\tau$. Then, we have the following result (cf.~\cite{VergneRossi,Ishi4,Tubi}).

\begin{proposition}
	The linear mapping $\Pc_\lambda\colon 2^\lambda \int_{\overline \Omega} \Lin^2(\Hs_\tau) P_{\tau,0} \, \dd I^{\lambda}(\tau)\to H_\lambda(\Dc)$ defined by
	\[
	\Pc_\lambda(v)(\zeta,z)=2^\lambda\int_{\overline \Omega} \tr(v_{\tau}\pi_{\tau}) \ee^{-\langle \tau, \Im z-\Phi(\zeta)\rangle}\,\dd I^{\lambda }(\tau)
	\]
	for every $v=(v_\tau)\in 2^\lambda \int_{\overline \Omega} \Lin^2(\Hs_\tau) P_{\tau,0} \, \dd I^{\lambda}$ and for every $(\zeta,z)\in \Dc$, is an isometric isomorphism.
\end{proposition}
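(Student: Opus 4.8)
The plan is to realise $\Pc_\lambda$ as the restriction, to a distinguished ``vacuum'' component, of the abstract Plancherel decomposition of $H_\lambda(\Dc)$ under the translation action of $\Nc$, and then to pin down the Plancherel measure as $I^\lambda$ through the Laplace representation of the reproducing kernel. Since every element of $\Nc$ acts on $\Dc$ by an affine map of unit Jacobian, $\Uc_\lambda$ restricts on $\Nc$ to the ordinary unitary representation $f\mapsto f\circ(\,\cdot\,)^{-1}$ on $H_\lambda(\Dc)$; decomposing it over the unitary dual of $\Nc$ yields a direct integral whose generic fibres are the $\pi_\tau$. The content of the statement is then that only the $\pi_\tau$ with $\tau\in\overline\Omega$ occur, that the multiplicity at $\tau$ is one copy of $\Hs_\tau$ realised through $P_{\tau,0}$ as $\Lin^2(\Hs_\tau)P_{\tau,0}$, and that the Plancherel measure is $I^\lambda$.

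First I would use the reproducing kernels of $H_\lambda(\Dc)$ as test vectors. For $(\zeta',z')\in\Dc$ let $e_{(\zeta',z')}$ be the field whose $\tau$-component is the rank-one operator obtained by composing the holomorphic continuation of $\pi_\tau$ to $(\zeta',z')$ with $P_{\tau,0}$, so that $(e_{(\zeta',z')})_\tau\in\Lin^2(\Hs_\tau)P_{\tau,0}$. A direct computation from the explicit formula for $\pi_\tau$ gives
\[
\tr\big((e_{(\zeta',z')})_\tau\,\pi_\tau(\zeta,z)\big)\,\ee^{-\langle\tau,\,\Im z-\Phi(\zeta)\rangle}=\ee^{-\langle\tau,\,\frac{z-\overline{z'}}{2i}-\Phi(\zeta,\zeta')\rangle},
\]
whence, integrating,
\[
\Pc_\lambda\big(e_{(\zeta',z')}\big)(\zeta,z)=2^\lambda\int_{\overline\Omega}\ee^{-\langle\tau,\,\frac{z-\overline{z'}}{2i}-\Phi(\zeta,\zeta')\rangle}\,\dd I^\lambda(\tau).
\]
By the case $\vect s=\lambda\vect 1_r$ of $\Lc I^{\vect s}=(\Delta^*)^{-\sigma(\vect s)}$ (note $\Delta^*_r=\Delta_r=\Delta$, so $\Lc I^\lambda=\Delta^{-\lambda}$), the right-hand side is a nonzero constant multiple of $\Delta^{-\lambda}\!\big(\frac{z-\overline{z'}}{2i}-\Phi(\zeta,\zeta')\big)$, that is, of the reproducing kernel $\Kc^{\lambda/g}_{(\zeta',z')}$ of $H_\lambda(\Dc)$ (recall that the latter is transported from $\Kc^{\lambda/g}$ by $\Cc_\lambda$). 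Applying the same Laplace identity once more shows that the direct-integral inner product $\langle e_{(\zeta',z')}\vert e_{(\zeta'',z'')}\rangle$ equals, up to the same constant, $\Kc^{\lambda/g}\big((\zeta',z'),(\zeta'',z'')\big)$. Thus $\Pc_\lambda$ carries the coherent fields $e_{(\zeta',z')}$ isometrically onto the kernels $\Kc^{\lambda/g}_{(\zeta',z')}$.

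It then remains to show that the $e_{(\zeta',z')}$ are total in $2^\lambda\int_{\overline\Omega}\Lin^2(\Hs_\tau)P_{\tau,0}\,\dd I^\lambda(\tau)$; granting this, $\Pc_\lambda$ extends by continuity to a surjective isometry onto the closed span of the kernels, which is all of $H_\lambda(\Dc)$. Totality I would establish fibrewise: for fixed $\tau$ the vectors $\pi_\tau(\zeta',\overline{z'})1_\tau$ are the coherent states of the Fock model $\Hs_\tau$ and are total in $\Hs_\tau\cong\Lin^2(\Hs_\tau)P_{\tau,0}$, while varying $\Im z'\in\Omega$ produces the exponential weights $\ee^{-\langle\tau,\,\cdot\,\rangle}$ needed to separate the fibres after integration against $I^\lambda$; measurability of the fields follows from their compatibility with the intertwiners $\Us_{A\times B}$. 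Holomorphy of $\Pc_\lambda(v)$ for a general field $v$ follows from the holomorphic dependence of $\tr(v_\tau\pi_\tau)$ on $(\zeta,z)$ together with the decay $\ee^{-\langle\tau,\,\Im z-\Phi(\zeta)\rangle}$ (valid because $\tau\in\overline\Omega$ and $\Im z-\Phi(\zeta)\in\Omega$), which legitimises differentiation under the integral sign. As a sanity check for $\lambda>g-1$ one may bypass the kernel altogether: writing $H_\lambda(\Dc)=A^2_\lambda(\Dc)$, slicing at height $y=\Im z-\Phi(\zeta)\in\Omega$, and applying the Plancherel theorem on $\Nc$ to each slice turns the defining weighted integral into $2^\lambda\int_{\overline\Omega}\norm{v_\tau}_{\Lin^2(\Hs_\tau)}^2\,\dd I^\lambda(\tau)$, the weight $\Delta^\lambda$ and the constant $2^\lambda$ recombining into $I^\lambda$ via the same Gindikin formula.

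The main obstacle is the boundary of $\Omega$, i.e.\ the degenerate representations. For $\tau$ in a proper face of $\overline\Omega$ one has $\Rc_\tau\neq\Set{0}$, the model space $\Hs_\tau$ collapses to a Fock space over the smaller $E\ominus\Rc_\tau$, and $P_{\tau,0}$ projects onto the correspondingly larger space of functions constant along $\Rc_\tau$; for the discrete Wallach points $\lambda=ja/2$ the distribution $I^\lambda$ is no longer absolutely continuous on $\Omega$ but is carried by a single lower-dimensional $GL(\Dc)$-orbit in $\partial\Omega$. One must then verify that the Plancherel bookkeeping, the Hausdorff-measure normalisation $\Hc^{2(n-d_\tau)}$ built into $\Hs_\tau$, and the exponent hidden in $I^\lambda$ all match on these fibres, and that the coherent states remain total there. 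Establishing that the Laplace representation $\Lc I^\lambda=\Delta^{-\lambda}$ persists as an identity of \emph{positive} measures exactly on $\Wc(\Omega)$ is the delicate point; away from it the computation is the standard coherent-state/Plancherel one.
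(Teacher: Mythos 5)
The paper itself offers no proof of this proposition; it defers to Vergne--Rossi, Ishi, and the companion paper cited as [Tubi], and the route you take -- coherent fields sent to reproducing kernels via the Laplace identity $\Lc I^\lambda=\Delta^{-\lambda}$, Gram matrices matched by the same identity, then totality -- is essentially the argument of those sources; your Plancherel-by-slices check for $\lambda>g-1$ is likewise the standard argument in the absolutely continuous range. Note also that the point you single out at the end as the delicate one, namely that $I^\lambda$ is a \emph{positive} measure carried by $\overline\Omega$ precisely when $\lambda\in\Wc(\Omega)$ with $\Lc I^\lambda=\Delta^{-\lambda}$, is part of the paper's standing setup (it is stated immediately before the proposition), so you may quote it rather than re-derive it; what changes at the discrete Wallach points is only that $I^\lambda$ is singular, which affects the measure-theoretic bookkeeping below.

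The genuine gap is in the totality step. ``Fibrewise'' totality is not a valid principle for direct integrals: in $\int_{[0,1]}\C\,\dd\tau\cong L^2([0,1])$ the single constant field $\tau\mapsto 1$ spans every fibre but is far from total. The argument must instead be organized around the identity your own computation provides: pairing a field $v$ against the coherent field $e_{(\zeta',z')}$ returns (up to conjugation and constants) the value $\Pc_\lambda(v)(\zeta',z')$, so totality of the coherent fields \emph{is} injectivity of $\Pc_\lambda$, and that is what has to be proved. Concretely: fix $(\zeta',x')\in\Nc$ and let $h'$ run over $\Omega$; the vanishing of $\Pc_\lambda(v)$ along this ray, together with injectivity of the Laplace transform for complex measures on $\overline\Omega$ integrable against $\ee^{-\langle\,\cdot\,,h'\rangle}$, forces $\langle v_\tau\vert \pi_\tau(\zeta',x')1_\tau\rangle=0$ for $I^\lambda$-almost every $\tau$; a countable dense family of $(\zeta',x')$ plus continuity in $(\zeta',x')$ then yields a single $I^\lambda$-null set, off which $v_\tau\perp\pi_\tau(\Nc)1_\tau$; finally $1_\tau$ is cyclic because $\pi_\tau$ is irreducible, so $v_\tau=0$ a.e. This null-set assembly is exactly where the singularity of $I^\lambda$ at the discrete points must be confronted, and it is absent from your sketch. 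Two smaller points: your isometry is only established ``up to the same constant,'' but if $\Pc_\lambda(e_{(\zeta',z')})=c\,\Delta^{-\lambda}\bigl(\frac{\cdot-\overline{z'}}{2i}-\Phi(\cdot,\zeta')\bigr)$ while the Gram identity also carries the factor $c$, then $\Pc_\lambda$ scales norms by $\sqrt{c}$; the claimed exact isometry requires chasing the normalizations (the factor $2^\lambda$ and the Cayley transport defining the norm of $H_\lambda(\Dc)$) until one checks $c=1$. And in your displayed trace identity, if $\pi_\tau(\zeta,z)$ already denotes the holomorphically continued operator, the extra factor $\ee^{-\langle\tau,\Im z-\Phi(\zeta)\rangle}$ double-counts the decay; with the paper's convention $\pi_\tau(\zeta,x)=\ee^{-i\langle\tau,x\rangle}(\cdots)$ one must in fact pair against adjoints (or inverses) of $\pi_\tau$ for the integrand to be holomorphic in $z$, so the conventions deserve one careful check rather than none.
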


Thus, $\Pc_\lambda$ provides a Fourier-type description of $H_\lambda(\Dc)$ for every $\lambda\in\Wc(\Omega)$.

When $\lambda>m/r-1$, $I^{\lambda}=c_\lambda\Delta^\lambda \cdot \nu_\Omega$ for a suitable $c_\lambda>0$ (cf.~\cite[Proposition 2.28]{CalziPeloso}), so that $I^{\lambda}$ is absolutely continuous with respect to the Plancherel measure $c\abs{\Delta}^{b}\cdot \Hc^m$ (cf.~\cite[Corollary 1.17 and Proposition 2.30]{CalziPeloso}), and $\Pc_\lambda$ is actually the inverse Fourier transform of $c'\Delta^{\lambda-b} v$ for suitable constants $c,c'\neq 0$. In this case, it is possible to define a suitable Besov space $B_\lambda$ on $\Nc$ of type $L^2$ so that, for every $f\in H_\lambda(\Dc)$, the functions $f_h\colon (\zeta,x)\mapsto f(\zeta,x+i \Phi(\zeta))$ converge  to some $f_0$ in $B_\lambda$, and the mapping $f\to f_0$ induces an isomorphism of $H_\lambda(\Dc)$ onto $B_\lambda$. We refer the reader to~\cite[Chapters 4 and 5]{CalziPeloso} for a more thorough description of the spaces $B_\lambda$ and the corresponding extension operator $B_\lambda\to H_\lambda(\Dc)$ (inverse to the boundary values operator $f\mapsto f_0$). Here, we shall only remark that convergence in $B_\lambda$ does not, in general, imply pointwise convergence almost everywhere. In fact, $B_\lambda$ is not a space of functions, in general, but embeds canonically in a quotient of the space of tempered distributions on $\Nc$.
For example, when $n=0$ one may define $B_\lambda$ as $\Fc^{-1}(L^2(\Delta^\lambda\cdot \nu_\Omega))$, where $\Fc$ denotes the Fourier transform on $F$, so that the preceding remarks become more apparent in this case. The general case requires a more delicate approach, since the non-commutative Fourier transform does not have a clear description for general distributions.

Notice that, when $\lambda>m/r-1$, the norm of the space $H_\lambda(\Dc)$ may be described efficiently by means of the operators $\square^k$ and the integral norms of the weighted Bergman spaces (cf.~\cite[Corollary 3.7 and Proposition 4.2]{Tubi}, and argue as in the proof of~\cite[Proposition 2.8]{Rango1}). Notice, though, that this description fails to determine $H_\lambda(\Dc)$ as a vector space in general. 

\begin{theorem}
	Take $\lambda>m/r-1$ and $k\in \N$. Then, $\square^k$ induces a multiple of an isometry from $H_\lambda(\Dc)$ onto $H_{\lambda+2k}(\Dc)$. In particular, if $\lambda+ 2 k>g-1$, then there is a constant $c>0$ such that
	\[
	\norm{f}^2_{H_\lambda(\Dc)}=c \int_{\Dc} \abs{\square^k f(\zeta,z)}^2 \Delta^{\lambda+ 2k}(\Im z-\Phi(\zeta))\,\dd \nu_\Dc(\zeta,z)
	\]
	for every $f\in H_\lambda(\Dc)$. If $\lambda>2(m/r-1)$, then 
	\[
	H_\lambda(\Dc)=\Set{f\in \Hol(\Dc)\colon \sup_{(\zeta,z)\in \Dc} \Delta^{\lambda/2}(\Im z-\Phi(\zeta)) \abs{f(\zeta,z)}<\infty, \square^k f\in H_{\lambda+2 k} }.
	\]
\end{theorem}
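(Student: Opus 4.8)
The plan is to reduce everything to the Fourier--Laplace description of the spaces $H_\lambda(\Dc)$ furnished by the isomorphisms $\Pc_\lambda$, and to exploit the fact that $\square^k$ acts on the spectral side as multiplication by a power of $\Delta$. First I would prove the isometry assertion. Since $\lambda>m/r-1$ and $\lambda+2k>m/r-1$, both measures $I^\lambda$ and $I^{\lambda+2k}$ are of the form $c_\mu\Delta^\mu\cdot\nu_\Omega$ on $\Omega$, so via $\Pc_\lambda$ and $\Pc_{\lambda+2k}$ the spaces $H_\lambda(\Dc)$ and $H_{\lambda+2k}(\Dc)$ are realized as direct integrals of Hilbert--Schmidt fields against $\Delta^\lambda\cdot\nu_\Omega$ and $\Delta^{\lambda+2k}\cdot\nu_\Omega$, respectively. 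The relation $\square^k f=f*I^{-k}$ together with $\Lc I^{-k}=(\Delta^*)^{k}=\Delta^k$ shows that, on the spectral side, $\square^k$ is multiplication of the operator field $(v_\tau)$ by the positive scalar $\Delta(\tau)^k$. The key computation is then simply that $\abs{\Delta^k}^2\Delta^\lambda=\Delta^{\lambda+2k}$ on $\Omega$: this makes multiplication by $\Delta^k$ an isometry, up to a single positive constant depending only on $\lambda$ and $k$ (into which the factors $2^\lambda,2^{\lambda+2k}$ and $c_\lambda,c_{\lambda+2k}$ collapse), between the two direct integrals. Surjectivity is immediate, since $\Delta>0$ on $\Omega$ makes multiplication by $\Delta^k$ a bijection of the spectral fields. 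This proves that $\square^k$ is a multiple of an isometry of $H_\lambda(\Dc)$ onto $H_{\lambda+2k}(\Dc)$.

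The integral formula is then a direct substitution. When $\lambda+2k>g-1$ one has $H_{\lambda+2k}(\Dc)=A^2_{\lambda+2k}(\Dc)$ with proportional norms, so the isometry just established gives $\norm{f}^2_{H_\lambda(\Dc)}=c\,\norm{\square^k f}^2_{A^2_{\lambda+2k}(\Dc)}$ for a suitable $c>0$, and expanding the definition of $A^2_{\lambda+2k}(\Dc)$ yields exactly the claimed expression with weight $\Delta^{\lambda+2k}(\Im z-\Phi(\zeta))$ against $\dd\nu_\Dc$.

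For the last equality I would argue by double inclusion, the forward inclusion being the easy half. If $f\in H_\lambda(\Dc)$, then $\square^k f\in H_{\lambda+2k}(\Dc)$ by the isometry, while the pointwise bound comes from the reproducing kernel of $H_\lambda(\Dc)$: since this kernel is the transferred $\Kc^{\lambda/g}$, whose diagonal value is a constant times $\Delta^{-\lambda}(\Im z-\Phi(\zeta))$, the Cauchy--Schwarz estimate $\abs{f(\zeta,z)}\meg\norm{f}_{H_\lambda(\Dc)}\sqrt{K((\zeta,z),(\zeta,z))}$ gives $\sup_{(\zeta,z)\in\Dc}\Delta^{\lambda/2}(\Im z-\Phi(\zeta))\abs{f(\zeta,z)}<\infty$. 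Thus $H_\lambda(\Dc)$ is contained in the right-hand side.

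The reverse inclusion is where the real work (and the hypothesis $\lambda>2(m/r-1)=a(r-1)$) lies. Given $f$ holomorphic with $\square^k f\in H_{\lambda+2k}(\Dc)$ and the sup bound, I would use the surjectivity from the first part to choose $g\in H_\lambda(\Dc)$ with $\square^k g=\square^k f$; by the forward inclusion $g$ satisfies the same sup bound, so $h\coloneqq f-g$ is holomorphic, satisfies $\sup_{(\zeta,z)\in\Dc}\Delta^{\lambda/2}(\Im z-\Phi(\zeta))\abs{h(\zeta,z)}<\infty$, and satisfies $\square^k h=0$. It then suffices to show that $\square^k$ has trivial kernel on this weighted sup class, which forces $h=0$ and hence $f=g\in H_\lambda(\Dc)$. \textbf{This Liouville-type uniqueness is the main obstacle.} I expect to prove it by transferring to the spectral side, where $\square^k h=0$ concentrates the boundary spectrum of $h$ on the lower-rank strata $\Set{\Delta=0}\cap\overline\Omega$, and then showing that the weight $\Delta^{\lambda/2}$ with $\lambda>a(r-1)$ is strong enough to exclude any nonzero such $h$ (in the rank-one prototype this is exactly the statement that a nonzero polynomial cannot satisfy $\sup(\Im z)^{\lambda/2}\abs{h}<\infty$ when $\lambda>0$). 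Alternatively one can run the scaling argument of~\cite[Proposition~2.8]{Rango1}, combining the dilations $\delta_R$ with the homogeneity of $\Delta$ under $T_-$ to rule out nontrivial elements of $\ker\square^k$ in the sup class.
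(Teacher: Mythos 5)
Your handling of the first two assertions, and of the forward inclusion in the third, is correct and is essentially the argument behind the results the paper cites for this theorem (\cite[Corollary 3.7]{Tubi}, \cite[Proposition 2.8]{Rango1}): under $\Pc_\lambda$ the operator $\square^k=\,\cdot\,*I^{-k}$ becomes multiplication by a unimodular multiple of $\Delta(\tau)^k$, and since $\Delta^{2k}\,\dd I^\lambda=(c_\lambda/c_{\lambda+2k})\,\dd I^{\lambda+2k}$ on $\Omega$ this multiplication is a constant multiple of a unitary between the two direct integrals, which gives both the isometry and the surjectivity; the weighted-Bergman identification and the reproducing-kernel bound $\abs{f(\zeta,z)}\meg c\norm{f}_{H_\lambda(\Dc)}\Delta^{-\lambda/2}(\Im z-\Phi(\zeta))$ are likewise standard. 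The genuine gap is the step you yourself flag as the main obstacle: the Liouville-type statement that $\ker\square^k\cap\Ac^\infty_{\lambda,0}=\Set{0}$ when $\lambda>2(m/r-1)$. Your reduction shows that the last assertion of the theorem is \emph{equivalent} to this statement (given the first two parts), so it is the entire content of that assertion; leaving it at the level of ``the weight should be strong enough'' or ``run the scaling argument of \cite{Rango1}'' means the theorem is not proved.

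Moreover, this step cannot be handled softly, because the statement is sharp and the mechanism of failure at the endpoint is invisible in the rank-one model you invoke. In the tube case with $r\Meg 2$, the Bernstein identity $\square\Delta^s=b(s)\Delta^{s-1}$, $b(s)=\prod_{j=1}^r\bigl(s+(j-1)a/2\bigr)$, shows that $h\coloneqq\Delta^{-(m/r-1)}=\Delta^{-a(r-1)/2}$ satisfies $\square h=0$; since $\abs{\Delta(x+iy)}\Meg\Delta(y)$ on $F+i\Omega$ (cf.~\cite{FarautKoranyi}), $h$ is a nonzero element of $\ker\square^k\cap\Ac^\infty_{2(m/r-1),0}$, and $h\notin H_{2(m/r-1)}(\Dc)$ (otherwise injectivity of $\square^k$ on $H_{2(m/r-1)}(\Dc)$, from part one, would force $h=0$). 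So the claimed set equality actually \emph{fails} at $\lambda=2(m/r-1)$, and any proof of the kernel triviality must use the strict inequality quantitatively. This is exactly why your two fallback strategies stall: in rank one, $\{\Delta=0\}\cap\overline\Omega$ is the single point $\Set{0}$, so $\ker\partial_2^k$ consists of polynomials in $z$ (with holomorphic coefficients in $\zeta$) and the elementary decay argument of \cite[Proposition 2.8]{Rango1} applies; in higher rank the set $\{\Delta=0\}\cap\overline\Omega$ is a union of positive-dimensional strata carrying the measures $I^{ja/2}$, and $\ker\square^k$ contains their genuinely decaying, non-polynomial Fourier--Laplace transforms such as the $h$ above. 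Excluding all of these for $\lambda>2(m/r-1)$ requires either a Paley--Wiener theorem for the weighted sup class together with a quantitative analysis of boundary distributions supported on the strata (the route of \cite{Tubi}, resting on \cite{CalziPeloso}), and in the case $n\neq 0$ one faces the additional difficulty, noted in Section 6 of the paper, that the noncommutative Fourier transform of a general boundary distribution is not available, so ``transferring to the spectral side'' is itself nontrivial there. As it stands, your proposal proves the first two assertions and correctly reduces the third to this crucial lemma, but does not prove it.
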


When $\lambda\in (n+m)/r+\Wc(\Omega)$, we may further describe $H_\lambda(\Dc)$ as a suitable `Hardy-type' space (cf.~\cite[3.6.2]{CalziPeloso}). 

\begin{proposition}
	Take $\lambda\in  (n+m)/r+\Wc(\Omega)$. Then, $H_\lambda(\Dc)$ is the space of $f\in \Hol(\Dc)$ such that
	\[
	\norm{f}_{H_\lambda(\Dc)}^2=\sup_{h\in \Omega}\int_{\overline \Omega}  \norm{f_{h+h'} }^2_{L^2(\Nc)}\,\dd I^{\lambda-(n+m)/r}(h')
	\]
	is finite, endowed with a multiple of the corresponding norm.
\end{proposition}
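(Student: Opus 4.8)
The plan is to transport the whole statement to the Fourier model furnished by the isometry $\Pc_\lambda$ of the preceding Proposition, writing $f=\Pc_\lambda(v)$ with $v=(v_\tau)\in 2^\lambda\int_{\overline\Omega}\Lin^2(\Hs_\tau)P_{\tau,0}\,\dd I^\lambda(\tau)$, and to reduce everything to a computation of the group Fourier transform of the boundary slices $f_h$ on $\Nc$. Set $\mu\coloneqq\lambda-(n+m)/r\in\Wc(\Omega)$, so that $I^{\lambda-(n+m)/r}=I^\mu$. Since $\Im z-\Phi(\zeta)=h$ is constant along the slice at height $h$, the defining formula for $\Pc_\lambda$ gives
\[
f_h(\zeta,x)=2^\lambda\int_{\overline\Omega}\tr\big(v_\tau\,\pi_\tau(\zeta,x)\big)\,\ee^{-\langle\tau,h\rangle}\,\dd I^\lambda(\tau),
\]
so $f_h$ is an operator-valued inverse Fourier transform on $\Nc$ of the symbol $\tau\mapsto 2^\lambda\ee^{-\langle\tau,h\rangle}v_\tau$ against $\dd I^\lambda$. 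The main tool is then the Plancherel theorem for $\Nc$, whose Plancherel measure is $c\,\abs{\Delta}^{n/r}\cdot\Hc^m$ on $\overline\Omega$; note that, since $\lambda\Meg(n+m)/r>m/r-1$, the measure $I^\lambda=c_\lambda\Delta^\lambda\cdot\nu_\Omega$ is absolutely continuous with respect to it, with Radon--Nikodym density $\rho(\tau)=c'\Delta^{\lambda-(n+m)/r}(\tau)$ on $\Omega$.

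First I would run the computation for $f\in H_\lambda(\Dc)$. By the orthogonality relations underlying the Plancherel formula, the Fourier coefficient of the slice $f_{h+h'}$ is $\pi_\tau(f_{h+h'})=c''\,\ee^{-\langle\tau,h+h'\rangle}\,v_\tau^{*}\,\rho(\tau)$, whence, by Plancherel and $\rho^2\,\dd(\text{Pl})=\rho\,\dd I^\lambda$,
\[
\norm{f_{h+h'}}_{L^2(\Nc)}^2=c'''\int_{\overline\Omega} \norm{v_\tau}_{\Lin^2(\Hs_\tau)}^2\,\ee^{-2\langle\tau,h+h'\rangle}\,\rho(\tau)\,\dd I^\lambda(\tau).
\]
Integrating in $h'$ over $\overline\Omega$ against $\dd I^\mu$, using Tonelli (everything is nonnegative) together with the Laplace transform identity $\int_{\overline\Omega}\ee^{-2\langle\tau,h'\rangle}\,\dd I^\mu(h')=\Lc I^\mu(2\tau)=2^{-r\mu}\Delta^{-\mu}(\tau)$ (which follows from $\Lc I^{\vect s}=(\Delta^*)^{-\sigma(\vect s)}$ and the homogeneity of $\Delta$), the $\tau$-density collapses: $\rho(\tau)\Delta^{-\mu}(\tau)=c'\Delta^{\lambda-(n+m)/r-\mu}(\tau)$ is \emph{constant}, the exponent vanishing precisely because $\mu=\lambda-(n+m)/r$ and the Plancherel exponent is $n/r$. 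This is the numerical coincidence that makes the assertion hold exactly on the range $\lambda\in(n+m)/r+\Wc(\Omega)$. One thus obtains
\[
\int_{\overline\Omega}\norm{f_{h+h'}}_{L^2(\Nc)}^2\,\dd I^\mu(h')=C\int_{\overline\Omega}\norm{v_\tau}_{\Lin^2(\Hs_\tau)}^2\,\ee^{-2\langle\tau,h\rangle}\,\dd I^\lambda(\tau).
\]
Since $\ee^{-2\langle\tau,h\rangle}\meg1$ for $\tau\in\overline\Omega$ and $h\in\Omega$, while $\ee^{-2\langle\tau,t h_0\rangle}\to1$ as $t\to0^+$ for any fixed $h_0\in\Omega$, taking the supremum over $h\in\Omega$ and invoking monotone convergence along the ray $h=t h_0$ reconstitutes $C\int_{\overline\Omega}\norm{v_\tau}^2\,\dd I^\lambda(\tau)$, which is a multiple of $\norm{f}_{H_\lambda(\Dc)}^2$ by the isometry $\Pc_\lambda$. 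This yields the claimed identity for $f\in H_\lambda(\Dc)$.

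It then remains to show that finiteness of the supremum forces $f\in H_\lambda(\Dc)$. For this I would invoke the Paley--Wiener theory for $\Dc$: the holomorphy of $f$ on $\Dc$ implies that, on the (full-measure) set of $\tau$ for which the slices lie in $L^2(\Nc)$, their Fourier coefficients have the form $\pi_\tau(f_h)=\ee^{-\langle\tau,h\rangle}W_\tau$ for an $h$-independent operator field $(W_\tau)$ satisfying $W_\tau P_{\tau,0}=W_\tau$ (this ground-state condition encodes holomorphy, i.e.\ that only the vacuum contributes). Setting $v_\tau\coloneqq c''^{-1}\rho(\tau)^{-1}W_\tau^{*}$ and running the previous computation backwards identifies the finite supremum with a multiple of $\int_{\overline\Omega}\norm{v_\tau}^2\,\dd I^\lambda(\tau)$, so that $v$ belongs to the direct integral model and $f=\Pc_\lambda(v)\in H_\lambda(\Dc)$.

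The main obstacle is exactly this Fourier-analytic identification on the genuinely non-abelian group $\Nc$: one must justify that $\pi_\tau(f_h)$ is recovered from the symbol $v_\tau$ through the Plancherel/orthogonality relations, control the exchange of the $h'$-integral with the Plancherel integral (here Tonelli and positivity suffice), and verify the Paley--Wiener description of the admissible boundary data together with the value $n/r$ of the Plancherel exponent. The abelian tube case ($n=0$, where $\Nc=F$ and the $\pi_\tau$ are characters) is an instructive model in which every step reduces to the ordinary Fourier--Plancherel theorem on $F$ and to the homogeneity identity $\Lc I^\mu(2\tau)=2^{-r\mu}\Delta^{-\mu}(\tau)$, and in which the density collapse $\lambda-m/r-\mu=0$ (with $\mu=\lambda-m/r$) is immediate.
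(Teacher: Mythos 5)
The paper itself contains no proof of this proposition: it is quoted from~\cite[3.6.2]{CalziPeloso}, so your attempt can only be measured against that route, which is in fact the one you chose. Your forward direction ($f\in H_\lambda(\Dc)$ implies the identity) is correct and is exactly the mechanism of the cited source: transport by the isometry $\Pc_\lambda$, the group Plancherel formula on $\Nc$ with Plancherel density $c\,\abs{\Delta}^{n/r}\cdot\Hc^m$ (your identification $b=n/r$ is correct for symmetric type II domains, though it deserves the citation~\cite[Corollary 1.17 and Proposition 2.30]{CalziPeloso} rather than assertion), the Laplace-transform identity $\Lc I^\mu(2\tau)=2^{-r\mu}\Delta^{-\mu}(\tau)$, the collapse $\rho\,\Delta^{-\mu}=\mathrm{const}$ --- which is indeed the whole reason the shift $(n+m)/r$ appears --- and monotone convergence along a ray for the supremum. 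The only omission there is minor: to apply Plancherel to the slice $f_{h+h'}$ you must first check that the symbol $\tau\mapsto \ee^{-\langle\tau,h\rangle}\rho(\tau)v_\tau$ is square-integrable against the Plancherel measure; this follows in one line from $\mu\Meg 0$ and the exponential decay of $\ee^{-2\langle\tau,h\rangle}$ on $\overline\Omega$ for $h\in\Omega$, but it is what licenses the identification of $\pi_\tau(f_{h+h'})$.

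The converse is where the substance of the proposition lies, and as written it is a plan rather than a proof. Two concrete gaps. First, the hypothesis gives $f_{h+h'}\in L^2(\Nc)$ only for $I^{\mu}$-almost every $h'$ at each fixed $h$, with no uniform bound on any individual slice; when $\mu=\lambda-(n+m)/r$ is one of the discrete Wallach points $ja/2$, $j<r$, the measure $I^\mu$ is singular and carried by a boundary stratum of $\overline\Omega$, so ``a.e.\ $h'$'' is a much weaker statement than it looks. One must bootstrap (e.g.\ a countable dense family of $h$, a common $I^\mu$-conull set of $h'$, a Paley--Wiener theorem applied on a translated domain, then uniqueness of holomorphic extension) before the exponential factorization $\pi_\tau(f_k)=\ee^{-\langle\tau,k\rangle}W_\tau$ and the support condition $\supp\subseteq\overline\Omega$ are even available; your phrase ``on the (full-measure) set of $\tau$ for which the slices lie in $L^2(\Nc)$'' conflates the $\tau$- and $h$-variables and hides precisely this step. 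Second, the Paley--Wiener theorem you invoke --- operator-valued, with the vacuum condition $W_\tau P_{\tau,0}=W_\tau$ encoding holomorphy, valid from slices over a dense (not open, not full) set of heights --- is itself a nontrivial theorem; it does exist (\cite{PWS}, \cite[Chapter 3]{CalziPeloso}) and is what the cited proof rests on, so appealing to it is legitimate, but note that for $\lambda=(n+m)/r$ (where $I^0=\delta_0$) the proposition \emph{is} that theorem, so no argument of the present kind can avoid it: it should be stated precisely and invoked as the anchor of the converse, not gestured at.
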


Notice that, if $\lambda>g-1$, then $I^{\lambda-(n+m)/r}=c_\lambda \Delta^{\lambda-g}\cdot \Hc^m$ for a suitable $c_\lambda>0$ (cf.~\cite[Proposition 2.28]{CalziPeloso}), so that this  provides a more cumbersome description of the weighted Bergman spaces $A_\lambda^2(\Dc)$. If, otherwise, $\lambda=(n+m)/r $, then $I^{0}=\delta_0$, so that $H_{(n+m)/r}(\Dc)$ is the ordinary Hardy space.

As shown in~\cite[Theorem 1.2]{Garrigos2} (in the case of tube domains, that is, $n=0$), all the `generalized Hardy spaces' $H_{m/r+j a/2}(\Dc)$ have suitable boundary values by which they are determined. More explicitly, if $f\in H_{m/r+j a/2}(\Dc)$, then $f_h$ has a limit $f_0$ in $L^2(\Hc^m\otimes I^{j a/2})$ for $h\to 0$, $h\in \Omega$. Nonetheless, if one wishes to reconstruct $f_0$ as the pointwise limit almost everywhere of the $f_h$, then one has to take $h$ in a cone $\Omega_0$ such that $\overline {\Omega_0}\setminus \Set{0}\subseteq\Omega$ (`restricted convergence'). One may also consider more general forms of almost everywhere convergence which generalize non-tangential convergence; this more general kind of convergence in usually called `admissible convergence'. Cf.~\cite{Garrigos2} for more information on generalized Hardy spaces on tube domains, and also~\cite{Koranyi} and the references therein for a discussion of admissible and restricted convergence in a broader context.

\subsection{The Case $n=0$}\label{sec:6:1}

In this case, $\Dc=F+i \Omega$, so that $\Aff$ becomes the semi-direct product of $\Nc$ and the group $G(\Omega)$ of linear automorphisms of $\Omega$ (acting on $F_\C$ by complexification), thanks to~\cite[Proposition 2.1]{Murakami}. 
The spaces $\Pc_{\vect s}$ described in Section~\ref{sec:2} naturally arise also in this situation, namely as the $G(\Omega)$-invariant (or, equivalently, $G_0(\Omega)$-invariant) spaces generated by $\Delta^{\vect s}$, $\vect s\in \N_\Omega$, where $G_0(\Omega)$ denotes the identity component of $G(\Omega)$, cf.~\cite[Theorem XI.2.4]{FarautKoranyi} and~\cite[Proposition 3.16]{Tubi}. 
Thus, Proposition~\ref{prop:4} shows that the closed $\Aff$-invariant subspaces of $\Hol(\Dc)$ are the closures of $\bigoplus_{\vect s\in N} \Pc_{\vect s}$ for suitable subsets $N$ of $\N_\Omega$. 

The dual description is somewhat more efficient. Denote by $\Dc_{\vect s}$ the set of differential operators of the form $f\mapsto f * \Fc^{-1}(p(-i  \,\cdot\,))$, where $\Fc$ denotes the Fourier transform, $p\in \Pc_{\sigma(\vect s)}$, and  $\sigma(s_1,\dots,s_r)=\sigma(s_r,\cdots,s_1)$.  Then, we have the following result (cf.~\cite[Corollary 3.20]{Tubi}):

\begin{proposition}
	A closed vector subspace $V$ of $\Hol(\Dc)$ is $\Aff$-invariant if and only if $V=\bigcap_{\vect s\in N} \ker \Dc_{\vect s}$ for some $N\subseteq \N_\Omega^*$. In addition, $\Dc_{\vect s}$ is the $G(\Omega)$-invariant vector space of differential operators on $\Dc$ generated by $f\mapsto f*I^{-\vect s}$.
\end{proposition}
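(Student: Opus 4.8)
The plan is to push everything down to the level of polynomials by means of Proposition~\ref{prop:4}, and then to match the primal description (closures of sums $\bigoplus_{\vect s\in N}\Pc_{\vect s}$) with the dual description through the combinatorics of the Faraut--Korányi decomposition $\Pc=\bigoplus_{\vect s\in\N_\Omega}\Pc_{\vect s}$, which is multiplicity-free by Proposition~\ref{prop:3}. I keep two labels: a \emph{primal} lower set (which I call $N$) and a \emph{dual} index set $N'\subseteq\N_\Omega^*$ (the set the statement calls $N$).

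First I would settle the second assertion. For $\vect s\in\N_\Omega^*$ the distribution $I^{-\vect s}$ is supported at $0$, so $f\mapsto f*I^{-\vect s}$ is a holomorphic constant-coefficient differential operator; since $\Lc I^{-\vect s}=(\Delta^*)^{\sigma(\vect s)}$ and convolution corresponds to multiplication of transforms, writing $\Fc I^{-\vect s}(\xi)=\Lc I^{-\vect s}(i\xi)$ shows that $f*I^{-\vect s}=f*\Fc^{-1}(p_0(-i\,\cdot\,))$ with $p_0$ a scalar multiple of the homogeneous polynomial $(\Delta^*)^{\sigma(\vect s)}$. Now $(\Delta^*)^{\sigma(\vect s)}$ is a nonzero (lowest-weight) vector of the irreducible $G(\Omega)$-module $\Pc_{\sigma(\vect s)}$, hence generates it, and the symbol map is $G(\Omega)$-equivariant; therefore the $G(\Omega)$-invariant space of operators generated by $f\mapsto f*I^{-\vect s}$ has symbols exactly $\Pc_{\sigma(\vect s)}$, i.e.\ it is $\Dc_{\vect s}$. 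This also makes transparent that each $\Dc_{\vect s}$ consists of constant-coefficient operators, so $\ker\Dc_{\vect s}$ is closed in $\Hol(\Dc)$ and commutes with the translations $\Nc=F$; together with its manifest $G(\Omega)$-invariance this shows that every $\bigcap_{\vect s\in N'}\ker\Dc_{\vect s}$ is closed and $\Aff$-invariant, which is the easy (``if'') direction.

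For the converse, let $V$ be closed and $\Aff$-invariant. By Proposition~\ref{prop:4} we have $V=\overline{V\cap\Pc}$ with $V\cap\Pc$ a $G(\Omega)$-submodule, hence $V\cap\Pc=\bigoplus_{\vect s\in N}\Pc_{\vect s}$; moreover, differentiating along $F$ (possible since $V$ is closed and $\Nc$-invariant) shows that $V\cap\Pc$ is stable under every constant-coefficient operator. The crux is then to decide, for $\vect t\in\N_\Omega^*$ and $\vect s\in\N_\Omega$, exactly when $\Dc_{\vect t}$ annihilates $\Pc_{\vect s}$. Since $\Dc_{\vect t}$ is $G(\Omega)$-irreducible and generated by $(\Delta^*)^{\sigma(\vect t)}(\partial)$, an equivariance argument reduces this to whether the single operator $(\Delta^*)^{\sigma(\vect t)}(\partial)$ kills $\Pc_{\vect s}$; using that $p(\partial)$ is the Fischer-adjoint of multiplication by $p$, this in turn is equivalent to $\Pc_{\vect s}$ occurring in $(\Delta^*)^{\sigma(\vect t)}\cdot\Pc$. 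The Faraut--Korányi multiplication (Pieri-type) rules for the $\Pc_{\vect s}$ then give that this happens precisely when $\vect s\geq\sigma(\vect t)$ componentwise, so that $\ker\Dc_{\vect t}\cap\Pc=\bigoplus_{\vect s\not\geq\sigma(\vect t)}\Pc_{\vect s}$. The same input (differentiation lowers the signature exactly to the componentwise-smaller ones) shows that the differentiation-stable $G(\Omega)$-submodules of $\Pc$ are precisely the $\bigoplus_{\vect s\in N}\Pc_{\vect s}$ with $N$ a \emph{lower set} for the componentwise order on $\N_\Omega$; in particular the $N$ attached to $V$ is such a lower set.

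Finally I would assemble the bijection. The complement $\N_\Omega\setminus N$ is an upper set, and by the well-quasi-ordering of $\N^r$ (Dickson's lemma) it has a finite set $M$ of minimal elements; put $N'\coloneqq\sigma^{-1}(M)\subseteq\N_\Omega^*$. Then $\vect s\geq\sigma(\vect t)$ for some $\vect t\in N'$ if and only if $\vect s\in\N_\Omega\setminus N$, so the annihilation criterion yields $\bigl(\bigcap_{\vect t\in N'}\ker\Dc_{\vect t}\bigr)\cap\Pc=\bigoplus_{\vect s\in N}\Pc_{\vect s}=V\cap\Pc$. Taking closures, and using Proposition~\ref{prop:4} for the intersection (which is again closed and $\Aff$-invariant, so equals the closure of its polynomial part), gives $V=\bigcap_{\vect t\in N'}\ker\Dc_{\vect t}$, completing the proof. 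The main obstacle is the annihilation criterion of the third paragraph: every other step is formal, but pinning down the exact order relation governing when $\Dc_{\vect t}$ kills $\Pc_{\vect s}$ requires the precise Faraut--Korányi multiplication formulas for the spaces $\Pc_{\vect s}$ together with Fischer duality.
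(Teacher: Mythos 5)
There is no internal proof to compare against here: the survey quotes this result from \cite[Corollary 3.20]{Tubi}, and the text surrounding it only records the two ingredients you start from, namely Proposition~\ref{prop:4} and the identification of the $\Pc_{\vect s}$ as pairwise inequivalent irreducible $G(\Omega)$-modules. Judged on its own, the architecture of your argument is correct: the symbol/equivariance proof of the second assertion works (the transpose appearing in the action on symbols is harmless because $\Omega$ is self-dual, so $G(\Omega)$ is stable under transposition, and $(\Delta^*)^{\sigma(\vect t)}$ does lie in $\Pc_{\sigma(\vect t)}$ since the frames $(e_1,\dots,e_r)$ and $(e_r,\dots,e_1)$ are exchanged by an automorphism of the Jordan algebra, which belongs to $G(\Omega)$); the ``if'' direction via constant-coefficient operators is fine; and the final matching of lower sets with the kernels $\bigcap_{\vect t}\ker\Dc_{\vect t}$, using well-foundedness of the componentwise order and Proposition~\ref{prop:4} applied to the intersection, is complete, provided the annihilation criterion holds.

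The one step that needs repair is precisely the one you flag as the crux, and the repair concerns the \emph{source}, not the statement. The criterion ``$\Dc_{\vect t}$ kills $\Pc_{\vect s}$ iff $\vect s\not\Meg\sigma(\vect t)$'', i.e.\ that the ideal of $\Pc$ generated by $\Pc_{\vect m}$ equals $\bigoplus_{\vect s\Meg\vect m}\Pc_{\vect s}$, cannot be quoted from ``precise Faraut--Kor\'anyi multiplication formulas'': no closed decomposition of the products $\Pc_{\vect m}\cdot\Pc_{\vect n}$ is available for general irreducible symmetric cones (the relevant Littlewood--Richardson-type coefficients are not known, e.g.\ for the exceptional cone). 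What \emph{is} available is much weaker and, thanks to your own Fischer-duality observation, sufficient: the one-box Pieri rule, stating that multiplication by linear forms maps $\Pc_{\vect s}$ into $\bigoplus_{j}\Pc_{\vect s+\epsilon_j}$ (here $\epsilon_j$ is the $j$-th standard basis vector of $\R^r$) with every admissible projection nonzero --- equivalently, by Fischer adjunction, the corresponding lowering statement for first-order derivatives; this is the lemma underlying the composition-series analysis of \cite{FarautKoranyi2}, which the survey already invokes for Proposition~\ref{prop:8}. Since $\Pc$ is generated as an algebra by its linear forms, a $G(\Omega)$-stable subspace $\bigoplus_{\vect s\in M}\Pc_{\vect s}$ is an ideal if and only if its Fischer-orthocomplement $\bigoplus_{\vect s\notin M}\Pc_{\vect s}$ is stable under all first-order constant-coefficient derivatives; by the one-box rule (and the elementary fact that any $\vect s'\meg\vect s$ in $\N_\Omega$ is reached from $\vect s$ by removing one box at a time without leaving $\N_\Omega$), this happens exactly when $M$ is an upper set. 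Hence $G(\Omega)$-stable ideals correspond bijectively to upper sets, so the ideal generated by $\Pc_{\vect m}$ is the one attached to the smallest upper set containing $\vect m$, which is your criterion; the same correspondence gives the lower-set description of differentiation-stable submodules needed in your third paragraph. With this substitution --- one-box rule plus your duality lemma in place of full multiplication rules --- the proof is complete.
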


Concerning $\Uc_\lambda$-invariant semi-Hilbert spaces, we have the following result (cf.~\cite[Theorem 3.23]{Tubi}).

\begin{theorem}	\label{theorem:1}
	Take $\lambda\in \R$ and let $H$ be a non-trivial strongly decent, saturated  semi-Hilbert subspace of $\Hol(\Dc)$ such that  $U_\lambda$ induces a bounded (resp.\ isometric) representation of $\Aff$ in $H$. Then, there are $k\in\N$ and a closed $\Aff$-invariant subspace of $\Hol(\Dc)$ such that $\lambda+2 k\in \Wc(\Omega)$ and $H\subseteq H_{\lambda,k}+V$ with an equivalent (resp.\ proportional) seminorm, where $H_{\lambda,k}=\Set{f\in \Hol(\Dc)\colon \square^k f\in H_{\lambda+2 k}(\Dc)}$, endowed with the corresponding seminorm, $V$ is endowed with the $0$ seminorm, and $H_{\lambda,k}\cap V=\ker \square^k$.
\end{theorem}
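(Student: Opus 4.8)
The plan is to use the differential operator $\square^{k}$ to transport the whole problem into the regime $\lambda+2k>m/r-1$, where $\lambda+2k$ lies in the \emph{continuous} part of $\Wc(\Omega)$ and everything is controlled by a single reproducing kernel Hilbert space of genuine holomorphic functions. First I would invoke the defining properties of strongly decent and saturated spaces to produce a closed $\Aff$-invariant subspace $V_0$ of $\Hol(\Dc)$, namely the closure of $\Set{0}$ in $H$ (which by Proposition~\ref{prop:4} has the expected form), with $V_0\subseteq H$ and such that $H\to\Hol(\Dc)/V_0$ is continuous. The Hausdorff quotient $\mathcal H:=H/V_0$ is then a genuine Hilbert space on which $\Uc_\lambda$ acts as a uniformly bounded (resp.\ isometric) representation of $\Aff$.

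Next I would fix $k\in\N$ with $\mu:=\lambda+2k>m/r-1$, so that $\mu\in\Wc(\Omega)$, and exploit the fact that $\square^{k}=(\,\cdot\,)*I^{-k}$ intertwines $\Uc_\lambda$ and $\Uc_{\mu}$ on $\Hol(\Dc)$ (each $\square$ raising the parameter by $2$, as in Theorem~\ref{prop:10}). The operator $\square^{k}$ serves a double purpose: on the Fourier--Laplace side it acts as multiplication by $\Delta^{k}$, which both shifts the parameter into the Hausdorff range and annihilates the contributions supported on the lower-rank boundary faces of $\overline\Omega$, i.e.\ the non-open $G(\Omega)$-orbits. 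Transporting the seminorm of $H$ to $\square^{k}H$ and completing, one obtains a bounded (resp.\ isometric) $\Uc_{\mu}$-invariant space $\mathcal K$ which, after a first technical check, is a \emph{Hausdorff} reproducing kernel Hilbert space of holomorphic functions embedded continuously into $\Hol(\Dc)$.

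The conceptual core is then to identify $\mathcal K$ with $H_{\mu}(\Dc)$. Since $\Aff$ is solvable, hence amenable, a uniformly bounded Hilbert-space representation is similar to a unitary one; after this renorming (which only replaces the norm of $\mathcal K$ by an equivalent one, and by nothing in the isometric case) the reproducing kernel $K$ of $\mathcal K$ becomes exactly $\Uc_{\mu}$-covariant, i.e.\ $K(\phi z,\phi w)=\abs{J\phi(z)}^{-\mu/g}K(z,w)\,\overline{\abs{J\phi(w)}^{-\mu/g}}$ for $\phi\in\Aff$. Because $\Aff$ acts transitively on $\Dc$, evaluating this identity on the diagonal forces $K(z,z)$ to be a constant multiple of the diagonal of the kernel $\Kc^{\mu/g}$ of $H_{\mu}(\Dc)$; and a sesquiholomorphic kernel is determined by its restriction to the diagonal, so $K$ is proportional to $\Kc^{\mu/g}$ and $\mathcal K=H_{\mu}(\Dc)$ with proportional norm. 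Hence $\square^{k}H\subseteq H_{\mu}(\Dc)$ with an equivalent (resp.\ proportional) norm, which by the very definition of $H_{\lambda,k}$ gives $H\subseteq H_{\lambda,k}$; taking $V$ to be the closed $\Aff$-invariant subspace generated by $V_0$ and $\ker\square^{k}$ one checks $\ker\square^{k}\subseteq V$, $V\cap H=V_0$, and $H_{\lambda,k}\cap V=\ker\square^{k}$, and that the transported seminorm matches $f\mapsto\norm{\square^{k}f}_{H_{\mu}(\Dc)}$ up to equivalence (resp.\ proportionality), so that the seminorm of $H_{\lambda,k}+V$ restricts to the one of $H$.

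The hard part will be the control of the boundary-face components, which is exactly what $\square^{k}$ cannot detect. Concretely, one must prove $\ker\square^{k}\cap H\subseteq V_0$, equivalently that the Hausdorff part $\mathcal H$ carries no spectrum on the lower-rank strata of $\overline\Omega$; otherwise such a component would have positive $H$-seminorm yet be forced into the zero-seminorm summand $V$, and the asserted seminorm equivalence would fail. Since in the present case $n=0$ the group $\Nc$ is abelian and its action is diagonalised by the ordinary Fourier transform on $F$ with spectrum in $\overline\Omega$, I expect this point to be settled by the explicit Plancherel/$\pi_\tau$ decomposition together with the finiteness of the set of $G(\Omega)$-orbits, ruling out boundary-supported Hausdorff pieces at a parameter in the continuous range (and, where they do occur, pinning them to the discrete values of $\lambda$ and absorbing them consistently). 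The remaining work is the functional-analytic plumbing around the three invariant subspaces $V_0$, $\ker\square^{k}$ and $V$; the kernel-covariance identification itself is comparatively clean once amenability and the transitivity of $\Aff$ are in hand.
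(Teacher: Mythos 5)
Your outline has a genuine structural gap, and it lies exactly where you placed the ``hard part.'' You fix $k$ at the outset so that $\mu=\lambda+2k>m/r-1$ and then declare that what remains is to prove $\ker\square^k\cap H\subseteq V_0$. That claim is \emph{false} in general, and the cases where it fails are precisely the ones that force the theorem to be stated the way it is. Take $\lambda=ja/2$ with $0\meg j\meg r-1$ (a discrete Wallach point, so $\lambda\meg m/r-1$) and $H=H_{ja/2}(\Dc)$: this is a non-trivial Hausdorff Hilbert space, continuously embedded in $\Hol(\Dc)$ (hence strongly decent and saturated, with $V_0=\Set{0}$), on which $U_\lambda$ acts isometrically through $\Aff$. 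By the Bernstein identity $\Delta(\partial)\Delta^{-s}=(-1)^r\prod_{l=1}^{r}\bigl(s-(l-1)a/2\bigr)\Delta^{-s-1}$, the operator $\square$ annihilates every reproducing kernel $\Delta^{-ja/2}\bigl(\frac{z-\overline w}{2i}\bigr)$ of $H_{ja/2}(\Dc)$ (equivalently: the measure $I^{ja/2}$ in the Fourier-type description of Section~\ref{sec:6} is supported on a boundary $G(\Omega)$-orbit of $\overline\Omega$, where $\Delta$ vanishes). Hence $H\subseteq\ker\square\subseteq\ker\square^k$ for every $k\Meg1$, while $V_0=\Set{0}$. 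For such an $H$ your space $\square^kH$ is $\Set{0}$, and worse: since $H\subseteq\ker\square^k\subseteq H_{\lambda,k}$ and $\ker\square^k$ is exactly the null space of the seminorm of $H_{\lambda,k}$, the seminorm induced on $H$ by $H_{\lambda,k}+V$ is identically zero \emph{for every choice of} $V$, so the asserted equivalence cannot hold. The theorem accommodates these spaces only because it allows $\lambda+2k$ to be a \emph{discrete} point of $\Wc(\Omega)$ (here $k=0$, $H_{\lambda,0}=H_\lambda(\Dc)$). In other words, $k$ cannot be prescribed in advance: it must be read off from which $G(\Omega)$-orbit of $\overline\Omega$ carries the spectrum of the $\Nc$-action on $H/V_0$, and this open-orbit/boundary-orbit dichotomy is the actual core of the proof. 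Your parenthetical promise to ``absorb'' boundary-supported pieces ``consistently'' cannot be kept inside your framework, because once $k$ lands in the continuous range such pieces are assigned zero seminorm.

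Two further points. First, the unitarization step rests on a false premise: for $n=0$ one has $\Aff=\Nc\rtimes G(\Omega)$, and $G(\Omega)$ is neither solvable nor amenable in rank $\Meg2$ (it contains $GL_r(\R)$ acting by $x\mapsto axa^*$ on positive-definite symmetric matrices, or $SO_0(1,m-1)$ for Lorentz cones). The Dixmier renorming must instead be applied to the solvable --- hence amenable --- subgroup $\Nc\rtimes T_-$ of Subsection~\ref{sec:6:2}; since that subgroup still acts simply transitively on $\Dc$, your diagonal/kernel-covariance identification survives, so this error is repairable. Second, even in the open-orbit case, for point evaluations of $\square^kf$ to descend to $H/V_0$ and for the transported norm to be Hausdorff you also need $V_0\subseteq\ker\square^k$; this is not automatic and requires an argument (via the classification of closed $\Aff$-invariant subspaces in Proposition~\ref{prop:4}, every \emph{proper} one is killed by a large power of $\square$, by a Nullstellensatz-type argument), which your sketch never addresses. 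Finally, note that the survey itself does not prove this theorem but quotes it from the reference given alongside the statement; the proof there is organized around exactly the spectral/orbit analysis that your proposal postpones to the end and, as it stands, mis-states.
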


Concerning the $G(\Dc)$-$U_\lambda$-invariant spaces, we have the following results, which translate the analogous results from the bounded case to the present setting (cf.~\cite[Theorem 4.3 and Propositions 4.4 and 4.5]{Tubi}).

\begin{proposition}\label{prop:5}
	Take $\lambda\in \R$. A closed vector subspace $V$ of $\Hol(\Dc)$ is $G(\Dc)$-$U_\lambda$-invariant if and only if  $V=\Hol(\Dc)$ or $V=\bigcap_{\vect s\in N_{\lambda,k}}\ker \Dc_{\vect s}$ for some $k\in \Set{1,\dots, r}$ such that $\frac 1 2 a (k-1)-\lambda\in\N$, where 
	\[
	N_{\lambda,k}=\Set{\vect s\in \N_\Omega^* \colon  s_{\lambda,k,r-k+1}=\cdots=s_{\lambda,k,r}=\frac 1 2 a (k-1)-\lambda+1}.
	\]
\end{proposition}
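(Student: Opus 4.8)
The plan is to derive Proposition~\ref{prop:5} from the classification already available on the bounded realization $D$, transporting it to $\Dc$ through the intertwining isomorphism $\Cc_\lambda$ and then rewriting the resulting subspaces in the dual form $\bigcap_{\vect s}\ker\Dc_{\vect s}$. Since $\Cc_\lambda\colon\Hol(D)\to\Hol(\Dc)$ is an isomorphism intertwining the ray representations $U_\lambda$ of $G(D)$ and $G(\Dc)$, a closed subspace $V\subseteq\Hol(\Dc)$ is $G(\Dc)$-$U_\lambda$-invariant if and only if $\Cc_\lambda^{-1}(V)$ is a closed $G(D)$-$U_\lambda$-invariant subspace of $\Hol(D)$. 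By Proposition~\ref{prop:8} the latter are exactly the members of the finite chain $W_j\coloneqq\overline{\bigoplus_{q(\vect s,\lambda)\meg j}\Pc_{\vect s}}$, for $j=-1,\dots,q(\lambda)$, with $W_{-1}=\Set{0}$ (empty sum) and $W_{q(\lambda)}=\Hol(D)$ (as $q(\vect s,\lambda)\meg q(\lambda)$ for every $\vect s$, and $\Pc$ is dense). Consequently the $G(\Dc)$-$U_\lambda$-invariant subspaces are precisely the $\Cc_\lambda(W_j)$; the top one is $\Hol(\Dc)$, accounting for the first alternative in the statement, and this simultaneously settles the sufficiency direction once the spaces $\bigcap_{\vect s\in N_{\lambda,k}}\ker\Dc_{\vect s}$ are identified with the remaining $\Cc_\lambda(W_j)$.

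Next I would translate each proper $\Cc_\lambda(W_j)$ into dual form. Being $\Aff$-invariant, it equals $\bigcap_{\vect s\in N}\ker\Dc_{\vect s}$ for some $N\subseteq\N_\Omega^*$, by the dual description of the $\Aff$-invariant subspaces established earlier in this subsection. To compute $N$ I would use the companion primal description, writing an $\Aff$-invariant space as $\overline{\bigoplus_{\vect t\in M}\Pc_{\vect t}}$ with $M\subseteq\N_\Omega$, together with the explicit action of the generators $f\mapsto f*I^{-\vect s}$ of $\Dc_{\vect s}$ on the $\Pc_{\vect t}$ (a lowering rule read off from the symbol $(\Delta^*)^{\sigma(\vect s)}$ via~\eqref{eq:1}): such an operator annihilates $\Pc_{\vect t}$ unless $\vect t$ dominates $\sigma(\vect s)$ in the relevant order, so that $\bigcap_{\vect s\in N}\ker\Dc_{\vect s}=\overline{\bigoplus_{\vect t\in M}\Pc_{\vect t}}$ with $M$ determined by $\sigma(N)$ through this domination rule. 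Matching this with $M=\Set{\vect t\in\N_\Omega\colon q(\vect t,\lambda)\meg j}$ and reversing coordinates by $\sigma$ is what produces the sets $N_{\lambda,k}$.

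The correspondence $j\leftrightarrow k$ is governed by the jumps of $\vect t\mapsto q(\vect t,\lambda)$. By Definition~\ref{def:2}, $q(\vect t,\lambda)$ counts the indices $k\in\Set{1,\dots,r}$ for which $\frac 1 2 a(k-1)-\lambda$ is a positive integer $<t_k$; hence the level sets $\Set{\vect t\in\N_\Omega\colon q(\vect t,\lambda)\meg j}$ have their boundaries along the thresholds $t_k=\frac 1 2 a(k-1)-\lambda+1$, precisely for those $k$ with $\frac 1 2 a(k-1)-\lambda\in\N$, which is exactly the admissibility condition on $k$ in the statement. Applying $\sigma$, which sends $t_k$ to $s_{r-k+1}$ and reverses the monotonicity since $\N_\Omega^*=\sigma(\N_\Omega)$, turns the threshold condition into the requirement $s_{r-k+1}=\cdots=s_r=\frac 1 2 a(k-1)-\lambda+1$ defining $N_{\lambda,k}$.

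I expect the main obstacle to be the bookkeeping in this dual translation. First, one must verify that $\Cc_\lambda$ carries the bounded chain $W_j$ onto the affine primal chain $\overline{\bigoplus_{q(\vect s,\lambda)\meg j}\Pc_{\vect s}}$ \emph{index-by-index}, that is, that the labels $\vect s$ are preserved and not merely that the two lattices of invariant subspaces are order-isomorphic. Second, one must pin down the exact annihilation rule of $\Dc_{\vect s}$ on the $\Pc_{\vect t}$ together with the $\sigma$-reversal, so that the level sets of $q(\,\cdot\,,\lambda)$ transform precisely into the sets $N_{\lambda,k}$. The endpoints require separate care: $j=q(\lambda)$ yields $\Hol(\Dc)$, listed separately, while the bottom of the chain must be assigned to the extreme admissible $k$, with attention to whether the value $\frac 1 2 a(k-1)-\lambda=0$ is attained, since this governs a possible off-by-one between the number of admissible $k$ and the length of the chain.
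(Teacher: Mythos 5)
Your route is the one the paper itself takes: the survey obtains Proposition~\ref{prop:5} by translating the bounded-case classification (Proposition~\ref{prop:8}) to $\Dc$ through $\Cc_\lambda$ and rewriting the transported chain in the dual form supplied by the $\Aff$-invariant classification of this subsection (citing the corresponding results of~\cite{Tubi}). Your skeleton is sound, and the annihilation rule you invoke is the correct one: $\Pc_{\vect t}\subseteq\ker\Dc_{\vect s}$ if and only if $\vect t\not\Meg\sigma(\vect s)$ componentwise (Fischer duality plus the structure of the ideal generated by $\Pc_{\sigma(\vect s)}$). Writing $J\coloneqq\Set{k\colon \frac12 a(k-1)-\lambda\in\N}$ and observing that $N_{\lambda,k}$ has the single minimal element $(0,\dots,0,c_k,\dots,c_k)$ with $c_k=\frac12a(k-1)-\lambda+1$ in the last $k$ slots, this rule gives
\[
\Pc\cap\bigcap_{\vect s\in N_{\lambda,k}}\ker\Dc_{\vect s}=\bigoplus\Set{\Pc_{\vect t}\colon \vect t\in\N_\Omega,\ t_{k}\meg \tfrac12a(k-1)-\lambda},
\]
while $q(\vect t,\lambda)\meg j$ holds if and only if $t_k\meg\frac12a(k-1)-\lambda$ for $k$ the $(j+1)$-th element of $J$ (the conditions counted by $q(\vect t,\lambda)$ are satisfied along an initial segment of $J$, since the thresholds increase in $k$ while the $t_k$ decrease). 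So, granting the index-preserving transport you rightly flag as needing verification, your chain $W_j$ matches the $(j+1)$-th admissible $k$ for $j=0,\dots,q(\lambda)-1$, and $W_{q(\lambda)}$ matches $\Hol(\Dc)$.

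The gap is the endpoint issue, which you flag but misdiagnose. It is not governed by whether $\frac12a(k-1)-\lambda=0$ is attained, and it cannot be fixed by assigning the bottom of the chain to an extreme admissible $k$: since $\card J=q(\lambda)$ (take $s_1=\cdots=s_r$ large in Definition~\ref{def:2}), the transported chain has $q(\lambda)+2$ distinct members $\Set{0}=\Cc_\lambda(W_{-1})\subsetneq\Cc_\lambda(W_0)\subsetneq\cdots\subsetneq\Cc_\lambda(W_{q(\lambda)})=\Hol(\Dc)$, whereas the statement lists only $q(\lambda)+1$ spaces; and $\Set{0}$ is never of the listed form, because for admissible $k$ the index $\vect t=\vect 0$ satisfies $t_k=0\meg\frac12a(k-1)-\lambda$, so every $\bigcap_{\vect s\in N_{\lambda,k}}\ker\Dc_{\vect s}$ contains the constants. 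Carried out faithfully, your plan therefore proves the correct classification but not the stated equivalence: $V=\Set{0}$ is a closed $G(\Dc)$-$U_\lambda$-invariant subspace missing from the proposition's list. This is an omission in the statement itself rather than a flaw in your method -- compare Proposition~\ref{prop:11} and the rank-one classification, both of which list $V=\Set{0}$ as a separate case -- but your write-up must confront it explicitly (prove the statement with $\Set{0}$ adjoined, or record the discrepancy); as it stands, the bookkeeping you defer is exactly the point where the claimed bijection fails to close.
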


\begin{proposition}\label{prop:20}
	Take $\lambda\in \R$. If $\frac m r -1-\lambda\in \N$, then $\widetilde H_\lambda(\Dc)=H_{\lambda,m/r-\lambda}$   (with a proportional seminorm). In addition, $\square^{m/r-\lambda}$ intertwines $U_\lambda$ and $U_{2 m/r-\lambda}$ (even as ray representations of $G(\Dc)$ in $\Hol(\Dc)$).
\end{proposition}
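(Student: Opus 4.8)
The plan is to \emph{transfer} the corresponding statement on the bounded realization, Theorem~\ref{prop:10}, to $\Dc$ by means of the Cayley transform, and to reduce the whole proposition to a single compatibility lemma. Since $n=0$ we have $g=2m/r$, so $g-\lambda=2m/r-\lambda$, and since $\lambda\in m/r-1-\N$ the exponent $m/r-\lambda$ is a positive integer, so $\square^{m/r-\lambda}$ is a genuine power of the constant-coefficient operator $\square=\Delta(\nabla)$ on $F_\C$ (the same operator on $D$ and on $\Dc$, as both are subdomains of $F_\C$). The key lemma I would isolate is that
\[
\square^{m/r-\lambda}\circ \Cc_\lambda = c'\,\Cc_{g-\lambda}\circ \square^{m/r-\lambda}\qquad\text{as operators }\Hol(D)\to \Hol(\Dc),
\]
for some constant $c'\neq 0$. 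Everything else is formal bookkeeping once this is available.

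Granting the lemma, I would first obtain the intertwining statement. Recall that $\Cc_\mu$ is, by construction, an isomorphism intertwining $\widetilde U_\mu$ on $\widetilde G(D)$ with $\widetilde U_\mu$ on $\widetilde G(\Dc)$, for every $\mu$. Rewriting the lemma as $\square^{m/r-\lambda}=c'\,\Cc_{g-\lambda}\,\square^{m/r-\lambda}\,\Cc_\lambda^{-1}$ on $\Hol(\Dc)$, a direct computation that inserts this identity, then the intertwining property $\square^{m/r-\lambda}\widetilde U_\lambda(\phi)=\widetilde U_{g-\lambda}(\phi)\square^{m/r-\lambda}$ of Theorem~\ref{prop:10} on $D$, and finally the intertwining property of $\Cc_{g-\lambda}$, yields $\square^{m/r-\lambda}\widetilde U_\lambda(\phi)=\widetilde U_{g-\lambda}(\phi)\square^{m/r-\lambda}$ on $\Hol(\Dc)$ for every $\phi\in\widetilde G(\Dc)$. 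Passing to the descended ray representations of $G(\Dc)$ gives exactly that $\square^{m/r-\lambda}$ intertwines $U_\lambda$ and $U_{2m/r-\lambda}$ as ray representations of $G(\Dc)$.

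For the identification $\widetilde H_\lambda(\Dc)=H_{\lambda,m/r-\lambda}$ I would argue as follows. Write $f=\Cc_\lambda f_0$ with $f_0\in\Hol(D)$. By the definition $\widetilde H_\lambda(\Dc)=\Cc_\lambda\widetilde H_\lambda(D)$ we have $f\in\widetilde H_\lambda(\Dc)$ if and only if $f_0\in\widetilde H_\lambda(D)$, which by Theorem~\ref{prop:10} means $\square^{m/r-\lambda}f_0\in H_{g-\lambda}(D)$. The lemma gives $\square^{m/r-\lambda}f_0=(c')^{-1}\Cc_{g-\lambda}^{-1}\square^{m/r-\lambda}f$, and since $\Cc_{g-\lambda}$ is the defining isometric isomorphism $H_{g-\lambda}(D)\to H_{g-\lambda}(\Dc)$ and $g-\lambda=2m/r-\lambda$, this is equivalent to $\square^{m/r-\lambda}f\in H_{2m/r-\lambda}(\Dc)$, i.e.\ to $f\in H_{\lambda,m/r-\lambda}$. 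Tracking the three isometries involved (the seminorm-preserving $\Cc_\lambda\colon\widetilde H_\lambda(D)\to\widetilde H_\lambda(\Dc)$, the proportionality constant of Theorem~\ref{prop:10}, and the constant $c'$ of the lemma) shows that $\norm{f}_{\widetilde H_\lambda(\Dc)}$ is a fixed positive multiple of $\norm{\square^{m/r-\lambda}f}_{H_{2m/r-\lambda}(\Dc)}=\norm{f}_{H_{\lambda,m/r-\lambda}}$, which is the asserted proportionality of seminorms.

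The main obstacle is the compatibility lemma, that is, the behaviour of $\square=\Delta(\nabla)$ under the Cayley transform. I would prove it by factoring $\Cc$ into affine maps (translations together with elements of the structure group of the Jordan algebra $F_\C$) and a single Jordan inversion $z\mapsto -z^{-1}$. For the affine factors the identity is immediate from the translation invariance of $\square$ and the covariance $\Delta(gz)=\chi(g)\Delta(z)$ of the determinant polynomial, which after raising to the power $m/r-\lambda$ produces precisely the Jacobian factors defining the weighted pullbacks. For the inversion the identity is exactly the covariance of $\square$ encoded in Yan's formula~\eqref{eq:3}, $D^k_s f=\Delta^{m/r-s}\square^k(\Delta^{s+k-m/r}f)$, applied on the simply connected locus where $\Delta$ does not vanish. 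Composing the factors and matching the accumulated powers of the Jacobian reproduces $\Cc_\lambda$ on the source and $\Cc_{g-\lambda}$ on the target, with exactly the exponent shift $\lambda\mapsto g-\lambda$ dictated by $\square^{m/r-\lambda}$. The two delicate points are the careful bookkeeping of the automorphy factors $(J\Cc^{-1})^{\mu/g}$ across the factorization, and the verification that the inversion covariance, a priori valid only away from the zero set of $\Delta$, propagates to all of $\Dc$ by holomorphy; both are routine once the factorization of $\Cc$ is fixed.
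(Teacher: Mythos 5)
Your strategy---conjugating Theorem~\ref{prop:10} by the Cayley transform via the compatibility lemma $\square^{m/r-\lambda}\circ\Cc_\lambda=c'\,\Cc_{g-\lambda}\circ\square^{m/r-\lambda}$---is in substance the route the paper itself indicates: the paper gives no inline proof here, citing~\cite{Tubi} and describing those results as translations of the bounded-case statements to the unbounded setting. Granting your lemma, the bookkeeping is correct: the transferred intertwining, the chain of equivalences identifying $\widetilde H_\lambda(\Dc)=\Cc_\lambda\widetilde H_\lambda(D)$ with $H_{\lambda,m/r-\lambda}$, and the proportionality of seminorms all go through. Two points, however, are genuine gaps.

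First, the proposition asserts the intertwining \emph{as ray representations of $G(\Dc)$}, and your argument reaches only the identity component. Theorem~\ref{prop:10} is a statement about $\widetilde G$, so its transfer yields the intertwining for $\widetilde G(\Dc)$, hence for the ray representations of $G_0(\Dc)$; ``descending'' cannot produce the remaining components, and $G(\Dc)$ need not be connected (already $G(\Omega)$ is disconnected in general). The missing step is to use $G(\Dc)=G_0(\Dc)\Aff(\Dc)$ (cf.~\cite{Nakajima}, as invoked in the proof of Theorem~\ref{theorem:3}) and to check the intertwining on affine maps directly. This is elementary, since affine Jacobians are constant: the relative invariance $\Delta(Bx)=\Delta(Be)\Delta(x)$ for $B\in G(\Omega)$, together with $\abs{{\det}_\C B_\C}=\Delta(Be)^{m/r}$, makes all constants cancel exactly when the exponent is $m/r-\lambda$. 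But this step has to be carried out: the word ``even'' in the statement is precisely this upgrade from $G_0(\Dc)$ to $G(\Dc)$.

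Second, and more seriously, the inversion step in your proof of the compatibility lemma is mis-attributed, and with it the lemma---the keystone of the whole argument---remains unproven. Formula~\eqref{eq:3} expresses the $K$-invariant operators $D^k_s$ through $\square$ and multiplication by powers of $\Delta$; it carries no information about composition with $\iota\colon z\mapsto z^{-1}$, so it cannot yield the identity you actually need, namely $\square^{m/r-\lambda}\bigl[\Delta^{-\lambda}\,(f\circ\iota)\bigr]=c\,\Delta^{\lambda-2m/r}\,(\square^{m/r-\lambda}f)\circ\iota$. Beware also of a circularity: since $\sigma\circ\Cc=\Cc\circ(-\mathrm{id})$ for $\sigma(w)=-w^{-1}$, the inversion is Cayley-conjugate to $-\mathrm{id}\in K$, so (the affine factors being trivial) this inversion identity is essentially \emph{equivalent} to your lemma, and no $K$-invariance statement can produce it. To close the gap, either invoke the classical inversion covariance of $\Delta(\nabla)$---it is the computational core of the proof of Theorem~\ref{prop:10} in~\cite{Arazy}, cf.\ also~\cite{FarautKoranyi}---or bypass generators altogether and deduce the lemma from Theorem~\ref{prop:10} by analytic continuation in the group: both sides of the intertwining identity depend holomorphically on the group element, they agree on the real form $G_0(D)$, and $\Cc$ belongs to its complexification. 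Either repair is routine; the proposal as written contains neither.
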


\subsection{The Case $r=1$}

In this case, $F=\R$, $E=\C^n$ and we may assume that $\Phi$ is the standard hermitian scalar product on $\C^n$. Then, $\Dc=\Set{(\zeta,z)\in \C^n\times \C\colon \Im z>\abs{\zeta}^2}$. Notice that $\Wc(\R_+^*)=[0,+\infty)$.

For simplicity, we shall denote by $\Pc_k(\C^n)$ the space of homogeneous holomorphic polynomials on $\C^n$ of degree $k$, and by $\Pc^k(\C)$ the space of holomorphic polynomials on $\C$ of degree $<k$ (in this latter case, we also allow $k=\infty$). Then, we have the following results (cf.~\cite[Proposition 5.1 and Theorem 5.2]{Rango1}).

\begin{proposition}
	Let $V$ be a closed subspace of $\Hol(\Dc)$. Then, $V$ is $\Aff$-invariant if and only if $V$ is the closure of $\bigoplus_{k\in\N} [\Pc_k(\C^n)\otimes \Pc^{h_k}(\C)]$ in $\Hol(D)$, where $h_0\in \N\cup \Set{\infty}$ and $h_{k+1}\in \Set{h_k,(h_k-1)_+}$ for every $k\in\N$.
\end{proposition}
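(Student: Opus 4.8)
The plan is to reduce, by means of Proposition~\ref{prop:4}, the classification of closed $\Aff$-invariant subspaces $V$ of $\Hol(\Dc)$ to the purely algebraic classification of $\Aff$-invariant subspaces $W$ of the polynomial space $\Pc$, via $W=V\cap\Pc$ and $V=\overline W$. Indeed, if $V$ is closed and $\Aff$-invariant, then $V=\overline{V\cap\Pc}$ by Proposition~\ref{prop:4}, and $W=V\cap\Pc$ is $\Aff$-invariant since $\Aff$ preserves both $V$ and $\Pc$; conversely, if $W$ is an $\Aff$-invariant subspace of $\Pc$, then its closure in $\Hol(\Dc)$ is closed and $\Aff$-invariant, because the action $f\mapsto f\circ\phi^{-1}$ is continuous on $\Hol(\Dc)$. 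Hence it suffices to show that the $\Aff$-invariant subspaces of $\Pc$ are exactly the $\bigoplus_{k\in\N}[\Pc_k(\C^n)\otimes\Pc^{h_k}(\C)]$ with $(h_k)$ as in the statement.

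First I would describe the $GL(\Dc)$-action. Since $r=1$, $G(\Omega)=\R_+^*$, and the defining relation $B\Phi=\Phi(A\times A)$ forces $A^*A=tI$ for the scaling factor $t$; thus $GL(\Dc)\cong U(n)\times\R_+^*$ acts by $(\zeta,z)\mapsto(A\zeta,tz)$ with $A=\sqrt t\,U$, $U\in U(n)$. On a monomial $p(\zeta)z^j$ with $p\in\Pc_k(\C^n)$ the scaling acts by the character $t\mapsto t^{-(k/2+j)}$, while $U(n)$ acts irreducibly on $\Pc_k(\C^n)$. Decomposing $\Pc$ into $\R_+^*$-weight spaces and then into $U(n)$-isotypic components, one checks that the $\Pc_k(\C^n)\otimes\C z^j$ are pairwise inequivalent irreducible $GL(\Dc)$-modules whose direct sum is $\Pc$; by multiplicity-freeness, every $GL(\Dc)$-invariant subspace of $\Pc$ has the form $W=\bigoplus_{(k,j)\in S}\Pc_k(\C^n)\otimes\C z^j$ for some $S\subseteq\N\times\N$. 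Since $\Aff=\Nc\rtimes GL(\Dc)$, it remains to impose $\Nc$-invariance.

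The heart of the argument is to translate $\Nc$-invariance into a condition on $S$. I would work infinitesimally: writing $N\coloneqq k+2j$, every generator of $\mathfrak n$ strictly lowers $N$, so $\Nc$ preserves each finite-dimensional space $\bigoplus_{k+2j\meg N}\Pc_k(\C^n)\otimes\C z^j$, and, $\Nc$ being connected, $W$ is $\Nc$-invariant if and only if it is $\mathfrak n$-invariant. A direct computation of $f\mapsto f\circ\phi^{-1}$ gives, for the $F$-translations, the operator $\partial_z$, which maps $\Pc_k\otimes\C z^j$ onto $\Pc_k\otimes\C z^{j-1}$; and, for the $E$-translations, the operators
\[
X_w f=-\sum_l w_l\,\partial_{\zeta_l}f-2i\,\Phi(\zeta,w)\,\partial_z f\qquad(w\in E),
\]
which send $\Pc_k\otimes\C z^j$ into $(\Pc_{k-1}\otimes\C z^j)\oplus(\Pc_{k+1}\otimes\C z^{j-1})$, the first summand being hit (surjectively, as $p,w$ vary) iff $k\Meg1$ and the second iff $j\Meg1$. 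As $W$ is a sum of isotypic components, projecting onto them shows that $\mathfrak n$-invariance is equivalent to the three implications: $(k,j)\in S,\ j\Meg1\Rightarrow(k,j-1)\in S$; $(k,j)\in S,\ k\Meg1\Rightarrow(k-1,j)\in S$; and $(k,j)\in S,\ j\Meg1\Rightarrow(k+1,j-1)\in S$.

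Finally I would read off the $h_k$. The first implication says that for each $k$ the slice $\Set{j\colon(k,j)\in S}$ is an initial segment $\Set{0,\dots,h_k-1}$ of $\N$ (allowing $h_k=\infty$), which yields $W=\bigoplus_k\Pc_k(\C^n)\otimes\Pc^{h_k}(\C)$. The second implication gives $h_{k+1}\meg h_k$, the third gives $h_{k+1}\Meg h_k-1$, and, since $h_{k+1}\Meg0$, these two are exactly $h_{k+1}\in\Set{h_k,(h_k-1)_+}$, with $h_0\in\N\cup\Set{\infty}$ unconstrained. Conversely, any $S$ of this shape satisfies the three implications, so the corresponding $W$, hence its closure, is $\Aff$-invariant. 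I expect the main obstacle to be the explicit infinitesimal computation of the $E$-translations together with the justification that infinitesimal invariance suffices (via the $N$-filtration); the $GL(\Dc)$-decomposition and the final combinatorial bookkeeping are routine.
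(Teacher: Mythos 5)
Your proof is correct, and it follows the route the paper sets up: the survey itself only cites \cite[Proposition 5.1]{Rango1} for this statement, but the intended argument is precisely your reduction via Proposition~\ref{prop:4} to the algebraic classification of the $\Aff$-invariant subspaces of $\Pc$. Your implementation of that classification — the multiplicity-free decomposition of $\Pc$ into the pairwise inequivalent modules $\Pc_k(\C^n)\otimes\C z^j$ under $GL(\Dc)\cong U(n)\times\R_+^*$, the passage to the Lie algebra of $\Nc$ justified by the invariant filtration by $k+2j$, and the translation of the three resulting implications into $h_{k+1}\in\Set{h_k,(h_k-1)_+}$ — is sound.
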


\begin{theorem}
	Take $\lambda\in \R$ and let $H$ be a non-trivial strongly decent and saturated semi-Hilbert subspace of $\Hol(\Dc)$ such that   $U_\lambda$ induces a bounded  (resp.\ isometric) representation of $\Aff$ in $H$. Then, either one of the following conditions hold:
	\begin{itemize}
		\item there is  $k\in \N$ such that $s+2k>0$ and  such that $H$ is a dense subspace of $H_{\lambda,k}(\Dc)=\Set{f\in \Hol(\Dc)\colon \partial_2^k f \in H_{\lambda+2 k}(\Dc)}$, endowed with an equivalent (resp.\ proportional) seminorm;
		
		\item $\lambda\in -\N$ and there is a non-empty subset $J$ of $\N\cap (-s-2 \N)$ such that $H$ contains $\bigoplus_{j\in J} [\Pc_j(\C^n)\otimes \Pc_{-(s+j)/2}(\C)]$ as a dense  subspace, and its seminorm is equivalent (resp.\ proportional) thereon to the following one:
		\[
		\norm*{\sum_{j\in J}  (p_j\otimes (\,\cdot\,)^{-(s+j)/2})}^2=\sum_{j\in J} \norm{p_j}^2_j
		\]
		for every $\sum_{j\in J}  (p_j\otimes (\,\cdot\,)^{-(s+j)/2})\in \bigoplus_{j\in J} [\Pc_j(\C^n)\otimes \Pc_{-(s+j)/2}(\C)]$, where $\norm{\,\cdot\,}_j$ is some $U(\Phi)$-invariant Hilbert norm on $\Pc_j(\C^n)$.
	\end{itemize}
\end{theorem}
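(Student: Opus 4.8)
The plan is to follow the strategy of Theorem~\ref{theorem:1} (the tube case $n=0$) and of the general rank-one analysis, the new features being the compact factor $U(n)\subseteq GL(\Dc)$ acting on $\zeta$ and the genuine (non-abelian) Heisenberg structure of $\Nc=\C^n\times\R$; throughout I write $s=\lambda$, as in the statement. First I would pass to the Hausdorff quotient $H/V$, where $V$ is the closure of $\Set{0}$ (a closed $\Aff$-invariant subspace, available by saturation): it is a Hilbert space, hence reflexive, so Corollary~\ref{cor:1} applies and $\Uc_\lambda$ is a \emph{continuous} bounded representation of $\Aff$. Since $\Aff$ is solvable, hence amenable, I may average the inner product to assume that $\Uc_\lambda$ acts \emph{isometrically} on $H/V$ (producing an equivalent norm; proportional under the isometric hypothesis). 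In particular the dilation subgroup $\delta_t\colon(\zeta,z)\mapsto(t\zeta,t^2z)$ of $GL(\Dc)$, whose Jacobian equals $t^{g}$, then acts by a \emph{unitary} one-parameter group on $H/V$.

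The key device is the Fourier/Plancherel realization of invariant spaces recalled in this section: $H/V$ embeds $\Aff$-equivariantly into a direct integral over the (one-dimensional) cone $\overline\Omega=[0,\infty)$, whose fibre over $\tau>0$ is the infinite-dimensional Stone--von Neumann representation $\Hs_\tau$ of the Heisenberg group $\Nc$, while the fibre over the vertex $\tau=0$ is finite-dimensional, with $\Nc$ acting trivially and $U(n)$ acting through its irreducible constituents $\Pc_j(\C^n)$. The whole datum is thus a semi-invariant spectral measure on $\overline\Omega$. A direct computation shows that each monomial $p_j(\zeta)z^\nu$ is a $\delta_t$-eigenvector of eigenvalue $t^{-(j+2\nu+\lambda)}$; since $\delta_t$ is unitary on $H/V$, only the weight $j+2\nu+\lambda=0$ survives with nonzero norm, which is possible exactly when $-\lambda\in\N$ and $j\equiv-\lambda\pmod 2$, i.e.\ for the pairs $(j,\nu)$ of the second alternative, and these vectors live precisely over the vertex $\tau=0$.

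I would then determine the two alternatives. For the continuous one I use the intertwiner $\square^k=\partial_2^k$, which carries $U_\lambda$ to $U_{\lambda+2k}$ as ray representations of $\Aff$ in $\Hol(\Dc)$ (the case $k=m/r-\lambda$ being Proposition~\ref{prop:20}, the general case following from the same computation, as $\square=\partial_2$ in rank one). Choosing $k$ with $\lambda+2k>0$ lands the parameter in the continuous part $(0,\infty)$ of $\Wc(\R_+^*)=[0,\infty)$, where $H_{\lambda+2k}(\Dc)$ is a genuine invariant Hilbert space; transferring to $\Dc$ the uniqueness of invariant (semi-)Hilbert spaces from Section~\ref{sec:4}, the image $\square^k(H/V)$ is, up to an equivalent (resp.\ proportional) norm, a dense subspace of $H_{\lambda+2k}(\Dc)$, and pulling back identifies $H$ with a dense subspace of $H_{\lambda,k}(\Dc)=\Set{f\in\Hol(\Dc)\colon\partial_2^k f\in H_{\lambda+2k}(\Dc)}$. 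For the atomic alternative the spectral measure is carried by the vertex $\tau=0$; there $\Nc$ acts trivially and each summand $\Pc_j(\C^n)\otimes\C z^{\nu}$ (with $j+2\nu=-\lambda$) is $U(n)$-irreducible, so by Schur's lemma the invariant seminorm is unique up to a positive scalar, the surviving set $J$ of degrees and the scalars $\norm{\,\cdot\,}_j$ being the only freedom, exactly as in the second alternative.

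The main obstacle I expect is proving that these two alternatives are \emph{mutually exclusive}, i.e.\ that the vertex atom and the open-cone continuous part cannot both carry nonzero norm in a single admissible $H$. The crux is that semi-invariance under the dilations forces the spectral measure on $\overline\Omega=[0,\infty)$ to be \emph{homogeneous} under the scaling $\tau\mapsto t^{-2}\tau$; the absolutely continuous model and the vertex atom $\delta_0$ transform with distinct dilation-weights, so the single weight fixed by $\lambda$ can match at most one of them, never their superposition. Making this rigorous is delicate because the Heisenberg translations do not commute with the dilations and, applied to a vertex vector, produce off-weight contributions; one must check that, modulo the radical $V$, these contributions vanish (so that $\Nc$ indeed acts trivially over $\tau=0$), which is what ultimately decouples the atomic and continuous regimes. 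A secondary technical point is the edge case in which $\lambda+2k$ is only just positive, so that $H_{\lambda+2k}(\Dc)$ sits at the boundary of the Wallach set and is not a weighted Bergman space; there I would rely on the Plancherel description of $H_{\lambda+2k}(\Dc)$ recalled above rather than on an integral Bergman norm.
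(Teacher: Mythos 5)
Your skeleton --- pass to the Hausdorff quotient $H/V$, make the representation isometric by averaging over an invariant mean ($\Aff$ is indeed amenable, though for $n\Meg 2$ it is compact-by-solvable rather than solvable, since it contains $U(n)$), decompose under the Heisenberg group $\Nc$, and read off the two alternatives from dilation weights --- is the right family of ideas, and your two concrete computations are correct: $p_j(\zeta)z^\nu$ is a $\Uc_\lambda(\delta_t)$-eigenvector with eigenvalue $t^{-(j+2\nu+\lambda)}$, and $\partial_2^k$ intertwines $\Uc_\lambda$ and $\Uc_{\lambda+2k}$ as representations of $\Aff$ (this really does need only the affine group, unlike Proposition~\ref{prop:20}). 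But two central steps are not proved. First, you settle the continuous alternative by invoking ``the uniqueness of invariant (semi-)Hilbert spaces from Section~\ref{sec:4}'' for $\partial_2^k(H/V)$. That classification applies to $\lambda$-invariant spaces, i.e.\ spaces on which $U_{\lambda+2k}$ induces a bounded ray representation of the \emph{full} group $G(\Dc)$; your space is only known to be $\Aff$-invariant, and the entire content of the theorem is that invariance under this much smaller group already suffices. As written, the argument is circular: if Section~\ref{sec:4} applied, no Fourier analysis (and no theorem) would be needed. Second, the direct-integral picture you start from (spectral measure of the $\Nc$-action carried by $\Set{\pi_\tau\colon \tau\Meg 0}$, exclusion of the nontrivial characters of $\C^n$, finite-dimensional vertex fibre with trivial $\Nc$-action, fibres $\Hs_\tau$ over $\tau>0$) is asserted, not established: what Section~\ref{sec:6} recalls is this description for the \emph{specific} spaces $H_\mu(\Dc)$, $\mu\in\Wc(\Omega)$, not for an arbitrary invariant semi-Hilbert space. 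Proving it for a general $H$ --- using strong decency to convert holomorphy into a Paley--Wiener-type support condition, and then determining which $\Aff$-equivariant continuous embeddings of the abstract model into $\Hol(\Dc)/V$ exist (this is where the integer $k$ and the space $H_{\lambda,k}(\Dc)$ actually arise) --- is the technical heart of the cited proof, and it is missing here.

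Your proposed mechanism for mutual exclusivity is also flawed. For a unitary representation of $\Nc\rtimes(\R_+^*\times U(n))$, only the spectral measure \emph{class} is dilation-invariant, not the measure itself, and the class of $\delta_0+\dd\tau$ is perfectly invariant under $\tau\mapsto t^2\tau$: abstractly, the vertex atom and the continuous part coexist with no weight conflict, so no purely representation-theoretic homogeneity argument can separate the two alternatives. The exclusion has to come from the embedding into $\Hol(\Dc)/V$, saturation, and the lattice of closed $\Aff$-invariant subspaces classified in the proposition stated just before the theorem --- which your argument never uses. A useful observation in this direction: if $\lambda+2k>0$, then every weight-$(-\lambda)$ polynomial $p_j(\zeta)z^\nu$ has $\nu=(-\lambda-j)/2<k$, hence lies in $\ker\partial_2^k$; so once the seminorm of $H$ is equivalent to that of $H_{\lambda,k}(\Dc)$, all candidate atomic vectors are absorbed into the radical $V$, and conversely an atomic vector with nonzero seminorm forbids the continuous regime. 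Finally, a smaller point: Corollary~\ref{cor:1} cannot be quoted for merely $\Aff$-invariant spaces, since it presupposes $G(D)$-invariance and its proof rests on the rotation group $\T\subseteq K_0$, which does not act affinely on $\Dc$; the continuity you want does hold for a reflexive Hausdorff quotient, but via the separability/ultradecency/measurability argument of~\cite{Rango1}, which you would need to reproduce for $\Aff$.
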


Also in this case one may describe the $G(\Dc)$-$U_\lambda$-invariant spaces   (cf.~\cite[Proposition 5.1 and Theorem 5.3]{Rango1} and also~\cite[Theorem 5.5]{Arcozzietal}).

\begin{proposition}
	Let $V$ be a closed subspace of $\Hol(\Dc)$ and take $\lambda\in \R$. Then, $V$ is $G(\Dc)$-$U_\lambda$-invariant if and only if either $V=\Set{0}$, $V=\Hol(\Dc)$, or $\lambda\in-\N$ and  $V=\Pc^{1-\lambda}(\C^{n+1})$.
\end{proposition}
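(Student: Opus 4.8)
The plan is to transfer the classification from the bounded realization of $D$ as the unit ball in $\C^{n+1}$ to its unbounded realization $\Dc$ by means of the Cayley transform, and then to identify explicitly the images of the finitely many invariant spaces furnished by Proposition~\ref{prop:8}. Recall from the beginning of Section~\ref{sec:6} that $\Cc_\lambda\colon \Hol(D)\to \Hol(\Dc)$ is an isomorphism intertwining the representations $U_\lambda$ of $G(D)$ and of $G(\Dc)$. Hence a closed subspace $V\subseteq \Hol(\Dc)$ is $G(\Dc)$-$U_\lambda$-invariant if and only if $\Cc_\lambda^{-1}(V)$ is a $G(D)$-$U_\lambda$-invariant closed subspace of $\Hol(D)$; by Proposition~\ref{prop:8} (recall that there the $G_0(D)$- and the $G(D)$-invariant closed subspaces coincide) these are precisely the spaces $\overline{\bigoplus_{q(\vect s,\lambda)\meg j}\Pc_{\vect s}}$, for $j=-1,\dots,q(\lambda)$.

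First I would compute $q(\vect s,\lambda)$ in the present situation. Since $D$ is the unit ball in $\C^{n+1}$ we have $r=1$, $a=0$, and $\N_\Omega=\N$, with $\Pc_s$ the space of homogeneous holomorphic polynomials of degree $s$ on $\C^{n+1}$. The factor in~\eqref{eq:1} reduces to $(\lambda')_s=\lambda'(\lambda'+1)\cdots(\lambda'+s-1)$, whose order of vanishing at $\lambda$ is $1$ exactly when $-\lambda\in\Set{0,\dots,s-1}$ and $0$ otherwise. Thus, if $\lambda\notin -\N$ then $q(s,\lambda)=0$ for every $s$ and $q(\lambda)=0$, so the only invariant spaces are $\Set{0}$ (from $j=-1$) and $\overline{\Pc}=\Hol(D)$ (from $j=0$); whereas if $\lambda\in-\N$ then $q(s,\lambda)=1$ exactly for $s>-\lambda$, so $q(\lambda)=1$, and the invariant spaces are $\Set{0}$, the finite-dimensional (hence closed) space $\bigoplus_{s\meg -\lambda}\Pc_s=\Pc^{1-\lambda}(\C^{n+1})$, and $\Hol(D)$.

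It then remains to transfer these spaces through $\Cc_\lambda$; the cases $\Set{0}$ and $\Hol(D)$ are immediate, and only the finite-dimensional space requires work. This I would handle by a direct computation. Writing $\Dc=\Set{(\zeta,w)\in\C^n\times\C\colon \Im w>\abs{\zeta}^2}$, one finds $\Cc^{-1}(\zeta,w)=\left(\frac{w-i}{w+i},\frac{2i\,\zeta}{w+i}\right)$, whose holomorphic Jacobian determinant is $J\Cc^{-1}(\zeta,w)=\frac{(2i)^{n+1}}{(w+i)^{n+2}}$. Since $g=n+2$, this gives $(J\Cc^{-1})^{\lambda/g}=c\,(w+i)^{-\lambda}$ for a suitable constant $c$. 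Now a polynomial $P$ of degree $\meg d$ on $D$ satisfies $P\circ\Cc^{-1}=(w+i)^{-d}Q$ with $Q$ a polynomial of degree $\meg d$, so that
\[
\Cc_\lambda(P)=c\,Q\,(w+i)^{-\lambda-d}.
\]
For $\lambda\in-\N$ and $d\meg -\lambda$ the exponent $-\lambda-d$ is a nonnegative integer, whence $\Cc_\lambda(P)$ is a polynomial of degree $\meg -\lambda$; running the same argument with $\Cc_\lambda^{-1}$ in place of $\Cc_\lambda$ shows that $\Cc_\lambda$ restricts to a bijection of $\Pc^{1-\lambda}(\C^{n+1})$ (on $D$) onto $\Pc^{1-\lambda}(\C^{n+1})$ (on $\Dc$). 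This yields the stated list.

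The main obstacle lies in this last step: it is not clear a priori that $\Cc_\lambda$ preserves the space of polynomials of degree $\meg -\lambda$, since $\Cc^{-1}$ is only rational and $(J\Cc^{-1})^{\lambda/g}$ is in general merely a fractional power. The computation shows that precisely when $\lambda\in-\N$ the factor $(w+i)^{-\lambda}$ becomes a polynomial which exactly cancels the denominators introduced by $\Cc^{-1}$, and that the degree constraint $d\meg -\lambda$ is both necessary and sufficient for $\Cc_\lambda(P)$ to remain a polynomial.
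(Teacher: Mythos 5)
Your proof is correct, but it follows a genuinely different route from the one the paper relies on. The survey gives no in-text proof of this proposition: it cites \cite[Proposition 5.1 and Theorem 5.3]{Rango1} (see also \cite[Theorem 5.5]{Arcozzietal}), where the classification is obtained intrinsically on $\Dc$ --- one first establishes the classification of the closed $\Aff(\Dc)$-invariant subspaces (the proposition immediately preceding this one in the survey, proved by Fourier/Paley--Wiener analysis on the \v Silov boundary $\Nc$), and then determines which of the spaces $\overline{\bigoplus_{k}[\Pc_k(\C^n)\otimes\Pc^{h_k}(\C)]}$ survive the additional invariance under all of $G(\Dc)$. You instead pull the problem back to the ball through $\Cc_\lambda$, invoke Proposition~\ref{prop:8}, compute $q(s,\lambda)$ there (with $r=1$, $a=0$, so that $q(\lambda)=1$ exactly when $\lambda\in-\N$ and $q(\lambda)=0$ otherwise), and identify $\Cc_\lambda\bigl(\bigoplus_{s\meg-\lambda}\Pc_s\bigr)$ by an explicit computation with the rational map $\Cc^{-1}$ and its Jacobian. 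This is closer in spirit to how the survey handles the general case (Proposition~\ref{prop:11}, described as ``translating the analogous results from the bounded case''), and it has two merits: it is self-contained given facts stated in the survey (the intertwining property of $\Cc_\lambda$ and Proposition~\ref{prop:8}), and it makes transparent why polynomial spaces occur precisely for $\lambda\in-\N$, namely because the factor $(w+i)^{-\lambda}$ arising from $(J\Cc^{-1})^{\lambda/g}$ is then itself a polynomial that cancels the denominators produced by $\Cc^{-1}$. What the cited route buys instead is the $\Aff(\Dc)$-classification as an intermediate result of independent interest, with no recourse to the bounded-domain theory. Two small points you should make explicit, though neither is a gap: (i) the identification of $\Pc_s$ with the full space of homogeneous polynomials of degree $s$ on $\C^{n+1}$ (the $K_0$-span of powers of linear forms, which is an irreducible $U(n+1)$-module); (ii) in the last step, instead of rerunning the argument for $\Cc_\lambda^{-1}$, it suffices to observe that $\Cc_\lambda$ is injective and that the two polynomial spaces have the same finite dimension, so your inclusion is automatically an equality.
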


\begin{proposition}
	If $\lambda\in -\N$, then $\widetilde H_\lambda(\Dc)$ is the subspace of $H_{\lambda,1-\lambda}(\Dc)$ consisting of the $f\in \Hol(\Dc)$ such that $\partial_1^k f(0,\,\cdot\,)\in  H_{\lambda+k,(1-\lambda-k)_+}(\C_+)$ for every $k\in\N$ (with a proportional seminorm).
	
	In addition, $\widetilde H_0$ is the subspace of $H_{0,1}(\Dc)$ consisting of the $f\in \Hol(\Dc)$ such that $\nabla f(\zeta,z)\to 0$ for $\Im z\to+\infty$ and $\abs{\zeta}\meg R$, for every $R>0$.
\end{proposition}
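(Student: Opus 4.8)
The plan is to diagonalize $\widetilde H_\lambda(\Dc)$ under the rotation subgroup of $GL(\Dc)$ and to reduce each homogeneous piece to a space on the half-plane $\C_+$. Since $(\zeta,z)\mapsto(\ee^{i\theta}\zeta,z)$ lies in $GL(\Dc)$ and $\widetilde H_\lambda(\Dc)$ is $G(\Dc)$-$U_\lambda$-invariant (hence $U_\lambda$ acts isometrically on this circle), the continuous projectors $P_k=\int_\T \ee^{-ik\theta}U_\lambda(\ee^{i\theta}\,\cdot\,)\,\dd\theta$ split every $f$ into its components $f_k$ homogeneous of degree $k$ in $\zeta$, orthogonally for the seminorm of $\widetilde H_\lambda(\Dc)$; the coefficient of $f_k$ is recovered from $g_k\coloneqq\partial_1^k f(0,\,\cdot\,)\in\Pc_k(\C^n)\otimes\Hol(\C_+)$. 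I would then show that $f\in\widetilde H_\lambda(\Dc)$ if and only if $g_k$ lies in the slice space $S_k\coloneqq H_{\lambda+k,(1-\lambda-k)_+}(\C_+)$ for every $k$, with $\sum_k\norm{g_k}_{S_k}^2<\infty$. As $P_k$ is bounded on the complete space $\widetilde H_\lambda(\Dc)$, each $\Pc_k(\C^n)\otimes S_k$ is a closed summand, so it suffices to identify $S_k$ together with its seminorm and its radical.

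The weight bookkeeping comes from the copy of $\Aff(\C_+)$ inside $\Aff$ given by $\phi\colon(\zeta,z)\mapsto(\sqrt a\,\zeta,az+b)$ with $a>0$, $b\in\R$. Computing Jacobians with $g=n+2$ gives $J\phi^{-1}=a^{-g/2}$, so $(J\phi^{-1})^{\lambda/g}=a^{-\lambda/2}$, and for $p\in\Pc_k(\C^n)$ one finds $U_\lambda(\phi)(p\otimes h)=p\otimes\big(a^{-(\lambda+k)/2}h((\,\cdot\,-b)/a)\big)$; that is, on the degree-$k$ slice $U_\lambda$ restricts to the half-plane representation $U^{\C_+}_{\lambda+k}$, so the effective weight is shifted from $\lambda$ to $\lambda+k$. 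Hence $S_k$ is a strongly decent, saturated, $\Aff(\C_+)$-$U_{\lambda+k}$-invariant semi-Hilbert space on $\C_+$, and the half-plane case ($n=0$, $r=1$) of the classification theorem of Section~\ref{sec:6} forces $S_k$ to be dense in some $H_{\lambda+k,j}(\C_+)$.

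To pin down the seminorm and the index $j$, I would use the intertwining $\square^{1-\lambda}=\partial_2^{1-\lambda}$ between $U_\lambda$ and $U_{2-\lambda}$ as in Proposition~\ref{prop:20} (which persists for $n>0$, as the slice computation above shows): applied to $f\in\widetilde H_\lambda(\Dc)$ it yields $\partial_2^{1-\lambda}f\in H_{2-\lambda}(\Dc)$ with $\norm{f}_{\widetilde H_\lambda(\Dc)}$ proportional to $\norm{\partial_2^{1-\lambda}f}_{H_{2-\lambda}(\Dc)}$, whence $\widetilde H_\lambda(\Dc)\subseteq H_{\lambda,1-\lambda}(\Dc)$. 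Expanding the reproducing kernel $((z-\overline{z'})/2i-\langle\zeta,\zeta'\rangle)^{-(2-\lambda)}$ in powers of $\langle\zeta,\zeta'\rangle$ exhibits $H_{2-\lambda}(\Dc)=\widehat\bigoplus_k\Pc_k(\C^n)\otimes H_{2-\lambda+k}(\C_+)$, so the slice seminorm is a multiple of $\norm{\partial^{1-\lambda}g_k}_{H_{2-\lambda+k}(\C_+)}$. This seminorm alone does \emph{not} determine $j$, since $\partial^{1-\lambda-j}$ is a multiple of an isometry $H_{\lambda+k+2j}(\C_+)\to H_{2-\lambda+k}(\C_+)$ for every $j\meg1-\lambda$. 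The index is instead fixed by the radical: the closure of $\Set{0}$ in $\widetilde H_\lambda(\Dc)$ is a non-trivial proper $G(\Dc)$-$U_\lambda$-invariant closed subspace, hence equals $\Pc^{1-\lambda}(\C^{n+1})$ by the classification of such subspaces, and its degree-$k$ slice is exactly $\Pc^{(1-\lambda-k)_+}(\C)=\ker\partial^{(1-\lambda-k)_+}$. Matching radicals gives $j=(1-\lambda-k)_+$, so $S_k=H_{\lambda+k,(1-\lambda-k)_+}(\C_+)$; completeness of the closed summand upgrades density to equality, and summing the orthogonal pieces yields the stated description with proportional seminorm. \emph{The main obstacle is precisely this radical analysis}: $\widetilde H_\lambda(\Dc)$ and the strictly larger $H_{\lambda,1-\lambda}(\Dc)$ carry the same seminorm and differ only through the cut-down of the radical from $\ker\partial_2^{1-\lambda}$ to the polynomials $\Pc^{1-\lambda}(\C^{n+1})$, which is what the slice conditions $g_k\in S_k$ encode.

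For the final assertion, the same slicing with $\lambda=0$ gives the conditions $f(0,\,\cdot\,)\in\widetilde H_0(\C_+)$ and $\partial_1^k f(0,\,\cdot\,)\in H_k(\C_+)$ for $k\Meg1$, and reduces the radical of $H_{0,1}(\Dc)$ from $\ker\partial_2$ (the functions of $\zeta$ alone) to the constants $\Pc^1(\C^{n+1})$. I would then check that, within $H_{0,1}(\Dc)$, the condition $\nabla f(\zeta,z)\to0$ as $\Im z\to+\infty$ with $\abs{\zeta}\meg R$ is equivalent to this reduction: a $z$-independent summand $h(\zeta)$ contributes $\nabla h(\zeta)$ to the gradient uniformly in $\Im z$, so the limit forces $h$ constant, while once the radical is the constants the limit holds by the decay of the genuine part as $\Im z\to+\infty$.
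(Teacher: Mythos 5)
Your overall strategy (slice along the $\T$-action in $\zeta$, reduce each slice to a half-plane space, use the radical to fix the indices) is the right skeleton, but two of its load-bearing steps are wrong as stated. The first is the parenthetical claim that the intertwining of Proposition~\ref{prop:20} ``persists for $n>0$''. It does not, and it cannot: if $\partial_2^{1-\lambda}$ intertwined $U_\lambda$ and $U_{2-\lambda}$ as ray representations of the full group $G(\Dc)$, then $H_{\lambda,1-\lambda}(\Dc)$ would be a strictly $\lambda$-invariant semi-Hilbert space (it is strongly decent and saturated, and $H_{2-\lambda}(\Dc)$ is isometrically $U_{2-\lambda}$-invariant), so the uniqueness theorem of Section~\ref{sec:4} would force $H_{\lambda,1-\lambda}(\Dc)=\widetilde H_\lambda(\Dc)$ --- exactly what you correctly deny a few lines later when you call $H_{\lambda,1-\lambda}(\Dc)$ ``strictly larger''. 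Your slice computation only tests the subgroup $(\zeta,z)\mapsto(\sqrt a\,\zeta,az+b)$, i.e.\ a piece of $\Aff(\Dc)$; it never tests the inversion, which has the form $(\zeta,z)\mapsto(c\,\zeta/z,\,-1/z)$: its $\zeta$-component depends on $z$, so the chain rule produces $\partial_1$-terms and the intertwining fails. Consequently the inclusion $\widetilde H_\lambda(\Dc)\subseteq H_{\lambda,1-\lambda}(\Dc)$ with proportional seminorm --- which is part of the statement --- is not established in your write-up. The legitimate substitute is to apply to $\widetilde H_\lambda(\Dc)$ itself the classification of $\Aff$-invariant semi-Hilbert spaces in the case $r=1$ (Section~\ref{sec:6}); but that theorem only yields density in \emph{some} $H_{\lambda,k}(\Dc)$, and identifying $k$ is then precisely the remaining problem.

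The second error is the principle ``completeness of the closed summand upgrades density to equality''. For semi-normed spaces this is false (already $\C\times\Set{0}$ inside $\C^2$ with seminorm $\abs{x}$ is a complete, dense, isometric, proper subspace), and the pair $\widetilde H_\lambda(\Dc)\subsetneq H_{\lambda,1-\lambda}(\Dc)$ is itself a counterexample: $\widetilde H_\lambda(\Dc)$ is complete, carries a multiple of the restricted seminorm, and is dense in $H_{\lambda,1-\lambda}(\Dc)$ since the two spaces have the same Hausdorff quotient, yet it is proper. For the same reason your radical matching only gives $j\Meg(1-\lambda-k)_+$: knowing that $S_k$ is complete, dense in some $H_{\lambda+k,j}(\C_+)$ with proportional seminorm, and has radical $\Pc^{(1-\lambda-k)_+}(\C)$ does not show $S_k\subseteq H_{\lambda+k,(1-\lambda-k)_+}(\C_+)$, let alone equality. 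Closing this gap requires input beyond $\Aff$-invariance plus the radical: either prove that on the half-plane there are no intermediate invariant spaces (for instance because $H_\mu(\C_+)$, $\mu>0$, admits no non-zero continuous translation-invariant linear functional), or --- closer to the paper's framework --- verify directly that the space on the right-hand side of the statement is strictly $\lambda$-invariant (the only non-trivial point being invariance under the inversion, which mixes the slices) and then invoke the uniqueness theorem of Section~\ref{sec:4} to identify it with $\widetilde H_\lambda(\Dc)$. Note finally that the $\lambda=0$ clause is asserted rather than proved: the equivalence between the gradient-decay condition and the cut-down of the radical from $\ker\partial_2$ to the constants needs an estimate in both directions, not only the easy observation about $z$-independent summands.
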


\subsection{The General Case}

So far, there does not seem to be a reasonable description of $\widetilde H_\lambda(\Dc)$ when $n\neq 0$ and $r>1$. Nonetheless, we have the following partial description of the closed $U_\lambda$-invariant vector subspaces of $\Hol(\Dc)$ (cf.~\cite[Proposition 4.7]{Tubi}).

\begin{proposition}\label{prop:11}
	Take $\lambda\in \R$ and keep the notation of Proposition~\ref{prop:5}. Then, a closed vector subspace $V$ of $\Hol(\Dc)$  is $G(\Dc)$-$U_\lambda$-invariant if and only if $V=\Set{0}$, $V=\Hol(\Dc)$, or there is $k\in \Set{1,\dots, r}$ such that $\frac 1 2 a (k-1)-\lambda\in \N$ and 
	\[
	V=\Set{f\in \Hol(\Dc)\colon \forall \phi\in G(\Dc)\:\: (U_\lambda(\phi)f)* I^{-\vect s}=0},
	\]
	for every $\vect s\in N_{\lambda,k}$.
\end{proposition}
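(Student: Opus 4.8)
The plan is to treat Proposition~\ref{prop:11} as the translation, through the Cayley isomorphism $\Cc_\lambda$, of the classification already available on the bounded realization $D$ in Proposition~\ref{prop:8}.

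I would begin with the elementary inclusion. Both $\Set{0}$ and $\Hol(\Dc)$ are evidently closed and $G(\Dc)$-$U_\lambda$-invariant. For a fixed $\vect s\in\N_\Omega^*$ the distribution $I^{-\vect s}$ is supported at the origin of $\Nc$ (cf.~Subsection~\ref{sec:6:2}), so $f\mapsto f*I^{-\vect s}$ is a holomorphic differential operator with constant coefficients and is therefore continuous on $\Hol(\Dc)$; as each $U_\lambda(\phi)$ is also continuous, the set $\Set{f\in\Hol(\Dc)\colon \forall\phi\in G(\Dc)\:\: (U_\lambda(\phi)f)*I^{-\vect s}=0}$ is an intersection of kernels of continuous operators, hence closed, and it is $G(\Dc)$-$U_\lambda$-invariant by its very definition. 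This settles that every space listed in the statement is indeed a closed $G(\Dc)$-$U_\lambda$-invariant subspace.

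For the reverse inclusion I would exploit that $\Cc_\lambda$ is an isomorphism intertwining the representations $U_\lambda$ of $G(D)$ and of $G(\Dc)$ (Section~\ref{sec:6}); hence $W\mapsto\Cc_\lambda W$ is a bijection between the closed $G(D)$-$U_\lambda$-invariant subspaces of $\Hol(D)$ and the closed $G(\Dc)$-$U_\lambda$-invariant subspaces of $\Hol(\Dc)$. By Proposition~\ref{prop:8} the former are precisely the nested family $W_j=\overline{\bigoplus_{q(\vect s,\lambda)\meg j}\Pc_{\vect s}}$ with $j=-1,\dots,q(\lambda)$, so the latter are exactly the $V_j\coloneqq\Cc_\lambda W_j$. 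The two extreme cases are immediate: $V_{-1}=\Set{0}$, while $V_{q(\lambda)}=\Cc_\lambda\Hol(D)=\Hol(\Dc)$ because $\Pc$ is dense in $\Hol(D)$. It therefore remains to realise each intermediate $V_j$ (for $0\meg j\meg q(\lambda)-1$) in the convolution form of the statement: concretely, to produce an index $k\in\Set{1,\dots,r}$ with $\frac 1 2 a(k-1)-\lambda\in\N$ and some $\vect s\in N_{\lambda,k}$ for which $V_j=\Set{f\in\Hol(\Dc)\colon \forall\phi\in G(\Dc)\:\: (U_\lambda(\phi)f)*I^{-\vect s}=0}$, the latter being independent of the chosen $\vect s\in N_{\lambda,k}$.

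The heart of the matter, and the main obstacle, is this realisation. One analyses the action of $f\mapsto f*I^{-\vect s}$ on the $G(\Dc)$-translates of the polynomial components transported from $D$: using that the vanishing of $I^{-\vect s}$ on a homogeneous piece is governed by the Pochhammer factors $(\,\cdot\,)_{\vect s}$ of~\eqref{eq:1} (so that the relevant thresholds are exactly the $j$ for which $\frac 1 2 a(j-1)-\lambda$ is an integer in the range prescribed by Definition~\ref{def:2}), one shows that the $G(\Dc)$-orbit of this single operator annihilates precisely the components surviving in $V_j$ and none of the others, which pins down $k$ in terms of $j$ and forces the two descriptions to coincide. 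Unlike the tube case $n=0$ of Proposition~\ref{prop:5}, where the linear group $G(\Omega)$ permits one to replace the orbit condition by the finite intrinsic system $\bigcap_{\vect s\in N_{\lambda,k}}\ker\Dc_{\vect s}$, when $n\neq0$ the variables in $E$ and $F_\C$ are coupled and no such clean intrinsic differential description is available; this is exactly why one must retain the full group $G(\Dc)$ in order to generate the annihilator. The remaining delicate point is the bookkeeping of the correspondence $j\leftrightarrow k$ at the boundary values $\lambda=\frac 1 2 a(k-1)$ lying in the discrete part of $\Wc(\Omega)$, where some convolution spaces degenerate and may coincide with $\Set{0}$ or $\Hol(\Dc)$; since the statement is a biconditional such redundancies are harmless, but one must still check that no intermediate $V_j$ is left unaccounted for.
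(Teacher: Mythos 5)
Your opening moves are correct, and they coincide with the natural first step of any proof of this statement. The sets in question are indeed closed and invariant: for $\vect s\in\N_\Omega^*$ the distribution $I^{-\vect s}$ is supported at the origin, so $f\mapsto f*I^{-\vect s}$ is a constant-coefficient holomorphic differential operator, each $f\mapsto (U_\lambda(\phi)f)*I^{-\vect s}$ is continuous on $\Hol(\Dc)$ (and its kernel is unaffected by the $\T$-ambiguity of the ray representation), and invariance follows from the composition law of $U_\lambda$. Likewise, since $\Cc_\lambda$ is an isomorphism of $\Hol(D)$ onto $\Hol(\Dc)$ intertwining the two ray representations, Proposition~\ref{prop:8} does show that the closed $G(\Dc)$-$U_\lambda$-invariant subspaces form the finite chain $V_j=\Cc_\lambda\overline{\bigoplus_{q(\vect s,\lambda)\meg j}\Pc_{\vect s}}$, $j=-1,\dots,q(\lambda)$, whose extremes are $\Set{0}$ and $\Hol(\Dc)$, and whose intermediate members are in number equal to the number of admissible indices $k$.

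From that point on, however, you have only restated the problem. The entire content of the proposition is the identification of each intermediate $V_j$ with the convolution-defined space for the matching $k$, together with the independence of that space from the choice of $\vect s\in N_{\lambda,k}$; your paragraph beginning ``One analyses the action\dots'' announces this identification but supplies no mechanism for it, and the mechanism you gesture at does not exist as stated. The Pochhammer factors of~\eqref{eq:1} govern the action of $K$-invariant operators on the components $\Pc_{\vect s}$ in the \emph{bounded} realization, whereas $f\mapsto f*I^{-\vect s}$ is defined on the \emph{Siegel} realization through the Laplace transform ($\Lc I^{\vect s}=(\Delta^*)^{-\sigma(\vect s)}$); the Cayley transform does not carry the $\Pc_{\vect s}$ to eigenspaces of these convolution operators, so there are no Pochhammer factors to ``read off.'' Bridging the two calculi is precisely the hard part. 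This is why the survey itself gives no proof but cites \cite[Proposition 4.7]{Tubi}, whose argument is tied to the tube-domain picture: it rests on the classification of Proposition~\ref{prop:5} (proved by Fourier--Laplace methods), on the relation between the orbit condition on $\Dc$ and the intrinsic system $\bigcap_{\vect s}\ker\Dc_{\vect s}$ on the tube $F+i\Omega\subseteq\Dc$ (this is exactly the equality $\bigcap_{\vect s\in N'_{\lambda,k}}\ker\Dc_{\vect s}=\Set{f\in\Hol(F+i\Omega)\colon [(\zeta,z)\mapsto f(z)]\in V_{\lambda,k}}$ invoked in the proof of Theorem~\ref{theorem:3} above), and on decompositions such as $G(\Dc)=G_0(\Dc)\Aff(\Dc)$ and the polar decomposition of $\Dc$ under the stabilizer $K(\Dc)$ of $(0,ie)$. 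Nothing of this kind, nor any substitute for it, appears in your outline; the matching $j\leftrightarrow k$ and the independence of $\vect s$, which you yourself flag as ``delicate,'' belong to this same missing core. As it stands, the converse implication --- and hence the proposition --- is not proved.
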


\section{Minimal and Maximal Invariant Spaces}\label{sec:7}

We keep the notation of Sections~\ref{sec:2} to~\ref{sec:6}. In this section we shall briefly describe the minimal and maximal $\lambda$-invariant spaces.

\begin{definition}
	Take $\lambda\in \R$. For every $k=0,\dots, q(\lambda)$, define finite subsets $N'_{\lambda,k}$ of $ \N_\Omega^*$ so that the closed $G(\Dc)$-$U_\lambda$-invariant \emph{proper} subspaces of $\Hol(F+i  \Omega)$ are precisely the  $\bigcap_{\vect s\in N'_{\lambda,k}}\ker \Dc_{\vect s}$, $k=0,\dots, q(\lambda)$, ordered increasingly.\footnote{This is possible setting $N'_{\lambda,0}=\Set{\vect 0}$ and using Proposition~\ref{prop:5}.}
	Then, set  
	\[
	V_{\lambda,k}\coloneqq \Set{f\in \Hol(\Dc)\colon \forall \phi\in G_0(\Dc)\:\: \forall \vect s\in N'_{\lambda,k}\:\: (U_\lambda(\phi)f)*I^{-\vect s}=0},
	\] 
	for $k=0,\dots, q(\lambda)$. 
	
	Then,   define, for every $k=0,\dots,q(\lambda)$, 
	\[
	\Ac^\infty_{\lambda,k}(\Dc)=\Set{f\in \Hol(\Dc)\colon \sup_{\phi\in G_0(\Dc)} \sup_{\vect s\in N'_{\lambda,k}} \abs{((U_\lambda(\phi)f)*I^{-\vect s})(0, i  e) }<\infty },
	\]
	endowed with the corresponding seminorm. We shall also simply write $\Ac^\infty_{\lambda,k}$ instead of $\Ac^\infty_{\lambda,k}(\Dc)$, and we shall define $\Ac^\infty_{\lambda,k}(D)\coloneqq \Cc_\lambda^{-1} \Ac^\infty_{\lambda,k}(\Dc)$.
\end{definition}

\begin{theorem}\label{theorem:3}
	Take $\lambda\in \R$ and $k\in \Set{0,\dots, q(\lambda)}$. Then, the following hold:
	\begin{enumerate}
		\item[\textnormal{(1)}] the space $\Ac^\infty_{\lambda,k}$ is strictly $\lambda$-invariant; in addition, 
		\begin{equation}\label{eq:6}
			\norm{f}_{\Ac^{\infty}_{\lambda,k}}=\sup_{\phi\in G(\Dc)} \sup_{(\zeta,z)\in \Dc} \sup_{\vect s\in N'_{\lambda,k}}\Delta^{(\lambda/2)\vect 1_r+\vect s}(\Im z-\Phi(\zeta)) \abs{ ((U_\lambda(\phi)f)*I^{-\vect s} )(\zeta,z) } 
		\end{equation}
		for every $f\in \Ac^\infty_{\lambda,k}$;
		
		\item[\textnormal{(2)}] if $X$ is a strongly decent and saturated semi-Banach subspace of $\Hol(\Dc)$, the closure of $\Set{0}$ in $X$ is $V_{\lambda,k}$ for some $k\in \Set{0,\dots, q(\lambda)}$,  and $U_\lambda$ induces a bounded ray representation of $G_0(\Dc)$ in $X$, then $X\subseteq \Ac^\infty_{\lambda,h}$ continuously for every $h=k,\dots,q(\lambda)$;

		\item[\textnormal{(3)}] if $X$ is a  decent  semi-Banach subspace of $\Hol(\Dc)$  and $U_\lambda$ induces a bounded ray representation of $G_0(\Dc)$ in $X$, then $X\subseteq \Ac^\infty_{\lambda,q(\lambda)}$ continuously;
		
		\item[\textnormal{(4)}] $\Ac^\infty_{\lambda,k}=V_{\lambda,k}$ with the trivial seminorm  if and only if $\lambda<0$ and either $k<q(\lambda)$ or $m/r-1-\lambda\not \in \N$;
		
		\item[\textnormal{(5)}] $\Ac^\infty_{\lambda,0}=\Set{f\in \Hol(\Dc)\colon \sup_{(\zeta,z)\in D}\Delta^{\lambda/2}(\Im z-\Phi(\zeta))\abs{f(\zeta,z)}<\infty}$, with the corresponding norm;
		
		\item[\textnormal{(6)}] if $n=0$ and $m/r-1-\lambda\in\N$, then  $\Ac^{\infty}_{\lambda,q(\lambda)}=\Set{f\in \Hol(\Dc)\colon \square^{m/r-\lambda}f\in \Ac^\infty_{2 m/r-\lambda,0}}$, with the corresponding seminorm.
	\end{enumerate}
\end{theorem}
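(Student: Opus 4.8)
The plan is to establish the intrinsic description of $\Ac^\infty_{\lambda,k}$ first and then deduce the maximality and the remaining identifications from it. The starting point is the covariance of the operators $f\mapsto f*I^{-\vect s}$ for $\vect s\in N'_{\lambda,k}\subseteq\N_\Omega^*$. Since $I^{-\vect s}$ is supported at the origin of $\Nc$, each such map is a holomorphic differential operator, continuous on $\Hol(\Dc)$. Combining the $GL(\Dc)$-transformation law of $I^{-\vect s}$ recorded in Subsection~\ref{sec:6:2} with that of the Jacobian (and arguing as for $\square$ in Proposition~\ref{prop:20}), one finds that $(U_\lambda(\phi)f)*I^{-\vect s}$ equals $(f*I^{-\vect s})\circ\phi^{-1}$ times a power of $J\phi^{-1}$ and a unimodular constant. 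Using transitivity of $G_0(\Dc)$ on the connected domain $\Dc$ to move the base point $(0,i e)$ to an arbitrary $(\zeta,z)$, the accumulated weight is exactly $\Delta^{(\lambda/2)\vect 1_r+\vect s}(\Im z-\Phi(\zeta))$; this yields~\eqref{eq:6} and exhibits the defining seminorm as manifestly $G(\Dc)$-invariant, so $\Ac^\infty_{\lambda,k}$ is strictly $\lambda$-invariant. Strong decency and saturation are then immediate, since the closure of $\Set{0}$ in $\Ac^\infty_{\lambda,k}$ is the $G(\Dc)$-$U_\lambda$-invariant (hence $\Hol(\Dc)$-closed) subspace $V_{\lambda,k}$, onto whose quotient the map is continuous by construction. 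Part~(5) is read off from~\eqref{eq:6} by taking $N'_{\lambda,0}=\Set{\vect 0}$, so that $f*I^{\vect 0}=f$ and the weight reduces to $\Delta^{\lambda/2}$.

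Part~(2) is the more direct of the two maximality claims. For $\vect s\in N'_{\lambda,k}$ the functional $\Lambda_{\vect s}\colon g\mapsto (g*I^{-\vect s})(0,i e)$ is continuous on $\Hol(\Dc)$ and annihilates $V_{\lambda,k}$ (take $\phi=\mathrm{id}$ in the definition of $V_{\lambda,k}$), hence descends to a continuous functional on $\Hol(\Dc)/V_{\lambda,k}$. By saturation together with the hypothesis that the closure of $\Set{0}$ in $X$ equals $V_{\lambda,k}$, the canonical map $X\to\Hol(\Dc)/V_{\lambda,k}$ is continuous, so $\Lambda_{\vect s}$ pulls back to an element of $X'$; boundedness of the representation then makes all its $G_0(\Dc)$-translates uniformly bounded on $X$. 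In view of~\eqref{eq:6} this is precisely the continuous inclusion $X\subseteq\Ac^\infty_{\lambda,k}$, and the inclusions $X\subseteq\Ac^\infty_{\lambda,h}$ for $h>k$ follow from $N'_{\lambda,h}\subseteq N'_{\lambda,k}$, which makes the corresponding seminorms smaller. Part~(3) is where the Rubel--Timoney mechanism is genuinely needed: here $X$ is only decent, with a single nonzero $L\in X'$ extending to $\Hol(\Dc)$, and for $f\in X$ the function $M_f\colon\phi\mapsto\langle L,U_\lambda(\phi)f\rangle$ is bounded on $G_0(\Dc)$ by $\norm{L}_{X'}\,\sup_\phi\norm{U_\lambda(\phi)}_{\Lin(X)}\,\norm{f}_X$. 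Since $V_{\lambda,q(\lambda)}$ is the \emph{largest} proper $G_0(\Dc)$-$U_\lambda$-invariant subspace (Propositions~\ref{prop:5} and~\ref{prop:11}), the classification forces each $\Lambda_{\vect s}$, $\vect s\in N'_{\lambda,q(\lambda)}$, to lie in the weak-dual closed span of the translates $L\circ U_\lambda(\phi)$; upgrading this to a \emph{uniformly bounded} representation (via averaging against approximate identities and the compact isotropy of $(0,i e)$, as in~\cite{Rango1,Tubi}) transfers the bound on $M_f$ to $\sup_{\phi,\vect s}\abs{\Lambda_{\vect s}(U_\lambda(\phi)f)}$, giving $X\subseteq\Ac^\infty_{\lambda,q(\lambda)}$ continuously.

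Assertion~(6) follows by transporting~\eqref{eq:6} through the intertwiner of Proposition~\ref{prop:20}. When $n=0$ and $m/r-1-\lambda\in\N$, the operator $\square^{m/r-\lambda}=\,\cdot\,*I^{-(m/r-\lambda)}$ intertwines $U_\lambda$ and $U_{2m/r-\lambda}$ as ray representations of $G(\Dc)$ and carries the top invariant level $q(\lambda)$ for $\lambda$ to level $0$ for $2m/r-\lambda$; recognising $(m/r-\lambda)\vect 1_r$ as the index defining that top level and matching the weight $\Delta^{(\lambda/2)\vect 1_r+\vect s}$ against $\Delta^{(m/r-\lambda/2)\vect 1_r}$ (the weight of $\Ac^\infty_{2m/r-\lambda,0}$ from part~(5)) identifies $\Ac^\infty_{\lambda,q(\lambda)}$ with $\Set{f\colon \square^{m/r-\lambda}f\in\Ac^\infty_{2m/r-\lambda,0}}$. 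Part~(4) is a growth computation comparing the weight exponent $(\lambda/2)\vect 1_r+\vect s$ in~\eqref{eq:6} with the positivity of the residual hermitian forms discussed before Theorem~\ref{prop:9}: for $\lambda<0$ these factors blow up along the appropriate boundary strata of $\Dc$, forcing every relevant $(U_\lambda(\phi)f)*I^{-\vect s}$ to vanish identically, so $\Ac^\infty_{\lambda,k}=V_{\lambda,k}$ with trivial seminorm, the sole exception being $k=q(\lambda)$ with $m/r-1-\lambda\in\N$, which is exactly when the top residual form is positive and $\widetilde H_\lambda$ survives.

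I expect the main obstacle to be the extraction step in part~(3): while membership of each $\Lambda_{\vect s}$ in the weak-dual span of the translates $L\circ U_\lambda(\phi)$ is automatic from the classification of invariant subspaces, realising it by a \emph{uniformly bounded} family of such translates — so that the bound on $M_f$ actually passes to the seminorm~\eqref{eq:6} — is delicate, since the weak-dual span a priori contains unbounded combinations. Controlling this requires careful use of the compact isotropy of $(0,i e)$, the irreducibility of the quotient $\Hol(\Dc)/V_{\lambda,q(\lambda)}$, and a Fejér- or Poisson-type averaging along $G_0(\Dc)$. The covariance computations behind~\eqref{eq:6} and the growth analysis in~(4) and~(6) are, by contrast, essentially bookkeeping once the transformation laws of $f\mapsto f*I^{-\vect s}$ and Proposition~\ref{prop:20} are in hand.
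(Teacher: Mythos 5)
Your covariance computation behind~\eqref{eq:6}, the functional-pullback mechanism in (2), and the reductions of (5) and (6) to~\eqref{eq:6} and Proposition~\ref{prop:20} all run parallel to the paper's proof. But there is a genuine gap, and it sits exactly where you declare things ``immediate.'' In (1) you assert that strong decency and saturation hold ``by construction'' because the closure of $\Set{0}$ in $\Ac^\infty_{\lambda,k}$ is $V_{\lambda,k}$ and the map onto the quotient is continuous. It is not: the seminorm~\eqref{eq:6} only controls weighted sup-norms of the finitely many functions $f*I^{-\vect s}$, $\vect s\in N'_{\lambda,k}$, and continuity of $\Ac^\infty_{\lambda,k}\to\Hol(\Dc)/V_{\lambda,k}$ requires showing that these quantities control $f$ modulo $V_{\lambda,k}$ uniformly on compact subsets of $\Dc$. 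This is the analytic core of the paper's proof of (1): it applies H\"ormander's closed-range theorem \cite[Theorem 7.6.13]{Hormander} to the system of operators $f\mapsto f*(\phi_j)_*I^{-\vect s}$ on the tube $T_\Omega$, then uses the invariant projectors with kernels $V_{\lambda,k}$ and $V_{\lambda,k}(T_\Omega)$, equicontinuity over the compact isotropy group $K(\Dc)$, and the polar decomposition (the $K(\Dc)$-orbit of $T_\Omega$ is all of $\Dc$) to pass from the tube to the whole domain. Relatedly, you never prove completeness of $\Ac^\infty_{\lambda,k}$, which is part of being a semi-Banach space and hence of strict $\lambda$-invariance (Definition~\ref{def:1}); in the paper, strong decency and saturation are reduced, via lower semicontinuity of the seminorm on $\Hol(\Dc)$, precisely to this completeness, and the machinery above is what establishes it.

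Two further points. In (2), your passage from $h=k$ to $h>k$ rests on the inclusion $N'_{\lambda,h}\subseteq N'_{\lambda,k}$, which is false in general: the subspaces $V_{\lambda,k}$ form a chain, but the index sets defining them do not nest (by Proposition~\ref{prop:5}, membership in the set attached to level $k$ forces $s_r=\frac 1 2 a(k-1)-\lambda+1$, a value that changes with $k$, so for $a\neq 0$ these sets are pairwise disjoint). The correct route, which is the paper's, is to compose the continuous maps $X\to\Hol(\Dc)/V_{\lambda,k}\to\Hol(\Dc)/V_{\lambda,h}$ (continuous because $V_{\lambda,k}\subseteq V_{\lambda,h}$) and then pull back the functionals $\Lambda_{\vect s}$ for $\vect s\in N'_{\lambda,h}$; no nesting of index sets is needed. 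In (3), as you yourself concede, membership of $\Lambda_{\vect s}$ in the weak-$*$ closed span of the translates $L\circ U_\lambda(\phi)$ does not transfer the bound on $M_f$ to the seminorm~\eqref{eq:6}, and your ``upgrading via averaging'' is a placeholder, not an argument; the paper handles this by the single assertion that decency forces $X\to\Hol(\Dc)/V_{\lambda,q(\lambda)}$ to be continuous (deferring the proof to \cite{Rango1,Tubi}), after which the argument of (2) applies verbatim. Finally, your (4) gestures at the right dichotomy but omits both the precise vanishing criterion (a weighted-sup Bergman space with exponent $(\lambda/2)\vect 1_r+\vect s\not\Meg\vect 0$ is trivial) applied to the minimal $\vect s\in N'_{\lambda,k}$, and the non-triviality half for $\lambda\Meg 0$, which in the paper uses the density of $\Ac^\infty_{\lambda,0}$ in $\Hol(\Dc)$ together with $\Ac^\infty_{\lambda,0}\subseteq\Ac^\infty_{\lambda,k}$.
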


Notice that $\Ac^\infty_{0,1}$ is Timoney's Bloch space, defined in~\cite{Timoney1}, and erroneously identified with $\Ac^\infty_{0,q(\lambda)}$ in~\cite{Timoney2} (cf.~also~\cite{Agranovski}).

\begin{proof}
	(1) It is clear, by construction, that $\Ac^\infty_{\lambda,k}$ is $U_\lambda$-invariant with its seminorm. 
	Then, take $h\in \Omega$, and observe that there is $t\in T_-$ (cf.~Subsection~\ref{sec:6:2}) such that $h=t\cdot e$. If $\psi\colon (\zeta,z)\mapsto (t^{-1}\cdot \zeta, t^{-1}\cdot z)$, then $\abs{J \psi^{-1}(0,i  e)}=\Delta^{g/2}(h)$ (by the invariance of the Bergman kernel) and $t_* I^{-\vect s}=\Delta^{\vect s}(h) I_{\lambda,k}$ (by the definition of $I^{-\vect s}$ by means of the Laplace transform), so that
	\[
	\abs{((U_\lambda(\psi) f)*I^{-\vect s} )(0,i e)}= \Delta^{(\lambda/2)\vect 1_r+\vect s}(h) \abs{(f*I^{-\vect s} )(0,i h)}.
	\]
	In a similar way, if we fix $(\zeta,z)\in\Dc$, chhose $h=\Im z- \Phi(\zeta)$ in the above computations, and  choose $\psi'$ as the action of $(-\zeta,-\Re z)\in \Nc$ on $\Dc$, then
	\[
	\abs{((U_\lambda(\psi'\psi) f)*I^{-\vect s})(0,i e)}= \Delta^{(\lambda/2)\vect 1_r+\vect s}(\Im z-\Phi(\zeta)) \abs{(f*I^{-\vect s} )(\zeta,z)},
	\]
	whence~\eqref{eq:6} using the fact that $G(\Dc)=G_0(\Dc) \Aff(\Dc)$ (cf., e.g.,~\cite[Remark 1]{Nakajima}).
	
	Now, observe that the closure of $\Set{0}$ in $\Ac^\infty_{\lambda,k}$ is $V_{\lambda,k}$ by construction, so that   $\Ac^\infty_{\lambda,k}$ is strongly decent and saturated if and only if the canonical mapping $\Ac^\infty_{\lambda,k}\to \Hol(\Dc)/V_{\lambda,k}$ is continuous. Since the seminorm of $\Ac^\infty_{\lambda,k}$ is clearly lower semi-continuous on $\Hol(\Dc)$, it is clear that $\Ac^\infty_{\lambda,k}$ is strongly decent and saturated  if and only if $\Ac^\infty_{\lambda,k}$ is complete.
	Let us then prove that $\Ac^\infty_{\lambda,0}$ is complete. 
	First observe that there are $\phi_1,\dots, \phi_N$ in the stabilizer of $e$ in $G_0(\Omega)$ such that convolution (on the right) with the point distributions $(\phi_1)_* I^{-\vect s},\dots,(\phi_N)_* I^{-\vect s}$ forms a basis of  $\Dc_{\vect s}$ for every $\vect s\in N'_{\lambda,k}$. 
	Therefore, using~\eqref{eq:6}, we see that   the mappings $\Ac^\infty_{\lambda,k}\ni f \mapsto [f(0,\,\cdot\,)]*(\phi_j)_* I^{-\vect s}\in \Hol(T_\Omega) $ are  continuous for every $j=1,\dots, N$ and for every $\vect s\in N'_{\lambda,k}$, where $T_\Omega\coloneqq F+i \Omega$. 
	Then, applying~\cite[Theorem 7.6.13]{Hormander} to the differential operators $f \mapsto f*(\phi_j)_* I^{-\vect s}$, $j=1,\dots,N$, $\vect s\in N_{\lambda,k}'$, and  $\partial_v-i \partial_{i v} $, with $v$ in a basis of $F$, we see that the mapping $\Hol(T_\Omega)f\ni \mapsto (f*(\phi_j)_* I^{-\vect s})_{j,\vect s}\in \Hol(T_\Omega)^{\Set{1,\dots, N}\times N'_{\lambda,k}}$ has a closed image, so that it induces an isomorphism of $\Hol(T_\Omega)/V_{\lambda,k}(T_\Omega)$ onto its image in $\Hol(T_\Omega)^{\Set{1,\dots, N}\times N'_{\lambda,k}}$, where $V_{\lambda,k}(T_\Omega)=\bigcap_{\vect s\in N'_{\lambda,k}}\ker \Dc_{\vect s}=\Set{f\in \Hol(T_\Omega)\colon [(\zeta,z)\mapsto f(z)]\in V_{\lambda,k}}$ (cf.~\cite[Proposition 4.7]{Tubi}). 
	Therefore, the mapping $\Rc\colon \Ac^\infty_{\lambda,k}\ni f \mapsto f(0,\,\cdot\,)\in \Hol(T_\Omega)/V_{\lambda,k}(T_\Omega)$ is continuous. Now, observe that the results of Section~\ref{sec:2} imply that there are two unique $K(\Dc)$- and $K(T_\Omega)$-invariant continuous projectors of $\Hol(\Dc)$ and $\Hol(T_\Omega)$ with kernels $V_{\lambda,k}$ and $V_{\lambda,k}(T_\Omega)$, respectively, and that
	\[
	(P f)(0,\,\cdot\,)=P_0 [f(0,\,\cdot\,)]
	\]
	for every $f\in \Hol(\Dc)$, where $K(\Dc)$ and $K(T_\Omega)$ denote the stabilizers of $(0, i e)$ and $i e $ in $G(\Dc)$ and $G(T_\Omega)$, respectively.\footnote{With the notation of Section~\ref{sec:2}, $Pf=  \Cc_\lambda \sum_{q(\vect s,\lambda)\Meg k} \pi_{\vect s}( \Cc_\lambda^{-1}f) $ for every $f\in \Hol(\Dc)$.} 
	Therefore, the mapping
	\[
	P_0 \Rc=\Rc P \colon \Ac^\infty_{\lambda,k}\to \Hol(T_\Omega)
	\]
	is continuous. Hence, the mappings $\Rc P U_\lambda(\phi)=\Rc U_\lambda(\phi) P\colon \Ac^\infty_{\lambda,k}\to \Hol(T_\Omega)$, as $\phi$ runs through $K(\Dc)$, are equicontinuous. Since the $K(\Dc)$-orbit of $ T_\Omega$ is the whole of $\Dc$ (because of the so-called polar decomposition, cf.~\cite[Theorem 1.1 of Chapter IX]{Helgason}), every compact subset of $\Hol(\Dc)$ is contained in the $K(\Dc)$-orbit of some compact subset $L$ of $T_\Omega$ (namely, of the intersection of $T_\Omega$ with the $K(\Dc)$-orbit of $L$), so that the mapping $P\colon \Ac^\infty_{\lambda,k}\to \Hol(\Dc)$ is continuous by the previous remarks. If $(f_j)$ is a Cauchy sequence in $\Ac^\infty_{\lambda,k}$, then $(P f_j)$ is a Cauchy sequence in both $\Ac^\infty_{\lambda,k}$ and $\Hol(\Dc)$. Since the seminorm of $\Ac^\infty_{\lambda,k}$ is lower semicontinuous for the topology of $\Hol(\Dc)$, this implies that the limit of $(P f_j)$  in $\Hol(\Dc)$ belongs to $\Ac^\infty_{\lambda,k}$ and is a limit of $(P f_j)$ (hence of $(f_j)$, since $f_j-P f_j\in V_{\lambda,k}$ for every $j\in\N$) in $\Ac^\infty_{\lambda,k}$. Then, $\Ac^\infty_{\lambda,k}$ is complete.

	(2) Observe that the canonical mappings $X\to \Hol(\Dc)/V_{\lambda,k}\to \Hol(\Dc)/V_{\lambda,h}$ are continuous, so that the mapping $f \mapsto (f*I^{-\vect s})(0,i e)$ is continuous on $X$ for every $\vect s\in N'_{\lambda,h}$. Thus, there are two constants $C,C'>0$ such that
	\[
	\abs{((U_\lambda(\phi)f)*I^{-\vect s})(0,i e)}\meg C \norm{U_\lambda(\phi)f}_X\meg C' \norm{f}_X
	\]
	for every $f\in X$, for every $\vect s\in N'_{\lambda,k}$, and for every $\phi\in G_0(\Dc)$, whence our assertion.
	
	(3) The proof is similar to that of (2), since the decency of $X$ implies that $X\subseteq \Hol(\Dc)/V_{\lambda,q(\lambda)}$ continuously.
	
	(4) -- (5) Observe that (1) and the invariance properties of the unweighted Bergman kernel show that 
	\[
	\norm{f}_{\Ac^\infty_{\lambda,0}}=\sup_{(\zeta,z)\in \Dc} \Delta^{(\lambda/2)\vect 1_r}(\Im z-\Phi(\zeta))\abs{f(\zeta,z)}
	\]
	for every $(\zeta,z)\in \Dc$ and for every $\lambda\in \R$. This proves (5), and also shows that $\Ac^\infty_{\lambda,0}=\Set{0}$ if and only if $\lambda<0$, thanks to~\cite[Proposition 3.5]{CalziPeloso}. Thus, assume that $k>0$, and observe that~\eqref{eq:6} shows  that $\Ac^{\infty}_{\lambda,k}*I_{-\vect s}$ is contained in the Bergman space $\Set{f\in \Hol(\Dc)\colon \sup_{(\zeta,z)\in \Dc}\Delta^{(\lambda/2)\vect 1_r+\vect s}(\Im z-\Phi(\zeta))\abs{f(\zeta,z)}<\infty }$ for every $\vect s\in N'_{\lambda,k}$. If we choose $\vect s$ as the minimum of $ N'_{\lambda,k}$ (so that $s_1=0$ unless $k=q(\lambda)$ and $m/r-1-\lambda\in \N$), then this latter Bergman space is reduced to $\Set{0}$ if (and only if) $(\lambda/2)\vect 1_r+\vect s\not \Meg \vect 0 $, that is, $\lambda<0$ and either $k<q(\lambda)$ or $m/r-1-\lambda	\not \in\N$, thanks to~\cite[Proposition 3.5]{CalziPeloso} again. 
	We are thus reduced to proving that $\Ac^\infty_{\lambda,q(\lambda)}\neq V_{\lambda,q(\lambda)}$ for every $\lambda\in m/r-1-\N$, and that $\Ac^\infty_{\lambda,k}\neq V_{\lambda,k}$ for every $\lambda\Meg0$ and for every $k=0,\dots, q(\lambda)$. The former fact   follows from (3), which implies that $\widetilde H_\lambda(\Dc)\subseteq \Ac^\infty_{\lambda,q(\lambda)}$ continuously. 
	The latter fact may be proved observing that, by (1) and (3),  $\Ac^\infty_{\lambda,0}\subseteq \Ac^\infty_{\lambda,k}$ continuously, and that $\Ac^\infty_{\lambda,0}$ is dense in $\Hol(\Dc)$,\footnote{Indeed, $\Ac^\infty_{\lambda,0}\cap H_g(\Dc)$ is dense in $H_g(\Dc)$ by~\cite[Proposition 3.9]{CalziPeloso}, and $H_g(\Dc)$ is dense in $\Hol(\Dc)$.} hence is not contained in $V_{\lambda,k}$.
	
	(6) This follows from (5) and Proposition~\ref{prop:20}.
\end{proof}

\begin{definition}\label{def:3}
	Take $\lambda\in \R$ and $k\in \Set{0,\dots,q(\lambda)}$ so that $\Ac^\infty_{\lambda,k}\neq V_{\lambda,k}$. Then, we define $\widehat \Ac^1_{\lambda,k}(\Dc)$ as the image of the mapping
	\[
	L^1(\widetilde G(\Dc))\ni f \mapsto \int_{ \widetilde G(\Dc) } f(\phi)\widetilde U_\lambda(\phi) h\,\dd \phi\in \Hol(\Dc)/V_{\lambda,k-1}
	\]
	endowed with the induced topology, where $\widetilde G(\Dc)$ is endowed with a left Haar measure and $h$ is any element of $\Cc_\lambda(\Pc)\cap V_{\lambda,k+1}\setminus V_{\lambda,k}$ (setting $V_{\lambda,q(\lambda)+1}\coloneqq \Hol(\Dc)$).  We then define $\Ac^1_{\lambda,k}(\Dc)\coloneqq \Set{f\in \Hol(\Dc)\colon f+V_{\lambda,k-1}\in \widehat \Ac^1_{\lambda,k}(\Dc)}$. We shall also write $\Ac^1_{\lambda,k}$ instead of $ \Ac^1_{\lambda,k}(\Dc)$, and we shall define $\Ac^1_{\lambda,k}(D)\coloneqq \Cc_\lambda^{-1}\Ac^1_{\lambda,k}(\Dc)$.
	
	In addition, we define $\widetilde A^1_\lambda(\Dc)$, for $\lambda>2(m/r-1)$, as the image of the mapping
	\[
	\ell^1(J)\ni a \mapsto \sum_{j\in J} a_j B^{-\lambda}_{(\zeta_j,z_j)} \Delta^{\lambda/2}(\Im z_j-\Phi(\zeta_j))\in \Hol(D),
	\]
	endowed with the corresponding topology, where 
	\[
	B^{-\lambda}_{(\zeta',z')}(\zeta'',z'')\coloneqq \Delta^{-\lambda}\left(\frac{z-\overline{z'}}{2 i } -\Phi(\zeta,\zeta')\right)
	\]
	for every $(\zeta,z),(\zeta',z')\in \Dc$, and where $(\zeta_j,z_j)_{j\in J}$ is a family of elements of $\Dc$ which is maximal for the relation $d((\zeta_j,z_j),(\zeta_{j'},z_{j'}))\Meg 2 \delta$, where $\delta$ is sufficiently small and $d$ is the Bergman metric (or, more generally, any Riemannian distance associated with a $G(\Dc)$-invariant complete Riemannian metric).
\end{definition}

Observe that $B^{-\lambda}$ is, up to a constant, the power to the exponent $\lambda/g$ of the unweighted Bergman kernel, so that it is $\widetilde U_\lambda(\phi)\otimes \overline{\widetilde U_\lambda(\phi)}$-invariant for every $\phi\in \widetilde G(\Dc)$.

As shown in~\cite[Chapter 5]{CalziPeloso}, combined with~\cite[Theorem 4.5]{Paralipomena}, the space $\widetilde A^1_\lambda(\Dc)$ and its topology do \emph{not} depend on the choice of the family $(\zeta_j,z_j)$, provided that $\delta$ is sufficiently small. In fact, the associated norms are uniformly equivalent as long as $\delta$ is fixed, as one sees combining the proof of~\cite[Theorem 4.5]{Paralipomena} with~\cite[Theorem 3.22]{CalziPeloso}. Actually, the space $\widetilde A^1_\lambda(\Dc)$ may be defined as the space of holomorphic extensions of a suitable Besov space defined on the \v Silov boundary of $\Dc$, and coincides with the usual Bergman space
\[
\Set{f\in \Hol(\Dc)\colon \int_{\Dc} \abs{f(\zeta,z)} \Delta^{\lambda/2}(\Im z-\Phi(\zeta))\,\dd \nu_\Dc(\zeta,z)<\infty }
\]
when $\lambda>2(g-1)$.

\begin{theorem}\label{theorem:2}
	Take $\lambda\in \R$ and $k\in \Set{0,\dots,q(\lambda)}$ so that $\Ac^\infty_{\lambda,k}\neq V_{\lambda,k}$.  Then, the following hold:
	\begin{enumerate}
		\item[\textnormal{(1)}] the space $\Ac^1_{\lambda,k}$ does not depend on the choice of $h$ and is $\lambda$-invariant, separable, and ultradecent;

		\item[\textnormal{(2)}] if $X$ is a strongly decent, saturated, separable semi-Banach subspace of $\Hol(\Dc)$ which is not contained in $V_{\lambda,k}$ and in which $U_\lambda$ induces a \emph{continuous} bounded representation of $G_0(\Dc)$, then $\Ac^1_{\lambda,k}\subseteq X$ continuously;\footnote{The assumption that $U_\lambda$ induces a \emph{continuous} representation of $G_0(\Dc)$ in $X$ may be replaced with the assumption that $X$ be strongly decent and that the closed unit ball in $X$ be closed in $\Hol(\Dc)$, with a slightly different argument. With these assumptions, the space $\Ac^\infty_{\lambda,k}$ is an eligible choice for $X$.}
		
		\item[\textnormal{(3)}] if $\lambda>2(m/r-1)$, then $\Ac^1_{\lambda,0}=\widetilde A^1_{\lambda}(\Dc)$;
		
		\item[\textnormal{(4)}] if $n=0$ and $m/r-1-\lambda\in\N$, then $\Ac^1_{\lambda,q(\lambda)}=\Set{f\in \Hol(\Dc)\colon \square^{m/r-\lambda} f\in \Ac^1_{2 m/r-\lambda,0} }$.
	\end{enumerate}
\end{theorem}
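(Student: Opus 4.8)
The plan is to realize $\Ac^1_{\lambda,k}$ as the image of an \emph{integrated representation} and to read off all four assertions from the formal properties of this construction, together with the classification of closed invariant subspaces in Proposition~\ref{prop:8} and the continuity criterion of Proposition~\ref{prop:7}. Write $\Lambda_h\colon L^1(\widetilde G(\Dc))\to \Hol(\Dc)/V_{\lambda,k-1}$ for the map $\tilde f\mapsto \int \tilde f(\phi)\widetilde U_\lambda(\phi)h\,\dd\phi+V_{\lambda,k-1}$ defining $\widehat\Ac^1_{\lambda,k}$. Since $\Lambda_h$ intertwines the left regular representation of $\widetilde G(\Dc)$ on $L^1$ with $\widetilde U_\lambda$ on the quotient, and left translation is isometric on $L^1$, the space $\widehat\Ac^1_{\lambda,k}\cong L^1(\widetilde G(\Dc))/\ker\Lambda_h$ is a separable Banach space on which $\widetilde U_\lambda$ acts by isometries. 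This gives at once the $\lambda$-invariance and separability in~(1), the strong decency and saturation (the canonical map $\Ac^1_{\lambda,k}\to\Hol(\Dc)/V_{\lambda,k-1}$ is continuous with kernel exactly $V_{\lambda,k-1}\subseteq\Ac^1_{\lambda,k}$), and then, since the representation is continuous, the ultradecency via Proposition~\ref{prop:7}.

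First I would verify that $\Lambda_h$ is well defined, i.e.\ that the orbit map $\phi\mapsto \widetilde U_\lambda(\phi)h+V_{\lambda,k-1}$ is bounded in $\Hol(\Dc)/V_{\lambda,k-1}$, so that the integral converges for every $\tilde f\in L^1$ with $\norm{\Lambda_h(\tilde f)}\leq C\norm{\tilde f}_{L^1}$. Transferring to the bounded realization through $\Cc_\lambda$, for $\lambda\geq 0$ this is elementary: $h\circ\phi^{-1}$ is bounded on compacta because $\phi^{-1}$ maps $D$ into $D$, while $(J\phi^{-1})^{\lambda/g}$ is bounded on compacta uniformly in $\phi$. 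For $\lambda<0$ the Jacobian factor is unbounded and one genuinely needs the quotient by $V_{\lambda,k-1}$: the singular part of $\widetilde U_\lambda(\phi)h$ as $\phi\to\partial D$ must be absorbed into the lower layers, and the boundedness of the remainder is the delicate point, established in~\cite{Rango1,Tubi}. I expect this to be the main obstacle.

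The heart of the argument is the minimality statement~(2), proved by transference. Given $X$ as in the hypotheses, Proposition~\ref{prop:7}(3) shows that $\Pc\cap\overline X$ is dense in $X$ and, being $G_0$-invariant, equals one of the spaces of Proposition~\ref{prop:8}; since $X\not\subseteq V_{\lambda,k}$ this forces $\overline X\supseteq V_{\lambda,k+1}$, whence the polynomial $h\in\Pc\cap\overline X\subseteq X$. Because $U_\lambda$ is bounded and continuous on $X$ and $h\in X$, the integral $\int \tilde f(\phi)U_\lambda(\phi)h\,\dd\phi$ converges in $X$ with norm $\leq C\norm{\tilde f}_{L^1}$, and its image in $\Hol(\Dc)/V_{\lambda,k-1}$ is $\Lambda_h(\tilde f)$ (using $V_{\lambda,k-1}\subseteq V_X$, which follows from the impossibility of a Hausdorff bounded invariant structure on the residual layers below level $k$). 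Taking the infimum over representatives $\tilde f$ yields $\Ac^1_{\lambda,k}\subseteq X$ continuously. Applying this with $X$ equal to the space built from a second admissible generator $h'$ (which is legitimate by the previous paragraph together with Proposition~\ref{prop:7}), and symmetrically, then gives the independence of $\Ac^1_{\lambda,k}$ on $h$, completing~(1).

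Finally,~(3) and~(4) identify the abstract minimal space with concrete models. For~(3), the condition $\lambda>2(m/r-1)$ forces $k=0$ and $V_{\lambda,-1}=\{0\}$, and one may take $h=\Cc_\lambda 1$; its $\widetilde U_\lambda$-orbit consists of the unimodular multiples of the normalized kernels $\Delta^{\lambda/2}(\Im z_0-\Phi(\zeta_0))\,B^{-\lambda}_{(\zeta_0,z_0)}$, so that the $L^1(\widetilde G(\Dc))$-reproducing-formula realization of $\Ac^1_{\lambda,0}$ coincides, by the continuous-to-discrete atomic-decomposition theory of~\cite[Chapter~5]{CalziPeloso} and~\cite[Theorem~4.5]{Paralipomena}, with the $\ell^1$-atomic space $\widetilde A^1_\lambda(\Dc)$. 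For~(4), with $n=0$ and $m/r-1-\lambda\in\N$, Proposition~\ref{prop:20} gives that $\square^{m/r-\lambda}$ intertwines $U_\lambda$ and $U_{2m/r-\lambda}$; it therefore carries a top-layer generator $h$ of $\Ac^1_{\lambda,q(\lambda)}$ to a level-$0$ generator of $\Ac^1_{2m/r-\lambda,0}$ and, since $\Lambda_{\square h}=\square\circ\Lambda_h$ by the intertwining relation, maps $\Ac^1_{\lambda,q(\lambda)}$ onto $\Ac^1_{2m/r-\lambda,0}$, which is precisely the asserted description, the kernel of $\square^{m/r-\lambda}$ accounting for the matching of the seminorms.
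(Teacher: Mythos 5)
Your proposal follows the paper's architecture — $\Ac^1_{\lambda,k}$ as the image of the integrated representation $\Lambda_h$ on $L^1(\widetilde G(\Dc))$, assertion (1) from the quotient structure and Proposition~\ref{prop:7}, assertion (2) by running the same integral inside $X$ after producing $h\in X$, and (3)--(4) via atoms and $\square^{m/r-\lambda}$ — but it has a genuine gap exactly at the step you yourself flag as ``the main obstacle'' and then leave open: the well-definedness of $\Lambda_h$ when $\lambda<0$. The paper does not outsource this step to the references, and it is not settled by ``absorbing the singular part into the lower layers''; the mechanism is Theorem~\ref{theorem:3}. Since $\Ac^\infty_{\lambda,k}$ is \emph{strictly} $\lambda$-invariant and contains $h$ (it contains $\Cc_\lambda(\Pc)\cap\overline{\Ac^\infty_{\lambda,k}}$, and $\overline{\Ac^\infty_{\lambda,k}}\supseteq V_{\lambda,k+1}$), the orbit $\phi\mapsto\widetilde U_\lambda(\phi)h$ is bounded in $\Ac^\infty_{\lambda,k}$, and because the closed unit ball of $\Ac^\infty_{\lambda,k}$ is closed in $\Hol(\Dc)/V_{\lambda,k}$, the integrals $\int f(\phi)\widetilde U_\lambda(\phi)h\,\dd\phi$, $f\in L^1(\widetilde G(\Dc))$, converge there and land in $\Ac^\infty_{\lambda,k}$. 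This use of the maximal space to make the minimal one well defined is the key idea of the proof, and it is missing from your argument. Worse, the mechanism you sketch cannot work: the orbit is bounded only modulo $V_{\lambda,k}$, never modulo $V_{\lambda,k-1}$. Take $D$ the unit disc, $\lambda=-2$, $k=q(\lambda)=1$ (so that $V_{\lambda,k-1}=V_{-2,0}=\Set{0}$) and $h(z)=z^3$: for $\phi^{-1}(z)=(z-b)/(1-\overline{b}z)$ one computes $(\widetilde U_{-2}(\phi)h)(0)=-b^3/(1-\abs{b}^2)$, which is unbounded as $\abs{b}\to1$. Hence no quotient by the layers \emph{below} level $k$ makes the integral converge; convergence takes place only in $\Hol(\Dc)/V_{\lambda,k}$, and this is how the paper's proof actually proceeds (the index $k-1$ appearing in Definition~\ref{def:3} notwithstanding).

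Two further points. First, $\lambda$-invariance in the sense of Definition~\ref{def:1} concerns the ray representation of the full group $G(\Dc)$, which need not be connected; your construction only yields isometric invariance under $\widetilde U_\lambda(\widetilde G(\Dc))$, i.e.\ under $G_0(\Dc)$. The paper completes this step by writing $G(\Dc)=G_0(\Dc)K(\Dc)$ and using that the conjugation action of $K(\Dc)$ preserves the left Haar measure of $\widetilde G(\Dc)$; this is absent from your proposal. Second, in (2) you justify $V_{\lambda,k-1}\subseteq V_X$ by ``the impossibility of a Hausdorff bounded invariant structure on the residual layers below level $k$''. That is not an argument, and as a general principle it is false for $\lambda\Meg 0$: Hausdorff, separable, strongly decent spaces carrying a continuous bounded representation and whose closure in $\Hol(\Dc)$ is all of $\Hol(\Dc)$ do exist (for the disc and $\lambda=0$, the disc algebra; in general, the closure of $\Cc_\lambda(\Pc)$ in $\Ac^\infty_{\lambda,0}$ is a natural candidate), so for $k\Meg 2$ the containment of the lower layers in $X$ cannot be deduced from any such non-existence. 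Some genuine argument is needed at this point — the paper itself relies here on the assertion that the closure of $\Set{0}$ in $X$ is $V_{\lambda,k'}$ with $k'\Meg k$ — and your proposed shortcut does not supply it.
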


\begin{proof}
	(1) -- (2) We may assume that $X\neq \Hol(\Dc)$. Then, choose $k\meg k'\meg q(\lambda)$ so that the closure of $\Set{0}$ in $X$ is $V_{\lambda,k'}$.  Observe that  Proposition~\ref{prop:7} implies that $\widetilde U_\lambda$ induces a continuous representation of $\widetilde G(\Dc)$ in $X$, and that $X$ contains all the $h'\in \Cc_\lambda^{-1}(\Pc)$ which belong to the closure of $X$ in $\Hol(\Dc)$ (in particular, to $V_{\lambda,k'+1} $). 
	In particular, $h\in X$ and the mapping $\widetilde G(\Dc)\ni\phi \mapsto \widetilde U_\lambda(\phi)h\in X$ is continuous and bounded. Since $X$ embeds continuously in $\Hol(\Dc)/V_{\lambda,k'}$, this shows that the linear mapping
	\[
	L^1(\widetilde G(\Dc))\ni f \mapsto \int_{\widetilde G(\Dc)}  f(\phi)\widetilde U_\lambda(\phi) h\,\dd \phi\in X\subseteq\Hol(\Dc)/V_{\lambda,k'-1}
	\]
	is well defined and continuous, where the integral may be considered as an integral with values in $X$ or in $\Hol(\Dc)/V_{\lambda,k'-1}$. Thus, once we show that $\Ac^1_{\lambda,k}$ is well defined, this will complete the proof of (2). 
	
	If we take $X=\Ac^\infty_{\lambda,k}$, then $k=k'$,\footnote{Notice that $\Ac^\infty_{\lambda,k}$ is \emph{not} separable, but still contains $\Cc_\lambda^{-1}(\Pc)\cap \overline{\Ac^\infty_{\lambda,k}}$; this is true essentially because the closed unit ball in $\Ac^\infty_{\lambda,k}$ is also closed in $\Hol(\Dc)/V_{\lambda,k}$, so that the integrals of the functions with values in   $\Hol(\Dc)/V_{\lambda,k}$, whose image is  contained and bounded in $\Ac^\infty_{\lambda,k}$, actually belong to $\Ac^\infty_{\lambda,k}$.} so that the above computations show that the mapping defining $\Ac^1_{\lambda,k}$ is continuous with values in ($\Ac^\infty_{\lambda,k}$, hence in) $\Hol(\Dc)/V_{\lambda,k}$, so that $\Ac^1_{\lambda,k}$ is well defined. Further, since $\widehat\Ac^1_{\lambda,k}(\Dc)$ embeds continuously in $\Hol(\Dc)/V_{\lambda,k}$, and since $h$ induces a non-zero element of $\widehat\Ac^1_{\lambda,k}(\Dc)$, it is also clear that the closure of $\Set{0}$ in $\Ac^1_{\lambda,k}$ is precisely $V_{\lambda,k-1}$, so that $\Ac^1_{\lambda,k}$ is strongly decent and saturated. Since $L^1(\widetilde G(\Dc))$ is separable and complete, the same holds for $\Ac^1_{\lambda,k}$. Analogously, since the action of $\widetilde U_\lambda$ in $\Ac^1_{\lambda,k}$ is precisely the quotient of the left regular representation in $L^1(\widetilde G(\Dc))$, it is clear that $\widetilde U_\lambda$ induces a continuous and isometric representation of $\widetilde G(\Dc)$ in $\Ac^1_{\lambda,k}$, so that this latter space in ultradecent by Proposition~\ref{prop:7}. To see that $\Ac^1_{\lambda,k}$ is actually $G(\Dc)$-$U_\lambda$-invariant with its seminorm, it suffices to observe that $G(\Dc)=G_0(\Dc) K(\Dc)$, where $K(\Dc)=\Set{\phi\in G(\Dc)\colon \phi(0, i  e)=(0,i  e)}$ (cf.~the proof of~\cite[Proposition 4.6]{Tubi}),  that the natural action of the compact group $K(\Dc)$ on $\widetilde G(\Dc)$, induced by the action $k\cdot \phi\mapsto k \phi k^{-1}$ of $K(\Dc)$ on $G_0(\Dc)$, necessarily preserves the left Haar measure of $\widetilde G(\Dc)$, and that $U_\lambda(k) \widetilde U_\lambda(\phi)=\alpha_{k,\phi}\widetilde U_\lambda(k\cdot \phi) U_\lambda(k)$, with $\alpha_{k,\phi}\in \T$, for every $k\in K(\Dc)$ and for every $\phi\in \widetilde G(\Dc)$, once a representative of $U_\lambda(k)$ has been chosen.
	
	Finally, $\Ac^1_{\lambda,k}$ does not depend on the choice of $h$: if $A'$ is the space defined as $\Ac^1_{\lambda,k}$ starting with a different $h'$, then (2) (and (1) applied to $A'$) shows that $\Ac^1_{\lambda,k}\subseteq A'$ continuously, whereas (1) (and (2) applied to $A'$) shows that $A'\subseteq \Ac^1_{\lambda,k}$ continuously.
	
	(3) Observe first that $B^{-\lambda}_{(0,i  e)}=c\Cc_\lambda^{-1}(1)$ for some $c\neq 0$, by the invariance properties of the unweighted Bergman kernel, and $\widetilde U_\lambda$ induces a bounded representation of $\widetilde G(\Dc)$ in $\widetilde A^1_\lambda(\Dc)$, since the invariance properties of the unweighted Bergman kernel show that
	\[
	\widetilde U_\lambda(\phi) B^{-\lambda}_{(\zeta,z)} \Delta^{\lambda/2}(\Im z-\Phi(\zeta))=  \alpha B^{-\lambda}_{\phi(\zeta,z)} \Delta^{\lambda/2}(\Im \phi(\zeta,z)_2- \Phi(\phi(\zeta,z)_1))
	\]
	for every $(\zeta,z)\in \Dc$ and for every $\phi\in \widetilde G(\Dc)$, where $\alpha=\frac{\overline{(J \phi )(\zeta,z)^\lambda}}{\abs{(J \phi)(\zeta,z)}^\lambda}$ has modulus $1$ (and since we remarked earlier that the norms defined by the various $(\phi(\zeta_j,z_j))$, given $(\zeta_j,z_j)$ as in Definition~\ref{def:3}, are uniformly equivalent). In addition, the space generated by the $B^{-\lambda}_{(\zeta,z)}$, $(\zeta,z)\in \Dc$,  is contained and dense in $\widetilde A^1_\lambda(\Dc)$ by the previous remarks. One may then show (cf.~the proof of~\cite[Lemma 5.15]{CalziPeloso} and the previous remarks) that $\widetilde U_\lambda$ acts continuously on the  $B^{-\lambda}_{(\zeta,z)}$, hence induces a continuous representation in $\widetilde A^1_\lambda(\Dc)$. Thus, (2) shows that $\Ac^1_{\lambda,0}\subseteq \widetilde A^1_\lambda(\Dc)$ continuously. The converse inclusion is trivial, since $\Ac^1_{\lambda,0}$ contains the image of 
	\[
	\cM^1(\widetilde G(\Dc))\ni f \mapsto \int_{\widetilde G(\Dc)}  \widetilde U_\lambda(\phi) B^{-\lambda}_{(0, i  e)}\,\dd \mi(\phi)\in\Hol(\Dc),
	\]
	whence our claim considering the continuous linear mapping
	\[
	\ell^1(J)\ni (a_j)\mapsto \sum_{j\in J} a_j \delta_{\phi_j}\in \cM^1(\widetilde G(\Dc)),
	\]
	where $\phi_j\colon (\zeta,z)\mapsto (\zeta_j, \Re z_j+ i  \Phi(\zeta_j))\cdot (t_j\cdot \zeta , t_j\cdot z)$, $t_j\in T_-$ is such that $t_j\cdot e=\Im z_j-\Phi(\zeta_j)$, and $(\zeta_j,z_j)$ is chosen as in Definition~\ref{def:3}.
	
	(4) This follows from (3) and Proposition~\ref{prop:20}.
\end{proof}

\begin{proposition}
	Take $\lambda>m/r-1$. Then, the sesquilinear mapping
	\[
	\Ac^1_{\lambda,0}(D)\times \Ac^\infty_{\lambda,0}(D)\ni (f,h)\mapsto \lim_{R\to 1^-} \langle f(R\,\cdot\,)\vert h(R\,\cdot\,)\rangle_{H_\lambda(D)}
	\]
	induces an antilinear isometric isomorphism of $\Ac^\infty_{\lambda,0}(D)$ onto the dual of $\Ac^1_{\lambda,0}(D)$.
\end{proposition}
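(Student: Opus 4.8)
The plan is to deduce the statement from Proposition~\ref{prop:12} by identifying the space $\Ff(X')$ produced there with $\Ac^\infty_{\lambda,0}(D)$, and then to upgrade the resulting isomorphism to an isometry by two matching norm estimates. Since $\lambda>m/r-1$ we have $q(\lambda)=0$, so $V_{\lambda,0}=\Set{0}$ and every $\lambda$-invariant semi-Banach space is Hausdorff; in particular, by Theorem~\ref{theorem:2}(1), transported to $D$ through the isomorphism $\Cc_\lambda$ (which intertwines the representations), the space $X\coloneqq\Ac^1_{\lambda,0}(D)$ is a separable, ultradecent, $\lambda$-invariant Banach space, hence an admissible choice in Proposition~\ref{prop:12}. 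That proposition already gives that $\Ff$ is well defined and continuous, that $\Ff(X')$ is $\lambda$-invariant, and that the sesquilinear form $(f,h)\mapsto\lim_{R\to1^-}\langle f(R\,\cdot\,)\vert h(R\,\cdot\,)\rangle_{H_\lambda}$ induces an antilinear isomorphism of $\Ff(X')$ onto $X'$. It therefore remains to show that $\Ff(X')=\Ac^\infty_{\lambda,0}(D)$ with equal norms.

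The inclusion $\Ff(X')\subseteq\Ac^\infty_{\lambda,0}(D)$ is immediate from maximality: $\Ff(X')$ is a $\lambda$-invariant Banach space whose closure of $\Set{0}$ is $V_{\lambda,0}=\Set{0}$, so Theorem~\ref{theorem:3}(2), with $k=0=q(\lambda)$, yields a continuous embedding. For the reverse inclusion I fix $h\in\Ac^\infty_{\lambda,0}(D)$ and produce $L_h\in X'$ with $\Ff(L_h)=h$. Writing a generic $f\in X$ as $f=\int_{\widetilde G(D)}F(\phi)\,\widetilde U_\lambda(\phi)1\,\dd\phi$ with $F\in L^1(\widetilde G(D))$ (as in Definition~\ref{def:3}, transported to $D$, with the constant function $1$ as generator), and applying the dilation $\delta_R\colon\Hol(D)\to H_\lambda$, which is continuous by Remark~\ref{oss:2}, I obtain $\langle f(R\,\cdot\,)\vert h(R\,\cdot\,)\rangle_{H_\lambda}=\int F(\phi)\,\langle(\widetilde U_\lambda(\phi)1)(R\,\cdot\,)\vert h(R\,\cdot\,)\rangle_{H_\lambda}\,\dd\phi$. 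Using $\widetilde U_\lambda(\phi)1=\overline{(J\phi)^{\lambda/g}(0)}\,\Kc^{\lambda/g}(\,\cdot\,,\phi(0))$ and the reproducing identity $\langle g\vert\Kc^{\lambda/g}(R\,\cdot\,,w)\rangle_{H_\lambda}=g(Rw)$ established in the proof of Proposition~\ref{prop:12}, the inner integrand equals $\overline{(J\phi)^{\lambda/g}(0)}\,\overline{h(R^2\phi(0))}$, which converges to $c_h(\phi)\coloneqq\overline{(J\phi)^{\lambda/g}(0)}\,\overline{h(\phi(0))}$ as $R\to1^-$. Writing $\Kc(z,z)=\sum_j\abs{e_j(z)}^2$ for an orthonormal basis of homogeneous polynomials shows that $z\mapsto\Kc(z,z)$ is radially increasing, whence $\abs{h(R^2\phi(0))}\meg\norm{h}_{\Ac^\infty_{\lambda,0}}\Kc(\phi(0),\phi(0))^{\lambda/(2g)}$; combined with $\abs{J\phi(0)}^{\lambda/g}=\bigl(\Kc(0,0)/\Kc(\phi(0),\phi(0))\bigr)^{\lambda/(2g)}$, which follows from~\eqref{eq:2}, the integrand is dominated by $\Kc(0,0)^{\lambda/(2g)}\norm{h}_{\Ac^\infty_{\lambda,0}}\abs{F(\phi)}$, uniformly in $R$. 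Dominated convergence then gives $L_h(f)=\int F(\phi)c_h(\phi)\,\dd\phi$, so $L_h\in X'$ with $\norm{L_h}_{X'}\meg\Kc(0,0)^{\lambda/(2g)}\norm{h}_{\Ac^\infty_{\lambda,0}}$; evaluating on $\Kc^{\lambda/g}(\,\cdot\,,z)\in X$ (which equals $\overline{(J\phi_z)^{-\lambda/g}(0)}\widetilde U_\lambda(\phi_z)1$) gives $L_h(\Kc^{\lambda/g}(\,\cdot\,,z))=\overline{h(z)}$, that is $\Ff(L_h)=h$, whence $\Ac^\infty_{\lambda,0}(D)\subseteq\Ff(X')$.

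For the isometry it remains to bound $\norm{L_h}_{X'}$ from below. Since $\widetilde U_\lambda$ acts on $X$ as the quotient of the left regular representation of $L^1(\widetilde G(D))$, it is isometric, so $\norm{\Kc^{\lambda/g}(\,\cdot\,,w)}_X=\abs{J\phi_w(0)}^{-\lambda/g}\norm{1}_X=\Kc(0,0)^{-\lambda/(2g)}\Kc(w,w)^{\lambda/(2g)}\norm{1}_X$ for any $\phi_w$ with $\phi_w(0)=w$. Together with $L_h(\Kc^{\lambda/g}(\,\cdot\,,w))=\overline{h(w)}$ and the fact that the norm of $\Ac^\infty_{\lambda,0}(D)$ is $\sup_w\Kc(w,w)^{-\lambda/(2g)}\abs{h(w)}$ (the transport through $\Cc_\lambda$ of the weight in Theorem~\ref{theorem:3}(5)), this yields $\norm{L_h}_{X'}\Meg\Kc(0,0)^{\lambda/(2g)}\norm{1}_X^{-1}\norm{h}_{\Ac^\infty_{\lambda,0}}$. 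An approximate-identity argument in $L^1(\widetilde G(D))$ shows $\norm{1}_X\meg1$, so the lower bound already dominates the upper bound obtained above; the two estimates are therefore forced to coincide, pinning $\norm{1}_X=1$ and $\norm{L_h}_{X'}=\Kc(0,0)^{\lambda/(2g)}\norm{h}_{\Ac^\infty_{\lambda,0}}$. With the normalizations of the scalar product of $H_\lambda$ and of the norm of $\Ac^\infty_{\lambda,0}$ fixed as above this constant is $1$, so $h\mapsto L_h$ is the asserted antilinear isometric isomorphism of $\Ac^\infty_{\lambda,0}(D)$ onto $(\Ac^1_{\lambda,0}(D))'$.

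The main obstacle is the interchange of $\lim_{R\to1^-}$ with the $L^1(\widetilde G(D))$-integral for an \emph{arbitrary} $h\in\Ac^\infty_{\lambda,0}(D)$ (not a priori known to lie in $\Ff(X')$): it hinges on the uniform-in-$R$ domination above, and thus on the radial monotonicity of $z\mapsto\Kc(z,z)$ and on the exact cancellation of the Jacobian weight $\abs{J\phi(0)}^{\lambda/g}$ against the defining weight of $\Ac^\infty_{\lambda,0}$ supplied by~\eqref{eq:2}. A secondary, more bookkeeping-type difficulty is tracking the normalization constants $\Kc(0,0)^{\lambda/(2g)}$ and $\norm{1}_X$ so as to obtain a genuine isometry rather than a mere isomorphism; the self-consistency of the matching upper and lower estimates, which itself forces $\norm{1}_X=1$, is what makes this step clean.
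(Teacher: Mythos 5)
Your proposal is correct and follows essentially the same route as the paper's proof: both apply Proposition~\ref{prop:12} to $X=\Ac^1_{\lambda,0}(D)$ and then identify $\Ff(X')$ with $\Ac^\infty_{\lambda,0}(D)$ isometrically by playing the $L^1(\widetilde G(D))$-representation of elements of $\Ac^1_{\lambda,0}(D)$ against the identity $\abs{J\phi(0)}^{-\lambda/g}=\Kc^{\lambda/(2g)}(\phi(0),\phi(0))$. The differences are only organizational: you derive the inclusion $\Ff(X')\subseteq\Ac^\infty_{\lambda,0}(D)$ from the maximality statement in Theorem~\ref{theorem:3}\,(2) instead of the direct norm estimate, your dominated-convergence construction of $L_h$ fills in the surjectivity step the paper dismisses as ``readily seen'', and your explicit tracking of the constants $\Kc(0,0)^{\lambda/(2g)}$ and $\norm{1}_X$ makes visible the same normalization ($\Kc(0,0)=1$, generator $1$) that the paper uses implicitly to get a genuine isometry.
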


When $n=0$ and $m=1$, a different version of this result is essentially contained in~\cite[Theorem 3]{AlemanMas}. This result was also stated (allowing $\lambda\in \Wc(\Omega)$) in~\cite[Proposition 3.5]{ArazyUpmeier3}. Nonetheless, the statement is incorrect (as one realizes taking the extreme case $\lambda=0$, in which case the statement claims that the dual of the space of constant functions may be identified with the space of bounded holomorphic functions) and the proof is incomplete (as no reference is made to the problem of extending the scalar product of $H_\lambda(D)$ to $\Ac^1_{\lambda,0}(D)\times\Ac^\infty_{\lambda,0}(D)$ -- which is non-trivial, since $H_\lambda(D)$ is \emph{not} dense in $\Ac^\infty_{\lambda,0}(D)$).

If $\lambda>2(m/r-1)$, then the identifications of $\Ac^1_{\lambda,0}$ and $\Ac^\infty_{\lambda,0}$ provided in (3) of Theorem~\ref{theorem:2} and in (5) of Theorem~\ref{theorem:3}, combined with~\cite[Proposition 5.12]{CalziPeloso},\footnote{In this latter result, one should replace $\vect s''$ with $\vect s''-\vect b-\vect d$ in order to get the correct statement.} provide a different interpretation of the duality between $\Ac^1_{\lambda,0}(\Dc)$ and $\Ac^\infty_{\lambda,0}(\Dc)$.

\begin{proof}
	First observe that, by means of (5) of Theorem~\ref{theorem:3} we see that 
	\[\Ac^\infty_{\lambda,0}(D)=\Set{f\in \Hol(D)\colon \sup_{z\in D} \abs{f(z)} \Kc^{-\lambda/(2 g)}(z,z)<\infty}.
	\]
	Then, define $\Ff\colon \Ac^1_{\lambda,0}(D)\to \Hol(D)$ as in Proposition~\ref{prop:12}, and observe that $\Ff(L)(\phi(0))=\overline{(J \phi)^{-\lambda/g}(0)} \langle L, \widetilde U_\lambda(\phi) 1\rangle $  and that $\abs{(J \phi)(0)}^{-\lambda/g}=\Kc^{\lambda/(2g)}(\phi(0),\phi(0))$ for every $\phi\in \widetilde G(D)$, by the invariance properties of the unweighted Bergman Kernel $\Kc$. Therefore, $\norm{\Ff(L)}_{\Ac^\infty_{\lambda,0}(D)}\meg \norm{L}_{\Ac^1_{\lambda,0}(D)'}$. Further, if $f\in L^1(\widetilde G(D))$, then
	\[
	\begin{split}
		&\Big\langle L, \int_{\widetilde G(D)} f(\phi) \widetilde U_\lambda(\phi) 1\,\dd \phi\Big\rangle= \int_{\widetilde G(D)} f(\phi)\langle L, \widetilde U_\lambda(\phi) 1\rangle\,\dd \phi\\
		&\qquad=\int_{\widetilde G(D)} f(\phi) \frac{\overline{(J\phi)^{\lambda/g}(0)}}{\abs{(J\phi)(0)}^{\lambda/g}} \Kc^{-\lambda/(2g)}(\phi(0),\phi(0)) \Ff(L)(\phi(0)) \,\dd \phi,
	\end{split}
	\]
	so that $\norm{\Ff(L)}_{\Ac^\infty_{\lambda,0}(D)}= \norm{L}_{\Ac^1_{\lambda,0}(D)'}$. Finally, it is readily seen that every $h\in \Ac^\infty_\lambda(D)$ induces a continuous linear functional on $\Ac^1_\lambda(D)$, so that the assertion follows.
\end{proof}

Applying Proposition~\ref{prop:20}, we also get the following corollary.

\begin{corollary}
	Take $\lambda\in m/r-1-\N$, and assume that $n=0$. Then, the continuous sesquilinear mapping
	\begin{eqnarray*}
		&\Ac^1_{\lambda,q(\lambda)}(D)\times \Ac^\infty_{\lambda,q(\lambda)}(D)\ni & (f,h)\mapsto \lim_{R\to 1^-} \langle f\vert g\rangle_{\widetilde H_\lambda}\\ & &= \lim_{R\to 1^-} \langle \square^{m/r-\lambda }f(R\,\cdot\,)\vert \square^{m/r-\lambda }h(R\,\cdot\,)\rangle_{H_{2m/r-\lambda}(D)}, 
	\end{eqnarray*}
	induces  an isometric antilinear isomorphism of $\Ac^\infty_{\lambda,q(\lambda)}/V_{\lambda,q(\lambda)}$ onto the dual of $\Ac^1_{\lambda,q(\lambda)}$.
\end{corollary}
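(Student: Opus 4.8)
The plan is to deduce the statement from the preceding Proposition (the case $\lambda>m/r-1$) by transporting everything through the intertwining operator $\square^{m/r-\lambda}$. Since $\lambda\in m/r-1-\N$, the exponent $m/r-\lambda$ is a positive integer, and $\mu\coloneqq 2m/r-\lambda$ satisfies $\mu\Meg m/r+1>m/r-1$, so $q(\mu)=0$ and the preceding Proposition applies to $\mu$ in place of $\lambda$; because $n=0$ one has $g-\lambda=2m/r-\lambda=\mu$. First I would record, from Theorem~\ref{prop:10} (together with Proposition~\ref{prop:20}), that $\square^{m/r-\lambda}$ intertwines $U_\lambda$ with $U_\mu$ and realises $\widetilde H_\lambda=\Set{f\colon \square^{m/r-\lambda}f\in H_\mu}$ with a proportional seminorm. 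Hence, on polynomials — and, after the $R\to 1^-$ regularisation of Remark~\ref{oss:2}, on the whole spaces — one has the pairing identity $\langle f\vert h\rangle_{\widetilde H_\lambda}=c\,\langle \square^{m/r-\lambda}f\vert \square^{m/r-\lambda}h\rangle_{H_\mu}$ for a fixed constant $c\neq 0$, which is exactly the second displayed expression in the statement (with $H=h$).

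Next I would transport the two extremal spaces. By part (6) of Theorem~\ref{theorem:3}, $\Ac^\infty_{\lambda,q(\lambda)}=\Set{f\colon \square^{m/r-\lambda}f\in\Ac^\infty_{\mu,0}}$ with the corresponding seminorm, whose radical is $\ker\square^{m/r-\lambda}=V_{\lambda,q(\lambda)}$; since $\square^{m/r-\lambda}$ is surjective on $\Hol(\Dc)$, it therefore induces an \emph{isometric} isomorphism $\Ac^\infty_{\lambda,q(\lambda)}/V_{\lambda,q(\lambda)}\xrightarrow{\ \sim\ }\Ac^\infty_{\mu,0}$. Symmetrically, part (4) of Theorem~\ref{theorem:2} gives $\Ac^1_{\lambda,q(\lambda)}=\Set{f\colon \square^{m/r-\lambda}f\in\Ac^1_{\mu,0}}$, so $\square^{m/r-\lambda}$ induces an isomorphism of the associated Banach spaces $\Ac^1_{\lambda,q(\lambda)}\xrightarrow{\ \sim\ }\Ac^1_{\mu,0}$. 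Pulling these back through the (bounded) Cayley intertwiner $\Cc_\lambda$ returns the corresponding statements on $D$, matching the spaces appearing in the corollary.

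Finally I would combine these: the preceding Proposition furnishes an isometric antilinear isomorphism of $\Ac^\infty_{\mu,0}(D)$ onto $(\Ac^1_{\mu,0}(D))'$ implemented by $(F,H)\mapsto\lim_{R\to 1^-}\langle F(R\,\cdot\,)\vert H(R\,\cdot\,)\rangle_{H_\mu(D)}$. Writing $F=\square^{m/r-\lambda}f$, $H=\square^{m/r-\lambda}h$ and invoking the pairing identity of the first step, the map in the statement is precisely this isomorphism conjugated by the two isometries of the second step, whence the claim (the constant $c$ being absorbed into the proportionality of Proposition~\ref{prop:20}). One routine point is the interchange of $\square^{m/r-\lambda}$ with the dilations $\delta_R\colon f\mapsto f(R\,\cdot\,)$: as $\square=\Delta(\nabla)$ is homogeneous of degree $r$, one has $\square^{m/r-\lambda}\delta_R f=R^{\,m-r\lambda}\delta_R\square^{m/r-\lambda}f$, and the scalar $R^{\,m-r\lambda}\to 1$, so the two $R\to 1^-$ limits agree. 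The hard part will be the bookkeeping of the radicals: I must verify that the seminorm pairing descends to exactly $\Ac^\infty_{\lambda,q(\lambda)}/V_{\lambda,q(\lambda)}$ and is \emph{non-degenerate and onto} against $\Ac^1_{\lambda,q(\lambda)}$, i.e.\ that $\ker\square^{m/r-\lambda}$ meets $\Ac^1_{\lambda,q(\lambda)}$ only in the closure of $\{0\}$ there; this is where the precise identifications of the closures of $\{0\}$ in the two families (as $V_{\lambda,q(\lambda)}$ and $V_{\lambda,q(\lambda)-1}$, respectively) must be reconciled so that the transported duality is surjective.
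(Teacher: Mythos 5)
Your overall route is exactly the paper's: the paper's entire proof consists in applying Proposition~\ref{prop:20}, together with the preceding Proposition for the index $2m/r-\lambda>m/r-1$ and the identifications in part (6) of Theorem~\ref{theorem:3} and part (4) of Theorem~\ref{theorem:2}. Your first steps — the polarized pairing identity coming from the proportionality of the seminorms, the transport of the two extremal spaces through $\square^{m/r-\lambda}$, and the commutation $\square^{m/r-\lambda}\delta_R=R^{m-r\lambda}\delta_R\square^{m/r-\lambda}$ with $R^{m-r\lambda}\to 1$ — are all correct and are precisely what the one-line proof leaves implicit. (The surjectivity of $\square^{m/r-\lambda}$ on $\Hol(\Dc)$, which you assert in passing, does hold: $\Dc$ is convex and $\square$ has constant coefficients, so it follows from the classical existence theory for constant-coefficient operators on convex domains, of the same kind as the result of H\"ormander invoked in the proof of Theorem~\ref{theorem:3}.)

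The genuine problem is the step you yourself defer to the end, and it cannot be settled in the form you propose. You want to verify that $\ker\square^{m/r-\lambda}$ meets $\Ac^1_{\lambda,q(\lambda)}$ only in the closure of $\Set{0}$ there; this is \emph{false} whenever $V_{\lambda,q(\lambda)}\neq V_{\lambda,q(\lambda)-1}$. Indeed, by part (4) of Theorem~\ref{theorem:2} the set $\Ac^1_{\lambda,q(\lambda)}$ is the full preimage of $\Ac^1_{2m/r-\lambda,0}$ under $\square^{m/r-\lambda}$, so it contains all of $\ker\square^{m/r-\lambda}=V_{\lambda,q(\lambda)}$, while by Definition~\ref{def:3} (and the proof of Theorem~\ref{theorem:2}) the closure of $\Set{0}$ in $\Ac^1_{\lambda,q(\lambda)}$ is only $V_{\lambda,q(\lambda)-1}$. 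Already for the unit disc with $\lambda=0$ the minimal space is a Banach space containing the constants, whereas every functional produced by the Dirichlet-type pairing annihilates the constants; hence, for instance, $f\mapsto f(0)$ lies in the dual but not in the image of the pairing map. What your conjugation argument actually yields is an isometric antilinear isomorphism of $\Ac^\infty_{\lambda,q(\lambda)}/V_{\lambda,q(\lambda)}$ onto the annihilator of $V_{\lambda,q(\lambda)}$ in the dual of $\Ac^1_{\lambda,q(\lambda)}$, that is, onto the dual of the quotient $\Ac^1_{\lambda,q(\lambda)}/V_{\lambda,q(\lambda)}$ — equivalently, of $\Ac^1_{\lambda,q(\lambda)}$ equipped with the seminorm $f\mapsto\norm{\square^{m/r-\lambda}f}_{\Ac^1_{2m/r-\lambda,0}}$. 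This is how ``the dual of $\Ac^1_{\lambda,q(\lambda)}$'' in the statement has to be read; it matches the classical duality of~\cite{ArazyFisherPeetre}, which is between the Bloch space modulo constants and the minimal space modulo constants. Once this reading is adopted, your conjugation argument closes immediately, with no further bookkeeping needed; aiming instead at the full dual of $\Ac^1_{\lambda,q(\lambda)}$ in its Definition~\ref{def:3} topology is aiming at a false statement.
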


When $n=0$ and $m=1$, this result was proved in~\cite[pp.~122--124]{ArazyFisherPeetre}. This result was later extended to the case $n>0$ and $m=1$ in~\cite[Theorem 5.13]{Peloso}.

\end{document}